\DeclareMathOperator{\supp}{supp}
\DeclareMathOperator{\ord}{ord}
\newtheorem{thm}{Theorem}[section]
\newtheorem{df}[thm]{Definition}
\newtheorem{prop}[thm]{Proposition}
\newtheorem{lem}[thm]{Lemma}
\newtheorem{cor}[thm]{Corollary}
\newtheorem{rem}[thm]{Remark}
\newtheorem{exa}[thm]{Example}
\begin{document}
\title[Linear differential equations and representations of quivers]{Linear differential equations on the Riemann sphere 
	and representations of quivers}
\date{}
\author{Kazuki Hiroe}
\email{kazuki@josai.ac.jp}
\address{Department of Mathematics, Josai University,\\ 
1-1 Keyakidai Sakado-shi Saitama 350-0295 JAPAN.}

\maketitle
\begin{abstract}
	Our interest in this paper  is 
  a generalization of the additive Deligne-Simpson
  problem which is originally defined for Fuchsian differential
  equations on the Riemann sphere.
  We shall extend this problem to differential equations having 
  an arbitrary number of unramified 
  irregular singular points and 
  determine 
  the existence of solutions of the 
  generalized 
  additive Deligne-Simpson problems.
  Moreover we apply this result to the geometry of the moduli
  spaces of stable meromorphic connections of trivial bundles
  on the Riemann sphere. Namely, open embedding of the moduli spaces 
  into quiver varieties is given and the non-emptiness condition
  of the moduli spaces is determined. Furthermore the connectedness of 
  the moduli spaces is shown.
\end{abstract}

\section*{Introduction}
The additive Deligne-Simpson problem asks the existence of 
an irreducible Fuchsian differential equation on the Riemann sphere
with 
prescribed local data.
In this paper we shall consider analogous 
problems for differential
equations on the Riemann sphere with unramified irregular singular points.

First of all let us recall the classical case, i.e.,
the additive Deligne-Simpson problem 
for systems of linear Fuchsian differential equations (see \cite{Kos}).
A system of first order linear differential 
equations is called {\em Fuchsian} if it is 
of the form
\[
    \frac{d}{dz}Y=\sum_{i=1}^{p}\frac{A_{i}}{z-a_{i}}Y\quad
    (A_{i}\in M(n,\mathbb{C}),\,i=1,\ldots,p).
\]
Here we call each $A_{i}$ the {\em residue matrix} 
at the singular point $a_{i}$.
Also $A_{0}:=-\sum_{i=1}^{p}A_{i}$ is called 
the residue matrix at $\infty$.
We say that $\frac{d}{dz}Y
=\sum_{i=1}^{p}\frac{A_{i}}{z-a_{i}}Y$
is \textit{irreducible}
if $A_{0},\ldots,A_{p}$ have no nontrivial simultaneous invariant
vector subspace of $\mathbb{C}^{n}$, i.e., if there exists
$W\subsetneqq \mathbb{C}^{n}$ such that 
$A_{i}W\subset W$  for all $i=0,\ldots,p$, then $W=\{0\}$.

\begin{df}[additive Deligne-Simpson problem (classical case)]\normalfont
	The \textit{additive Deligne-Simpson problem} consists of 
	 points $a_{1},\ldots,a_{p}$ in $\mathbb{C}$ and 
	conjugacy classes $C_{0},C_{1},\ldots,C_{p}$
	in $M(n,\mathbb{C})$.
	A \textit{solution} of the problem 
	is an {\em irreducible} Fuchsian differential equation
	\[
		\frac{dY}{dz}=\sum_{i=1}^{p}\frac{A_{i}}{z-a_{i}}Y\quad
		(A_{i}\in M(n,\mathbb{C}),\,i=1,\ldots,p)	
	\]
	whose residue matrices $A_{i}\in C_{i}$ for 
	$i=0,\ldots,p$.
\end{df}

This problem is developed by V. Kostov as an analogy of the problem
studied by P. Deligne and C. Simpson (see \cite{S}), 
so called multiplicative Deligne-Simpson problem.
After Kostov who gave an necessary and sufficient condition
of the existence of solutions under generic conditions in \cite{Kos},
a complete necessary and sufficient condition
was given by W. Crawley-Boevey \cite{C}.

As a generalization of this problem, it seems to be natural to 
consider  similar problems for non-Fuchsian equations (see for example
\cite{Boarx},\cite{Bo}, \cite{Kos2}). 
Our generalization in this paper is an extension of the work of P. Boalch in 
\cite{Boarx} and Kostov's way of generalization given in \cite{Kos2} is somewhat 
different from ours.
Before formulating the generalized problem precisely,
let us recall some facts
of local formal theory around irregular singular points 
of differential equations.
Let $M(n,R)$ be the ring of $n\times n$ matrices with 
coefficients in a commutative ring $R$ and $\mathrm{GL}(n,R)$ the group 
which consists of all invertible elements in $M(n,R)$ of the multiplication.
Set $\mathbb{C}(\!(z)\!):=
\{\sum_{i=r}^{\infty}c_{i}z^{i}\mid c_{i}\in\mathbb{C},\,
r\in \mathbb{Z}\}$
and  
$\mathbb{C}[\![z]\!]:=
\{\sum_{i=0}^{\infty}c_{i}z^{i}\mid c_{i}\in \mathbb{C}\}$. 
Let us attach to
$C=\sum_{i=r}^{\infty}c_{i}z^{i}\in M(n,\mathbb{C}(\!(z)\!))$
the integer  $\mathrm{ord}(C):=\min\{i\mid c_{i}\neq 0\}$ called
the 
{\em order}.
For $A\in M(n,\mathbb{C}(\!(z)\!))$ and 
$X\in \mathrm{GL}(n,\mathbb{C}(\!(z)\!))$
the {\em gauge transformation} of $A$ by $X$ is 
\[
    X[A]:=XAX^{-1}+\left(\frac{d}{dz}X\right)X^{-1}.
\]

\begin{df}[Hukuhara-Turrittin-Levelt normal form]
    \normalfont
    If an element $B\in M(n,\mathbb{C}(\!(z)\!))$ is of the form
\[B=\mathrm{diag}(q_{1}(z^{-1})I_{n_1}+R_{1}z^{-1},\ldots,
q_{m}(z^{-1})I_{n_{m}}+R_{m}z^{-1})
\]
with 
        $q_{i}(s)\in s^{2}\mathbb{C}[s]$ satisfying
    $q_{i}\neq q_{j}$ if $i\neq j$ and 
    $R_{i}\in M(n_{i},\mathbb{C})$,
    then $B$ is called the {\em Hukuhara-Turrittin-Levelt normal form}
    or the {\em HTL normal form} shortly.
    Here $I_{m}$ is the identity matrix of $M(m,\mathbb{C})$.
\end{df}
Now let us introduce truncated orbits which play the same role as 
the conjugacy classes $C_{i}$ of residue 
matrices in the 
Deligne-Simpson problem of Fuchsian systems.
Let us define
$G_{k}:=\mathrm{GL}(n,\mathbb{C}[\![z]\!]/z^{k}\mathbb{C}[\![z]\!])$,
$k\ge 1$, which
can be identified with 
\[
	\left\{A_{0}+A_{1}x+\cdots+A_{k-1}z^{k-1}\,
    	\bigg|\,
	\begin{array}{l}
		A_{0}\in \mathrm{GL}(n,\mathbb{C}),\,A_{i}\in 
    	M(n,\mathbb{C}),\\
	i=1,\ldots,k-1 
	\end{array}
	\right\}.
\]
Also define 
\begin{align*}
	\mathfrak{g}_{k}&:=M(n,\mathbb{C}[\![z]\!]/z^{k}\mathbb{C}[\![z]\!])\\
        &=
    	\left\{A_{0}+A_{1}z+\cdots+A_{k-1}z^{k-1}\,\big|\,
    	A_{i}\in 
    	M(n,\mathbb{C}),\,i=0,\ldots,k-1 \right\}.
\end{align*}
The dual vector space $\mathfrak{g}_{k}^{*}:=\mathrm{Hom}_{\mathbb{C}}(
\mathfrak{g}_{k},\mathbb{C})$ is identified with 
$$M(n,z^{-k}\mathbb{C}[\![z]\!]/\mathbb{C}[\![z]\!])=
\left\{\frac{A_{k}}{z^{k}}+\cdots+\frac{A_{1}}{z}
\,\Big|\, A_{i}\in M(n,\mathbb{C})\right\}$$ 
by the nondegenerate bilinear form 
$\mathfrak{g}_{k}\times \mathfrak{g}_{k}^{*}\ni (A,B)\mapsto 
\mathrm{Res}(\mathrm{tr}(AB))\in \mathbb{C}.$
Here we set $\mathrm{Res}(\sum_{i=r}^{\infty}A_{i}z^{i}):=A_{-1}$.

Then an HTL normal form $B$
with $\mathrm{ord}(B)\ge -k$ can be seen as an element in $\mathfrak{g}^{*}_{k}$. 
Thus we can consider the $G_{k}$-orbit
$\mathcal{O}_{B}
:=
\left\{
gBg^{-1}\in \mathfrak{g}^{*}_{k}\mid g\in G_{k}
\right\}$
of $B$ in $\mathfrak{g}_{k}^{*}$ called the \textit{truncated orbit}
of $B$. 
Now let us formulate a generalization of Deligne-Simpson problem.
\begin{df}[generalized additive Deligne-Simpson problem]
\normalfont
The {\em generalized additive Deligne-Simpson problem}
	consists of a collection of points $a_{1},\ldots,a_{p}$ in $\mathbb{C}$,
	of nonzero positive integers $k_{0},\ldots,k_{p}$
	and of 	HTL normal forms 
        $B_{i}\in\mathfrak{g}^{*}_{k_{i}}\subset 
        M(n,\mathbb{C}(\!(z)\!))$  for $i=0,\ldots,p$.  
	A \textit{solution} of the generalized additive Deligne-Simpson 
	problem is an {\em irreducible} differential equation
        \[
            \frac{d}{dz}Y=\left(\sum_{i=1}^{p}\sum_{j=1}^{k_{i}}
            \frac{A_{i,j}}{(z-a_{i})^{j}}
	    +\sum_{2\le j\le k_{0}}A_{0,j}z^{j-2}\right)Y
        \]
        satisfying that $A^{(i)}(z)\in \mathcal{O}_{B_{i}}$
        for $i=0,\ldots,p$.
        Here $A^{(i)}(z):=\sum_{j=1}^{k_{i}}A_{i,j}z^{-j}$ for 
    	$i=0,\ldots,p$ and $A_{0,1}:=-\sum_{i=1}^{p}A_{i,1}$.
    	We say $$\frac{d}{dz}Y=\left(\sum_{i=1}^{p}\sum_{j=1}^{k_{i}}
        \frac{A_{i,j}}{(z-a_{i})^{j}}
	+\sum_{2\le j\le k_{0}}A_{0,j}x^{j-2}\right)Y$$
        is {\em irreducible}
        if the collection 
        $(A_{i,j})_{\substack{0\le i\le p\\1\le j\le k_{i}}}$
        of coefficient matrices is irreducible.
\end{df}
This can be seen as a natural generalization of additive
Deligne-Simpson problems which contains 
the original problems for Fuchsian 
equations as the special case $k_{0}=\cdots=k_{p}=1$. 
In the case $k_{1}=\cdots=k_{p}=1$ and $k_{0}\le 3$, P. Boalch
obtains a necessary and sufficient condition for the 
existence of a solution of the generalized additive Deligne-Simpson 
problem in \cite{Boarx} and for an arbitrary $k_{0}$, see \cite{HY}.

In order to determine the existence condition of  a solution of 
the additive Deligne-Simpson
problem for Fuchsian systems, Crawley-Boevey \cite{C} shows that 
Fuchsian 
systems can be realized as representations of  quivers,
and applies the existence theorem of irreducible representations
of deformed preprojective algebras associated with
the quivers to the existence of solutions
of additive Deligne-Simpson problems.
One can find a review of his work in Section \ref{reviewoffuchs} and 
\ref{reviewoffuchsian}.

As a generalization of his work, we can associate our generalized 
additive Deligne-Simposn problem with 
a quiver defined as follows. The detail of the construction
shall be explained in Sections \ref{Section2}, 
\ref{Section3} and \ref{equationsandquiver}.
Let us suppose that HTL normal forms $B_{0},\ldots,B_{p}$ are written by
\[
    B_{i}=
    \mathrm{diag}\left(
	    q_{[i,1]}(z^{-1})I_{n_{[i,1]}}+R_{[i,1]}z^{-1},\ldots,
	    q_{[i,m_{i}]}(z^{-1})I_{n_{[i,m_{i}]}}+R_{[i,m_{i}]}z^{-1}
    \right)
\]
and choose complex numbers 
$\xi^{[i,j]}_{1},\ldots,\xi^{[i,j]}_{e_{[i,j]}}$
so that 
\[
	\prod_{k=1}^{e_{[i,j]}}(R_{[i,j]}-\xi^{[i,j]}_{k})=0
\]
for $i=0,\ldots,p$ and $j=1,\ldots,m_{i}$.
Set $I_{\text{irr}}:=\{i\in\{0,\ldots,p\}\mid m_{i}>1\}\cup \{0\}$ and 
$I_{\text{reg}}:=\{0,\ldots,p\}\backslash I_{\text{irr}}$.

Then let $\mathsf{Q}$  be the quiver with the set of vertices
\begin{align*}
	\mathsf{Q}_{0}:=\left\{[i,j]\,\middle|\,
		\begin{array}{l}
		i\in I_{\text{irr}},\\
		j=1,\ldots,m_{i}
	\end{array}\right\}
		\cup
		\left\{
			[i,j,k]\,\middle|\,
			\begin{array}{l}
			i=0,\ldots,p,\\
			j=1,\ldots,m_{i},\\
			k=1,\ldots,e_{[i,j]}-1
			\end{array}
		\right\}
\end{align*}
and the set of arrows 
\begin{align*}
	\mathsf{Q}_{1}=&\left\{
		\rho^{[0,j]}_{[i,j']}\colon
		[0,j]\rightarrow [i,j']\,\middle|\,
		\begin{array}{l}
			j=1,\ldots,m_{0},\\
			i\in I_{\text{irr}}\backslash\{0\},\\
			j'=1,\ldots,m_{i}
		\end{array}
	\right\}\\
	&\cup
	\left\{
		\rho^{[k]}_{[i,j],[i,j']}\colon
		[i,j]\rightarrow [i,j']\,\middle|\,
		\begin{array}{l}
			i\in I_{\text{irr}},\,
			1\le j<j'\le m_{i},\\
			1\le k\le d_{i}(j,j')
		\end{array}
	\right\}\\
	&\cup
	\left\{
		\rho_{[i,j,1]}\colon [i,j,1]\rightarrow [i,j]\mid
		i\in I_{\text{irr}},\,j=1,\ldots,m_{i}
	\right\}\\
	&\cup
	\left\{
		\rho^{[i,1,1]}_{[0,j]}\colon
		[i,1,1]\rightarrow [0,j]\mid 
		i\in I_{\text{reg}},\,j=1,\ldots,m_{0}
	\right\}\\
	&\cup
	\left\{
		\rho_{[i,j,k]}\colon [i,j,k]\rightarrow
		[i,j,k-1]\,\middle|\,
		\begin{array}{l}
			i=0,\ldots,p,\\
			j=1,\ldots,m_{i},\\
			k=2,\ldots,e_{[i,j]}-1
		\end{array}
	\right\}.
\end{align*}
Here $d_{i}(j,j'):=\mathrm{deg\,}_{\mathbb{C}[z]}(q_{[i,j]}(z)
-q_{[i,j']}(z))-2$.
To each vector 
      $\beta\in \mathbb{Z}^{\mathsf{Q}_{0}}$,
      we associate integers 
      \begin{align*}
	      q(\beta)&:=\sum_{a\in \mathsf{Q}_{0}}\beta_{a}^{2}
	      -\sum_{\rho\in \mathsf{Q}_{1}}
      \beta_{s(\rho)}\beta_{t(\rho)},&
      p(\beta)&:=1-q(\beta).
  \end{align*}
Here $s(\rho)$ and $t(\rho)$ are the source and target of the arrow $\rho$
respectively.
Let $\alpha=(\alpha_{a})_{a\in \mathsf{Q}_{0}}
\in \mathbb{Z}^{\mathsf{Q}_{0}}$ be the vector defined by setting  
$\alpha_{[i,j]}:=n_{[i,j]}$ and 
$\alpha_{[i,j,k]}:=\mathrm{rank\,}
\prod_{l=1}^{k}(R^{(i)}_{j}-\xi_{l}^{[i,j]})$.
Also define $\lambda=(\lambda_{a})_{a\in \mathsf{Q}_{0}}\in 
\mathbb{C}^{\mathsf{Q}_{0}}$ by 
$\lambda_{[i,j]}:=-\xi^{[i,j]}_1$ for $i\in I_{\text{irr}}\backslash\{0\}$,
$j=1,\ldots,m_{i}$, $\lambda_{[0,j]}:=-\xi^{[0,j]}
-\sum_{i\in I_{\text{reg}}}\xi^{[i,1]}_{1}$ for $j=1,\ldots,m_{0}$,
and $\lambda_{[i,j,k]}:=\xi^{[i,j]}_{k}-\xi^{[i,j]}_{k+1}$ for 
$i=0,\ldots,p$, $j=1,\ldots,m_{i}$ and $k=1,\ldots,e_{[i,j]}-1$.
The following sublattice of $\mathbb{Z}^{\mathsf{Q}_{0}}$ plays an
essential role in this paper, 
\[
	\mathcal{L}:=\left\{\beta\in 
\mathbb{Z}^{\mathsf{Q}_{0}}\,\middle|\,
	\sum_{j=1}^{m_{0}}\beta_{[0,j]}=\sum_{j=1}^{m_{i}}\beta_{[i,j]}
\text{ for all }i\in I_{\text{irr}}\backslash\{0\}
\right\}.
\]
Set $\mathcal{L}^{+}:=\mathcal{L}\cap (\mathbb{Z}_{\ge 0})^{\mathsf{Q}_{0}}$.

Then the following is the main theorem of this paper.
\begin{thm}[see Theorem \ref{mainthm}]\label{premainthm}
	Let us consider the generalized 
	additive  Deligne-Simpson problem consisting of
	positive integers $k_{0},\ldots,k_{p}$ and HTL normal forms
	$B_{i}\in \mathfrak{g}_{k_{i}}^{*}$ for $i=0,\ldots,p$.
	Let us define $\mathsf{Q}=(\mathsf{Q}_{0},\mathsf{Q}_{1})$, 
	$\alpha\in (\mathbb{Z}_{\ge 0})^{\mathsf{Q}_{0}}$
	and $\lambda\in \mathbb{C}^{\mathsf{Q}_{0}}$ as above.
	Then the generalized additive Deligne-Simpson problem has 
	a solution if and only if the following are satisfied,
	\begin{enumerate}
		\item $\alpha$ is a positive root of $\mathsf{Q}$ and 
			$\alpha\cdot \lambda=
			\sum_{a\in \mathsf{Q}_{0}}\alpha_{a}\lambda_{a}=0$,
		\item for any decomposition $\alpha=\beta_{1}+\cdots+
			\beta_{r}$ where $\beta_{i}\in 
			\mathcal{L}^{+}$ are positive roots of 
			$\mathsf{Q}$ satisfying $\beta_{i}\cdot
			\lambda=0$, we have 
			\[
				p(\alpha)>
				p(\beta_{1})+\cdots +p(\beta_{r}).
			\]\label{test}
	\end{enumerate}
\end{thm}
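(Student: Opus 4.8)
The plan is to reduce the generalized additive Deligne--Simpson problem to the existence of a suitably irreducible representation, of dimension vector $\alpha$, in the fibre $\mu^{-1}(\lambda)$ of the moment map defining the deformed preprojective algebra $\Pi^{\lambda}(\mathsf{Q})$, and then to apply the combinatorial criterion for such representations. First I would set up --- this is the content of Sections~\ref{Section2}, \ref{Section3} and \ref{equationsandquiver} --- a bijection between the set of solutions of the problem, taken up to the natural equivalence, and the set of $G_{\alpha}$-orbits of representations $M\in\mu^{-1}(\lambda)$ of dimension $\alpha$ having no subrepresentation whose dimension vector lies in $\mathcal{L}^{+}\setminus\{0,\alpha\}$; call such $M$ \emph{$\mathcal{L}$-irreducible}. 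Under this bijection, irreducibility of the connection is matched with $\mathcal{L}$-irreducibility of $M$.

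The dictionary is built as in Crawley-Boevey's treatment of the Fuchsian case \cite{C1}, enriched by the irregular data. For a regular singular point $a_{i}$, whose residue is semisimple with eigenvalues governed by the $\xi^{[i,1]}_{k}$, one attaches to each central vertex $[0,j]$ a type $A$ leg $[i,1,1]\to[i,1,2]\to\cdots$ whose dimension vector records the ranks $\alpha_{[i,1,k]}$, so that imposing the moment-map relations along the leg recovers the prescribed conjugacy class; this is Crawley-Boevey's leg construction \cite{C}. For an irregular singular point the normal form $B^{(i)}$ decomposes $\mathbb{C}^{n}$ into the blocks indexed by the distinct exponentials $q^{(i)}_{j}$, of sizes $n^{(i)}_{j}=\alpha_{[i,j]}$; the semisimple part $R^{(i)}_{j}$ of each block again contributes a leg, while the coupling between two blocks with $q^{(i)}_{j}\neq q^{(i)}_{j'}$ contributes exactly $d_{i}(j,j')=\deg(q^{(i)}_{j}-q^{(i)}_{j'})-2$ arrows $\rho^{[k]}_{[i,j],[i,j']}$, the count being the number of coefficients coupling the two blocks that are not normalized away inside the truncated orbit $\mathcal{O}_{B^{(i)}}$. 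I would then verify that the full system of $\Pi^{\lambda}(\mathsf{Q})$-relations is equivalent to $A^{(i)}(x)\in\mathcal{O}_{B^{(i)}}$ for all $i$ together with $\sum_{i}A^{(i)}_{1}=0$. The lattice $\mathcal{L}$ arises because an invariant subspace $W\subsetneq\mathbb{C}^{n}$ of all the coefficient matrices is stable under every leading term, hence splits compatibly with the block decomposition at $\infty$ and at each irregular point; so $\dim W=\sum_{j}\dim(W\cap V^{(i)}_{j})$ for each such $i$, which says precisely that the dimension vector of the corresponding subrepresentation lies in $\mathcal{L}$.

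Granting the dictionary, the theorem is reduced to: $\mu^{-1}(\lambda)$ contains an $\mathcal{L}$-irreducible representation of dimension $\alpha$ if and only if (1) and (2) hold. This is the $\mathcal{L}$-refinement of Crawley-Boevey's theorem on simple representations of deformed preprojective algebras \cite{C}, and I would prove it by the same machinery. The reflection functors at the vertices of $\mathsf{Q}$ preserve $\mathcal{L}$ and send $(\alpha,\lambda)$ to $(s_{a}\alpha,s_{a}\lambda)$, so a sequence of reflections reduces the problem either to the case of a coordinate vector, where the statement is immediate, or to the case where $\alpha$ lies in the fundamental region, $(\alpha,e_{a})\le 0$ for all $a$. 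In the latter case one builds an $\mathcal{L}$-irreducible representation directly from sufficiently general matrices, the genericity being controlled by condition (2) applied to $\mathcal{L}^{+}$-decompositions of $\alpha$; conversely, if (2) fails one shows, by a dimension count with $p$, that a generic point of the relevant component of $\mu^{-1}(\lambda)$ decomposes along an $\mathcal{L}^{+}$-decomposition and is therefore not $\mathcal{L}$-irreducible. Condition (1) --- that $\alpha$ be a positive root with $\alpha\cdot\lambda=0$ --- follows from Kac's theorem and from taking the trace in the moment-map relation, and condition (2) is exactly the decomposition inequality.

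I expect the main obstacle to be the irregular part of the dictionary: one must control the coupling coefficients between two exponential blocks precisely enough to see that their number equals $d_{i}(j,j')$ and that the preprojective relations cut out the truncated orbit $\mathcal{O}_{B^{(i)}}$ itself, not merely its closure, so that the equivalence ``connection irreducible $\Leftrightarrow$ $M$ is $\mathcal{L}$-irreducible'' holds exactly rather than up to specialization. A secondary difficulty is checking that every reflection functor and every step of the fundamental-region reduction stays within $\mathcal{L}$; this is what forces the decompositions in (2) to range over $\mathcal{L}^{+}$ rather than over all of $(\mathbb{Z}_{\ge0})^{\mathsf{Q}_{0}}$, and hence what prevents the result from following directly from the Fuchsian-case theorem.
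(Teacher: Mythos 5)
Your overall architecture (dictionary with a quiver, then an $\mathcal{L}$-refined Crawley--Boevey criterion) matches the paper, but the key step of your reduction rests on a false claim: the reflection functors at the vertices of $\mathsf{Q}$ do \emph{not} all preserve $\mathcal{L}$. A simple reflection $s_{[i,j]}$ at a single block vertex of an irregular point changes only the coordinate $\alpha_{[i,j]}$ and so destroys the balance condition $\sum_{j}\beta_{[0,j]}=\sum_{j}\beta_{[i,j]}$. The only reflections available on the differential-equation side are the composite reflections $s_{\mathbf{i}}=\bigl(\prod_{i}s_{[i,j_{i}]}\bigr)\circ s_{[0,j_{0}]}\circ\bigl(\prod_{i}s_{[i,j_{i}]}\bigr)$ in the non-simple real roots $\epsilon_{\mathbf{i}}$ (realizing middle convolution, Propositions \ref{middletoroot} and \ref{modifiedreflection}) together with the leg reflections $s_{[i,j,k]}$. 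Because the group is smaller, the reduction does not terminate in Kac's fundamental region of $\mathsf{Q}$ but in the strictly larger \emph{quasi-fundamental set} $\tilde{F}$, where only $(\beta,\epsilon_{a})\le 0$ for $a\in\mathcal{J}\cup\mathsf{Q}_{0}\backslash\mathsf{Q}_{0}'$ is imposed. For such $\beta$ it is not at all automatic that $\beta$ is a positive root of $\mathsf{Q}$ (which condition (1) requires), nor that one can build the required representation from ``sufficiently general matrices'': the paper needs the lift $\Xi\colon\mathcal{M}\to\mathcal{L}$ to an auxiliary Kac--Moody root lattice, a connectedness analysis of supports, a wild/tame dichotomy with generic decompositions, and a case-by-case reduction to Euclidean diagrams (Theorems \ref{quasiisroots} and \ref{nonempty}) to close exactly this gap. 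You flag this as a ``secondary difficulty,'' but it is the central one, and your proposal contains no argument for it.

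A second, smaller omission: for the direction you do sketch, the paper does not redo the deformed-preprojective-algebra machinery with an $\mathcal{L}$-constraint. Instead it perturbs $\lambda$ along the addition directions $z^{(i)}$ (Lemma \ref{slide}) so that every sub-dimension vector $\beta'\le\alpha$ with $\beta'\cdot\lambda'=0$ automatically lies in $\mathcal{L}$; after this shift quasi-irreducibility coincides with ordinary irreducibility and Theorem \ref{CB} applies as a black box. Without either this trick or a full reworking of Crawley--Boevey's lifting arguments under the $\mathcal{L}$-constraint, the equivalence between $\mathcal{L}$-irreducibility and the numerical condition $\alpha\in\widetilde{\Sigma}_{\lambda}$ is not established. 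Finally, note that even granting $\alpha\in\widetilde{\Sigma}_{\lambda}$ one must produce an irreducible representation inside the \emph{open} locus where the matrices $\bigl(\psi_{\rho^{[0,j]}_{[i,j']}}\bigr)$ are invertible; this non-emptiness is again supplied by the quasi-fundamental-set analysis and is not addressed in your sketch.
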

Here we note that the condition \ref{test} is weaker than the corresponding 
condition of Crawley-Boevey's theorem (see Theorem 1.2 in \cite{C1} and 
see also Theorem \ref{CB}) since we deal only with positive roots in 
$\mathcal{L}$.
If, however, $I_{\text{irr}}=\{0\}$ which contains known cases by 
Crawley-Boevey \cite{C}, Boalch \cite{Boarx}, and \cite{HY},
then $\mathcal{L}=
\mathbb{Z}^{\mathsf{Q}_{0}}$ and 
the conditions in the above theorem coincide with 
Crawley-Boevey's one.
Thus the theorem covers the above preceding known results for the additive Delinge-Simpson
problems.

Let us  discuss moduli spaces of meromorphic 
connections.
In the theory of isomonodromic deformation, the Riemann-Hilbert problem of 
moduli spaces of meromorphic connections not only with
regular singularities but also 
irregular singularities plays a central role,
see \cite{JMU} for instance. 
Thus 
many researchers are interested in the geometry of the moduli spaces
of the meromorphic connections with irregular singularities, see 
\cite{Boa1} \cite{BreSag},\cite{IS} for instance.
As an application of our main theorem, we shall discuss some 
geometric properties of the moduli spaces.
Precisely to say, following the Boalch's paper \cite{Boa1}, we define 
the moduli space $\mathfrak{M}(\mathbf{B})$ 
of meromorphic connections 
on trivial bundles associated with the collection of HTL normal forms 
$\mathbf{B}=(B_{i})_{0\le i\le p}$, see Section \ref{moduli conn}. 
Then we shall show that $\mathfrak{M}(\mathbf{B})$ can 
be embedded onto an open subset of a quiver variety 
$\mathfrak{M}^{\text{reg}}_{\lambda}(\mathsf{Q},\alpha)$.
\begin{thm}[see Theorem \ref{embedding}]
Let us take $\mathbf{B}=
	 (B_{i})_{0\le i\le p}$, the collection of HTL normal forms,
	 the quiver $\mathsf{Q}$,
	 $\alpha\in (\mathbb{Z}_{\ge 0})^{\mathsf{Q}_{0}}$ and 
	 $\lambda\in \mathbb{C}^{\mathsf{Q}_{0}}$ as in 
	 Theorem \ref{premainthm}.
	 Then there exists $\lambda'\in \mathbb{C}^{\mathsf{Q}_{0}}$ and
	 an injection
	 \[
		 \Phi\colon \mathfrak{M}(\mathbf{B})
		 \hookrightarrow
		 \mathfrak{M}^{\text{reg}}_{\lambda'}(\mathsf{Q},\alpha)
	 \]
	 such that 
	 \begin{equation*}
		 \Phi(\mathfrak{M}(\mathbf{B}))=
		 \left\{x\in 
			 \mathfrak{M}^{\text{reg}}_{\lambda'}(\mathsf{Q},
			 \alpha)\,\middle|\,
			 \mathrm{det}\left(
				 x_{\rho^{[0,j]}_{[i,j']}}
			 \right)_{\substack{1\le j\le m_{0}\\
			 1\le j'\le m_{i}}}\neq 0,\
		 i\in I_{\text{irr}}\backslash\{0\}
		 \right\}.
	 \end{equation*}
	 In particular if $I_{\text{irr}}=\{0\}$, then $\lambda'=\lambda$
	 and $\Phi$ is bijective.
\end{thm}
As a corollary of this embedding theorem, we can show the connectedness of 
$\mathfrak{M}(\mathbf{B})$.
\begin{thm}[see Theorem \ref{moduliconnected}]
	If $\mathfrak{M}(\mathbf{B})\neq \emptyset$, then 
	$\mathfrak{M}(\mathbf{B})$ has a structure of 
	connected complex manifold.
\end{thm}
Let us give a remark of the theorems 
which is already obtained by many 
researchers under several restrictions.
If $\#I_{\text{irr}}=1$, it is known that 
$\mathfrak{M}(\mathbf{B})$ is isomorphic to the quiver 
variety $\mathfrak{M}^{\text{reg}}_{\lambda}(\mathsf{Q},\alpha)$
by Crawley-Boevey for 
the Fuchsian case in \cite{C} and  
by Boalch in \cite{Boarx} and the 
work of D. Yamakawa with the author in \cite{HY}
for the case $\#I_{\text{irr}}=1$.
However for the case $\#I_{\text{irr}}>1$, Boalch
gave an example of a moduli space which is not isomorphic 
to any quiver varieties in \cite{Boarx}.
To avoid the difficulty, Yamakawa \cite{Yam} defines 
a generalization of quiver varieties which can be realized as 
$\mathfrak{M}(\mathbf{B})$ for 
general $I_{\text{irr}}$ under the restriction $m_{i}\le 2$
for all $i\in I_{\text{irr}}$.
In the above theorem we impose no restriction to $\mathbf{B}$ and 
show that although the moduli space 
$\mathfrak{M}(\mathbf{B})$ may
not be isomorphic to a quiver variety itself as Boalch suggested,
there always exists
the embedding onto the open subset of the quiver variety.

Furthermore, Theorem \ref{premainthm} determines the non-emptiness of 
the moduli space $\mathfrak{M}(\mathbf{B})$ as 
a generalization of the results for the case 
$\#I_{\text{irr}}\le 1$ by Crawley-Boevey \cite{C},
Boalch \cite{Boarx} and Yamakawa and the author \cite{HY}.
\begin{thm}(see Corollary \ref{maincor})
The moduli space $\mathfrak{M}(\mathbf{B})$ is non-empty
if and only if the following are satisfied,
	\begin{enumerate}
		\item $\alpha$ is a positive root of $\mathsf{Q}$ and 
			$\alpha\cdot \lambda=
			\sum_{a\in \mathsf{Q}_{0}}\alpha_{a}\lambda_{a}=0$,
		\item for any decomposition $\alpha=\beta_{1}+\cdots+
			\beta_{r}$ where $\beta_{i}\in 
			\mathcal{L}^{+}$ are positive roots of 
			$\mathsf{Q}$ satisfying $\beta_{i}\cdot
			\lambda=0$, we have 
			\[
				p(\alpha)>
				p(\beta_{1})+\cdots +p(\beta_{r}).
			\]
	\end{enumerate}
\end{thm}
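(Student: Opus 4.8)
The plan is to reduce the statement to Theorem~\ref{premainthm} by combining it with the embedding of Theorem~\ref{embedding}. First I would recall that Theorem~\ref{embedding} provides an injection $\Phi\colon \mathcal{M}^{*}_{\text{st}}(\mathbf{B})\hookrightarrow \mathfrak{M}^{\text{reg}}_{\lambda'}(\mathsf{Q},\alpha)$ whose image is the locus where the determinants $\det(\psi_{\rho^{[0,j]}_{[i,j']}})$ are nonzero for all $i\in I_{\text{irr}}\backslash\{0\}$. So $\mathcal{M}^{*}_{\text{st}}(\mathbf{B})$ is non-empty if and only if $\mathfrak{M}^{\text{reg}}_{\lambda'}(\mathsf{Q},\alpha)$ contains a point in that open determinant-nonvanishing locus. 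The content of Sections~\ref{equationsandquiver} and~\ref{middleconvolutionandreflection}, as summarised in the outline, is precisely that irreducible representations in this open locus correspond to irreducible (i.e. quasi-irreducible, after passing through the determinant condition) solutions of the generalized additive Deligne--Simpson problem for the data $\mathbf{B}$; hence the existence of such a point is equivalent to the existence of a solution of that Deligne--Simpson problem. Therefore the non-emptiness of $\mathcal{M}^{*}_{\text{st}}(\mathbf{B})$ is equivalent to the solvability condition in Theorem~\ref{premainthm}, which is exactly the two numbered conditions in the statement.

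The key steps in order would be: (1) invoke Theorem~\ref{embedding} to identify $\mathcal{M}^{*}_{\text{st}}(\mathbf{B})$ with the open subvariety of $\mathfrak{M}^{\text{reg}}_{\lambda'}(\mathsf{Q},\alpha)$ cut out by the determinant conditions; (2) recall from Section~\ref{equationsandquiver} the bijection between $Z$ (after quotienting by the gauge group $\prod_{a}\mathrm{GL}(\alpha_{a},\mathbb{C})$) and the open subset of $\mathrm{Rep}(\overline{\mathsf{Q}},\alpha)_{\lambda}$, matching irreducible elements of $Z$ with quasi-irreducible representations; (3) observe that under this correspondence the image of $\Phi$ matches exactly the solutions of the generalized additive Deligne--Simpson problem, so that $\mathcal{M}^{*}_{\text{st}}(\mathbf{B})\neq\emptyset$ iff the Deligne--Simpson problem with data $(k_i,B^{(i)})$ has a solution; (4) apply Theorem~\ref{premainthm} to translate ``has a solution'' into the root-theoretic conditions (1) and (2) stated here, which are verbatim the same as in Theorem~\ref{premainthm}. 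One should also note that the quiver $\mathsf{Q}$, the dimension vector $\alpha$, and the weight $\lambda$ appearing in the present statement are the ones attached to $\mathbf{B}$ as in Theorem~\ref{premainthm}, not the shifted $\lambda'$ of Theorem~\ref{embedding}; the root conditions are phrased in terms of $(\mathsf{Q},\alpha,\lambda)$ because solvability of the Deligne--Simpson problem is what is being characterised, and Theorem~\ref{premainthm} is stated for that data.

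The main obstacle I expect is step (3): making precise that the determinant-nonvanishing open condition in the image of $\Phi$ corresponds, on the Deligne--Simpson side, exactly to the class of connections realised by elements of $Z$ with $A^{(i)}(x)\in\mathcal{O}_{B^{(i)}}$ and $\sum_i A^{(i)}_1=0$ — i.e. checking that no solutions are lost or gained when passing between the moduli space $\mathcal{M}^{*}_{\text{st}}(\mathbf{B})$ of stable meromorphic connections on the \emph{trivial} bundle and the set $Z$ of tuples of truncated-orbit representatives. Concretely one must verify that a stable connection on the trivial bundle with prescribed formal type $\mathbf{B}$ is exactly an irreducible point of $Z$, using the identification of formal isomorphism classes with truncated orbits and the fact that irreducibility of the connection matches irreducibility of the coefficient-matrix tuple; the genuinely irregular directions are where the determinant condition becomes non-trivial and one has to be careful, since for $\#I_{\text{irr}}>1$ the image is a proper open subset. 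Once this dictionary is in place the argument is a formal chain of equivalences and the numbered conditions follow immediately from Theorem~\ref{premainthm}. A secondary, more routine point is to check that the restriction ``irreducible'' versus ``quasi-irreducible'' is harmonised correctly: on the open determinant-nonvanishing locus the two notions agree for the relevant representations, so the existence of an irreducible representation there is the same as the existence of a quasi-irreducible one, which is what Section~\ref{middleconvolutionandreflection} controls.
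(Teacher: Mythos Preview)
Your proposal is correct and arrives at the result by the same mechanism as the paper: reduce non-emptiness of $\mathcal{M}^{*}_{\text{st}}(\mathbf{B})$ to the existence of a solution of the generalized additive Deligne--Simpson problem, then invoke Theorem~\ref{premainthm} (equivalently Theorem~\ref{mainthm}). The difference is only in the routing. You pass through Theorem~\ref{embedding} and then try to reverse-engineer from the quiver-variety side back to the Deligne--Simpson data, worrying in your step~(3) about matching the determinant-nonvanishing locus with $Z$. The paper bypasses this entirely: just before Theorem~\ref{embedding} it records the direct bijection
\[
f\colon \mathcal{M}^{*}_{\text{st}}(\mathbf{B})\ \xrightarrow{\ \sim\ }\ \Bigl\{\bigl(\textstyle\sum_{j}A^{(i)}_{j}x^{-j}\bigr)_{i}\in\prod_{i}\mathcal{O}_{B^{(i)}}\ \Big|\ \textstyle\sum_{i}A^{(i)}_{1}=0,\ \text{irreducible}\Bigr\}\big/\mathrm{GL}(n,\mathbb{C}),
\]
obtained by trivialising the bundle and reading off the principal parts (with references to Boalch and Hiroe--Yamakawa for the details). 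With $f$ in hand, Theorem~\ref{nonemptymoduli} is an immediate corollary of Theorem~\ref{mainthm}: no detour through $\lambda'$, the embedding, or the irreducible/quasi-irreducible comparison is needed. So your ``main obstacle'' is exactly the content of $f$, which the paper takes as established input rather than something to verify via the quiver picture.
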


Finally let us mention the multiplicative Deligne-Simpson problem.
Simpson and also Kostov gave necessary and sufficient conditions
for the existence of a solution of the problem under some generic conditions,
see \cite{S} and \cite{Kos}. 
In \cite{CBS}, Crawley-Boevey and P. Shaw gave a correspondence 
between the space of solutions of the problem and the so-called multiplicative 
quiver variety (see also \cite{Y0} for multiplicative quiver varieties)
and obtained a sufficient condition for the existence of a solution.
Furthermore in \cite{Bo2}, Boalch considered a generalization of 
the multiplicative Deligne-Simpson problem for differential equations 
with irregular singular points and gave correspondence between 
the space of solutions of the problem and the multiplicative quiver variety
which is a further generalization of the multiplicative quiver variety 
considered by Crawley-Boevey and Shaw in \cite{CBS}.

\noindent\textbf{Acknowledgment.}

The author expresses his gratitude to Toshio Oshima and Daisuke Yamakawa.
This project would not have completed without the collaborations with them.
He also thank Philip Boalch for reading the earlier version of this paper 
and giving him helpful comments.
This work is influenced by many other mathematicians, 
in particular the participants
of ``Workshop on Accessory Parameters'' 
mainly organized by Yoshishige Haraoka.
Most work of this project was done during his stay in 
Research Institute for Mathematical Sciences in Kyoto University.
He thanks for the hospitality and the support.  

\section{Additive Delinge-Simpson problem}
In this section, we shall define a generalization of the additive 
Delinge-Simpson problem for differential equations with at most unramified 
irregular singularities on the Riemann sphere.
Also we recall moduli spaces of meromorphic connections on trivial 
vector bundles over the Riemann sphere studied by Boalch in \cite{Boa1} and 
moreover see that the additive Deligne-Simpson problem is related to the non-emptiness 
problem of the moduli spaces.
\subsection{A generalization of the additive Deligne-Simpson problem}
As we saw in Introduction, the original additive Delinge-Simpson problem
for Fuchsian differential equations consists of 
a collection of conjugacy classes of $M(n,\mathbb{C})$.
The counterparts for irregular singular cases of the conjugacy classes  
are Hukuhara-Turrittin-Levelt normal forms of $M(n,\mathbb{C}(\!(z)\!))$.
We shall recall the definition of Hukuhara-Turrittin-Levelt normal forms and 
give a definition of the additive Deline-Simpson
problem for differential equations with at most unramified irregular 
singularities.

Let us consider a differential equation
\[
	\frac{d}{dz}Y=AY,\quad(A\in M(n,\mathbb{C}(\!(z)\!))).
\]
For $X\in \mathrm{GL}(n,\mathbb{C}(\!(z)\!))$, we define 
a new differential equation $\frac{d}{dz}\tilde{Y}=B\tilde{Y}$ by 
\[
	B:=XAX^{-1}+(\frac{d}{dz}X)X^{-1}.
\]
We write $B=:X[A]$ and call this operation the {\em gauge transform}
 of $A$ by $X$.
Let $\mathbb{C}(\!(t)\!)$ be a finite field extension of $\mathbb{C}(\!(z)\!)$,
namely there exists $r \in \mathbb{Z}_{\ge 1}$ such that $t^{r}=z$.
Then the differential equation $\frac{d}{dz}Y=AY$ over 
$\mathbb{C}(\!(z)\!)$ defines the differential equation $\frac{d}{dt}Z=
\overline{A}Z$ over $\mathbb{C}(\!(t)\!)$ where $\overline{A}:=rt^{r-1}AZ$. 
\begin{df}[HTL normal form]\normalfont
By {\em Hukuhara-Turrittin-Levelt normal form} or {\em HTL normal form}
for short, we mean an element in $M(n, \mathbb{C}(\!(t)\!))$ of the form
\[
	\mathrm{diag}\left(q_{1}(t^{-1})I_{n_{1}}+R_{1}t^{-1},\ldots,
		q_{m}(t^{-1})I_{n_{m}}+R_{m}t^{-1}
	\right)
\]
where $t^{r}=z$, $q_{i}(s)\in s^2\mathbb{C}[s]$ satisfying $q_{i}\neq q_{j}$
if $i\neq j$, and $R_{i}\in M(n_{i},\mathbb{C})$.
In particular when $r=1$, the normal form is said to be {\em unramified}. 
\end{df}
The following is a fundamental fact of the local formal theory of 
differential equations with irregular singularity.
\begin{thm}[Hukuhara-Turrittin-Levelt, see \cite{W} for instance]
	For any $A\in M(n,\mathbb{C}(\!(z)\!))$, there exists a field 
	extension $\mathbb{C}(\!(t)\!)\supset \mathbb{C}(\!(z)\!)$
	with $t^{r}=z$, $r\in \mathbb{Z}_{\ge 1}$ and $X\in 
	\mathrm{GL}(n,\mathbb{C}(\!(t)\!))$ such that 
	$\overline{X[A]}$ is an HTL normal form in $M(n,\mathbb{C}(\!(t)\!))$.
\end{thm}
We call this $\overline{X[A]}$ the {\em normal form} of $A$ with 
	the {\em ramification index} $r$.

Let us consider an unramified HTL normal form
\[
	B=\mathrm{diag}\left(q_{1}(z^{-1})I_{n_{1}}+R_{1}z^{-1},\ldots,
		q_{m}(z^{-1})I_{n_{m}}+R_{m}z^{-1}
	\right)
\]
and set 
\[
	k:=\mathrm{max}_{i=1,\ldots,m}\{\mathrm{deg}_{\mathbb{C}[z^{-1}]}q_{i}(z^{-1})\}.
\]
We shall consider an orbit of $B$ under the following group action.
Let us define
$G_{k}:=\mathrm{GL}(n,\mathbb{C}[\![z]\!]/z^{k}\mathbb{C}[\![z]\!])$
which can be identified with 
\[
	\left\{
		A_{0}+A_{1}z+\cdots +
		A_{k-1}z^{k-1}\in\sum_{i=0}^{k-1} M(n,\mathbb{C})z^{i}
		\,\middle|\,
		A_{0}\in \mathrm{GL}(n,\mathbb{C})
	\right\}.
\]
Also define
\begin{align*}
	\mathfrak{g}_{k}&:=M(n,\mathbb{C}[\![z]\!]/z^{k}\mathbb{C}[\![z]\!])\\
		       &\cong
	\left\{	A_{0}+A_{1}z+\cdots +
		A_{k-1}z^{k-1}\,\middle|\,
		A_{i}\in M(n,\mathbb{C}),\,
		i=0,1,\ldots,k-1
	\right\}.
\end{align*}
The group $G_{k}$ acts on $\mathfrak{g}_{k}$ by the {\em adjoint action} 
$\mathrm{Ad}(g)X:=gXg^{-1}$  for $g\in G_{k},X\in\mathfrak{g}_{k}$.
The dual vector space $\mathfrak{g}_{k}^{*}$ is identified with
\[
	M(n,z^{-k}\mathbb{C}[\![z]\!]/\mathbb{C}[\![z]\!])
	\cong 
	\left\{
		\frac{A_{k}}{z^{k}}+\cdots 
		+\frac{A_{1}}{z}\,\middle|\,
		A_{i}\in M(n,\mathbb{C}),\,
		i=1,\ldots,k
	\right\}
\]
by the bilinear form 
\[\mathfrak{g}_{k}\times 
\mathfrak{g}_{k}^{*}\ni (A,B)\mapsto 
\mathrm{Res}(\mathrm{tr}(AB))\in \mathbb{C}\]
where 
$\mathrm{Res}(\sum_{i=r}^{\infty}a_{i}z^{i}):=a_{-1}$ for 
$\sum_{i=r}^{\infty}a_{i}z^{i}\in \mathbb{C}(\!(z)\!)$.
Let us note that the {\em coadjoint action} of $G_{k}$ on 
$\mathfrak{g}_{k}^{*}$ is defined by
$(\mathrm{Ad}^{*}(g)f)(X):=f(\mathrm{Ad}(g^{-1})X)$
for $g\in G_{k},f\in \mathfrak{g}_{k}^{*}, X\in \mathfrak{g}_{k}$.
The coadjoint action induces the action of   $G_{k}$ 
on $M(n,z^{-k}\mathbb{C}[\![z]\!]/\mathbb{C}[\![z]\!])$ defined by 
$\mathrm{Ad}^{*}(g)Z:=g^{-1}Zg$ for $g\in G_{k}, Z\in
M(n,z^{-k}\mathbb{C}[\![z]\!]/\mathbb{C}[\![z]\!])\cong \mathfrak{g}_{k}^{*}$.

Since we can regard $B\in \mathfrak{g}_{k}^{*}$, the orbit of $B$
under the coadjoint action of $G_{k}$.
\begin{df}[truncated orbit]\normalfont
	Let us regard $B\in \mathfrak{g}_{k}^{*}$. Then
\[
	\mathcal{O}_{B}:=\{\mathrm{Ad}^{*}(g)B\mid g\in G_{k}\}
\]
is called the {\em truncated orbit} of $B$.
\end{df}

Now we are ready to define a generalization of the additive Delinge-Simpson 
problem for differential equations with unramified irregular singularities. 
We say that a collection of matrices 
$(A_{1},\ldots,A_{s})\in M(n,\mathbb{C})^{s}$ is {\em irreducible} if 
$(A_{1},\ldots,A_{s})$ has no nontrivial invariant subspace of $\mathbb{C}^{n}$,
i.e., if a subspace $W\subset \mathbb{C}^{n}$ satisfies that 
$A_{i}W\subset W$ for all $i=1,\ldots,s$, then $W=\{0\}\text{ or }
\mathbb{C}^{n}$.
For a differential equation
\[
\frac{d}{dz}Y=\left(\sum_{i=1}^{p}\sum_{j=1}^{k_{i}}
\frac{A_{i,j}}{(z-a_{i})^{j}}
+\sum_{2\le j\le k_{0}}A_{0,j}z^{j-2}\right)Y,
\]
the {\em principal term} at the singular point $a_{i}$ is 
\[
	A_{i}(z_{i}):=\sum_{j=1}^{k_{i}}A_{i,j}z_{i}^{-j}
\]
for each $i=0,\ldots,p$. Here we set $A_{0,1}:=-\sum_{i=1}^{p}A_{i,1}$ 
and $z_{i}:=z-a_{i}$, $i=1,\ldots,p$, $z_{0}:=\frac{1}{z}$.
This differential equation is said to be {\em irreducible} if 
the collection of the matrices $(A_{i,j})_{\substack{0\le i\le p,\\
1\le j\le k_{i}}}$ is irreducible. 

\begin{df}[additive Delinge-Simpson problem]\label{gen add DS}\normalfont
Let us take 
$k_{i}\in \mathbb{Z}_{\ge 1}$ and  
unramified HTL normal forms $
B_{i}\in \mathfrak{g}_{k_{i}}^{*}$
for $i=0,1,\ldots,p$. Then a {\em solution} of the additive Delinge-Simpson
problem for the collection of the unramified HTL normal forms
$(B_{0},B_{1},\ldots,B_{p})$ is an 
irreducible differential equation 
 \[
\frac{d}{dz}Y=\left(\sum_{i=1}^{p}\sum_{j=1}^{k_{i}}
\frac{A_{i,j}}{(z-a_{i})^{j}}
+\sum_{2\le j\le k_{0}}A_{0,j}z^{j-2}\right)Y
\]
such that  the principal term at each singular point $a_{i}$, 
$i=0,1,\ldots,p$ satisfies
\[
	A_{i}(z)\in \mathcal{O}_{B_{i}}.
\]
\end{df}
\begin{rem}\normalfont
Let us note that if $k_{0}=k_{1}=\cdots =k_{p}=1$, then 
$G_{k_{i}}=\mathrm{GL}(n,\mathbb{C})$ and $\mathfrak{g}_{k_{i}}^{*}
=M(n,\mathbb{C})$. Thus 
the truncated orbits $\mathcal{O}_{B_{i}}$ are just conjugacy 
classes of $M(n,\mathbb{C})$. Therefore the additive Delinge-Simpson
problem in Definition \ref{gen add DS} contains the original additive 
Delinge-Simpson problem for Fuchsian differential equations.
\end{rem}
\subsection{Moduli spaces of meromorphic connections 
and  additive Delinge-Simpson problem}\label{moduli conn}
In this section we quickly 
recall the definition of moduli spaces of 
meromorphic connections on trivial vector bundles 
over the Riemann sphere following \cite{Boa1}. For the detailed treatment can be 
found in the original paper by Boalch \cite{Boa1} and we also refer \cite{HY} and 
their references.
The solvability of the 
additive Delinge-Simpson problems can be seen as the problem determining 
the necessary and sufficient condition of the non-emptiness of the moduli spaces

Let us recall the notion of meromorphic connections and 
see their relationship with differential equations.
For $f=\sum_{i>-\infty}^{\infty}a_{i}z^{i}
\in \mathbb{C}(\!(z)\!)$, the {\em order} is  
\[
	\mathrm{ord}(f):=\mathrm{min}\{i\mid a_{i}\neq 0\}.
\]
If $f=0$, we formally put $\mathrm{ord}(f)=\infty$.
For a meromorphic function $f$ locally defined near $a\in \mathbb{P}^{1}$, 
we denote the germ of $f$ at 
$a$ by $f_{a}$. 
We may see $f_{a}\in  
\mathbb{C}(\!(z_{a})\!)$ by setting $z_{a}=z-a$
if $a\in \mathbb{C}$ and $z_{a}=1/z$ if $a=\infty$ where we take 
$z$ as the standard coordinate of $\mathbb{C}$.
Then define 
\[
	\mathrm{ord}_{a}(f):=\mathrm{ord}(f_{a}).
\]
For a meromorphic 1-form $\omega$ defined on $\mathbb{P}^{1}$, the order 
$\mathrm{ord}_{a}(\omega)$ can be defined as follows. 
Set $U_{1}=\mathbb{P}^{1}\backslash 
\{\infty\}$ and $U_{2}=\mathbb{P}^{1}\backslash\{0\}$. 
Let $z_{i}$ be coordinates of $U_{i}$, $i=1,2$, such that 
$z_{1}(0)=z_{2}(\infty)=0$ and $z_{2}=1/z_{1}$ in $U_{1}\cap U_{2}$.
Then there exist meromorphic functions $f_{i}$ on $U_{i}$ such that 
\[
	\omega =f_{i}\,dz_{i}
\]
on $U_{i}$ for $i=1,2$.  
Then define
\[
	\mathrm{ord}_{a}(\omega):=\ord_{a}(f_{i})
\]
for $a\in U_{i}$, $i=1,2$.
 
Let us fix a collection of  points ${a_{0},\ldots,a_{p}}\in \mathbb{P}^1$
 and set
$S:=k_{0}a_{0}+\cdots +k_{p} a_{p}$ as an effective
divisor with $k_0,\ldots,k_{p}>0$.
For $a\in \mathbb{P}^{1}$ 
let $S(a)$ be the coefficient of $a$ in $S$, i.e.,
\[
	S(a):=
	\begin{cases}
	k_{i}&\text{if }a=a_{i}\text{ for }i=0,\ldots,p,\\
	0&\text{otherwise.}
	\end{cases}
\]
For an open set $U\subset \mathbb{P}^{1}$ we define 
$\Omega_{S}(U)$ to be the set of all meromorphic 1-forms $\omega$
on $U$ satisfying $\mathrm{ord}_a(\omega)\ge -S(a)$ for any $a\in U$.
This correspondence defines the sheaf $\Omega_{S}$ by the natural
restriction mappings.

Let $\mathcal{E}$ be a locally free sheaf of rank $n$ on $\mathbb{P}^{1}$,
namely a sheaf of modules over the sheaf $\mathcal{O}$ of 
holomorphic functions on $\mathbb{P}^{1}$ satisfying 
that for any $a\in \mathbb{P}^{1}$
there exists an open neighbourhood $V\subset \mathbb{P}^{1}$ such that 
$\mathcal{E}|_{V}\cong \mathcal{O}^{n}|_V$.
We may sometimes regard $\mathcal{E}$ as a holomorphic vector bundle over 
$\mathbb{P}^{1}$.
\begin{df}[meromorphic connection]\normalfont
A {\em meromorphic connection} is a pair $(\mathcal{E},\nabla)$ 
of a locally free sheaf $\mathcal{E}$ and a morphism 
$\nabla\colon \mathcal{E}\rightarrow \mathcal{E}\otimes \Omega_{S}$ 
of sheaves of $\mathbb{C}$-vector spaces satisfying
\[
	\nabla(fs)=df\otimes s+f\otimes \nabla(s)
\]
for all $f\in \mathcal{O}(U)$, $s\in \mathcal{E}(U)$
and open subsets $U\subset \mathbb{P}^{1}$.
\end{df}
Let $U\subset \mathbb{P}^{1}$ be an open subset which gives a local 
trivialization of $\mathcal{E}$ and $z$ a local coordinate of $U$.
Then if we fix an identification
$\mathcal{E}|_{U}\cong \mathcal{O}^{n}|_{U}$, 
we can write $\nabla=d-A\,dz$ by $A\in M(n,\mathcal{M}(U))$ on $U$. 
Note that if we write $\nabla=d-A'\,dz$ by another identification
$\mathcal{E}|_{U}\cong \mathcal{O}^{n}|_{U}$, then $A'$ can be obtained by 
a {\em holomorphic gauge transformation} of $A$, namely there exists 
$X\in \mathrm{GL}(n,\mathcal{O}(U))$ such that 
\[
	A'=X[A].
\]
Thus we may say that $(\mathcal{E},\nabla)$ defines a holomorphic gauge 
equivalent class of a local differential equation 
\[
	\frac{d}{dz}Y=AY
\]
on $U\subset \mathbb{P}^{1}$.

In particular, suppose that  $\mathcal{E}$ is {\em trivial}, i.e., 
$\mathcal{E}\cong \mathcal{O}^{n}$ and set $U_{1}=\mathbb{P}^{1}
\backslash\{\infty\}$ and $U_{2}=\mathbb{P}^{1}\backslash\{0\}$
as before.
Then if we fix a trivialization $\mathcal{E}\cong\mathcal{O}^{n}$, 
we have
$\nabla=d-A(z_{1})dz_{1}$ on $U_{1}$ with 
$A(z_{1})=(\alpha_{i,j}(z_{1}))_{i,j=1,\ldots,n}\in M(n,\mathbb{C}(z))$ 
satisfying
$\ord_{a}(\alpha_{i,j})\ge -S(a)$ for all $a\in U_{1}$.
Similarly on $U_{2}$ we have $\nabla=d-B(z_{2})dz_{2}$. 
Since $\mathcal{E}$ is trivial, 
\[
	A(z_{1})dz_{1}=B(z_{2})dz_{2}\text{ on }U_{1}\cap U_{2}.
\]
Namely, 
\[
	B(z_{2})=-\frac{A(1/z_{2})}{z_{2}^{2}}.
\]
This is nothing but the coordinate exchange $\zeta=\frac{1}{z}$ 
for a differential
equation
\[
	\frac{d}{dz}Y=A(z)Y\longmapsto
	-\zeta^{2}\frac{d}{d\zeta}Y=A(1/\zeta)Y.
\]

Thus a meromorphic connection 
$(\mathcal{E},\nabla)$ with a trivial bundle $\mathcal{E}$
on $\mathbb{P}^{1}$ corresponds to 
a meromorphic differential equation 
$\frac{d}{dz}Y=AY$ with 
$A=(\alpha_{i,j})_{i,j=1,\ldots,n}\in M(n,\mathbb{C}(z))$ satisfying
$\ord_{a}(\alpha_{i,j}\,dz)\ge -S(a)$ for all $a\in \mathbb{P}^{1}$,
and vice versa. 
This correspondence is 
unique up to the choice of $\mathcal{E}\cong \mathcal{O}^{n}$, i.e.,
$\mathrm{GL}(n,\mathbb{C})$-action.

Let $S=k_{0}a_{0}+\ldots+k_{p}a_{p}$ be an effective  divisor on $\mathbb{P}^{1}$
as before.
Define a set of meromorphic connections on $\mathbb{P}^{1}$ 
\[
	\mathrm{Triv}^{(n)}_{S}:=\left\{
		(\mathcal{E},\nabla) \middle| 
		\begin{array}{c}
			\mathcal{E}\colon \text{trivial of rank $n$},\\
			\nabla\colon \mathcal{E}\rightarrow 
			\mathcal{E}\otimes \Omega_{S}
			\end{array}
	\right\}.
\]
We say $(\mathcal{E},\nabla)\in \mathrm{Triv}^{(n)}_{S}$ 
is {\em stable} if there exists no nontrivial proper
subspace $W\subset \mathbb{C}^{n}$ 
such that the subbundle $\mathcal{W}:=W\otimes \mathcal{O}\subset \mathbb{C}^{n}\otimes 
\mathcal{O}=\mathcal{E}$ is closed under $\nabla$, i.e.,
\[
	\nabla(\mathcal{W})\subset \mathcal{W}\otimes \Omega_{S}.
\]

Let $\mathbf{B}=(B_{0},\ldots,B_{p})\in M(n,\mathbb{C}(\!(z)\!))^{p+1}$ 
be a collection of 
HTL normal forms satisfying $\mathrm{ord}(B_{i})=-k_{i}$ for all $i=0,\ldots,p$.
We write $\nabla|_{a_{i}}\in \mathcal{O}_{B_{i}}$ for a connection $(
\mathcal{E},\nabla)$ if 
there exists $A_{a_{i}}\in M(n,\mathbb{C}(\!(z_{a_{i}})\!))$ such 
that $\nabla=d-A_{a_{i}}\,dz_{a_{i}}$ and 
$\iota(A_{a_{i}})\in \mathcal{O}_{B_{i}}$ where $z_{a_{i}}$ is a local coordinate 
of $\mathbb{P}^{1}$ vanishing at $a_{i}$ and 
$\iota \colon M(n,\mathbb{C}(\!(z_{a_{i}})\!))\rightarrow 
M(n,\mathbb{C}(\!(z_{a_{i}})\!)/\mathbb{C}[\![z_{a_{i}}]\!])$ is the natural
projection.

Then the moduli space of stable meromorphic connections on trivial bundles
is 
\[
	\mathfrak{M}(\mathbf{B}):=
	\left\{
		(\mathcal{E},\nabla)\in 
		\mathrm{Triv}^{(n)}_{S}\,\middle|\,
		\begin{array}{c}
		(\mathcal{E},\nabla)\colon\text{ stable},\\
		\nabla|_{a_{i}}\in \mathcal{O}_{B_{i}}
		\text{ for all }i=0,\ldots,p
		\end{array}
	\right\}\big/\mathrm{GL}(n,\mathbb{C}).
\]
Here $\mathrm{GL}(n,\mathbb{C})=\mathrm{GL}(n,\mathcal{O}(\mathbb{P}^{1}))$
acts on $\text{Triv}^{(n)}_{S}$ as the holomorphic gauge transformation.

M\"obius transformation may allow us to suppose $a_{0}=\infty\in \mathbb{P}^{1}$.
Then by a trivialization  $\mathcal{E}\cong \mathcal{O}^{n}$  we can 
identify $(\mathcal{E},\nabla)\in \mathrm{Triv}^{n}_{S}$ with 
a meromorphic differential equation defined on $\mathbb{P}^{1}$,
\[
	\frac{d}{dz}Y=\left(\sum_{i=1}^{p}\sum_{\nu=1}^{k_{i}}
		\frac{A^{(i)}_{\nu}}{(z-a_{i})^{\nu}}
	+\sum_{2\le\nu\le k_{0}}A^{(0)}_{\nu}z^{\nu-2}
\right)Y
\]
up to $\mathrm{GL}(n,\mathbb{C})$-action, i.e., 
\[
	\frac{d}{dz}Y=A(z)Y\longmapsto \frac{d}{dz}Y'=gA(z)g^{-1}Y'
	\quad (g\in \mathrm{GL}(n,\mathbb{C})).
\]
The stability of $(\mathcal{E},\nabla)$ corresponds to the irreducibility
of the differential equation.
Thus we can regard $\mathfrak{M}(\mathbf{B})$ as a moduli space of 
meromorphic differential equations on $\mathbb{P}^{1}$,
\begin{multline*}
	\mathfrak{M}(\mathbf{B})=
	\left\{\frac{d}{dz}Y=\left(\sum_{i=1}^{p}\sum_{\nu=1}^{k_{i}}
		\frac{A^{(i)}_{\nu}}{(z-a_{i})^{\nu}}
	+\sum_{2\le\nu\le k_{0}}A^{(0)}_{\nu}z^{\nu-2}
\right)Y\right.\\
\left.\,\middle|\,\begin{array}{c}\text{irreducible},\\
 \sum_{\nu=1}^{k_{i}}\frac{A^{(i)}_{\nu}}{z^{\nu}}\in 
\mathcal{O}_{B_{i}}, i=0,\ldots,p
\end{array}
	\right\}\bigg/ \mathrm{GL}(n,\mathbb{C}).
\end{multline*}
Thus  the solvability of the additive Deligne-Simpson
problem is rephrased as the non-emptiness of the moduli space.
\begin{prop}
	There is a solution of the  additive Delinge-Simpson problem for 
	$\mathbf{B}=(B_{0},\ldots,B_{p})$ if and only if 
	$\mathfrak{M}(\mathbf{B})\neq \emptyset$.
\end{prop}

Furthermore, forgetting the location of the singular points, we may 
regard $\mathfrak{M}(\mathbf{B})$ as a subspace of the orbit space 
$\prod_{i=0}^{p}\mathcal{O}_{B_{i}}$,
\begin{multline*}
	\mathfrak{M}(\mathbf{B})=\\
	\left\{
		\mathbf{A}=(A_{i}(z))_{0\le i\le p}
		\in \prod_{i=0}^{p}\mathcal{O}_{B_{i}}
		\middle|\ 
\begin{array}{c}\mathbf{A}\text{ is irreducible }, \\
	\sum_{i=0}^{p}\mathrm{Res\,}(A_{i}(z))=0
\end{array}
\right\}\bigg/ \mathrm{GL}(n,\mathbb{C})
\end{multline*}
which is free from locations of $a_{i}$ in $\mathbb{P}^{1}$.
Here 
\[\mathrm{Res\,}(\sum_{j=1}^{k}A_{j}z^{-j}):=A_{1}
\]
and we say that 
$\mathbf{A}=(\sum_{j=1}^{k_{i}}A_{i,j}z^{-1})_{0\le i\le p}$ is 
{\em irreducible} if $(A_{i,j})_{\substack{0\le i\le p\\1\le j\le k_{i}}}$
is irreducible.
\section{A review of representations of quivers}\label{reviewoffuchs}
This section is a review of known results of the 
representation theory of quivers and theory of quiver varieties.
The review will be minimized only for our requirement for the latter sections 
and we refer original papers for the general theory 
by Nakajima \cite{Nak}, Crawley-Boevey and Holland
\cite{CBH}, Crawley-Boevey \cite{C1} and their references.
\subsection{Representations of quivers and quiver varieties} 
Here we recall the definition of representations of quivers and introduce
quiver varieties.
\begin{df}[quivers]
\normalfont
	A \textit{quiver} $\mathsf{Q}
	=(\mathsf{Q}_{0},\mathsf{Q}_{1},s,t)$ is the
	quadruple consisting 
	of $\mathsf{Q}_{0}$, the set of \textit{vertices}, 
	and $\mathsf{Q}_{1}$,
        the set of \textit{arrows} connecting 
	vertices in $\mathsf{Q}_{0}$,
	and two maps $s,t\,\colon \mathsf{Q}_{1}\rightarrow \mathsf{Q}_{0}$,
	which associate to each arrow $\rho\in \mathsf{Q}_{1}$ its 
	{\em source} $s(\rho)\in \mathsf{Q}_{0}$ and 
	its {\em target} $t(\rho)\in \mathsf{Q}_{0}$ respectively.
\end{df}
\begin{df}[representations of quivers]
      \normalfont
      Let $\mathsf{Q}$ be a finite quiver, i.e., 
      $\mathsf{Q}_{0}$ and $\mathsf{Q}_{1}$ are finite sets. 
      A {\em representation} $M$ of $\mathsf{Q}$ is defined by the 
      following data:
      \begin{enumerate}
	      \item To each vertex $a$ in $\mathsf{Q}_{0}$,  a finite 
              dimensional $\mathbb{C}$-
              vector space $M_{a}$ is attached.
      \item To each arrow $\rho\colon a\rightarrow b$ in $\mathsf{Q}_{1}$,
              a $\mathbb{C}$-linear map 
              $\psi_{\rho}\colon M_{a}\rightarrow M_{b}$
              is attached.
      \end{enumerate}   
\end{df}      
     We denote the representation by 
      $M=(M_{a},\psi_{\alpha})_{a\in \mathsf{Q}_{0},\alpha\in \mathsf{Q}_{1}}$.
      The collection of integers defined by $\mathbf{dim\,}M
      =(\mathrm{dim}_{\mathbb{C}}M_{a})_{a\in \mathsf{Q}_{0}}$ 
      is called the {\em dimension vector} of 
      $M$.

For a fixed vector $\alpha\in (\mathbb{Z}_{\ge 0})^{\mathsf{Q}_{0}}$,
the representation space is 
\[
	\mathrm{Rep}(\mathsf{Q},V,\alpha)=\bigoplus_{\rho\in \mathsf{Q}_{1}}
	\mathrm{Hom}_{\mathbb{C}}(V_{s(\rho)},V_{t(\rho)}),
\]
where $V=(V_{a})_{a\in \mathsf{Q}_{0}}$ is a collection of finite dimensional 
$\mathbb{C}$-vector spaces with 
$\mathrm{dim}_{\mathbb{C}}V_{a}=\alpha_{a}$.
If $V_{a}=\mathbb{C}^{\alpha_{a}}$ for all $a\in \mathsf{Q}_{0}$, we simply
write 
$$\mathrm{Rep\,}(\mathsf{Q},\alpha)=
\bigoplus_{\rho\in \mathsf{Q}_{1}}
\mathrm{Hom}_{\mathbb{C}}(\mathbb{C}^{\alpha_{s(\rho)}},
\mathbb{C}^{\alpha_{t(\rho)}}).$$

The representation space $\mathrm{Rep}(\mathsf{Q},V,\alpha)$ has 
an action of $\prod_{a\in \mathsf{Q}_{0}}\mathrm{GL}(V_{a})$.
For $(\psi_{\rho})_{\rho\in \mathsf{Q}_{1}}\in \mathrm{Rep\,}(\mathsf{Q},V,\alpha)$ and 
$g=(g_{a})\in \prod_{a\in \mathsf{Q}_{0}}\mathrm{GL}(V_{a})$,
then 
$g\cdot (\psi_{\rho})_{\rho\in \mathsf{Q}_{1}}\in \mathrm{Rep\,}(\mathsf{Q},V,\alpha)$ 
consists of 
$\psi'_{\rho}=g_{t(\rho)}\psi_{\rho}g_{s(\rho)}^{-1}
\in \mathrm{Hom}_{\mathbb{C}}(V_{s(\rho)},V_{t(\rho)})$.
   
Let  
$M=(M_{a},\psi^{M}_{\rho})_{a\in \mathsf{Q}_{0},\rho\in \mathsf{Q}_{1}}$
and 
$N=(N_{a},\psi^{N}_{\rho})_{a\in \mathsf{Q}_{0},\rho\in \mathsf{Q}_{1}}$ 
be representations of a quiver $\mathsf{Q}$.
Then $N$ is called the {\em subrepresentation} of M
if we have the following:
\begin{enumerate}
	\item For each $a\in \mathsf{Q}_{0}$, $N_{a}\subset M_{a}$.
          	  \item For each $\rho\colon a\rightarrow b\in \mathsf{Q}_{1}$,
               $\psi^{M}_{\rho}|_{N_{a}}
              =\psi^{N}_{\rho}$.
\end{enumerate}
In this case we denote $N\subset M$.
Moreover if  
\begin{itemize}
	\item[(3)] there exists a direct sum decomposition
		  $M_{a}=N_{a}\oplus N'_{a}$ for each $a\in \mathsf{Q}_{0}$,

	      \item[(4)] 
		      for each $\rho\colon a\rightarrow b\in \mathsf{Q}_{1}$,
              we have $\psi^{M}_{\rho}|_{N'_{a}}\subset N_{b}^{'}$,
\end{itemize}
then we say $M$ has a {\em direct sum decomposition} 
$M=N\oplus N'$ where 
$N'
=(N'_{a},\psi^{M}_{\rho}|_{N_{a}^{'}})_{a\in \mathsf{Q}_{0},\rho\in \mathsf{Q}_{1}}$.

The representation $M$ is said to be {\em irreducible} if 
$M$ has no subrepresentations other than $M$ and $\{0\}$. 
Here $\{0\}$ is the representation of $\mathsf{Q}$ 
which consists of zero vector spaces and zero
linear maps.
On the other hand if any direct sum 
decomposition  $M=N\oplus N'$
satisfies either  $N=\{0\}$ or $N'=\{0\}$, then
$M$ is said to be {\em indecomposable}.

Let us recall the double of 
a quiver $\mathsf{Q}$.
\begin{df}[double of a quiver]
\normalfont
	Let $\mathsf{Q}=(\mathsf{Q}_{0},\mathsf{Q}_{1})$ be a finite quiver. 
	Then the {\em double quiver} $\overline{\mathsf{Q}}$ 
	of $\mathsf{Q}$ is the 
        quiver obtained by 
        adjoining the reverse arrow $\rho^{*}\colon b\rightarrow a$ 
        to each arrow $\rho\colon a\rightarrow b$. Namely
	$\overline{\mathsf{Q}}=(\overline{\mathsf{Q}}_{0}=
	\mathsf{Q}_{0},\overline{\mathsf{Q}}_{1}=
	\mathsf{Q}_{1}\cup \mathsf{Q}_{1}^{*})$ where 
	$\mathsf{Q}_{1}^{*}=\{\rho^{*}\colon t(\rho)\rightarrow s(\rho)\mid 
	\rho \in \mathsf{Q}_{1}\}$. 
\end{df}

Here we note that the representation 
space $\mathrm{Rep}(\overline{
\mathsf{Q}},\alpha)$ of the double quiver $\overline{
\mathsf{Q}}$ can be regarded as 
the cotangent bundle of 
$\mathrm{Rep}(\mathsf{Q},\alpha)$,
namely 
\[
	\mathrm{Rep}(\overline{
\mathsf{Q}},\alpha)\cong 
\mathrm{Rep}(\mathsf{Q},\alpha)\oplus \mathrm{Rep}(\mathsf{Q},\alpha)^{*}
	\cong T^{*}
	\mathrm{Rep}(\mathsf{Q},\alpha),
\]
since we have the identification
\[
	\mathrm{Hom}_{\mathbb{C}}(\mathbb{C}^{\alpha_{s(\rho)}},
	\mathbb{C}^{\alpha_{t(\rho)}})^{*}\cong
	\mathrm{Hom}_{\mathbb{C}}(\mathbb{C}^{\alpha_{s(\rho^{*})}},
	\mathbb{C}^{\alpha_{t(\rho^{*})}})
\]
for each $\rho\in \mathsf{Q}_{1}$.
Then we can regard $\mathrm{Rep}(\overline{
	\mathsf{Q}},\alpha)
	\cong T^{*}
	\mathrm{Rep}(\mathsf{Q},\alpha)$
as a symplectic manifold with the canonical symplectic form 
\[
	\omega(x,y):=
	\sum_{\rho\in \mathsf{Q}_{1}}(\mathrm{tr}(x_{\rho}y_{\rho^{*}})-
	\mathrm{tr}(x_{\rho^{*}}y_{\rho}))
\]
for $x,y\in T^{*}
	\mathrm{Rep}(\mathsf{Q},\alpha)$, which is 
	invariant under the action of 
\[
	\mathbf{G}(\alpha):=
	\prod_{a\in \mathsf{Q}_{0}}\mathrm{GL}(\alpha_{a},\mathbb{C}).
\]
Then we define a {\em moment map}  of the symplectic manifold 
$\mathrm{Rep}(\overline{
	\mathsf{Q}},\alpha)$
	with the $\mathbf{G}(\alpha)$-action as follows.
The map 
\[
	\mu_{\alpha}\colon \mathrm{Rep}(\overline{\mathsf{Q}},\alpha)
	\rightarrow 
\mathrm{Lie\,}\mathbf{G}(\alpha):=
\prod_{a\in \mathsf{Q}_{0}}M(\alpha_{a},\mathbb{C})
\]
is defined by 
\[
	\mu_{\alpha}(x)_{a}=\sum_{
	\substack{\rho\in \mathsf{Q}_{1}\\ t(\rho)=a}}x_{\rho}
        x_{\rho^{*}}-
        \sum_{
	\substack{\rho\in \mathsf{Q}_{1}\\s(\rho)=a}}x_{\rho^{*}}
        x_{\rho},
\]      
for $x=(x_{\rho})_{
\rho\in \overline{\mathsf{Q}}_{1}}\in 
\mathrm{Rep\,}(\overline{\mathsf{Q}},\alpha)$.
      
Then the quiver variety is  defined as 
the symplectic reduction of $\mathrm{Rep}(
\overline{\mathsf{Q}},\alpha)$ by the moment map $\mu$.
\begin{df}[quiver variety]\normalfont
	Let us take a collection of complex numbers $\lambda=(\lambda_{a})_{a
	\in\mathsf{Q}_{0}}\in\mathbb{C}^{\mathsf{Q}_{0}}$ and 
	regard $\lambda=(\lambda_{a}I_{\alpha_{a}})_{a\in
	\mathsf{Q}_{0}}\in \prod_{a\in \mathsf{Q}_{0}}M(\alpha_{a},\mathbb{C})$.
	Then the {\em quiver variety} is the symplectic reduction 
	\[
		\mathfrak{M}_{\lambda}(\mathsf{Q},\alpha):=
		\mu^{-1}_{\alpha}(\lambda)/\mathbf{G}(\alpha).
	\]
\end{df}

Note that the symplectic reduction   
$\mathfrak{M}_{\lambda}(\mathsf{Q},\alpha)$ is homeomorphic to 
the affine quotient scheme $\mu^{-1}_{\alpha}(\lambda)/\!/\mathbf{G}(\alpha)$
by 
the theory of Kempf-Ness \cite{KN} and Kirwan \cite{Kir}.

This variety might have singularities.  
\if0
\begin{df}\normalfont
We say that $x\in \mathrm{Rep}(\overline{\mathsf{Q}},\alpha)$ 
is {\em stable} if 
\begin{enumerate}
	\item the orbit $\mathbf{G}(\alpha)\cdot x$
is closed,
\item  stabilizer of $x$ in 
	$\mathbf{G}(\alpha)/\mathbb{C}^{\times}$ 
is finite.
\end{enumerate}
\end{df}
Here we note that the diagonal subgroup $\mathbb{C}^{\times}
\subset \mathbf{G}(\alpha)$
acts trivially on $\mathrm{Rep}(\overline{\mathsf{Q}},\alpha)$.

It is known that the stability of $x$ assures that the morphism
\[
	\begin{array}{lccc}
		\sigma_{x}\colon& 
		\mathbf{G}(\alpha)&
		\longrightarrow &
		\mathrm{Rep}(\overline{\mathsf{Q}},\alpha)\\
		&g&\longmapsto&g\cdot x
	\end{array}
\]
is proper.

In our case moreover the stability 
can be rephrased by the irreducibility of representations.
\begin{thm}[King \cite{Kin}]
	$x\in \mathrm{Rep}(\overline{\mathsf{Q}},\alpha)$ is stable 
	if and only if $x$ is an irreducible representation.
\end{thm}
\fi
Thus let us consider the (possibly empty) subspace
\[
	\mu_{\alpha}^{-1}(\lambda)^{\text{irr}}:=\{
	x\in \mu^{-1}_{\alpha}(\lambda)\mid x\text{ is irreducible}\}.
\]
Then the action of $\mathbf{G}(\alpha)/\mathbb{C}^{\times}$ on this space 
is proper and moreover free (see King \cite{Kin}). Thus the homogeneous space
\[
	\mathfrak{M}^{\text{reg}}_{\lambda}(\mathsf{Q},\alpha):=
	\mu^{-1}_{\alpha}(\lambda)^{\text{irr}}/\mathbf{G}(\alpha)
\]
can be seen as a complex manifold with the symlpectic structure, i.e.,
a complex symplectic manifold.
We call this manifold a quiver variety too.
\begin{rem}\normalfont
	The above quiver varieties are special ones of Nakajima quiver 
	varieties which enjoy rich geometric properties and applications 
	for representation theory and theoretical physics and so on (see 
	\cite{Nak} for instance).
\end{rem}

\subsection{Crawley-Boevey's theorems for the geometry of quiver varieties}
The regular part $\mathfrak{M}^{\text{reg}}_{\lambda}(\mathsf{Q},\alpha)$
may be empty as we noted above. Thus we recall a  
necessary and sufficient condition for the non-emptiness of 
$\mathfrak{M}^{\text{reg}}_{\lambda}(\mathsf{Q},\alpha)$ given by 
Crawley-Boevey in \cite{C1}.

First let us introduce  the root system of a quiver $\mathsf{Q}$
(cf. \cite{Kac}).
Let $\mathsf{Q}$ be a finite quiver.  
From the  {\em Euler form}  $$\langle \alpha,\beta\rangle 
:=\sum_{a\in \mathsf{Q}_{0}}\alpha_{a}\beta_{a}-\sum_{\rho\in \mathsf{Q}_{1}}
\alpha_{s(\rho)}\beta_{t(\rho)},$$  
a symmetric bilinear form and quadratic form are  defined by 
\begin{align*}(\alpha,\beta)&:=\langle \alpha,\beta\rangle
+\langle \beta,\alpha\rangle,\\
	      q(\alpha)
	      &:=\frac{1}{2}(\alpha,\alpha)
\end{align*}      
	      and  set
      $p(\alpha):=1-q(\alpha)$. 
      Here $\alpha,\beta\in \mathbb{Z}^{\mathsf{Q}_{0}}$.

For each vertex $a\in \mathsf{Q}_{0}$, define 
 $\epsilon_{a}\in \mathbb{Z}^{\mathsf{Q}_{0}}$ ($a\in \mathsf{Q}_{0}$) 
  so that $(\epsilon_{a})_a=1$, $(\epsilon_{a})_{b}=0$, 
$(b\in \mathsf{Q}_{0}\backslash\{a\})$.
We call $\epsilon_{a}$ a {\em fundamental root} if 
the vertex $a$ has no edge-loop, i.e., there is no arrow $\rho$ such that 
$s(\rho)=t(\rho)=a$.
Denote by $\Pi$ the set of fundamental roots.
For a fundamental root $\epsilon_{a}$, define the {\em fundamental 
reflection} $s_{a}$ by 
\[
	s_{a}(\alpha):=\alpha-(\alpha,\epsilon_{a})\epsilon_{a}
	\text{ for }\alpha\in \mathbb{Z}^{\mathsf{Q}_{0}}.
\]
The group $W\subset \mathrm{Aut\,}\mathbb{Z}^{\mathsf{Q}_{0}}$ generated
by all fundamental reflections is called {\em Weyl group} of 
the quiver $\mathsf{Q}$. Note that the bilinear form $(\,,\,)$ is 
$W$-invariant. 
Similarly we can define the reflection $r_{a}\colon 
\mathbb{C}^{\mathsf{Q}_{0}}\rightarrow
\mathbb{C}^{\mathsf{Q}_{0}}$ by 
\[r_{a}(\lambda)_{b}:=
\lambda_{b}-(\epsilon_{a},\epsilon_{b})\lambda_{a}
\]
for $\lambda\in \mathbb{C}^{\mathsf{Q}_{0}}$ and  $a,b\in \mathsf{Q}_{0}$.
Define the set of {\em real roots} by 
\[
	\Delta^{\text{re}}:=\bigcup_{w\in W}w(\Pi).
\]

For an element $\alpha=(\alpha_{a})_{a\in \mathsf{Q}_{0}}
\in \mathbb{Z}^{\mathsf{Q}_{0}}$ the {\em support} of $\alpha$
is the set of $\epsilon_{a}$ such that $\alpha_{a}\neq 0$, and 
denoted by $\mathrm{supp\,}(\alpha)$.
We say the support of $\alpha$ is {\em connected} if 
the subquiver consisting of 
the set of vertices $a$ satisfying 
$\epsilon_{a}\in \mathrm{supp\,}(\alpha)$ and 
all arrows joining these vertices, is connected.
Define the {\em fundamental set} $F\subset \mathbb{Z}^{\mathsf{Q}_{0}}$ by
\begin{multline*}
	F:=\\
	\left\{\alpha\in (\mathbb{Z}_{\ge 0})^{\mathsf{Q}_{0}}\backslash\{0\}
        \mid (\alpha,\epsilon)\le 0\text{ for all }\epsilon\in\Pi,\,
        \text{support of }\alpha\text{ is connected}
        \right\}.
\end{multline*}
Then define the set of {\em imaginary roots} by
\[
	\Delta^{\text{im}}:=\bigcup_{w\in W}w(F\cup -F).
\]
Then the {\em root system} is
\[
	\Delta=\Delta^{\text{re}}\cup\Delta^{\text{im}}.
\]
Elements in $\Delta^{+}:=
\Delta\cap(\mathbb{Z}_{\ge 0})^{\mathsf{Q}_{0}}$
are called {\em positive roots}.
    
Now we are ready to see Crawley-Boevey's theorem.
For a fixed $\lambda=(\lambda_{a})\in \mathbb{C}^{\mathsf{Q}_{0}}$,
the set $\Sigma_{\lambda}$ consists of 
the positive roots satisfying 
\begin{enumerate}
              \item $\lambda\cdot \alpha=
		      \sum_{a\in \mathsf{Q}_{0}}\lambda_{a}\alpha_{a}=0$,
              \item if there exists a decomposition 
		      $\alpha=\beta_{1}+\beta_{2}+\cdots+\beta_{r}\ (r\ge 2),$
                  with $\beta_{i}\in \Delta^{+}$ and 
                  $\lambda\cdot \beta_{i}=0$, 
                  then 
		  $p(\alpha)>p(\beta_{1})+p(\beta_{2})+\cdots +p(\beta_{r}).$
\end{enumerate}

\begin{thm}[Crawley-Boevey. Theorem 1.2 in \cite{C1}]\label{CB}
	Let $\mathsf{Q}$ be a finite quiver and $\overline{\mathsf{Q}}$ the double of $\mathsf{Q}$.
        Let us fix a dimension vector 
	$\alpha\in (\mathbb{Z}_{\ge 0})^{\mathsf{Q}_{0}}$ and 
	$\lambda\in \mathbb{C}^{\mathsf{Q}_{0}}.
        $
	Then $\mu_{\alpha}^{-1}(\lambda)^{\text{irr}}\neq \emptyset$ 
        if and only if 
	$\alpha\in \Sigma_{\lambda}$. Furthermore, in this case 
	$\mu_{\alpha}^{-1}(\lambda)$ is an irreducible algebraic variety and 
	$\mu_{\alpha}^{-1}(\lambda)^{\text{irr}}$ is dense in $\mu_{\alpha}^{-1}(\lambda)$.
\end{thm}
This provides the following geometric properties of 
quiver varieties.
\begin{thm}[Crawley-Boevey Corollary 1.4 in \cite{C1}] 
	If $\alpha\in \sigma_{\lambda}$ then 
	the quiver variety $\mathfrak{M}_{\lambda}(\mathsf{Q},\alpha)$ is 
	reduced and irreducible algebraic variety of dimension $2p(\alpha)$.
\end{thm}
Combining these results, we have the following non-emptiness condition of 
regular parts of quiver varieties.
\begin{cor}[Crawley-Boevey \cite{C1}]\label{nonemptyquiv}
	The regular part of quiver variety
	$\mathfrak{M}^{\text{reg}}_{\lambda}(\mathsf{Q},\alpha)$
	is non-empty if and only if $\alpha\in\Sigma_{\lambda}$.
	Furthermore in this case, it is a connected symplectic 
	complex manifold of dimension $2p(\alpha)$.
\end{cor}
\section{A review of Fuchsian cases}\label{reviewoffuchsian}
The necessary and sufficient condition for the existence of a solution  
of the additive Deligne-Simpson 
problem for Fuchsian differential equations is  determined by Crawley-Boevey
in 
\cite{C}.
The strategy is as follows.
For the additive Deligne-Simpson problem for $\mathbf{C}=
(C_{0},C_{1},\ldots,C_{p})$, a collection of conjugacy classes 
in $M(n,\mathbb{C})$, it is shown that 
there exists a quiver $\mathsf{Q}$, 
dimension vector $\alpha$, and complex parameter $\lambda$ such that 
the quiver variety $\mathfrak{M}_{\lambda}^{\text{reg}}(\mathsf{Q},\alpha)$
is isomorphic to the moduli space $\mathfrak{M}(\mathbf{C})$.
Thus Theorem \ref{CB} determines the non-emptiness condition of 
$\mathfrak{M}(\mathbf{C})$ which is equivalent to the solvability of the
additive Deligne-Simpson problem.
We shall recall this correspondence between $\mathfrak{M}(\mathbf{C})$ and 
$\mathfrak{M}^{\text{reg}}_{\lambda}(\mathsf{Q},\alpha)$.

First we construct a representation of a quiver from 
a conjugacy class $C$ of $M(n,\mathbb{C})$.
Let us choose complex numbers $\xi_{1},\ldots,\xi_{d}$
so that 
\begin{equation}\label{ranks}
	\prod_{i=1}^{d}(A-\xi_{i}I_{n})=0
\end{equation}
for all $A\in C$. 
The minimal polynomial of $C$ is an example of this equation.
Set $m_{k}:=\mathrm{rank}\prod_{i=1}^{k}(A-\xi_{i}I_{n})$
for $k=1,\ldots,d$.
Then let us note that these $\xi_{1},\ldots,\xi_{d}$ and $m_{1},\ldots,m_{d}$
characterize $C$.
Namely  $B\in M(n,\mathbb{C})$ is contained in $C$ if and only if 
 $B$ satisfies 
$$\mathrm{rank\,}\prod_{i=1}^{k}(B-\xi_{i}I_{n})=m_{k}$$
for all $k=1,\ldots,d$. 
This observation leads us to the following correspondence
between 
the elements in $C$ and some representations of a quiver.
\begin{prop}[see Crawley-Boevey \cite{C} and also Lemma A.5 in \cite{HY}]
	\label{conjclass}
        Let us fix a conjugacy class $C$ of $M(n,\mathbb{C})$
        and 
        choose $\xi_{1},\ldots,\xi_{d}\in \mathbb{C}$ so that 
	the equation $(\ref{ranks})$ holds for all $A\in C$.
        Set $m_{k}:=\mathrm{rank\,}\prod_{i=1}^{k}(A-\xi_{i}I_{n})$ for 
        $k=1,\ldots,d-1$ and 
	$A\in C$, also set $m_{0}:=n$ and $\mathbf{m}:=(m_{i})_{
	i=0,\ldots,d-1}$.
	Define a quiver $\mathsf{Q}$ as below.
        \[\begin{xy}
              (0,0) *++!U{0} *\cir<4pt>{}="A",
              (10,0) *++!U{1} *\cir<4pt>{}="B",
              (50,0) *++!U{d-1} *\cir<4pt>{}="C",
              \ar@{<-} "A";"B"^{\rho_{1}}
              \ar@{<-} "B";(20,0)^{\rho_{2}}
              \ar@{.} (25,0);(35,0)
              \ar@{<-} (40,0);"C"^{\rho_{d-1}}
        \end{xy}\]
	Also define a subspace of 
$\mathrm{Rep}(\overline{\mathsf{Q}},\mathbf{m})$ by
	\begin{multline*}
		Z:=
	\left\{
	x=(x_{\rho}) \in \mathrm{Rep\,}
	(\overline{\mathsf{Q}},\mathbf{m})\,\Big|\, 
	\right.\\
	\left.
	\begin{array}{c}
		\mu_{\mathbf{m}}(x)_{i}=(\xi_{i}-\xi_{i+1})I_{m_{i}}
        \text{ for all }i=1,\ldots,d-1,\\
        x_{\rho}:\text{ injective},\,
        x_{\rho^{*}}:\text{ surjective}
	\text{ for all }\rho\in \mathsf{Q}_{1},
	\rho^{*}\in \mathsf{Q}_{1}^{*}
	\end{array}
        \right\}.
	\end{multline*}
        Then 
        \begin{align*}
	\Phi_{\xi}\colon \{A&\in C\}\longrightarrow
	Z/
	\prod_{i=1}^{d-1}\mathrm{GL}(m_{i},\mathbb{C})
        \end{align*}
        defined below is  bijective.
	For $A\in C$, we define $(M_{a},\psi_{\rho})_{
	a\in \mathsf{Q}_{0},\rho\in 
\overline{\mathsf{Q}}_{1}}$, a representation of 
	$\overline{\mathsf{Q}}$
	as follows:
        \begin{align*}
        &M_{0}:=\mathbb{C}^{n}, 
        \quad\quad 
        M_{k}:=\mathrm{Im}\prod_{i=1}^{k}(A-\xi_{i}I_{n})
        \text{ for all }k=1,\ldots,d-1,\\
        &\psi_{\rho_{i}}:M_{i}\hookrightarrow M_{i-1}\text{ :
        inclusion},
        \quad\quad\psi_{\rho_{i}^{*}}=(A-\xi_{i})|_{M_{i-1}}.
      	\end{align*}
	Then $\Phi_{\xi}(A)$ is the projection of  
	$(M_{a},\psi_{\rho})$.
     	The inverse map is given by 
      	$$(x_{\rho})_{\rho\in \overline{\mathsf{Q}}_{1}}\mapsto
      	x_{\rho_{1}}x_{\rho_{1}^{*}}+\xi_{1}.$$

     	 Furthermore for any 
	 $x=(x_{\rho})_{\rho\in 
      	 \overline{\mathsf{Q}}_{1}}
      	 \in Z$ and 
      	 any subspace $S\subset \mathbb{C}^{n}$ invariant under
      	 $x_{\rho_{1}}x_{\rho_{1}^{*}}+\xi_{1}$,
      	 there exists a subrepresentation 
      	 $y$ of $x$ such that 
      	 $x=0$ (resp. $N=M$) if and only if $S=0$ (resp. $S=\mathbb{C}^{n}$).
\end{prop}

Let $C_{0},C_{1},\ldots,C_{p}$ be a collection of conjugacy classes 
in $M(n,\mathbb{C})$ and write 
$\mathbf{C}:=(C_{0},C_{1},\ldots,C_{p})$.
As we noted before, a conjugacy class $C$ can be seen as a truncated orbit 
of an HTL normal form of the case $k=1$.
Thus 
\[
	\mathfrak{M}(\mathbf{C}):=\left\{
		(A_{i})_{i=0,1,\ldots,p}\in 
		\prod_{i=0}^{p}C_{i}\,\middle|\, 
		\begin{array}{c}
			(A_{i})_{i=0,\ldots,p}\text{ is irreducible},\\
			\sum_{i=0}^{p}A_{i}=0
		\end{array}
	\right\}\bigg/
	\mathrm{GL}(n,\mathbb{C})
\]
is a moduli space of Fuchsian differential equations or equivalently that of 
meromorphic connections defined in Section \ref{moduli conn}.
Crawely-Boevey obtained a realization of $\mathfrak{M}(\mathbf{C})$ as a 
quiver variety.
\begin{thm}[Crawley-Boevey \cite{C}]\label{one to one Fuchs}
	Let $C_{0},\ldots,C_{p}$ be conjugacy classes of 
        $M(n,\mathbb{C})$. 
        For $i=0,\ldots,p$, choose 
        $\xi_{[i,1]},\ldots,\xi_{[i,d_{i}]}\in \mathbb{C}$ so that 
        \[
            \prod_{j=1}^{d_{i}}(A_{i}-\xi_{[i,j]}I_{n})=0
        \]
        for all $A_{i}\in C_{i}$.
	Let $\xi=(\{\xi_{[i,1]},\ldots,\xi_{[i,d_{i}]}\})_{0\le i\le p}$
	be the collection of ordered sets 
	$\{\xi_{[i,1]},\ldots,\xi_{[i,d_{i}]}\}$.
        Set $m_{0}:=n$ and $m_{[i,j]}
        :=\mathrm{rank\,}\prod_{k=1}^{j}(A_{i}-\xi_{[i,k]}I_{n})$
        for $j=1,\ldots,d_{i}-1$.
	Consider the following quiver $\mathsf{Q}$.
        \[
            \begin{xy}
            (-10,0) *++!U{0} *\cir<4pt>{}="A",
            (5,15) *++!U!L(0.3){[0,1]} *\cir<4pt>{}="B",
            (15,15) *++!U{[0,2]} *\cir<4pt>{}="C",
            (55,15) *++!U{[0,d_{0}-1]} *\cir<4pt>{}="D",
            (5,5) *++!U{[1,1]} *\cir<4pt>{}="E",
            (15,5) *++!U{[1,2]} *\cir<4pt>{}="F",
            (55,5) *++!U{[1,d_{1}-1]} *\cir<4pt>{}="G",
            (5,-15) *++!U{[p,1]} *\cir<4pt>{}="H",
            (15,-15) *++!U{[p,2]} *\cir<4pt>{}="I",
            (55,-15) *++!U{[p,d_{p}-1]} *\cir<4pt>{}="J",
            \ar@{<-} "A";"B"
            \ar@{<-} "A";"E"
            \ar@{<-} "A";"H"
            \ar@{<-} "B";"C"
            \ar@{<-} "C";(25,15)
            \ar@{.} (30,15);(40,15)
            \ar@{<-} (45,15);"D"
            \ar@{<-} "E";"F"
            \ar@{<-} "F";(25,5)
            \ar@{.} (30,5);(40,5)
            \ar@{<-} (45,5);"G"
            \ar@{<-} "H";"I"
            \ar@{<-} "I";(25,-15)
            \ar@{.} (30,-15);(40,-15)
            \ar@{<-} (45,-15);"J"
            \ar@{.} (3,-2);(3,-8)
        \end{xy}
        \]
	Define  $\alpha=(\alpha_{a})_{a\in \mathsf{Q}_{0}}\in
	(\mathbb{Z}_{\ge 0})^{\mathsf{Q}_{0}}$ 
        by $\alpha_{0}:=m_{0}$ and $\alpha_{[i,j]}:=m_{[i,j]}$
        for $i=0,\ldots,p$, $j=1,\ldots,d_{i}-1$.
	Define $\lambda=(\lambda_{a})_{a\in \mathsf{Q}_{0}}\in 
	\mathbb{C}^{\mathsf{Q}_{0}}$ by
	$\lambda_{0}:=-\sum_{i=0}^{p}\xi_{[i,1]}$ and
        $\lambda_{[i,j]}:=\xi_{[i,j]}-\xi_{[i,j+1]}$
        for $i=0,\ldots,p$, $j=1,\ldots,d_{i}-1$.

        Then there exists a bijection 
	\[		\Phi_{\xi}\colon
		\mathfrak{M}(\mathbf{C})\longrightarrow 
		\mathfrak{M}^{\text{reg}}_{\lambda}(\mathsf{Q},\alpha).
	\]
\end{thm}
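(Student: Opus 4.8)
The plan is to bootstrap from the single--conjugacy--class statement of Proposition~\ref{conjclass}: apply it to each $C_i$ separately, amalgamate the resulting ``leg'' representations at the central vertex $0$, and then observe that the two global constraints --- the relation $\sum_{i=0}^{p}A^{(i)}=0$ and irreducibility --- are precisely the conditions visible at the vertex $0$. Write $\rho_{i,j}\colon[i,j]\to[i,j-1]$, with $[i,0]=0$, for the arrows of the $i$--th leg. For each $i=0,\dots,p$, Proposition~\ref{conjclass} applied to $C_i$ and the ordered set $\{\xi_{[i,1]},\dots,\xi_{[i,d_i]}\}$ gives a bijection $\Phi_{\xi_i}$ from $C_i$ onto the set of $\prod_{j}\mathrm{GL}(m_{[i,j]},\mathbb{C})$--orbits of representations of the $i$--th leg that satisfy the leg moment--map equations and the injectivity/surjectivity conditions defining the set $Z$ there; here $m_{[i,j]}=\mathrm{rank\,}\prod_{k=1}^{j}(A^{(i)}-\xi_{[i,k]}I_n)$ is a conjugacy invariant, so $\alpha$ is well defined, and $\Phi_{\xi_i}^{-1}$ recovers $A^{(i)}$ as $\psi_{\rho_{i,1}}\psi_{\rho_{i,1}^{*}}+\xi_{[i,1]}I_n$ on $M_0=\mathbb{C}^n$. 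Taking the product over $i$ and gluing the legs at $0$ produces a representation $M=\Phi_\xi\bigl((A^{(i)})_{0\le i\le p}\bigr)$ of $\overline{\mathsf{Q}}$ with dimension vector $\alpha$; the task is to compare the relevant loci and group actions on the two sides.

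First I would check that the defining constraints correspond. At each leg vertex $[i,j]$ the equation $\mu_\alpha(M)_{[i,j]}=\lambda_{[i,j]}I=(\xi_{[i,j]}-\xi_{[i,j+1]})I$ is literally the moment--map condition appearing in $Z$ for the $i$--th leg, so it holds automatically. At the vertex $0$ every arrow $\rho_{i,1}$ has target $0$ and no arrow of $\mathsf{Q}$ has source $0$, hence
\[
\mu_\alpha(M)_0=\sum_{i=0}^{p}\psi_{\rho_{i,1}}\psi_{\rho_{i,1}^{*}}=\sum_{i=0}^{p}\bigl(A^{(i)}-\xi_{[i,1]}I_n\bigr)=\Bigl(\sum_{i=0}^{p}A^{(i)}\Bigr)-\Bigl(\sum_{i=0}^{p}\xi_{[i,1]}\Bigr)I_n,
\]
and since $\lambda_0=-\sum_{i}\xi_{[i,1]}$ this equals $\lambda_0 I_n$ if and only if $\sum_{i}A^{(i)}=0$. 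Thus $\prod_i\Phi_{\xi_i}$ restricts to a bijection between $\{(A^{(i)})\in\prod_iC_i\mid\sum_iA^{(i)}=0\}$ and the set of $\prod_{i,j}\mathrm{GL}(m_{[i,j]},\mathbb{C})$--orbits of representations in $\mathrm{Rep\,}(\overline{\mathsf{Q}},\alpha)_\lambda$ whose leg arrows are injective, resp. surjective. Passing to orbits matches up because $\prod_{a\in\mathsf{Q}_0}\mathrm{GL}(\alpha_a,\mathbb{C})=\mathrm{GL}(n,\mathbb{C})\times\prod_{i,j}\mathrm{GL}(m_{[i,j]},\mathbb{C})$: by Proposition~\ref{conjclass} every such representation is $\prod_{i,j}\mathrm{GL}(m_{[i,j]},\mathbb{C})$--conjugate to one whose legs are in standard form, and the residual $\mathrm{GL}(n,\mathbb{C})$ at the vertex $0$ acts on $(A^{(i)})_{0\le i\le p}$ by simultaneous conjugation.

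The remaining point is to match the two notions of irreducibility, and this is where I expect the actual work to lie. If $(A^{(i)})$ is irreducible and $N\subset M=\Phi_\xi((A^{(i)}))$ is a subrepresentation, then $\psi_{\rho_{i,1}}\psi_{\rho_{i,1}^{*}}(N_0)\subset N_0$ for every $i$, so $N_0\subset\mathbb{C}^n$ is invariant under each $A^{(i)}$ and hence $N_0=0$ or $N_0=\mathbb{C}^n$; the ``furthermore'' clause of Proposition~\ref{conjclass}, applied leg by leg (the legs of $M$ are in standard form and $N_0$ is $A^{(i)}$--invariant for all $i$), then forces $N=0$ or $N=M$. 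Conversely, let $M\in\mathrm{Rep\,}(\overline{\mathsf{Q}},\alpha)_\lambda$ be irreducible. One shows, as in \cite{C}, that on each leg the arrows toward $0$ are injective and those away from $0$ are surjective: if some outward arrow $\psi_{\rho_{i,j}^{*}}$ were not surjective, then its image, together with the full spaces between it and $0$ and the spaces generated from that image in the outward direction, would span a proper nonzero subrepresentation --- the leg moment--map relation $\psi_{\rho_{i,j+1}}\psi_{\rho_{i,j+1}^{*}}=\psi_{\rho_{i,j}^{*}}\psi_{\rho_{i,j}}+\lambda_{[i,j]}I$ is exactly what is needed to see that this collection of subspaces is invariant under the arrows --- and dually a non-injective inward arrow gives a proper nonzero subrepresentation supported on the outer part of the leg. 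Hence every leg of $M$ lies in the set $Z$ of Proposition~\ref{conjclass}, so up to $\prod_{i,j}\mathrm{GL}(m_{[i,j]},\mathbb{C})$ we have $M=\Phi_\xi((A^{(i)}))$ for some $(A^{(i)})$ with $\sum_iA^{(i)}=0$, and were $(A^{(i)})$ to admit a common proper nonzero invariant subspace, the ``furthermore'' clause would yield a proper nonzero subrepresentation of $M$, contradicting irreducibility. Combining everything, $\Phi_\xi$ descends to the asserted bijection. The most delicate ingredients are the propagation argument for injectivity/surjectivity along the legs and keeping track of which $\mathrm{GL}$--factors have been spent in normalizing the legs, so that exactly $\mathrm{GL}(n,\mathbb{C})$ remains to be quotiented out on the matrix side.
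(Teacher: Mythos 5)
Your proposal is correct and follows the route the paper itself intends: the paper gives no independent proof of this theorem (it simply writes ``By this correspondence, we have the following'' after Proposition~\ref{conjclass}), and your leg-by-leg application of that proposition, the moment-map computation at the central vertex identifying $\mu_\alpha(M)_0=\lambda_0 I_n$ with $\sum_i A^{(i)}=0$, the bookkeeping of the $\mathrm{GL}$-factors, and the matching of irreducibility via the injectivity/surjectivity propagation along the legs constitute exactly the standard Crawley-Boevey argument. One minor imprecision: to conclude $N=0$ (resp.\ $N=M$) from $N_0=0$ (resp.\ $N_0=\mathbb{C}^n$) you should appeal directly to the injectivity of the inward leg arrows (resp.\ surjectivity of the outward ones) rather than to the ``furthermore'' clause of Proposition~\ref{conjclass}, which is only an existence statement associating a subrepresentation to an invariant subspace; the needed injectivity/surjectivity is available since the legs lie in the set $Z$ of that proposition.
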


Thus Theorem \ref{CB} solves additive Deligne-Simpson problem.
\begin{thm}[Crawley-Boevey \cite{C}]
	Let $C_{0},\ldots,C_{p}$ be conjugacy classes of 
        $M(n,\mathbb{C})$. 
	Let us choose the quiver $\mathsf{Q}$,
	$\alpha\in (\mathbb{Z}_{\ge 0})^{\mathsf{Q}_{0}}$
	and $\lambda\in \mathbb{C}^{\mathsf{Q}_{0}}$ as Theorem \ref{one to one 
        Fuchs}.
        Then the additive Deligne-Simpson problem for 
        $C_{0},\ldots,C_{p}$ has a solution if and only if 
	$\alpha\in \Sigma_{\lambda}$.
\end{thm}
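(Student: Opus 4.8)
The plan is to deduce this statement directly by combining the two results already recorded above, namely the orbit-level bijection of Theorem~\ref{one to one Fuchs} and Crawley-Boevey's existence criterion of Theorem~\ref{CB}; the argument is essentially a translation between the two pictures, so no new ideas are needed.

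First I would observe that, by definition, the additive Deligne-Simpson problem for $C_{0},\ldots,C_{p}$ admits a solution precisely when the set
\[
	S=\left\{(A^{(i)})_{0\le i\le p}\in\prod_{i=0}^{p}C_{i}\,\bigg|\,\sum_{i=0}^{p}A^{(i)}=0,\ (A^{(i)})_{0\le i\le p}\text{ is irreducible}\right\}
\]
is non-empty, and since $S$ is stable under the diagonal $\mathrm{GL}(n,\mathbb{C})$-action, this happens if and only if the quotient $S/\mathrm{GL}(n,\mathbb{C})$ is non-empty.

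Next I would invoke Theorem~\ref{one to one Fuchs} with exactly the quiver $\mathsf{Q}$, dimension vector $\alpha$ and parameter $\lambda$ specified in its statement, which are the same $\mathsf{Q}$, $\alpha$, $\lambda$ appearing in the present theorem: the map $\Phi_{\xi}$ is a bijection from $S/\mathrm{GL}(n,\mathbb{C})$ onto $\{M\in\mathrm{Rep}(\overline{\mathsf{Q}},\alpha)_{\lambda}\mid M\text{ is irreducible}\}/\prod_{a\in\mathsf{Q}_{0}}\mathrm{GL}(\alpha_{a},\mathbb{C})$. Hence $S/\mathrm{GL}(n,\mathbb{C})$ is non-empty if and only if there exists an irreducible representation in $\mathrm{Rep}(\overline{\mathsf{Q}},\alpha)_{\lambda}$. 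Finally, Theorem~\ref{CB} asserts that such an irreducible representation exists if and only if $\alpha\in\Sigma_{\lambda}$; chaining these equivalences yields the claim.

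Since every step is a direct application of an already established result, I do not anticipate a serious obstacle. The one point requiring care is bookkeeping: one must check that the data $(\mathsf{Q},\alpha,\lambda)$ produced by the recipe of Theorem~\ref{one to one Fuchs} from the ranks $m_{[i,j]}=\mathrm{rank}\prod_{k=1}^{j}(A^{(i)}-\xi_{[i,k]}I_{n})$ and the chosen eigenvalue data $\xi$ is literally the data fed into Theorem~\ref{CB}, so that the two invocations are compatible. One should also note that the conclusion does not depend on the auxiliary choice of the $\xi_{[i,j]}$; this is automatic because the left-hand side of the asserted equivalence makes no reference to $\xi$.
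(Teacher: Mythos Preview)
Your proposal is correct and matches the paper's approach exactly: the paper does not give a separate proof of this theorem but simply states it as an immediate consequence, remarking just before the statement that ``Theorem~\ref{CB} solves additive Deligne-Simpson problems'' via the bijection of Theorem~\ref{one to one Fuchs}. Your write-up just makes this implication explicit.
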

\section{Moduli spaces of meromorphic connections and quiver varieties}
In the previous section, we saw that moduli spaces of Fuchisan differential 
equations are isomorphic to quiver varieties and moreover 
the solvability of the additive Delinge-Simpson problem for Fuchsian 
differential equations is determined through these isomorphisms. 
In this section, we shall give a generalization of this correspondence.
Namely we shall consider  
a collection of HTL normal forms $\mathbf{B}=(B_{0},B_{1},\ldots,B_{p})$,
and give a correspondence between the moduli space $\mathfrak{M}(\mathbf{B})$
and a quiver variety.
This is first done by Boalch in \cite{Boarx} when the orders of HTL normal
 forms $k_{i}=-
 \mathrm{ord}(B_{i})$ for $i=0,\ldots,p$ satisfy
 \[
	 k_{0}\le 3\text{
	 and }k_{1}=\cdots =k_{p}=1.
\]
This result is generalized for arbitrary $k_{0}$ by Yamakawa and the author 
in \cite{HY}.
Thus when the number of unramified irregular singular points is at most one,
it is already known that there exist isomorphisms between moduli spaces of meromorphic connections and 
quiver varieties.

However in Introduction of \cite{Boarx} Boalch suggested
that  moduli spaces of meromorphic connections with more than two unramified 
irregular singular points might not be isomorphic to quiver varieties and 
gave an example.

Therefore it may not be expected to obtain isomorphisms between arbitrary
$\mathfrak{M}(\mathbf{B})$ and quiver varieties.
Based on these previous results, for an arbitrary $\mathfrak{M}(\mathbf{B})$, 
we shall construct an injective map from 
$\mathfrak{M}(\mathbf{B})$ into a quiver variety which becomes an isomorphism
if and only if the number of unramified irregular singular points are 
less than or equal to one. 

\subsection{A preliminary example: 
	Differential equations with poles of order 2 and representations
of quivers}\label{Section2}
As we noted above, the correspondence between the moduli spaces of 
meromorphic connections and quiver varieties is already known if 
$k_{1}=\cdots =k_{p}=1$ and $k_{0}$ is an arbitrary positive integer.
Before going to general cases, we shall see the first nontrivial case
$k_{0}=k_{1}=\cdots =k_{p}=2$.

\subsubsection{Splitting lemma and truncated orbits}
Let $B\in \mathfrak{g}^{*}_{2}$ be an HTL normal form,
\[
    B=\mathrm{diag}\left(c_{1}I_{n_{1}}z^{-2}+R_{1}z^{-1},
   \ldots,
   c_{m}I_{n_{m}}z^{-2}+R_{m}z^{-1}
    \right).
\]
Here $R_{i}\in M(n_{i},\mathbb{C})$ and 
$c_{i}\in \mathbb{C}$, $i=0,\ldots,p$ satisfying
$c_{i}\neq c_{j}$ if 
$i\neq j$.

Let us put $B_{\text{irr}}:=\mathrm{diag}\left(
c_{1}I_{n_{1}},\ldots,c_{m}I_{n_{m}}\right)$ 
and denote by  $V(c_{i})\subset \mathbb{C}^{n}$ 
the eigenspace of $B_{\text{irr}}$ for
each eigenvalue $c_{i}$, $i=1,\ldots,m$.
For each $X\in M(n,\mathbb{C})$, $X_{i,j}$ denotes
the $\mathrm{Hom}_{\mathbb{C}}(V(c_{j}),V(c_{i}))$-component
of $X$.

Then for the $G_{2}=\mathrm{GL}(n,\mathbb{C}[\![z]\!]/z^{2}
\mathbb{C}[\![z]\!])$-orbit of $B$, denoted by $\mathcal{O}_{B}$,
we have the following lemma which  is a direct consequence of 
the splitting lemma (see the section 3.2 in 
\cite{Ba} or section 2.3 in \cite{HY} for example) .
\begin{lem}\label{trunc}
	Let $B\in \mathfrak{g}^{*}_{2}$ be the HTL normal form 
	as above.
        Then $\mathcal{O}_{B}$ consists of $A(x)=\sum_{i=1}^{2}A_{i}x^{-i}
    	\in \mathfrak{g}^{*}_{2}$ satisfying that 
    	there exists $G\in \mathrm{GL}(n,\mathbb{C})$ such that 
	\[G^{-1}A_{2}G=B_{\text{irr}}\quad{ and }\quad 
	(G^{-1}A_{1}G)_{i,i}\in C_{R_i}
	\]
	where $C_{R_i}$ are 
	conjugacy classes of $R_{i}$ for $i=1,\ldots,m$.
	Moreover if $G_{1},G_{2}\in \mathrm{GL}(n,\mathbb{C})$ satisfy
	$G_{i}^{-1}A_{2}G_{i}=B_{\text{irr}}$, $i=1,2$, then
	$G_{2}^{-1}G_{1}=\mathrm{diag}(h_{1},\ldots,h_{m})$ where 
	$h_{i}\in \mathrm{GL}(n_{i},\mathbb{C})$ for $i=1,\ldots,m$.
\end{lem}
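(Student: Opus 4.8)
The plan is to make the $G_2$-action on $\mathfrak{g}_2^*$ completely explicit: since $G_2$ is already a finite-dimensional jet group, the orbit $\mathcal{O}_B$ is cut out by pure linear algebra governed by the semisimple leading term $B_{\mathrm{irr}}$, and the splitting lemma enters as the statement that this leading term is the only invariant of the top-order part.

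First I write $B=B_{\mathrm{irr}}x^{-2}+Rx^{-1}$ with $R=\mathrm{diag}(R_1,\ldots,R_m)$, and for a polynomial representative $g=g_0+g_1x$ of an element of $G_2$ I compute $gBg^{-1}\bmod\mathbb{C}[\![x]\!]$ using $g^{-1}=g_0^{-1}-g_0^{-1}g_1g_0^{-1}x+\cdots$. One finds that the coefficient of $x^{-2}$ is $g_0B_{\mathrm{irr}}g_0^{-1}$ and the coefficient of $x^{-1}$ is
\[
g_0Rg_0^{-1}+[\,g_1g_0^{-1},\,g_0B_{\mathrm{irr}}g_0^{-1}\,].
\]
I would check on the side that this does not depend on the chosen representative, so that it really is the coadjoint orbit map. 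In particular, for any $A(x)=A_2x^{-2}+A_1x^{-1}\in\mathcal{O}_B$ the leading coefficient $A_2$ is conjugate to $B_{\mathrm{irr}}$, i.e. there is $G\in\mathrm{GL}(n,\mathbb{C})$ with $G^{-1}A_2G=B_{\mathrm{irr}}$; this is exactly the splitting lemma in the present truncated setting, semisimplicity being preserved under the action and the eigenspaces $V_i$ being recovered from the fact that the $\alpha_i$ are distinct.

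Next I would conjugate by such a $G$. Setting $g_0=G$ in the formula above gives $G^{-1}A_1G=R+[v,B_{\mathrm{irr}}]$, where $v=G^{-1}g_1$ is arbitrary in $M(n,\mathbb{C})$. Since $\ad(B_{\mathrm{irr}})$ acts on $M(n,\mathbb{C})$ by $[B_{\mathrm{irr}},X]_{i,j}=(\alpha_i-\alpha_j)X_{i,j}$, and the $\alpha_i$ are distinct, its kernel is the block-diagonal subalgebra $Z(B_{\mathrm{irr}})$ and its image is the subspace of matrices with vanishing diagonal blocks. Hence the matrices $G^{-1}A_1G$ arising from $A(x)\in\mathcal{O}_B$ are precisely those whose $(i,i)$-block equals $R_i$, the off-diagonal blocks being arbitrary. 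The ``moreover'' assertion is then immediate: if $G_1^{-1}A_2G_1=G_2^{-1}A_2G_2=B_{\mathrm{irr}}$ then $G_2^{-1}G_1$ commutes with $B_{\mathrm{irr}}$, hence lies in $Z(B_{\mathrm{irr}})$, so $G_2^{-1}G_1=\mathrm{diag}(h_1,\ldots,h_m)$; and replacing $G$ by $Gh^{-1}$ with $h=\mathrm{diag}(h_1,\ldots,h_m)$ block-diagonal sends $(G^{-1}A_1G)_{i,i}$ to $h_iR_ih_i^{-1}$, i.e. to an arbitrary element of $C_{R_i}$. Combining the two directions gives the claim: if $A(x)\in\mathcal{O}_B$ then some conjugator $G$ makes $(G^{-1}A_1G)_{i,i}=R_i\in C_{R_i}$; conversely, given $G$ with $G^{-1}A_2G=B_{\mathrm{irr}}$ and $(G^{-1}A_1G)_{i,i}\in C_{R_i}$, I first adjust $G$ by a block-diagonal $h$ so that these diagonal blocks become exactly $R_i$, and then read off $g_0=G$ and $g_1$ from the explicit formula for the $x^{-1}$-coefficient to produce $g\in G_2$ with $gBg^{-1}\equiv A(x)$ in $\mathfrak{g}_2^*$.

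I do not expect a real obstacle here. The only two points that need care are the bookkeeping of the truncation — verifying that $gBg^{-1}\bmod\mathbb{C}[\![x]\!]$ is well defined on $G_2$ — and the clean invocation of the splitting lemma to justify the reduction to ``$A_2$ conjugate to $B_{\mathrm{irr}}$''; everything else is just the elementary structure of $\ad(B_{\mathrm{irr}})$ acting on $M(n,\mathbb{C})$ by rescaling off-diagonal blocks.
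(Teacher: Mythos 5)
Your proof is correct, and it is essentially the paper's argument made explicit: the paper simply cites the splitting lemma (Balser \S 3.2, or \S 2.3 of \cite{HY}), and your computation of the $x^{-2}$ and $x^{-1}$ coefficients of $gBg^{-1}$, together with the observation that $\ad(B_{\text{irr}})$ has kernel the block-diagonal subalgebra and image the matrices with vanishing diagonal blocks, is exactly the degree-$2$ content of that lemma. Both directions and the ``moreover'' clause are handled correctly, so there is nothing to add.
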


From this lemma we have the following one to one correspondence.
\begin{multline}\label{rank2orbit}
	\mathcal{O}_{B}\longrightarrow\\
	\left\{
		(G,A_{1})\in \mathrm{GL}(n,\mathbb{C})\times 
		M(n,\mathbb{C})\,\middle|\,\begin{array}{l}
		(G^{-1}A_{1}G)_{i,i}\in C_{R_i}\\\text{ for all }
		i=1,\ldots,m
	\end{array}
	\right\}
	/\prod_{i=1}^{m}\mathrm{GL}(n_{i},\mathbb{C}).
\end{multline}
Here $\prod_{i=1}^{m}\mathrm{GL}(n_{i},\mathbb{C})$ acts 
on $\mathrm{GL}(n,\mathbb{C})\times M(n,\mathbb{C})$ by 
\[
	\begin{array}{ccc}
	\prod_{i=1}^{m}\mathrm{GL}(n_{i},\mathbb{C})\times
	\left(\mathrm{GL}(n,\mathbb{C})\times M(n,\mathbb{C})\right)&
	\longrightarrow&\mathrm{GL}(n,\mathbb{C})\times M(n,\mathbb{C})\\
	\left((h_{1},\ldots,h_{m}),(G,A) \right)&
	\longmapsto& (G\cdot\mathrm{diag}(h_{1},\ldots,h_{m}),A).
\end{array}
\]
The inverse map is induced by sending $(G,A_{1})$ 
 to
$GB_{\text{irr}}G^{-1}x^{-2}+A_{1}x^{-1}\in 
\mathcal{O}_{B}$.
\begin{rem}\normalfont
	Let us recall that $T^{*}\mathrm{GL}(n,\mathbb{C})
	\cong \mathrm{GL}(n,\mathbb{C})\times M(n,\mathbb{C})$. Then
	the above correspondence can be seen as a special case of 
	the identification of  $G_{k}$-orbits of HTL normal forms 
	and a symplectic 
	reductions of the extended orbits given by Boalch (see Lemma 2.3 in 
	\cite{Boa1}).
\end{rem}

\subsubsection{Moduli spaces without irreducibility and 
quivers}
Under the above observation, let us consider the relation 
between $\mathfrak{M}(\mathbf{B})$ and a quiver variety.
Let $B_{0},\ldots,B_p\in \mathfrak{g}^{*}_{2}$ be 
HTL normal forms written by
\[
	B_i=\mathrm{diag}\left(c_{[i,1]}I_{n_{[i,1]}}z^{-2}
		+R_{[i,1]}z^{-1},
   \ldots,
   c_{[i,m_i]}I_{n_{[i,m_i]}}z^{-2}+R_{[i,m_i]}z^{-1}
    \right).
\]

Let $V(c_{[i,j]})\subset\mathbb{C}^{n}$ be the eigenspace
of $(B_i)_{\text{irr}}$ for each
eigenvalue $c_{[i,j]}$,
$i=0,\ldots,p$, $j=1,\ldots,m_{i}$.
Let  $X_{[i,j],[i'j']}$ be the
$\mathrm{Hom}_{\mathbb{C}}(V(c_{[i',j']}),V(c_{[i,j]}))$-component
of $X\in M(n,\mathbb{C})$.
We may write $X=\left(X_{[i,j],[i'j']}\right)
_{\substack{1\le j\le m_{i}\\1
\le j'\le m_{i'}}}$.

First let us consider the moduli space $\overline{\mathfrak{M}(\mathbf{B})}$ 
without the irreducibility, namely,
\[
	\overline{\mathfrak{M}(\mathbf{B})}:=
	\left\{
		(A_{i}(z))_{i=0,\ldots,p}\in\prod_{i=0}^{p}
		\mathcal{O}_{B_{i}}\,\middle|\,
		\sum_{i=0}^{p}\mathrm{Res}A_{i}(z)=0
	\right\}\big/\mathrm{GL}(n,\mathbb{C})
\]
which is isomorphic to 
\begin{multline*}
	\left\{
		(G_{i},A_{i})_{i=0,\ldots,p}\in 
		\prod_{i=0}^{p}\mathrm{GL}(n,\mathbb{C})\times
		M(n,\mathbb{C})\,
		\middle|\,
		\begin{array}{l}
			\mathrm{(i)}\
			(G_{i}^{-1}A_{i}G_{i})_{j,j}\in C_{R_{[i,j]}}\\
			\text{ for all }i=0,\ldots,p\text{ and }\\
			j=1,\ldots,m_{i}\\
			\mathrm{(ii)}\ 
			\sum_{i=0}^pA_{i}=0\\
			\mathrm{(iii)}\ G_{0}=I_{n}\\
		\end{array}
	\right\}\\
	\big/ \prod_{i=0}^{p}\prod_{j=1}^{m_{i}}
	\mathrm{GL}(n_{[i,j]},\mathbb{C})
\end{multline*}
from the above identification (\ref{rank2orbit}).
Here the condition $\mathrm{(ii)}$ comes from the condition 
$\sum_{i=0}^{p}\mathrm{Res}A_{i}(z)=0$ 
and $\mathrm{(iii)}$ comes from taking the quotient 
under the $\mathrm{GL}(n,\mathbb{C})$-action.
We shall give a realization of $\overline{\mathfrak{M}(\mathbf{B})}$ as 
a representation space of a quiver as follows.

\noindent\underline{\textbf{Step 1.}}

Let us consider the quiver $\mathsf{Q}^{(1)}$ defined as follows.
The set of vertices  is
\[\mathsf{Q}_{0}^{(1)}:
=\{0,\ldots,p\}.\]
The set of arrows is
\[
	\mathsf{Q}^{(1)}_{1}:=\left\{\rho^{[0]}_{[i]}\colon 0\rightarrow
    i\,\middle|\, i=1,\ldots,p
\right\}.
\]
\[
	\begin{xy}
		*++!U{0}*\cir<4pt>{}="A",
		(-20,13)*++!R{1}*\cir<4pt>{}="B",
		(-20,8)*++!R{2}*\cir<4pt>{}="C",
		(-20,2)="D",
		(-20,-3)="F",
		(-20,-13)*++!R{p}*\cir<4pt>{}="E",
		\ar@{..} "F";"D",
		\ar@{->} "A";"B",
		\ar@{->} "A";"C",
		\ar@{->} "A";"E"
	\end{xy}
\]

Fix a dimension vector $\alpha^{(1)}:=(\alpha_{i})_{i=0,\ldots,p}$ so that 
$\alpha_{i}:=n$ for all $i=0,\ldots,p$.
Then we have a bijection,
\begin{multline*}
	\left\{
	(G_{i},A_{i})_{i=0,\ldots,p}\in 
		\prod_{i=0}^{p}\mathrm{GL}(n,\mathbb{C})\times
		M(n,\mathbb{C})\,\middle|\,
		G_{0}=I_{n},\sum_{i=0}^{p}A_{i}=0
	\right\}\longrightarrow\\
	\left\{
		x=(x_{\rho})_{\rho \in \overline{\mathsf{Q^{(1)}}}_{1}}
		\in \mathrm{Rep}(\overline{\mathsf{Q^{(1)}}},\alpha^{(1)})\,
		\middle|\,
		\begin{array}{c}
		x_{\rho^{[0]}_{[i]}}\in \mathrm{GL}(n,\mathbb{C})\\
		\text{ for all }
		i=1,\ldots,p
	\end{array}
	\right\}
\end{multline*}
by setting $x_{\rho^{[0]}_{[i]}}:=G_{i}^{-1}$ and 
$x_{(\rho^{[0]}_{[i]})^{*}}:=A_{i}G_{i}$ for all $i=1,\ldots,p$.
Let us note that from $x=(x_{\rho})$ in the target space,
setting \begin{align*}
	&G_{i}:=x_{\rho^{[0]}_{[i]}}^{-1},&
	&A_{i}:=x_{(\rho^{[0]}_{[i]})^{*}}x_{\rho^{[0]}_{[i]}}
\end{align*}
for $i=1,\ldots,p$ and 
\[
	A_{0}:=-\sum_{i=1^{p}}x_{(\rho^{[0]}_{[i]})^{*}}x_{\rho^{[0]}_{[i]}}
	=\mu_{\alpha^{(1)}}(x)_{0},
\]
we obtain the inverse map.

\noindent\underline{\textbf{Step 2.}}

In Step 1, we could associate  representations of a quiver to 
$(G_{i},A_{i})_{i=0,\ldots,p}\in \prod_{i=0}^{p}\mathrm{GL}(n,\mathbb{C})
\times M(n,\mathbb{C})$ satisfying the 
conditions $\mathrm{(ii)}\ \sum_{i=0}^{p}A_{i}=0$ and $\mathrm{(iii)}\ 
G_{0}=I$.
However to obtain the one to one correspondence with $\overline{\mathfrak{M}}
(\mathbf{B})$, we need one more condition 
$\mathrm{(i)}\ (G_{i}^{-1}A_{i}G_{i})_{j,j}\in 
C_{R_{[i,j]}}$ for $i=0,\ldots,p$, $j=1,\ldots,m_{i}$. 
Let us recall that  
\begin{align*}
	G^{-1}_{i}A_{i}G_{i}=x_{\rho^{[0]}_{[i]}}x_{(\rho^{[0]}_{[i]})^{*}}=
	\mu_{\alpha^{(1)}}(x)_{i}
\end{align*}
for $i=1,\ldots,p$ and 
\begin{align*}
	G_{0}^{-1}A_{0}G_{0}=A_{0}=-\sum_{i=0}^{p}A_{i}
	=-\sum_{i=1^{p}}x_{(\rho^{[0]}_{[i]})^{*}}x_{\rho^{[0]}_{[i]}}
=\mu_{\alpha^{(1)}}(x)_{0}
\end{align*}
for $(G_{i},A_{i})_{i=0,\ldots,p}$ in the domain of the isomorphism in 
Step 1 and its image 
$x\in \mathrm{Rep}(\overline{\mathsf{Q^{(1)}}},\alpha^{(1)})$.
To obtain block diagonal components $(G_{i}^{-1}A_{i}G_{i})_{j,j}$ as 
images of the moment map, we shall break up the vertex 
\[
\begin{xy}*++!U{i}*\cir<4pt>{}\end{xy}
	\quad\quad\text{ into} \quad\quad
	\begin{xy}
		(0,13)	*++!L{[i,1]}*\cir<4pt>{},
		(0,8) *++!L{[i,2]}*\cir<4pt>{},
		(0,-13) *++!L{[i,m_{i}]}*\cir<4pt>{},
		\ar@{..} (0,5);(0,-5)
\end{xy}
\]for each $i=0,\ldots,p$ and define the following 
quiver $\mathsf{Q^{(2)}}$.

\[	\begin{xy}
		(0,-7)*++!L{[0,2]}*\cir<4pt>{}="A";
		(0,-27) *++!L{[0,m_{0}]}*{}*\cir<4pt>{}="B",
		(0,0)*++!L{[0,1]}*\cir<4pt>{}="C",
		(-19,13)*++!R{[1,1]}*\cir<4pt>{}="D",	
		(-22,7)*++!R{[1,2]}*\cir<4pt>{}="E",
		(-30,-3)*++!R{[1,m_{1}]}*\cir<4pt>{}="H",
		(-27,-23)*++!R{[p,2]}*\cir<4pt>{}="F",	
		(-30,-17)*++!R{[p,1]}*\cir<4pt>{}="G",
		(-19,-40)*++!R{[p,m_{p}]}*\cir<4pt>{}="I",
		\ar@{..}(-30, -6);(-30, -14),
		\ar@{..} (0,-10);(0,-24),
		\ar@{..} (-23,5);(-28,-1),
		\ar@{..} (-26,-25);(-20,-37),
		\ar@{->} "A";"D",
		\ar@{->} "A";"E",
		\ar@{->} "A";"F",
		\ar@{->} "A";"G",
		\ar@{->} "A";"H",
		\ar@{->} "A";"I",
		\ar@{->} "B";"D",
		\ar@{->} "B";"E",
		\ar@{->} "B";"F",
		\ar@{->} "B";"G",
		\ar@{->} "B";"H",
		\ar@{->} "B";"I",
		\ar@{->} "C";"D",
		\ar@{->} "C";"E",
		\ar@{->} "C";"F",
		\ar@{->} "C";"G",
		\ar@{->} "C";"H",
		\ar@{->} "C";"I",
    	\end{xy}
\]
Namely, the set of vertices is 
\[\mathsf{Q^{(2)}}_{0}:=
\{[i,j]\mid i=0,\ldots,p,\,j=1,\ldots,m_{i}\}.
\]
The set of arrows is 
\[
	\mathsf{Q^{(2)}}_{1}:=
\left\{
	\rho^{[0,j]}_{[i,j']}\colon
	[0,j]\rightarrow [i,j']\,\middle|\,
	\begin{array}{l}
	j=1,\ldots,m_{0},\\
	i=1,\ldots,p,\\
	j'=1,\ldots,m_{i}
\end{array}
\right\}.
\]

Define $\alpha^{(2)}
=(\alpha^{(2)}_{a})_{a\in \mathsf{Q}_{0}}\in 
\mathbb{Z}^{\mathsf{Q}_{0}}$ by $\alpha^{(2)}_{[i,j]}
:=\mathrm{dim}_{\mathbb{C}}V(c_{[i,j]})$, 
$i=0,\ldots,p$, $j=1,\ldots,m_{i}$.
Then we have a bijection from $\overline{\mathfrak{M}(\mathbf{B})}$ 
to an open subset of 
$\mathrm{Rep}(\overline{\mathsf{Q}^{(2)}},\alpha^{(2)})
/\mathbf{G}(\alpha^{(2)})$.
\begin{prop}\label{trunc order2}
	We use the same notation as above. Then there exists a bijection 
	\begin{multline*}
		\Phi\colon\overline{\mathfrak{M}(\mathbf{B})}
        \longrightarrow\\
	\left\{\begin{aligned}
			x&=(x_{\rho})_{\rho\in 
	\overline{\mathsf{Q^{(2)}}}_{1}}\\
	&\in \mathrm{Rep\,}(\overline{\mathsf{Q^{(2)}}},\alpha^{(2)})
	\end{aligned}
        \,\middle|\, 
	\begin{array}{l}
	\mathrm{det}\left(x_{\rho^{[0,j]}_{[i,j']}}
        \right)_{
        \substack{1\le j\le m_{0}\\1\le j'\le m_{i}}}
        \neq 0\\
	\text{for all }i=1,\ldots,p,
        \\
	\mu_{\alpha^{(2)}}(x)_{[i,j]}\in C_{R_{[i,j]}},\,
	[i,j]\in \mathsf{Q^{(2)}}_{0}
	\end{array}
\right\}/\mathbf{G}(\alpha^{(2)}).
	\end{multline*}
\end{prop}
\begin{proof}
	It suffices to show that there is a bijection from 
\begin{multline*}
	\left\{
		(G_{i},A_{i})_{i=0,\ldots,p}\in 
		\prod_{i=0}^{p}\mathrm{GL}(n,\mathbb{C})\times
		M(n,\mathbb{C})\,
		\middle|\,
		\begin{array}{l}
			\mathrm{(i)}\
			(G_{i}^{-1}A_{i}G_{i})_{j,j}\in C_{R_{[i,j]}}\\
			\text{ for all }i=0,\ldots,p\text{ and }\\
			j=1,\ldots,m_{i}\\
			\mathrm{(ii)}\ 
			\sum_{i=0}^pA_{i}=0\\
			\mathrm{(iii)}\ G_{0}=I_{n}\\
		\end{array}
	\right\}\\
	\big/ \prod_{i=0}^{p}\prod_{j=1}^{m_{i}}
	\mathrm{GL}(n_{[i,j]},\mathbb{C})
\end{multline*}
to the target space of the above map.
Let $(G_{i},A_{i})_{i=0,\ldots,p}$ be a representative of an 
element in this space. Then we define $x\in \mathrm{Rep}(\overline{
\mathsf{Q^{(2)}}},\alpha^{(2)})$ as follows.
	\item \begin{align*}
			x_{\rho^{[0,j]}_{[i,j']}}
			&=(G_{i}^{-1})_{[i,j'],[0,j]},&
        x_{\left(\rho^{[0,j]}_{[i,j']}\right)^{*}}
	&=\left(A_{i}G_{i}\right)_{[0,j],[i,j']},
	\end{align*}
        for $j=1,\ldots,m_{0}$, $i=1,\ldots,p$, $j'=1,\ldots,m_{i}$.
	Then 
	\[
		\mu_{\alpha^{(2)}}(x)_{[i,j']}=\sum_{j=1}^{m_{0}}
		x_{\rho^{[0,j]}_{[i,j']}}
x_{\left(\rho^{[0,j]}_{[i,j']}\right)^{*}}=
(G_{i}^{-1}A_{i}G_{i})_{j',j'}\in C_{R_{[i,j']}}
	\]
	for $i=1,\ldots,p$ and $j'=1,\ldots,m_{i}$. Also 
\[
	\mu_{\alpha^{(2)}}(x)_{[0,j]}=-
	\sum_{i=1}^{p}\sum_{j'=1}^{m_{i}}
	x_{\left(\rho^{[0,j]}_{[i,j']}\right)^{*}}
x_{\rho^{[0,j]}_{[i,j']}}
=-\sum_{i=1}^{p}(A_{i})_{j,j}=(A_{0})_{j,j}\in C_{R_{[0,j]}}
\]
for $j=1,\ldots,m_{0}$.
	Since this correspondence is $\prod_{i=0}^{p}\prod_{j=1}^{m_{i}}
	\mathrm{GL}(n_{[i,j]},\mathbb{C})\cong 
	\mathbf{G}(\alpha^{(2)})$-equivariant, we have the well-defined map.
	The inverse maps can be defined as we saw in Step 1. 
	Thus it is bijective.
\end{proof}
\subsubsection{Moduli space and quiver varieties}
Now we are ready to consider $\mathfrak{M}(\mathbf{B})$. 
As we will see later, the irreducibility of differential equations does not 
coincide with the irreducibility of representations of quiver under the 
bijection in Proposition \ref{trunc order2}. Thus we shall introduce 
a weaker condition which is called $\mathcal{L}$-irreducibility in this paper.

Let us define a sublattice $\tilde{\mathcal{L}}$ of 
$\mathbb{Z}^{\mathsf{Q^{(2)}}_{0}}$ by 
	\[
		\tilde{\mathcal{L}}:=
	\left\{
		\beta=(\beta_{a})_{a\in \mathsf{Q^{(2)}}_{0}}
		\in \mathbb{Z}^{\mathsf{Q^{(2)}}_{0}}\,
		\middle|\,
		\sum_{j=1}^{m_{0}}\beta_{[0,j]}=
		\sum_{j=1}^{m_{i}}\beta_{[i,j]}
		\text{ for all }i=1,\ldots,p
	\right\}
\]
\begin{df}[$\tilde{\mathcal{L}}$-irreducible]\label{ellirred}\normalfont
	An element in 
	\[\left\{\begin{aligned}
			x&=(x_{\rho})_{\rho\in 
	\overline{\mathsf{Q^{(2)}}}_{1}}\\
	&\in \mathrm{Rep\,}(\overline{\mathsf{Q^{(2)}}},\alpha^{(2)})
	\end{aligned}
        \,\middle|\, 
	\begin{array}{l}
	\mathrm{det}\left(x_{\rho^{[0,j]}_{[i,j']}}
        \right)_{
        \substack{1\le j\le m_{0}\\1\le j'\le m_{i}}}
        \neq 0\\
	\text{for all }i=1,\ldots,p,
        \\
	\mu_{\alpha^{(2)}}(x)_{[i,j]}\in C_{R_{[i,j]}},\,
	[i,j]\in \mathsf{Q^{(2)}}_{0}
	\end{array}
\right\}/\mathbf{G}(\alpha)\]
is said to be {\em $\tilde{\mathcal{L}}$-irreducible}, 
if it has no proper subrepresentation $y$ with the dimension 
vector $\mathbf{dim}(y)\in \tilde{\mathcal{L}}$ other than $\{0\}.$
\end{df}

\begin{prop}\label{preisom}
	Let $\mathbf{A}=(\sum_{j=1}^{2}A^{(i)}_{j}z^{-j})
	\in \prod_{i=0}^{p}\mathcal{O}_{B_{i}}$ 
	with $\sum_{i=0}^{p}A^{(i)}_{1}=0$ and 
	$x\in \mathrm{Rep}(\overline{\mathsf{Q^{(2)}}},\alpha^{(2)})$ 
	be the corresponding elements
under the map 	$\Phi$ in Proposition \ref{trunc order2}.
	If $\mathbf{A}$ 
	is irreducible, 
	then $x$ is $\tilde{\mathcal{L}}$-irreducible and vice versa.
\end{prop}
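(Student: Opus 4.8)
The plan is to reduce the statement to a single combinatorial dictionary: a bijection, realized through the map $\Phi$ of Proposition~\ref{trunc order2}, between the \emph{balanced} subrepresentations of $M$ — those $N\subset M$ whose dimension vector $\beta$ satisfies $\sum_{j=1}^{m^{(0)}}\beta_{[0,j]}=\sum_{j=1}^{m^{(i)}}\beta_{[i,j]}$ for all $i=1,\dots,p$ — and the subspaces $W\subset\mathbb{C}^{n}$ simultaneously invariant under all coefficient matrices $A^{(i)}_{j}$ of $\mathbf{A}$. Both notions appearing in the proposition are invariant under the relevant group actions (conjugation by $\mathrm{GL}(n,\mathbb{C})$ on the $\mathbf{A}$ side; the $\prod_{a}\mathrm{GL}(\alpha_{a},\mathbb{C})$-action on the $M$ side, which preserves dimension vectors of subrepresentations), so we may fix the explicit representative $M=\tilde{\Phi}(\mathbf{A})$ built in the proof of Proposition~\ref{trunc order2} after normalizing $G_{0}=I_{n}$. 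Granting the dictionary, the proposition is immediate: it visibly matches $\{0\}$ with $\{0\}$ and $M$ with $\mathbb{C}^{n}$, so $\mathbf{A}$ has no common invariant subspace other than $\{0\},\mathbb{C}^{n}$ exactly when $M$ has no balanced subrepresentation other than $\{0\},M$, i.e.\ exactly when $M$ is quasi-irreducible.

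To build the dictionary, I would first record, by reassembling the blocks in Proposition~\ref{trunc order2}, that as endomorphisms of $\mathbb{C}^{n}$ one has $\psi_{\rho^{(i)}}=G_{i}^{-1}$ (invertible, by the determinant condition defining $Z$) and $\psi_{(\rho^{(i)})^{*}}=A^{(i)}_{1}G_{i}$, and that $A^{(i)}_{2}=G_{i}B^{(i)}_{\text{irr}}G_{i}^{-1}$ with $V_{[i,j']}$ its eigenspaces. Given a subrepresentation $N\subset M$, put $W_{0}=\bigoplus_{j}N_{[0,j]}\subset\bigoplus_{j}V_{[0,j]}=\mathbb{C}^{n}$ and $W_{i}=\bigoplus_{j'}N_{[i,j']}$; since $\overline{\mathsf{Q}}$ has only the arrows $\rho^{[0,j]}_{[i,j']}$ and their reverses, $N$ being a subrepresentation is exactly the pair of inclusions $\psi_{\rho^{(i)}}(W_{0})\subset W_{i}$ and $\psi_{(\rho^{(i)})^{*}}(W_{i})\subset W_{0}$ for all $i\ge1$. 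If in addition $N$ is balanced then $\dim W_{i}=\dim W_{0}$, and since $\psi_{\rho^{(i)}}=G_{i}^{-1}$ is injective the first inclusion forces $W_{i}=G_{i}^{-1}(W_{0})$; set $W:=W_{0}$. In the other direction, starting from a common invariant subspace $W$, use that $A^{(0)}_{2}$ and each $B^{(i)}_{\text{irr}}$ are semisimple with pairwise distinct eigenvalues on the blocks to split $W=\bigoplus_{j}(W\cap V_{[0,j]})$ and, since $G_{i}^{-1}(W)$ is $B^{(i)}_{\text{irr}}$-invariant, $G_{i}^{-1}(W)=\bigoplus_{j'}(G_{i}^{-1}(W)\cap V_{[i,j']})$; define $N$ by $N_{[0,j]}=W\cap V_{[0,j]}$ and $N_{[i,j']}=G_{i}^{-1}(W)\cap V_{[i,j']}$.

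It then remains to verify that these assignments are well defined, mutually inverse, and land where claimed. For $N\mapsto W$: $W$ is $A^{(0)}_{2}$-invariant as a sum of subspaces of the $V_{[0,j]}$; it is $A^{(i)}_{2}$-invariant because $W_{i}=G_{i}^{-1}(W)$ is a sum of subspaces of the $V_{[i,j']}$ and $A^{(i)}_{2}=G_{i}B^{(i)}_{\text{irr}}G_{i}^{-1}$; it is $A^{(i)}_{1}$-invariant for $i\ge1$ since $A^{(i)}_{1}(W)=\psi_{(\rho^{(i)})^{*}}\psi_{\rho^{(i)}}(W)=\psi_{(\rho^{(i)})^{*}}(W_{i})\subset W$, hence also $A^{(0)}_{1}$-invariant because $A^{(0)}_{1}=-\sum_{i\ge1}A^{(i)}_{1}$. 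For $W\mapsto N$: the inclusions $\psi_{\rho^{(i)}}(W)=G_{i}^{-1}(W)=W_{i}$ and $\psi_{(\rho^{(i)})^{*}}(W_{i})=A^{(i)}_{1}(W)\subset W$ show $N$ is a subrepresentation, equality of the dimensions $\dim W_{i}=\dim W$ shows it is balanced, and because $W=\bigoplus_{j}(W\cap V_{[0,j]})$ and $W_{i}=\bigoplus_{j'}(W_{i}\cap V_{[i,j']})$ the two constructions invert one another. The only place requiring genuine care is the bookkeeping that matches the block normalizations of Proposition~\ref{trunc order2} to the endomorphisms $\psi_{\rho^{(i)}},\psi_{(\rho^{(i)})^{*}}$, together with the repeated, essential use of the determinant condition and of balancedness to promote inclusions of subspaces to equalities; beyond this there is no real obstacle.
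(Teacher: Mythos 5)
Your proposal is correct and follows essentially the same route as the paper: the paper proves the two implications directly (a common invariant subspace $W$ yields a balanced subrepresentation via $W^{(i)}=G_i^{-1}W$ intersected with the eigenspaces, and conversely a balanced subrepresentation yields $W=\bigoplus_j \tilde M_{[0,j]}$, with balancedness plus invertibility of $\psi_{\rho^{(i)}}$ promoting $\psi_{\rho^{(i)}}(W)\subset\bigoplus_{j'}\tilde M_{[i,j']}$ to equality), which is exactly your dictionary. Your packaging as an explicit bijection, and your remark on equivariance justifying the choice of representative, are slightly more careful than the paper's write-up but contain no new idea.
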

\begin{proof}
	Suppose that $\mathbf{A}$ has a nontrivial 
	invariant subspace $W\subsetneqq
	\mathbb{C}^{n}$, i.e., $W$ is invariant under all $A^{(i)}_{j}$.
	Set $W^{(i)}:=(x_{\rho^{[0,j]}_{[i,j']}})_{
	\substack{1\le j\le m_{0}\\1\le j'\le m_{i}}}W
	\cong W$ for $i=1,\ldots,p$ and $W^{(0)}:=W$.
	Also set
	\begin{align*}
		\tilde{V}_{[i,j]}&:=W^{(i)}\cap V(c_{[i,j]}), &
		\tilde{x}_{\rho^{[0,j]}_{[i,j']}}=&
		x_{\rho^{[0,j]}_{[i,j']}}|_{W^{(0)}},\\
		\tilde{x}_{\left(\rho^{[0,j]}_{[i,j']}
		\right)^{*}}&=
		x_{\left(\rho^{[0,j]}_{[i,j']}
		\right)^{*}}|_{W^{(i)}}.
	\end{align*}
	Then $\tilde{x}=(\tilde{V}_{a},
	\tilde{x}_{\rho})_{a\in \mathsf{Q^{(2)}}_{0},
\rho\in \overline{\mathsf{Q^{(2)}}}_{1}}$ defines
	a subrepresentation of $x$.
	Since $W$ is $A^{(i)}_{2}$-invariant,
	$W^{(i)}$ is $G_{i}^{-1}A^{(i)}_{2}G_{i}=(B_{i})_{\text{irr}}$-
	invariant. 
	Thus we have $W^{(i)}=\oplus_{j=1}^{m_{i}}\tilde{V}_{[i,j]}$,
	which shows that $\sum_{j=1}^{m_{0}}\mathrm{dim}_{\mathbb{C}}
	\tilde{V}_{[0,j]}=\cdots=
	\sum_{j=1}^{m_{p}}\mathrm{dim}_{\mathbb{C}}\tilde{V}_{[p,j]}$.
	Finally we need to check that 
	$\tilde{x}_{(\rho^{[0,j]}_{[i,j']})^{*}}
	(\tilde{V}_{[i,j']})\subset \tilde{V}_{[0,j]}$.
	To show this, it suffices to see that 
	$(\tilde{x}_{(\rho^{[0,j]}_{[i,j']})^{*}})_{
	\substack{1\le j\le m_{0}\\1\le j'\le m_{i}}}
	W^{(i)}\subset W$,
	which follows from the fact
	that 
	\begin{align*}
		(\tilde{x}_{(\rho^{[0,j]}_{[i,j']})^{*}})_{
	\substack{1\le j\le m_{0}\\1\le j'\le m_{i}}}
	W^{(i)}&=(x_{(\rho^{[0,j]}_{[i,j']})^{*}})_{
	\substack{1\le j\le m_{0}\\1\le j'\le m_{i}}}
	W^{(i)}\\
	&=(x_{(\rho^{[0,j]}_{[i,j']})^{*}})_{
	\substack{1\le j\le m_{0}\\1\le j'\le m_{i}}}
(x_{\rho^{[0,j]}_{[i,j']}})_{
	\substack{1\le j\le m_{0}\\1\le j'\le m_{i}}}W\\
	&=
	A^{(i)}_{1}W\subset W.
\end{align*}

	Conversely suppose that $x$ has a nontrivial proper subrepresentation 
	$\tilde{x}=(\tilde{V}_{a},\tilde{x}_{\rho})$
	satisfying 
	$\sum_{j=1}^{m_{0}}\mathrm{dim}_{\mathbb{C}}\tilde{V}_{[0,j]}=\cdots
	=\sum_{j=1}^{m_{p}}\mathrm{dim}_{\mathbb{C}}\tilde{V}_{[p,j]}$.
	Then $W=\bigoplus_{j=1}^{m^{(0)}}\tilde{V}_{[0,j]}$ is an 
	$\mathbf{A}$-invariant subspace.
	Indeed $W$ is $(A_{1}^{(0)},A_{2}^{(0)})$-invariant. Also for 
	$i=1,\ldots,p$,
	set $W^{(i)}:=(x_{\rho^{[0,j]}_{[i,j']}})_{
	\substack{1\le j\le m_{0}\\1\le j'\le m_{i}}}W
	\subset \oplus_{j=1}^{m_{i}}\tilde{V}_{[i,j]}$.
	Then we have 
	\[
		\sum_{j=1}^{m_{i}}\mathrm{dim}_{\mathbb{C}}
	\tilde{V}_{[i,j]}
	=\sum_{j=1}^{m_{0}}\mathrm{dim}_{\mathbb{C}}\tilde{V}_{[0,j]}
	=\mathrm{dim_{\mathbb{C}}}W=\mathrm{dim}_{\mathbb{C}}W^{(i)},
\]
	which implies that 
	$W^{(i)}=\oplus_{j=1}^{m_{i}}\tilde{V}_{[i,j]}.$
	Thus since 
	\[(x_{\rho^{[0,j]}_{[i,j']}})_{
	\substack{1\le j\le m_{0}\\1\le j'\le m_{i}}}
	(x_{(\rho^{[0,j]}_{[i,j']})^{*}})_{
	\substack{1\le j\le m_{0}\\1\le j'\le m_{i}}}
	=G_{i}^{-1}A^{(i)}_{1}G_{i},
\] $W^{(i)}$ is $G_{i}^{-1}A^{(i)}_{1}G_{i}
$-invariant,
	which shows that 
	$W=G_{i}W^{(i)}$ is $(A^{(i)}_{1},A^{(i)}_{2})$-invariant
	for each $i=1,\ldots,p$. 
\end{proof}

Finally let us give a realization of $\mathfrak{M}(\mathbf{B})$ as a 
subset of a quiver variety $\mathfrak{M}_{\lambda}(\mathsf{Q},\alpha)$
defined as below.

For each $i=0,\ldots,p$ and $j=1,\ldots,m_{i}$, we choose 
$\xi_{[i,j,k]}\in \mathbb{C}$, $k=1,\ldots,e_{[i,j]}$ so that
\[
	\prod_{k=1}^{e_{[i,j]}}(R_{[i,j]}-\xi_{[i,j,k]}I_{n_{[i,j]}})=0.
\]
Then the quiver $\mathsf{Q}$ is defined by the set of vertices 
\[
	\mathsf{Q}_{0}:=\left\{[i,j,k]\,\middle|\,
		\begin{array}{l}
			i=0,\ldots,p,\\
			j=1,\ldots,m_{i},\\
			k=1,\ldots,e_{[i,j]}-1
		\end{array}
	\right\}
\]
and the set of arrows
\begin{align*}
	\mathsf{Q}_{1}:=&
	\left\{
		\rho^{[0,j]}_{[i,j']}\colon
		[0,j]\rightarrow [i,j']\,\middle|\,
		\begin{array}{l}
			j=1,\ldots,m_{0},\\
			i=1,\ldots,p,\\
			j'=1,\ldots,m_{i}
		\end{array}
	\right\}\\
	&\bigsqcup
	\left\{
		\rho_{[i,j,k]}\colon
		[i,j,k]\rightarrow [i,j,k-1]\,
		\middle|\,
		\begin{array}{l}
		i=0,\ldots,p,\\
		j=1,\ldots,m_{i},\\
		k=1,\ldots,e_{[i,j]}-1
		\end{array}
	\right\}.
\end{align*}
Here we set $[i,j,0]:=[i,j]$.
Define a dimension vector 
$\alpha=(\alpha_{a})_{a\in \mathsf{Q}_{0}}$
by 
\begin{align*}
	\alpha_{[i,j]}&:=n_{[i,j]}&
	\alpha_{[i,j,k]}:=\mathrm{rank}\prod_{l=1}^{k}(R_{[i,j]}-
	\xi_{[i,j,l]}I_{n_{[i,j]}}).
\end{align*}
Also define $\lambda=(\lambda_{a})_{a\in \mathsf{Q}_{0}}\in 
\mathbb{C}^{\mathsf{Q}_{0}}$ by 
\begin{align*}
	\lambda_{[i,j]}&:=-\xi_{[i,j,1]}&
	\lambda_{[i,j,k]}&:=\xi_{[i,j,k]}-\xi_{[i,j,k+1]}
\end{align*}
where $\xi_{[i,j,e_{[i,j]}]}:=0$.
Let us define a sublattice 
\[
	\mathcal{L}:=
	\left\{
		\beta=(\beta_{a})_{a\in \mathsf{Q}_{0}}
		\in \mathbb{Z}^{\mathsf{Q}_{0}}\,
		\middle|\,
		\sum_{j=1}^{m_{0}}\beta_{[0,j]}=
		\sum_{j=1}^{m_{i}}\beta_{[i,j]}
		\text{ for all }i=1,\ldots,p
	\right\}\subset 
	\mathbb{Z}^{\mathsf{Q}_{0}}
\]
Then we define a subset of the quiver variety $\mathfrak{M}_{\lambda}
(\mathsf{Q},\alpha)$ as follows,
\[
	\mathfrak{M}_{\lambda}(\mathsf{Q},\alpha)^{\text{dif}}:=
	\mu_{\alpha}^{-1}(\lambda)^{\text{dif}}/\mathbf{G}(\alpha)
\]
where 
\[
	\mu_{\alpha}^{-1}(\lambda)^{\text{dif}}:=
	\left\{
		x\in\mu_{\alpha}^{-1}(\lambda)\,\middle|\,
		\begin{array}{l}
		x\text{ is $\mathcal{L}$-irreducible},\\
		\mathrm{det\,}(x_{\rho^{[0,j]}_{[i,j']}})_{
		\substack{1\le j\le m_{0}\\
	1\le j'\le m_{i}}}\neq 0,\,i=1,\ldots,p
	\end{array}
	\right\}.
\]
Here $\mathcal{L}$-irreducibility is defined as in Definition \ref{ellirred}.
Then from Proposition \ref{conjclass}, \ref{trunc order2}, and \ref{preisom},
we obtain the following identification.
\begin{thm}\label{preproof}
	We have a bijection
	\[
		\mathfrak{M}(\mathbf{B})\longrightarrow
		\mathfrak{M}_{\lambda}(\mathsf{Q},\alpha)^{\text{dif}}.
	\]
\end{thm}
\begin{proof}
	By Proposition \ref{conjclass}, \ref{trunc order2}, and 
	\ref{preisom}, it suffices to see that 
	$x\in \mu^{-1}(\lambda)^{\text{dif}}$
	implies
	that $x_{\rho_{[i,j,k]}}$ are injective and 
	$x_{\left(\rho_{[i,j,k]}\right)^{*}}$ are surjective.
	This can be checked similarly to the proof of Theorem 1 in \cite{C}.
	Indeed, if there exists $x_{\rho_{[i,j,k]}}$ which is not 
	injective, then there exists a nonzero element $v \in 
	\mathrm{Ker}(x_{\rho_{[i,j,k]}})$.
	Set $v_{k}:=v$ and $v_{l+1}:=
	\psi_{\left(\rho_{[i,j,l+1]}\right)^*}(v_{l})$ for $l\le k$.
	Then the  relation
	\[
		x_{\rho_{[i,j,l+1]}}
		x_{\left(\rho_{[i,j,l+1]}\right)^*}-
		x_{\left(\rho_{[i,j,l]}\right)^*}
		x_{\rho_{[i,j,l]}}=
		\lambda_{[i,j,l]}
	\]
	shows that $x_{\rho_{[i,j,l+1]}}(v_{l+1})$ is a
	multiple of $v_{l}$ for $l\ge k$.
	Thus  $v_{l}$, $l\le k$, span a subrepresentaion of $x$,
	which contradicts to the $\mathcal{L}$-irreducibility of $x$.
	A dual argument shows that 
	$x_{\left(\rho_{[i,j,k]}\right)^{*}}$ are surjective.
	\end{proof}

\begin{rem}\normalfont
	In the above theorem, we obtain an isomorphism between 
	the moduli space of meromorphic connections $\mathfrak{M}(\mathbf{B})$
	and the subset $\mathfrak{M}_{\lambda}(\mathsf{Q},\alpha)^{\text{dif}}$
	of the quiver variety.
	However we should notice that $\mathfrak{M}_{\lambda}(\mathsf{Q},\alpha)^{\text{dif}}$ does not coincide with $\mathfrak{M}^{\text{reg}}_{\lambda}(
	\mathsf{Q},\alpha)$ since 
	we imposed the $\mathrm{det\,}(x_{\rho^{[0,j]}_{[i,j']}})_{
	\substack{1\le j\le m_{0},\\1\le j'\le m_{i}}}\neq 0$ and 
	the $\mathcal{L}$-irreducibility does not coincide with the 
	irreducibility in general.
	Thus Crawley-Boevey's theorem (see Theorem \ref{CB})
	is not 
	applicable directly to our case.
\end{rem}
\subsection{Truncated orbits and representations of quivers}\label{Section3}
Let us recall the description of truncated orbits 
$\mathcal{O}_{B}$ of arbitrary orders $k$ as quiver varieties.
The description which will be given in this section was 
obtained by Boalch in \cite{Boarx} when $k\le 3$ and conjectured 
for arbitrary $k$.
In this section we shall give a quiver picture of truncated orbits 
of arbitrary orders following the paper by Yamakawa and the author \cite{HY}
in which the above Boalch's conjecture was finally settled.

Fix $k>1$
and $B=\sum_{i=1}^{k}B_{i}z^{-i}\in \mathfrak{g}^{*}_{k}$ 
, an HTL normal form written by 
\[
    B=\mathrm{diag}\left(
    q_{1}(z^{-1})I_{n_{1}}+R_{1}z^{-1},\ldots,
    q_{m}(z^{-1})I_{n_{m}}+R_{m}z^{-1}
    \right)
\]
where $R_{i}\in M(n_{i},\mathbb{C})$,
$q_{i}(z^{-1})\in z^{-2}\mathbb{C}[z^{-1}]$, $i=1,\ldots,m$ and 
$q_{i}\neq q_{j}$ if $i\neq j$.
To a pair $(j,j')$, $1\le j\neq j'\le  m$, we attach an integer
\begin{equation}\label{difference}
\begin{aligned}
d(j,j')
&:=\mathrm{deg\,}_{\mathbb{C}[x]}(q_{j}(x)-q_{j'}(x))-2.
\end{aligned}
\end{equation}
Moreover we set $d(j,j):=-1$ for the latter use.

Let $\bigoplus_{j=1}^{m(s)}V_{\langle s,j\rangle}$
be the decomposition of $\mathbb{C}^{n}$ as simultaneous invariant spaces of 
$\{B_{s+1},B_{s+2},\ldots,B_{k}\}$ for $s=1,\ldots,k-1$.
Especially we write $V_{j}:=V_{\langle 1,j\rangle}$ for 
$j=1,\ldots,m=m(1)$.

Let $X_{j,j'}$ be 
the $\mathrm{Hom}_{\mathbb{C}}(V_{j'},
V_{j})$-component of  $X\in M(n,\mathbb{C})$.
For 
$g(z)=\sum_{i=r}^{\infty}g_{i}z^{i}\in M(n,\mathbb{C}(\!(z)\!))$, 
write $(g(z))_{j,j'}:=\sum_{i=r}^{\infty}(g_{i})_{j,j'}z^{i}$,
$1\le j,j'\le m$.
We denote the
$\mathrm{Hom}_{\mathbb{C}}(V_{j},\mathbb{C}^{n})$-component
of $X\in M(n,\mathbb{C})$
by $X_{*,j}$ for each $i=1,\ldots,m$.
Similarly $X_{j,*}$ denote the 
$\mathrm{Hom}_{\mathbb{C}}(\mathbb{C}^{n},V_{j})$-
component.
We sometimes use the notation 
$X=(X_{j,j'})_{1\le j,j'\le m}=(X_{*,j'})_{1\le j'\le m}=
(X_{j,*})_{1\le j\le m}$.

Let $\pi_{s}:J_{s}
=\{1,\ldots,m(s)\}\rightarrow J_{s+1}=\{1,\ldots,m(s+1)\}$
be the natural surjection such that 
$V_{\langle s,j\rangle }\subset V_{\langle s+1,\pi_{s}(j)\rangle}$.
Define the total ordering $\{1<2<\cdots<m\}$ on  $J_{1}$  and 
also define total orderings on $J_{s}$, $s=2,\ldots,k-1$,
so that 
\[
    \text{if }  j_{1}<j_{2}, \text{ then }
    \pi_{s}(j_{1})\le \pi_{s}(j_{2}),\quad j_{1},j_{2}\in J_{s}.
\]

Let us define the  subgroup of $G_{k}$ by
\[
    G^{o}_{k}:=\left\{\sum_{i=0}^{k-1}A_{i}z^{i}\in G_{k}
    \,\bigg|\, A_{0}=I_{n}
    \right\}.
\]

Similarly define the subspace $\mathfrak{g}_{k}^{o}:=z\mathfrak{g}_{k}=
M\left(n,z\mathbb{C}[\![z]\!]/z^{k}\mathbb{C}[\![z]\!]\right)$
of $\mathfrak{g}_{k}$, which can be identified with 
\[
	\left\{
		\sum_{i=0}^{k-1}A_{i}z^{i}\in \mathfrak{g}_{k}
		\,\bigg|\, A_{0}=0
	\right\}.
\]
Then the dual space $(\mathfrak{g}_{k}^{o})^{*}$ can be identified with 
\[
	\left\{
		\sum_{i=0}^{k-1}A_{i}z^{-i-1}\in\mathfrak{g}^{*}_{k}
		\,\bigg|\, A_{0}=0
	\right\}.
\]
For $A=\sum_{i=1}^{k}A_{i}z^{-i}\in \mathfrak{g}^{*}_{k}$,
set $A_{\text{irr}}:=\sum_{i=2}^{k}A_{i}z^{-i}$ and 
$A_{\text{res}}:=A_{1}$.
Then we define the following two orbits 
\begin{align*}
	\mathcal{O}^{o}_{B}&:=
	\{gBg^{-1}\mid g\in G^{o}_{k}\}\subset \mathfrak{g}_{k}^{*},\\
	\mathcal{O}_{B_{\text{irr}}}&:
	=\{gB_{\text{irr}}g^{-1}\mid g\in G^{o}_{k}\}
\subset (\mathfrak{g}_{k}^{o})^{*}.
\end{align*}
Let us define the subgroup $H\subset \mathrm{GL}(n,\mathbb{C})$ by
$$H=\left\{
	h=\mathrm{diag}(h_{1},\ldots,h_{m})\mid h_{i}\in 
	\mathrm{GL}(n_{i},\mathbb{C}),\,
	i=1,\ldots,m
\right\}.$$
The following proposition links $\mathcal{O}^{o}_B$ with $\mathcal{O}_{B}$.
\begin{prop}[cf. Lemmas 2.2 and 2.4 in \cite{Boa1}]\label{modified truncated orbit}
	Set $$\mathrm{Ad}_{H}(\mathcal{O}^{o}_{B}):=
	\left\{
		hAh^{-1}\in \mathfrak{g}_{k}^{*}\mid
		h\in H,\,A\in \mathcal{O}^{o}_{B}
	\right\}.$$
	Then we have a bijection
	\[
		\mathrm{GL}(n,\mathbb{C})
		\times_{H}\mathrm{Ad}_{H}(\mathcal{O}^{o}_{B})
		\stackrel{\sim}{\rightarrow} \mathcal{O}_{B}.
	\]
	Here $\mathrm{GL}(n,\mathbb{C})
	\times_{H}\mathrm{Ad}_{H}(\mathcal{O}^{o}_{B})=
	\left(
	\mathrm{GL}(n,\mathbb{C})\times\mathrm{Ad}_{H}(\mathcal{O}^{o}_{B})
	\right)/\sim$,
	the equivalence relation $\sim$ is defined by
	$(g,A)\sim (gh^{-1},hAh^{-1})$ for $h\in H$.
\end{prop}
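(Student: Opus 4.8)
The plan is to construct the bijection explicitly and check it is well-defined, injective and surjective, using the two assertions of Lemma~\ref{trunc}-type splitting that are already available in this section (and recorded in \cite{HY}, \cite{Boa1}). First I would define the map $\mathrm{GL}(n,\C)\times_{H}\Ad_{H}(\mathcal{O}^{o}_{B})\to\mathcal{O}_{B}$ by sending the class of $(g,A)$ to $gAg^{-1}$. The key point making this well-defined is that $\Ad_{H}(\mathcal{O}^{o}_{B})\subset\mathcal{O}_{B}$: indeed $\mathcal{O}^{o}_{B}\subset\mathcal{O}_{B}$ since $G^{o}_{k}\subset G_{k}$, and $\mathcal{O}_{B}$ is stable under conjugation by $\mathrm{GL}(n,\C)\subset G_{k}$ (constant invertible matrices), so conjugating by $h\in H$ keeps us in $\mathcal{O}_{B}$; then conjugating again by arbitrary $g\in\mathrm{GL}(n,\C)$ stays in $\mathcal{O}_{B}$. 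Compatibility with the relation $(g,A)\sim(gh^{-1},hAh^{-1})$ is immediate since $g(A)g^{-1}=(gh^{-1})(hAh^{-1})(gh^{-1})^{-1}$.

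Next I would prove surjectivity. Take any $A'\in\mathcal{O}_{B}$, so $A'=\tilde g B\tilde g^{-1}$ for some $\tilde g=\sum_{i\ge 0}\tilde g_{i}x^{i}\in G_{k}$ with $\tilde g_{0}\in\mathrm{GL}(n,\C)$. Write $\tilde g=\tilde g_{0}\cdot g^{o}$ where $g^{o}=\tilde g_{0}^{-1}\tilde g\in G^{o}_{k}$. Then $A'=\tilde g_{0}\,(g^{o}B(g^{o})^{-1})\,\tilde g_{0}^{-1}$ with $g^{o}B(g^{o})^{-1}\in\mathcal{O}^{o}_{B}\subset\Ad_{H}(\mathcal{O}^{o}_{B})$ (taking $h=I_{n}$). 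Hence $A'$ is the image of the class of $(\tilde g_{0}, g^{o}B(g^{o})^{-1})$, proving surjectivity. (Note that the $H$-twist is not even needed for surjectivity; it is forced on us by the demand for injectivity.)

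The heart of the matter — and the step I expect to be the main obstacle — is injectivity, i.e.\ showing that if $g_{1}A_{1}g_{1}^{-1}=g_{2}A_{2}g_{2}^{-1}$ with $g_{i}\in\mathrm{GL}(n,\C)$ and $A_{i}\in\Ad_{H}(\mathcal{O}^{o}_{B})$, then $(g_{1},A_{1})\sim(g_{2},A_{2})$, which amounts to showing $g_{2}^{-1}g_{1}\in H$ and $A_{2}=(g_{2}^{-1}g_{1})A_{1}(g_{2}^{-1}g_{1})^{-1}$. Setting $c=g_{2}^{-1}g_{1}\in\mathrm{GL}(n,\C)$, the hypothesis becomes $cA_{1}c^{-1}=A_{2}$, and the second desired equality is then automatic, so everything reduces to: \emph{if $c\in\mathrm{GL}(n,\C)$ conjugates one element of $\Ad_{H}(\mathcal{O}^{o}_{B})$ to another, then $c\in H$}. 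To see this, write $A_{i}=h_{i}g^{o}_{i}B(g^{o}_{i})^{-1}h_{i}^{-1}$ with $h_{i}\in H$, $g^{o}_{i}\in G^{o}_{k}$; then $x=(h_{2}^{-1}c h_{1})g^{o}_{1}\in G_{k}$ and $y=g^{o}_{2}\in G^{o}_{k}\subset G_{k}$ both satisfy $x[B]=x B x^{-1}$ (no derivative term, $B$ having constant coefficients under $G_{k}$-conjugation... more precisely they satisfy $xBx^{-1}=yBy^{-1}$, so $y^{-1}x$ stabilizes $B$ under conjugation). The leading coefficient of $y^{-1}x$ is $h_{1}^{-1}c^{-1}h_{2}$, and one shows using the structure of $B$ — the pairwise distinct irregular parts $q_{j}$ together with the splitting statement (the analogue of the second assertion of Lemma~\ref{trunc}, valid for all degrees $k$) — that any element of $G_{k}$ commuting with $B$ under conjugation has leading coefficient lying in $H$. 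Hence $h_{1}^{-1}c^{-1}h_{2}\in H$, so $c\in H$, and finally $g_{2}^{-1}g_{1}=c\in H$ gives $(g_{1},A_{1})\sim(g_{2},A_{2})$, completing the proof.
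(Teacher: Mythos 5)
Your proof is correct and follows essentially the same route as the paper: the map $(g,A)\mapsto gAg^{-1}$, surjectivity via the factorization $\tilde g=\tilde g_{0}g^{o}$ with $g^{o}\in G^{o}_{k}$, and injectivity reduced to the fact that any element of $G_{k}$ stabilizing $B$ under conjugation has constant term in $H$ (the paper invokes Lemma 2.9 of \cite{HY}, which gives the slightly stronger conclusion that the constant term lies in $\mathrm{Stab}_{H}(B_{1})$). The only blemish is a harmless slip: the leading coefficient of $y^{-1}x$ is $h_{2}^{-1}ch_{1}$ rather than $h_{1}^{-1}c^{-1}h_{2}$, which makes no difference since $H$ is a group.
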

\begin{proof}
	Let us send 
	$(g,A)\in 
	\mathrm{GL}(n,\mathbb{C})\times_{H}\mathrm{Ad}_{H}(
	\mathcal{O}^{o}_{B})$ to
	$gAg^{-1}\in \mathcal{O}_{B}$ and show that this map is 
	well-defined and bijective.
	If $(g,A)=(g',A')\in \mathrm{GL}(n,\mathbb{C})
	\times_{H}\mathrm{Ad}_{H}(\mathcal{O}^{o}_{B})$,
	then there exists $h\in H$ such that 
	$g'=gh^{-1}$ and $A'=hAh^{-1}$. Thus $g'A'(g')^{-1}
	=gh^{-1}hAh^{-1}hg^{-1}
	=gAg^{-1}$.
	Let us see the surjectivity. For any $A'\in \mathcal{O}_{B}$,
	there exists $g=g_{0}+g_{1}x+\cdots\in G_{k}$ such that 
	$g^{-1}A'g=B$. Then $(g_{0}^{-1},g_{0}A'g_{0}^{-1})\in 
	\mathrm{GL}(n,\mathbb{C})\times \mathrm{Ad}_{H}(\mathcal{O}^{o}_{B})$
	and $g_{0}^{-1}(g_{0}A'g_{0}^{-1})g_{0}=A'$, 
	which shows the surjectivity.
	Next  we shall see the injectivity.
	Suppose that $(g,A),(g',A')\in 
	\mathrm{GL}(n,\mathbb{C})\times_{H}\mathrm{Ad}_{H}(
	\mathcal{O}^{o}_{B})$ are sent to the same element, i.e.,
	$gAg^{-1}=g'A'(g')^{-1}$.
	Then putting $h=g^{-1}g'$, we have $A'=h^{-1}Ah$.
	On the other hand, we may assume $A,A'\in \mathcal{O}^{o}_{B}$.
	Thus there exist $b,b'\in G_{k}^{o}$ such that 
	$b^{-1}Ab=(b')^{-1}A'b'=B$ which implies 
	$B=b^{-1}Ab=(hb')^{-1}A(hb')$.
	Lemma 2.9 in \cite{HY} shows that 
	the constant term of $b'hb^{-1}\in G_{k}$ is contained in 
	$\mathrm{Stab}_{H}(B_{1})$, the stabilizer of $B_{1}$ in $H$,
	i.e., $h\in \mathrm{Stab}_{H}(B_{1})\subset H$.
\end{proof}

According to the  ordering on each $J_{s}$, $s=1,\ldots,k-1$, 
let us define parabolic subalgebras of $M(n,\mathbb{C})$ as below,
\begin{align*}
	\mathfrak{p}(s)^{+}&:=\bigoplus_{\substack{j_{1},j_{2}\in J_{i},\\
    j_{1}\ge j_{2}}}
    \mathrm{Hom}_{\mathbb{C}}(V_{\langle s,j_{1}\rangle},
    V_{\langle s,j_{2}\rangle}),\\
    \mathfrak{p}(s)^{-}&:=\bigoplus_{\substack{j_{1},j_{2}\in J_{i},\\
    j_{1}\le j_{2}}}
    \mathrm{Hom}_{\mathbb{C}}(V_{\langle s,j_{1}\rangle},
    V_{\langle s,j_{2}\rangle}),
\end{align*}
and  similarly nilpotent subalgebras
\begin{align*}
	\mathfrak{u}(s)^{+}&:=\bigoplus_{\substack{j_{1},j_{2}\in J_{i},\\
    j_{1}> j_{2}}}
    \mathrm{Hom}_{\mathbb{C}}(V_{\langle s,j_{1}\rangle},
    V_{\langle s,j_{2}\rangle}),\\
    \mathfrak{u}(s)^{-}&:=\bigoplus_{\substack{j_{1},j_{2}\in J_{i},\\
    j_{1}< j_{2}}}
    \mathrm{Hom}_{\mathbb{C}}(V_{\langle s,j_{1}\rangle},
    V_{\langle s,j_{2}\rangle}),
\end{align*}
for $s=1,\ldots,k-1$.
Note that $\mathfrak{p}(s)^{\pm}=\mathfrak{h}(s)\oplus 
\mathfrak{u}(s)^{\pm}$ where 
\[
	\mathfrak{h}(s)
:=\bigoplus_{j\in J_{i}}
\mathrm{End\,}_{\mathbb{C}}(V_{\langle s,j\rangle}).
\]

\begin{lem}(cf. Lemma 3.4 in \cite{HY})\label{centralizer}
	An element $g\in G^{o}_{k}$ preserves 
	$B+(\mathfrak{u}^{+}_{1}\oplus \mathfrak{u}^{-}_{1})x^{-1}$,
	i.e., $g^{-1}bg\in B+(\mathfrak{u}^{+}_{1}\oplus
	\mathfrak{u}^{-}_{1})x^{-1}$ for all $b\in B
	+(\mathfrak{u}^{+}_{1}\oplus
	\mathfrak{u}^{-}_{1})x^{-1}$
	if and only if 
	\[
		g\in \mathcal{H}_{k}=
		\left\{
			h:=\sum_{i=1}^{k-1}h_{i}z^{i}\in G^{o}_{k}\,\bigg|\,
			h_{i}\in \mathfrak{h}_{i+1} \text{ for }
			i=1,\ldots,k-1
		\right\}.
	\]
	Here we put $\mathfrak{h}_{k}:=M(n,\mathbb{C})$.
\end{lem}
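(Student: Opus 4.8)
The statement is an ``if and only if'' describing the stabilizer, inside $G^o_k$, of the affine subspace $B+(\mathfrak{u}^+_1\oplus\mathfrak{u}^-_1)x^{-1}$ under conjugation. The plan is to work order by order in $x$, exploiting that $B=B_{\text{irr}}+B_1x^{-1}$ with $B_{\text{irr}}$ diagonal and that conjugation by $g=I_n+g_1x+\cdots+g_{k-1}x^{k-1}\in G^o_k$ acts on $\mathfrak{g}^*_k$ in a way whose leading interaction with $B$ is controlled by the commutators $[g_i,B_j]$. Concretely, for $b\in B+(\mathfrak{u}^+_1\oplus\mathfrak{u}^-_1)x^{-1}$, the element $g^{-1}bg$ differs from $b$ by a sum of terms, and the coefficient of $x^{-i}$ for $i\ge2$ in $g^{-1}bg-b$ has ``top part'' $[B_{\text{irr}},g_{?}]$ (the commutator of a diagonal matrix with a block of $g$); since $B_{\text{irr}}$ has distinct eigenvalues $q_1,\dots,q_m$ on the blocks $V_1,\dots,V_m$, the off-diagonal ($\mathfrak{u}^+_1\oplus\mathfrak{u}^-_1$) part of such a commutator is a nonzero rescaling of the corresponding off-diagonal part of $g_?$, whereas the $\mathfrak{h}_1$-diagonal part of the commutator vanishes identically. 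This is the mechanism that forces each $g_i$ to lie in $\mathfrak{h}_{i+1}$.

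\textbf{Key steps.} First I would fix $b=B$ itself and expand $g^{-1}Bg$, or equivalently analyze the equation $Bg=gb'$ with $b'=g^{-1}Bg$, collecting coefficients of $x^{-i}$. The governing recursion reads, schematically, ``(coefficient of $x^{-i}$ in $b'-B$) $=[B_{\text{irr}},g_{\,k-i}]+(\text{terms involving }g_1,\dots,g_{k-i-1}\text{ and }B_1,\text{plus }\tfrac{d}{dx}\text{ contributions})$''. Starting from the most negative order $x^{-k}$: the coefficient there must lie in $B_{\text{irr}}$'s block-diagonal part plus $(\mathfrak{u}^\pm_1)x^{-1}$-shifted data, but at order $x^{-k}$ with $k>1$ there is no $\mathfrak{u}^\pm_1 x^{-1}$ contribution allowed (that piece only affects order $x^{-1}$), so the off-diagonal part of $[B_{\text{irr}},g_{1}]$ must vanish; by distinctness of the $q_j$ this forces $(g_1)_{j,j'}=0$ for $j\ne j'$, i.e.\ $g_1\in\mathfrak{h}_2$ (one checks the relevant blocks are exactly those of $\mathfrak{h}_2$, not merely $\mathfrak{h}_1$, because the condition must also be compatible with preservation of the \emph{whole} affine space, not just of $B$ — this is where one plugs in a general $b$ with nonzero $\mathfrak{u}^\pm_1$-component and matches the $\mathfrak{u}_2$-refinement of the flag). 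Then I would induct: assuming $g_1,\dots,g_{i-1}\in\mathfrak{h}_{2},\dots,\mathfrak{h}_{i}$ respectively, the coefficient of $x^{-(k-i)}$ in $g^{-1}bg-b$ has leading term $[B_{\text{irr}},g_i]$ modulo terms already known to be block-diagonal in the appropriate sense, and the same eigenvalue-separation argument yields $g_i\in\mathfrak{h}_{i+1}$. The converse inclusion $\mathcal{H}_k\subset\{\text{stabilizer}\}$ is the easy direction: if every $g_i\in\mathfrak{h}_{i+1}$, one checks directly that conjugation by $g$ sends $B$ to something in $B+(\mathfrak{u}^\pm_1)x^{-1}$ and sends $(\mathfrak{u}^\pm_1)x^{-1}$ into itself, using that $\mathfrak{h}_{i+1}$ normalizes the relevant nilpotent pieces and that $B_{\text{irr}}$ commutes with all of $\mathfrak{h}_1\supset\mathfrak{h}_{i+1}$.

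\textbf{Main obstacle.} The delicate point is pinning down exactly \emph{which} $\mathfrak{h}_i$ appears at each order — i.e.\ showing the condition is $g_i\in\mathfrak{h}_{i+1}$ rather than the weaker $g_i\in\mathfrak{h}_1$. This requires using not just conjugation-invariance of the single point $B$ but of the full affine space, and tracking how the finer flags $V_{\langle i,j\rangle}$ (the maximal common invariant subspaces of $(B_{i+1},\dots,B_k)$) organize the constraint: at order $x^{-(k-i)}$ the relevant ``diagonal'' algebra that survives is $\mathfrak{h}_{i+1}=\bigoplus_{j\in J_{i+1}}\mathrm{End}(V_{\langle i+1,j\rangle})$, because the polynomials $q_l$ that have already ``split apart'' by degree $k-i$ are exactly those distinguishing the $J_{i+1}$-blocks. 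Making this bookkeeping precise — aligning the order-filtration in $x$ with the flag-filtration indexed by the $J_i$ via the degree function $d(j,j')$ — is the technical heart; everything else is a routine commutator computation and an induction. I would also need Lemma~2.9 of \cite{HY} (invoked already in Proposition~\ref{modified truncated orbit}) or a direct re-derivation to handle the interaction of the constant term with $\mathrm{Stab}_H(B_1)$ cleanly, though here $g\in G^o_k$ has constant term $I_n$ so that subtlety is largely absent.
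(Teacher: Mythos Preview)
Your overall strategy of working order by order in $x$ is viable and is, in essence, a re-derivation of what the paper imports as a black box: the paper's proof of the forward direction is a one-liner citing Lemma~3.4 of \cite{HY}, which identifies $\mathcal{H}_k$ as the stabilizer of $B_{\text{irr}}$ in $(\mathfrak{g}^o_k)^*$. Since any $b\in B+(\mathfrak{u}^+_1\oplus\mathfrak{u}^-_1)x^{-1}$ has $\pr_{\text{irr}}(b)=B_{\text{irr}}$, preserving the affine space forces $g$ to stabilize $B_{\text{irr}}$, and that alone already yields $g\in\mathcal{H}_k$. The converse is then a direct computation showing that $gBg^{-1}-B$ has residue in $\mathfrak{u}^+_1\oplus\mathfrak{u}^-_1$.

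However, your execution contains a genuine gap. First, the claim that one must ``plug in a general $b$ with nonzero $\mathfrak{u}^\pm_1$-component'' to obtain the refined condition $g_i\in\mathfrak{h}_{i+1}$ is incorrect: stabilizing $B_{\text{irr}}$ alone (equivalently, taking $b=B$) already forces the full condition. Second, your bookkeeping is off. Recall that $\mathfrak{h}_1\subset\mathfrak{h}_2\subset\cdots\subset\mathfrak{h}_{k-1}\subset\mathfrak{h}_k=M(n,\mathbb{C})$, since the $J_i$ become coarser as $i$ grows. The coefficient of $x^{-(k-1)}$ gives $[g_1,B_k]=0$, which yields only the weak constraint $g_1\in\mathfrak{h}_{k-1}$, not $g_1\in\mathfrak{h}_2$. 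One must then proceed through the coefficients of $x^{-(k-2)},\ldots,x^{-2}$: at each step the relation $[g_1,B_{l}]+[g_2,B_{l+1}]+\cdots=0$, combined with the constraints already obtained, forces each $g_i$ into the next-finer $\mathfrak{h}_{\bullet}$, and only after all orders are exhausted does one conclude $g_i\in\mathfrak{h}_{i+1}$. Your phrase ``$\mathfrak{h}_2$, not merely $\mathfrak{h}_1$'' suggests the filtration is reversed in your mind; in fact $\mathfrak{h}_2\supset\mathfrak{h}_1$, and the issue is to show $g_1$ lands in $\mathfrak{h}_2$ rather than only in the much larger $\mathfrak{h}_{k-1}$.
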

\begin{proof}
	We know that $\mathcal{H}_{k}$ is the stabilizer of $B_{\text{irr}}$
	in $(\mathfrak{g}^{o}_{k})^{*}$, 
	see Lemma 3.4 in \cite{HY} for example.
	Thus if $g\in G^{o}_{k}$ preserves 
	$B+(\mathfrak{u}^{+}_{1}\oplus \mathfrak{u}^{-}_{1})z^{-1}$,
	then $g\in \mathcal{H}_{k}$.
	Conversely take $g\in \mathcal{H}_{k}$ and 
	put $u_{1}z^{-1}=gBg^{-1}-B\in M(n,\mathbb{C})z^{-1}$.
	Then $u_{1}=B_{1}+
	\sum_{t=2}^{k}\sum_{s=1}^{t-1}g_{s}B_{t}g'_{t-s-1}-B_{1}$.
	Here we put $g=I_{n}+g_{1}z+g_{2}z^{2}+\cdots$ and 
	$g^{-1}=I_{n}+g'_{1}z+g'_{2}z^{2}+\cdots$.
	Then we  have 
	\begin{align*}
		u_{1}&=
	\sum_{t=2}^{k}B_{t}\sum_{s=1}^{t-1}g_{s}g'_{t-s-1}+
	\sum_{t=2}^{k}(g_{t-1}B_{t}-B_{t}g_{t-1})\\
	&=\sum_{t=2}^{k}B_{t}\cdot 0
	+\sum_{t=2}^{k}(g_{t-1}B_{t}-B_{t}g_{t-1})\\
	&=\sum_{t=2}^{k}(g_{t-1}B_{t}-B_{t}g_{t-1})
	\in \mathfrak{u}_{1}^{+}\oplus \mathfrak{u}_{1}^{-}.
	\end{align*}
	Here we put $g_{0}=g'_{0}=I_{n}$.
	Thus  
	we are done.
\end{proof}

Let us define subsets of $G^{o}_{k}$,
\begin{align*}
	&\mathcal{P}^{\pm}_{k}:	=\left\{
        \sum_{i=0}^{k-1}P_{i}z^{i}\in G^{o}_{k}\,\bigg|\, P_{i}\in 
        \mathfrak{p}^{\pm}_{i+1}, i=1,\ldots,k-1
    \right\},\\
    &\mathcal{U}^{\pm}_{k}:=\left\{
        \sum_{i=0}^{k-1}U_{i}z^{i}\in G^{o}_{k}\,\bigg|\, U_{i}\in 
        \mathfrak{u}^{\pm}_{i+1}, i=1,\ldots,k-1
    \right\},
\end{align*}
and subspaces of $\mathfrak{g}^{o}_{k}$ and $(\mathfrak{g}^{o}_{k})^{*}$,
\begin{align*}
	\mathfrak{U}^{\pm}_{k}&:=
	\left\{
		\sum_{i=1}^{k-1}U_{i}z^{i}\,\bigg|\,
		U_{i}\in \mathfrak{u}^{\pm}_{i+1},\,
		i=1,\ldots,k-1
	\right\},\\
    (\mathfrak{U}^{\mp}_{k})^{*}&:=
    \left\{
        \sum_{i=1}^{k-1}U_{i}z^{-i-1}\,\bigg|\,
        U_{i}\in \mathfrak{u}^{\pm}_{i+1},\,
        i=1,\ldots,k-1
    \right\}.
\end{align*}
Here we put $\mathfrak{p}^{\pm}_{k}:=M(n,\mathbb{C})$ and 
$\mathfrak{u}^{\pm}_{k}:=\{0\}$.
Let us note that $\mathcal{P}^{\pm}_{k}$ are subgroups of 
$G^{o}_{k}$ but $\mathcal{U}^{\pm}_{k}$
are not closed under the multiplication.
\begin{lem}[cf. Lemma 3.8 in \cite{HY}]\label{P-orbit}
	The $\mathcal{P}^{+}_{k}$-orbit through $B$ in $\mathfrak{g}^{*}_{k}$
	is $B+(\mathfrak{U}^{-}_{k})^{*}\oplus
	(\mathfrak{u}_{1}^{+}\oplus \mathfrak{u}_{1}^{-})z^{-1}$.
\end{lem}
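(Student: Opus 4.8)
The plan is to compute the action of a general element of $\mathcal{P}^{+}_{k}$ on $B$ explicitly, using the gauge transformation formula $g[A] = gAg^{-1} + (\frac{d}{dx}g)g^{-1}$, and track how the filtration by powers of $x^{-1}$ interacts with the parabolic filtration on $M(n,\mathbb{C})$. First I would fix $g = \sum_{i=0}^{k-1}P_{i}x^{i}\in\mathcal{P}^{+}_{k}$ with $P_{0}=I_{n}$ and $P_{i}\in\mathfrak{p}^{+}_{i+1}$, and expand $gBg^{-1}$ term by term. The key structural input is that $B_{\text{irr}}=\pr_{\text{irr}}(B)$ is diagonal with respect to the decomposition $\bigoplus_{j}V_{j}$ and, more refined, the coefficient $B_{t}$ of $x^{-t}$ ($t\ge 2$) acts as a scalar on each $V_{\langle t-1,j\rangle}$, so that $B_{t}\in\mathfrak{h}_{t}$. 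Thus the commutator-type terms $P_{t-1}B_{t}-B_{t}P_{t-1}$ lie in $[\mathfrak{p}^{+}_{t},\mathfrak{h}_{t}]\subset\mathfrak{u}^{+}_{t}$, which, viewed as living in the coefficient of $x^{-1}$ after the shift, contributes exactly to $(\mathfrak{U}^{-}_{k})^{*}$; here I would use the analogue of the computation already carried out in the proof of Lemma \ref{centralizer}, where the constant-in-$x$ identities $\sum_{s}g_{s}g'_{t-s-1}=0$ were used to kill the ``diagonal'' part.

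The containment $\subset$ then follows: one shows $g[B]-B$ has no component in $(\mathfrak{U}^{+}_{k})^{*}$ (i.e. the lower-triangular-in-$\mathfrak{u}^{-}$ part of the coefficients of $x^{-i}$, $i\ge 2$, stays zero) because $g\in\mathcal{P}^{+}_{k}$ is block-upper-triangular at every order, and conjugation plus the derivative term $(\frac{d}{dx}g)g^{-1}\in\mathfrak{g}^{o}_{k}$ (which has no $x^{-i}$ part for $i\ge 2$ at all, only affecting the residue) cannot produce strictly-lower components at orders $\ge 2$. The residue coefficient, i.e. the coefficient of $x^{-1}$, is the one place where both $\mathfrak{u}^{+}_{1}$ and $\mathfrak{u}^{-}_{1}$ appear — the $\mathfrak{u}^{-}_{1}$ part coming precisely from the derivative term $(\frac{d}{dx}g)g^{-1}$, whose constant term is $P_{1}\in\mathfrak{p}^{+}_{2}$, projected to $\mathfrak{u}^{-}_{1}$ under the natural map $\mathfrak{p}^{+}_{2}\twoheadrightarrow$ something landing in $\mathfrak{u}^{-}_{1}$ after the block coarsening $J_{2}\to J_{1}$; one checks this projection is surjective onto $\mathfrak{u}^{-}_{1}$.

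For the reverse containment $\supset$, given a target $B + \eta + ux^{-1}$ with $\eta\in(\mathfrak{U}^{-}_{k})^{*}$ and $u\in\mathfrak{u}^{+}_{1}\oplus\mathfrak{u}^{-}_{1}$, I would solve for the $P_{i}$ recursively in $i$: at each order the equation to be solved is of the form $[P_{i},B_{i+1}] + (\text{lower-order known terms}) = (\text{prescribed coefficient})$, and since $\ad(B_{i+1})$ restricted to $\mathfrak{u}^{+}_{i+1}$ is an isomorphism onto the relevant piece of $\mathfrak{u}^{+}_{i}$ (this is exactly the non-vanishing encoded by the definition of $d(j,j')$ and the strict ordering on the $J_{i}$), one can invert it; the Levelt/HTL normalization guarantees these eigenvalue differences are nonzero so the inversion is legitimate. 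The derivative term handles the residue. The main obstacle will be the careful bookkeeping of how the three filtrations — the $x$-adic one, the parabolic $\mathfrak{p}^{+}_{i}$ one at each level $i$, and the coarsening maps $\pi_{i}\colon J_{i}\to J_{i+1}$ — fit together, in particular verifying that the off-diagonal blocks produced at order $x^{-i}$ land in $\mathfrak{u}^{-}_{i}$ (not $\mathfrak{u}^{+}_{i}$) and that nothing leaks into $(\mathfrak{U}^{+}_{k})^{*}$; I expect this is where most of the work lies, but it is essentially the same splitting-lemma bookkeeping as in \cite{HY} and Lemma \ref{centralizer}, just run in the $\mathcal{P}^{+}$ rather than the $\mathcal{H}$ direction.
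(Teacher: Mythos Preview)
Your proposal contains a conceptual error at the outset: the $\mathcal{P}^{+}_{k}$-orbit in this lemma is the \emph{coadjoint} orbit $\{pBp^{-1}\mid p\in\mathcal{P}^{+}_{k}\}$, not the gauge orbit. No derivative term $(\tfrac{d}{dx}g)g^{-1}$ appears. Moreover, even if one did include it, $(\tfrac{d}{dx}g)g^{-1}$ for $g\in G^{o}_{k}$ has only \emph{nonnegative} powers of $x$, so it contributes nothing to $\mathfrak{g}^{*}_{k}$ at all --- in particular it cannot account for the $\mathfrak{u}^{-}_{1}$ part of the residue, contrary to your claim that ``the $\mathfrak{u}^{-}_{1}$ part [comes] precisely from the derivative term.'' This misidentification is the real gap: you have not located where the $\mathfrak{u}^{-}_{1}$ contribution actually arises, and without that both inclusions break down.

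The genuine source is the coarsening of the block structure: $p_{s}\in\mathfrak{p}^{+}_{s+1}$ for $s\ge 1$ is block-upper-triangular for the partition $J_{s+1}$, which is coarser than $J_{1}$, so it can (and does) have nonzero $\mathfrak{u}^{-}_{1}$-components when viewed through the finest decomposition. The cross terms $p_{s}B_{t}p'_{t-s-1}$ in the residue therefore land in $\mathfrak{u}^{+}_{1}\oplus\mathfrak{u}^{-}_{1}$, while their $\mathfrak{h}_{1}$-parts cancel by exactly the $\sum_{s}h_{s}h'_{t-s-1}=0$ identity you cite from Lemma~\ref{centralizer}. The paper handles the irregular part (coefficients of $x^{-i}$, $i\ge2$) by invoking the already-established Lemma~3.8 of \cite{HY}, which identifies the $\mathcal{P}^{+}_{k}$-orbit through $B_{\mathrm{irr}}$ in $(\mathfrak{g}^{o}_{k})^{*}$ with $B_{\mathrm{irr}}+(\mathfrak{U}^{-}_{k})^{*}$; your recursive $\ad(B_{i+1})$-inversion would re-prove that lemma from scratch, which is fine but redundant. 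For the reverse inclusion the paper again uses Lemma~3.8 of \cite{HY} to straighten the irregular part, then appeals to uniqueness of HTL normal forms in $\mathcal{O}^{o}_{B}$ (Lemma~2.9 of \cite{HY}) rather than solving for the residue directly.
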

\begin{proof}
	First we see that $\mathcal{P}^{+}_{k}$-orbit through $B$
	is included in $B+(\mathfrak{U}^{-}_{k})^{*}\oplus
	(\mathfrak{u}_{1}^{+}\oplus \mathfrak{u}_{1}^{-})x^{-1}$.
	From Lemma 3.8 in \cite{HY}, the $\mathcal{P}^{+}_{k}$-orbit
	through $B_{\text{irr}}$ in $(\mathfrak{g}^{o}_{k})^{*}$
	is $B_{\text{irr}}+(\mathfrak{U}^{-}_{k})^{*}$.
	Thus it suffices to show that $(pBp^{-1})_{\text{res}}
	\in 
	\left(B_{1}+(\mathfrak{u}^{+}_{1}\oplus\mathfrak{u}^{-}_{1})
	\right)z^{-1}$
	for all $p\in \mathcal{P}^{+}_{k}$.
	Let us set 
	$u:=(pBp^{-1})_{\text{res}}-B_{1}$.
	Then  
	$u=\sum_{t=2}^{k}\sum_{s=1}^{t-1}p_{s}B_{t}p'_{t-s-1}$
	where $p=p_{0}+p_{1}x+p_{2}x_{2}+\cdots$, $p^{-1}=p'_{0}+p'_{1}x+
	p'_{2}x^{2}+\cdots$.
	Let $h_{s}$ (resp. $h_{s}'$) be the $\mathfrak{h}_{s}$-component of 
	$p_{s}$ (resp. $p'_{s}$) $\in \mathfrak{p}^{+}_{s}
	=\mathfrak{h}_{s}\oplus 
	\mathfrak{u}^{+}_{s}$. 
	Then
	\begin{align*}
		u_{i,i}&=\left(
		\sum_{t=2}^{k}\sum_{s=1}^{t-1}p_{s}B_{t}p'_{t-s-1}
		\right)_{i,i}=\left(
		\sum_{t=2}^{k}\sum_{s=1}^{t-1}h_{s}B_{t}h'_{t-s-1}
		\right)_{i,i}\\
		&=\left(\sum_{t=2}^{k}B_{t}\sum_{s=1}^{t-1}h_{s}h'_{t-s-1}
		\right)_{i,i}
		+\sum_{t=2}^{k}(h_{t-1}B_{t}-B_{t}h_{t-1})_{i,i}
		=0
	\end{align*}
	for $i=1,\ldots,m$. 
	Here $X_{i,i}$ denote the
	$\mathrm{End}_{\mathbb{C}}(V_{i})$-component
	of $X\in M(n,\mathbb{C})$.
	Thus $u\in \mathfrak{u}^{+}_{1}\oplus \mathfrak{u}^{-}_{1}$ as 
	required.

	Conversely take an arbitrary element $D\in 
	B+(\mathfrak{U}^{-}_{k})^{*}\oplus
	(\mathfrak{u}_{1}^{+}\oplus \mathfrak{u}_{1}^{-})z^{-1}$.
	Lemma 3.8 in \cite{HY} shows that there exists 
	$p\in \mathcal{P}^{+}_{k}$ such that 
	$(pDp^{-1})_{\text{irr}}=B_{\text{irr}}$.
	Namely $pDp^{-1}=B_{\text{irr}}+(h+u)x^{-1}$ where 
	$h\in \mathfrak{h}_{1}, u\in \mathfrak{u}_{1}^{+}\oplus
	\mathfrak{u}_{1}^{-}$. 
	Thus there exists $p'\in \mathcal{P}^{+}_{k}$ such that 
	$p'D(p')^{-1}=D'=B_{\text{irr}}+hx^{-1}$, an HTL normal form.
	Moreover recall that if $D'\in \mathcal{O}^{o}_{B}$ is 
	of HTL normal form, then $D'=B$, cf. Lemma 2.9 in \cite{HY}.
	Thus $D$ is in $\mathcal{P}^{+}_{k}$-orbit through $B$.
\end{proof}
\begin{lem}[Lemma 3.5 in \cite{HY}]
	For any $g\in G^{o}_{k}$, there uniquely exist $u_{-}\in 
	\mathcal{U}^{-}_{k}$ 
	and $p_{+}\in \mathcal{P}^{+}_{k}$ such that $g=u_{-} p_{+}$.
\end{lem}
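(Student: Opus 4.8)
The plan is to prove the statement by induction on the $x$-degree, reducing it to a single elementary linear-algebra fact. That fact is: for each $l$ with $1\le l\le k-1$ there is a direct sum decomposition of vector spaces $M(n,\mathbb{C})=\mathfrak{u}^{-}_{l+1}\oplus\mathfrak{p}^{+}_{l+1}$. This is immediate from the definitions, since $\bigoplus_{j\in J_{l+1}}V_{\langle l+1,j\rangle}=\mathbb{C}^{n}$ and each pair $(j_{1},j_{2})\in J_{l+1}\times J_{l+1}$ satisfies exactly one of $j_{1}<j_{2}$ (contributing to $\mathfrak{u}^{-}_{l+1}$) or $j_{1}\ge j_{2}$ (contributing to $\mathfrak{p}^{+}_{l+1}=\mathfrak{h}_{l+1}\oplus\mathfrak{u}^{+}_{l+1}$); at the top level $\mathfrak{u}^{-}_{k}=\{0\}$ and $\mathfrak{p}^{+}_{k}=M(n,\mathbb{C})$, so the decomposition still holds, trivially. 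One may think of the whole lemma as an $x$-adic Gauss/LU-type factorization in the truncated group $G^{o}_{k}$.

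For existence, I would write $g=\sum_{i=0}^{k-1}g_{i}x^{i}$ with $g_{0}=I_{n}$ and look for $u_{-}=\sum_{i=0}^{k-1}U_{i}x^{i}\in\mathcal{U}^{-}_{k}$ and $p_{+}=\sum_{i=0}^{k-1}P_{i}x^{i}\in\mathcal{P}^{+}_{k}$ (so $U_{0}=P_{0}=I_{n}$, $U_{i}\in\mathfrak{u}^{-}_{i+1}$, $P_{i}\in\mathfrak{p}^{+}_{i+1}$). Comparing degree-$l$ coefficients of the equation $u_{-}p_{+}=g$ in $G_{k}$ gives, for $1\le l\le k-1$,
\[
U_{l}+P_{l}=g_{l}-\sum_{\substack{i+j=l\\i,j\ge 1}}U_{i}P_{j},
\]
whose right-hand side involves only the $U_{i},P_{j}$ with $i,j<l$. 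I would then construct $U_{l},P_{l}$ recursively in $l$: assuming $U_{1},\dots,U_{l-1}$ and $P_{1},\dots,P_{l-1}$ with the required block shapes are already defined, the right-hand side above is a well-defined element of $M(n,\mathbb{C})$, and I set $U_{l}\in\mathfrak{u}^{-}_{l+1}$ and $P_{l}\in\mathfrak{p}^{+}_{l+1}$ to be its two components under the decomposition $M(n,\mathbb{C})=\mathfrak{u}^{-}_{l+1}\oplus\mathfrak{p}^{+}_{l+1}$. The resulting $u_{-},p_{+}$ lie in $G^{o}_{k}$ (their constant terms are $I_{n}$), belong to $\mathcal{U}^{-}_{k}$ and $\mathcal{P}^{+}_{k}$ respectively, and satisfy $u_{-}p_{+}=g$ by construction.

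For uniqueness, the naive move of writing $(u_{-}')^{-1}u_{-}=p_{+}'(p_{+})^{-1}$ fails directly, because, as noted in the text, $\mathcal{U}^{\pm}_{k}$ are not closed under multiplication. Instead I would re-use the same coefficient comparison: if $u_{-}p_{+}=g=u_{-}'p_{+}'$ with both decompositions admissible, then at degree $l$ we have $U_{l}+P_{l}=g_{l}-\sum_{i+j=l,\,i,j\ge 1}U_{i}P_{j}$ and likewise for the primed data; inducting on $l$, once $U_{i}=U_{i}'$ and $P_{j}=P_{j}'$ for all $i,j<l$ (vacuous for $l=1$) the two right-hand sides agree, so $U_{l}+P_{l}=U_{l}'+P_{l}'$ with $U_{l},U_{l}'\in\mathfrak{u}^{-}_{l+1}$ and $P_{l},P_{l}'\in\mathfrak{p}^{+}_{l+1}$, and directness of the sum forces $U_{l}=U_{l}'$, $P_{l}=P_{l}'$. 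I do not expect a genuine obstacle here; the only point demanding care is the bookkeeping — matching the degree-$l$ coefficient with the decomposition at level $l+1$, and using the compatibility of the chosen orderings on the $J_{i}$ so that the block shapes of the $U_{i},P_{j}$ remain internally consistent.
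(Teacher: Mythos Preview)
Your proof is correct. Note, however, that the paper does not actually prove this lemma: it is quoted verbatim as Lemma~3.5 of \cite{HY} and used without argument, so there is no in-paper proof to compare against. Your coefficient-by-coefficient induction, reducing everything to the elementary direct-sum decomposition $M(n,\mathbb{C})=\mathfrak{u}^{-}_{l+1}\oplus\mathfrak{p}^{+}_{l+1}$ at each level, is the natural self-contained argument and works exactly as you describe. One small remark: the compatibility of the orderings on the $J_{i}$ is not actually needed for this lemma---the decomposition $\mathfrak{u}^{-}_{l+1}\oplus\mathfrak{p}^{+}_{l+1}=M(n,\mathbb{C})$ and hence your recursion hold for any choice of total orders on the $J_{i}$; the compatibility is used elsewhere (for instance to ensure $\mathcal{P}^{+}_{k}$ is a subgroup).
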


For $A\in \mathcal{O}_{B_{\text{irr}}}$, take $g\in G_{k}^{o}$ so that  
$g^{-1}Ag=B_{\text{irr}}$ and 
decompose $g=u_{-}p_{+}$ as above.
Note that $u_{-}$ does not depend on the choice of $g$ because
the stabilizer of $B_{\text{irr}}$
is contained in $\mathcal{P}^{+}_{k}$. 
Thus $u_{-}$ is uniquely determined by $A\in \mathcal{O}_{B_{\text{irr}}}$.
Then let us put $Q=u_{-}-I_{n}$,
$A'=u_{-}^{-1}A$
and $P=A'|_{(\mathfrak{U}^{-}_{k})^{*}}$.

\begin{prop}[Theorem 3.6 in \cite{HY}]
    The map 
    \[
	    \begin{array}{cccc}
		    \Phi\colon&\mathcal{O}_{B_{\text{irr}}}&\longrightarrow&
		    \mathfrak{U}_{k}^{-}\times (\mathfrak{U}^{-}_{k})^{*}\\
		    &A&\longmapsto&(Q,P)
	    \end{array}
    \]
    is bijective.
\end{prop}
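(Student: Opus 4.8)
The plan is to construct an explicit inverse to the map $\Phi\colon A\mapsto (Q,P)$ and to check that the two constructions are mutually inverse. First I would analyze the decomposition $g = u_{-}p_{+}$ of an element $g\in G_{k}^{o}$ with $g^{-1}Ag = B_{\text{irr}}$ in more detail: since $A = g B_{\text{irr}} g^{-1} = u_{-}p_{+}B_{\text{irr}}p_{+}^{-1}u_{-}^{-1}$ and by Lemma~\ref{P-orbit} (applied with $B$ replaced by $B_{\text{irr}}$, equivalently by Lemma~3.8 in \cite{HY}) the $\mathcal{P}_{k}^{+}$-orbit of $B_{\text{irr}}$ is $B_{\text{irr}} + (\mathfrak{U}_{k}^{-})^{*}$, we get $A' = u_{-}^{-1}A = p_{+}B_{\text{irr}}p_{+}^{-1}u_{-}^{-1} \in (B_{\text{irr}} + (\mathfrak{U}_{k}^{-})^{*})\,u_{-}^{-1}$. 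The point is that $Q = u_{-} - I_{n} \in \mathfrak{U}_{k}^{-}$ records $u_{-}$ completely, and $P = A'|_{(\mathfrak{U}_{k}^{-})^{*}}$ records the ``new'' degrees of freedom coming from the $\mathcal{P}_{k}^{+}$-orbit, so that $A$ should be recoverable from the pair $(Q,P)$ together with the fixed data $B_{\text{irr}}$.

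The key steps, in order: (1) Show the map is well-defined, i.e. that $u_{-}$, hence $Q$, is independent of the choice of $g$ with $g^{-1}Ag = B_{\text{irr}}$; this uses that the stabilizer of $B_{\text{irr}}$ in $G_{k}^{o}$ lies in $\mathcal{P}_{k}^{+}$ (Lemma~\ref{centralizer} with $\mathfrak{u}_1 = 0$, or Lemma~3.4 in \cite{HY}) together with the uniqueness of the $\mathcal{U}_{k}^{-}$--$\mathcal{P}_{k}^{+}$ factorization. (2) Construct the candidate inverse $\Psi\colon \mathfrak{U}_{k}^{-}\times(\mathfrak{U}_{k}^{-})^{*}\to \mathcal{O}_{B_{\text{irr}}}$ by setting $u_{-} = I_{n}+Q$ and then reconstructing $A'$ from $P$: since $A' = u_{-}^{-1}A$ lies in the $\mathcal{P}_{k}^{+}$-orbit issue, one writes $A' = (B_{\text{irr}} + P')\,u_{-}^{-1}$ for the appropriate $P'\in (\mathfrak{U}_{k}^{-})^{*}$ determined by $P$, and sets $\Psi(Q,P) = u_{-}A' = A$; one must verify $A\in \mathcal{O}_{B_{\text{irr}}}$, which follows because $u_{-}^{-1}Au_{-} = A'u_{-}$, the $\mathcal{P}_{k}^{+}$-orbit of $B_{\text{irr}}$ being exactly $B_{\text{irr}}+(\mathfrak{U}_{k}^{-})^{*}$ by Lemma~\ref{P-orbit}. (3) Check $\Psi\circ\Phi = \mathrm{id}$ and $\Phi\circ\Psi = \mathrm{id}$ by unwinding the definitions, using in both directions the uniqueness of the factorization $g = u_{-}p_{+}$.

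The main obstacle I expect is step (2)--(3): making precise the passage between $P = A'|_{(\mathfrak{U}_{k}^{-})^{*}}$ (the component of $A'$ in the ``negative'' directions) and the element of the $\mathcal{P}_{k}^{+}$-orbit of $B_{\text{irr}}$ that it corresponds to, and verifying that $A$ is genuinely determined by $(Q,P)$ and nothing else. Concretely, one has $A = u_{-}A'$ with $u_{-} = I_n + Q$, so $A$ is determined once $A'$ is; and $A'$ has the form $B_{\text{irr}}\cdot(\text{unit}) + (\text{terms in }(\mathfrak{U}_{k}^{-})^{*})\cdot(\text{unit})$, so the claim is that the $(\mathfrak{U}_{k}^{-})^{*}$-projection $P$ of $A'$, together with knowledge of $B_{\text{irr}}$ and of $u_{-}$, pins down all of $A'$ — this is essentially a triangularity/filtration argument on the subspaces $V_{\langle i,j\rangle}$, and it is where the careful choice of orderings on the $J_i$ and the nesting $V_{\langle i,j\rangle}\subset V_{\langle i+1,\pi_i(j)\rangle}$ is used. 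Since the analogous statement is Theorem~3.6 in \cite{HY}, I would follow that argument: set up the bijection componentwise in the grading by powers of $x$ and induct on the degree, at each stage peeling off the contribution of $Q$ and of $P$ and showing the remaining ambiguity is killed by the stabilizer condition.
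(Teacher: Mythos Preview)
The paper does not give its own proof of this proposition: it is stated with the attribution ``Theorem~3.6 in \cite{HY}'' and no argument follows. So there is no proof in the paper to compare against; the result is simply imported.

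That said, your outline is sound and matches what one expects the argument in \cite{HY} to be. The well-definedness of $u_{-}$ (hence $Q$) is exactly the observation the paper makes just before the proposition: the stabilizer of $B_{\text{irr}}$ in $G_{k}^{o}$ is $\mathcal{H}_{k}\subset\mathcal{P}_{k}^{+}$, so two choices of $g$ differ by a right $\mathcal{P}_{k}^{+}$-factor and the uniqueness of the factorization $g=u_{-}p_{+}$ forces the same $u_{-}$. Your candidate inverse is also the right one: since $u_{-}^{-1}Au_{-}=p_{+}B_{\text{irr}}p_{+}^{-1}\in B_{\text{irr}}+(\mathfrak{U}_{k}^{-})^{*}$ by Lemma~3.8 in \cite{HY}, the orbit is parametrized bijectively by pairs $(u_{-},P')$ with $P'\in(\mathfrak{U}_{k}^{-})^{*}$ via $A=u_{-}(B_{\text{irr}}+P')u_{-}^{-1}$, and the remaining task is exactly to show that for fixed $u_{-}$ the map $P'\mapsto P=[(B_{\text{irr}}+P')u_{-}^{-1}]\big|_{(\mathfrak{U}_{k}^{-})^{*}}$ is a bijection of $(\mathfrak{U}_{k}^{-})^{*}$ onto itself. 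You correctly flag this as the crux; the point is that this map is affine in $P'$ and, in the filtration by powers of $x$, the linear part is unipotent upper-triangular (the $x^{-l}$-component of $P$ equals $P'_{l}$ plus terms involving only $P'_{l+1},\ldots,P'_{k-1}$, projected to $\mathfrak{u}_{l}^{+}$), so invertibility follows by descending induction on $l$ together with the nesting $\mathfrak{u}_{l+j}^{+}\subset\mathfrak{u}_{l}^{+}$ coming from the compatibility of the block structures $V_{\langle i,j\rangle}\subset V_{\langle i+1,\pi_{i}(j)\rangle}$.

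One small point worth making explicit in your write-up: $A'=u_{-}^{-1}A$ is a matrix product, not a conjugation, so $A'$ lives in $\mathfrak{g}_{k}^{*}$ and in general has a nonzero $x^{-1}$-component; the restriction $A'|_{(\mathfrak{U}_{k}^{-})^{*}}$ simply discards that component along with the block-diagonal and block-lower parts at each degree.
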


\begin{prop}\label{residue}
	Let us take $A\in \mathcal{O}^{o}_{B}$ and set 
	$(Q,P)=\Phi(A_{\text{irr}})$.
	Then the following equations in $\mathfrak{g}_{k}^{*}$  hold for 
	$l=1,\ldots,m$.
	\begin{align*}
		B_{l,l}-A_{l,l}=&
		-Q_{l,1}P_{1,l}-Q_{l,2}P_{2,l}-\cdots- Q_{l,l-1}P_{l-1,l}\\
		&+P_{l,l+1}Q_{l+1,l}+P_{l,l+2}Q_{l+2,l}+\cdots
		+P_{l,m}Q_{m,l}
	\end{align*}
\end{prop}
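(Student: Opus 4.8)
The plan is to read the identity off the explicit description of $\Phi$ in the preceding proposition. Recall that $Q$ and $P$ are obtained by choosing $g\in G^{o}_{k}$ with $g^{-1}\pr_{\text{irr}}(A)g=B_{\text{irr}}$, decomposing $g=u_{-}p_{+}$ with $u_{-}\in\mathcal{U}^{-}_{k}$ and $p_{+}\in\mathcal{P}^{+}_{k}$, and setting $Q=u_{-}-I_{n}\in\mathfrak{U}^{-}_{k}$ and $P=u_{-}^{-1}\pr_{\text{irr}}(A)u_{-}-B_{\text{irr}}=p_{+}B_{\text{irr}}p_{+}^{-1}-B_{\text{irr}}\in(\mathfrak{U}^{-}_{k})^{*}$, all equalities being in $\mathfrak{g}^{*}_{k}$. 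Since $u_{-}^{-1}\bigl(\pr_{\text{res}}(A)x^{-1}\bigr)u_{-}=\pr_{\text{res}}(A)x^{-1}$ in $\mathfrak{g}^{*}_{k}$, additivity of the coadjoint action gives the reconstruction
\[
\tilde{A}:=u_{-}^{-1}Au_{-}=B_{\text{irr}}+P+\pr_{\text{res}}(A)x^{-1}\ \text{ in }\mathfrak{g}^{*}_{k},\qquad A=u_{-}\tilde{A}u_{-}^{-1}.
\]
Two structural facts will be used repeatedly. First, $Q$ is strictly block lower triangular for $\mathbb{C}^{n}=\bigoplus_{j}V_{j}$ (the coefficient of $x^{i}$ in $Q$ lies in $\mathfrak{u}^{-}_{i+1}$ and the orderings on the $J_{i}$ are order-preserving along the $\pi_{i}$), so $Q_{l,j}=0$ unless $j<l$; dually $P$ is strictly block upper triangular, so $P_{l,j}=0$ unless $j>l$. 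In particular $Q_{l,l}=P_{l,l}=0$ and the diagonal blocks of $u_{-}^{\pm1}$ are identity matrices. Second, the same grading yields $Q_{l,j}=\sum_{i=1}^{d(j,l)}(Q_{i})_{l,j}x^{i}$ and $P_{j,l}=\sum_{i=1}^{d(j,l)}(P_{i})_{j,l}x^{-i-1}$, which controls which powers of $x$ occur in the products below.

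Next I would compute the $\mathrm{End}_{\mathbb{C}}(V_{l})$-block of $B-A$. Using $B-\tilde{A}=\bigl(\pr_{\text{res}}(B)-\pr_{\text{res}}(A)\bigr)x^{-1}-P$ together with $\tilde{A}-u_{-}\tilde{A}u_{-}^{-1}=-(Q\tilde{A}-\tilde{A}Q)u_{-}^{-1}$ (the last from $u_{-}=I_{n}+Q$), one obtains in $\mathfrak{g}^{*}_{k}$
\[
B-A=\bigl(\pr_{\text{res}}(B)-\pr_{\text{res}}(A)\bigr)x^{-1}-P-(Q\tilde{A}-\tilde{A}Q)u_{-}^{-1}.
\]
On the $(l,l)$-block one has $P_{l,l}=0$ and $\pr_{\text{res}}(B)_{l,l}=R_{l}$; moreover the block-diagonal summand $B_{\text{irr}}$ of $\tilde{A}$ drops out of $(Q\tilde{A}-\tilde{A}Q)_{l,l}$ because $Q_{l,l}=0$, so this entry depends only on $P+\pr_{\text{res}}(A)x^{-1}$. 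Expanding $u_{-}^{-1}=I_{n}+Q^{*}$ and using the triangularity, the leading part of $-(Q\tilde{A}-\tilde{A}Q)u_{-}^{-1}$ on the $(l,l)$-block is precisely $-(QP)_{l,l}+(PQ)_{l,l}=-\sum_{j<l}Q_{l,j}P_{j,l}+\sum_{j>l}P_{l,j}Q_{j,l}$, and everything else is a ``correction'': the products $(QP)_{l,l}$ and $(PQ)_{l,l}$ paired with $Q^{*}$, the terms carrying $\pr_{\text{res}}(A)$ in an off-diagonal slot, and the residue discrepancy $\bigl(R_{l}-\pr_{\text{res}}(A)_{l,l}\bigr)x^{-1}$.

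The remaining task — and the main obstacle — is to show that all these corrections cancel modulo $\mathbb{C}[\![x]\!]$; this is pure bookkeeping of the truncation. I would do it with the two gradings above: each off-diagonal factor coming from $u_{-}^{-1}$, from $\pr_{\text{res}}(A)$, or from a triple product involves at least two successive steps $V_{j}\to V_{l}$ across distinct diagonal blocks, and for every such step the $x$-degree is bounded by the corresponding $d(\cdot,\cdot)$; adding these bounds shows the extra contributions have non-negative valuation and hence vanish in $\mathfrak{g}^{*}_{k}$. The residue discrepancy itself is fixed by the defining property of $\Phi$, namely that $u_{-}^{-1}\pr_{\text{irr}}(A)u_{-}$ lies in $B_{\text{irr}}+(\mathfrak{U}^{-}_{k})^{*}$ and so has vanishing residue; equivalently, via $P=p_{+}B_{\text{irr}}p_{+}^{-1}-B_{\text{irr}}$ one trades a factor of the eigenvalue difference $q_{j}(x^{-1})-q_{l}(x^{-1})$ against the block $P_{j,l}$, which absorbs the $p_{+}$-part of $g$ into $P$ (the degree-two case of this is already in Section \ref{Section2}). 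An alternative organisation of the same cancellation is induction on $k$: the splitting lemma strips the leading coefficient $B_{k}$ and replaces a degree-$k$ truncated orbit by a degree-$(k-1)$ one together with a residue adjustment, the base case $k\le2$ being the tautology $0=0$ (there $Q=P=0$ and $A_{l,l}=B_{l,l}$).
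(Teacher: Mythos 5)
Your identification of the leading term $-(QP)_{l,l}+(PQ)_{l,l}$ and your structural remarks about the block-triangularity and gradings of $Q$ and $P$ are correct, but the argument has a genuine gap at its very first step. The ``reconstruction'' $u_{-}^{-1}Au_{-}=B_{\text{irr}}+P+\pr_{\text{res}}(A)x^{-1}$ is false: the statement that $u_{-}^{-1}\pr_{\text{irr}}(A)u_{-}$ lies in $B_{\text{irr}}+(\mathfrak{U}^{-}_{k})^{*}$ is an identity in $(\mathfrak{g}^{o}_{k})^{*}$, where the residue term is quotiented out, not in $\mathfrak{g}^{*}_{k}$. In $\mathfrak{g}^{*}_{k}$ one only has (Lemma \ref{P-orbit}) $u_{-}^{-1}Au_{-}\in B+(\mathfrak{U}^{-}_{k})^{*}\oplus(\mathfrak{u}^{+}_{1}\oplus\mathfrak{u}^{-}_{1})x^{-1}$, so the diagonal residue blocks of $u_{-}^{-1}Au_{-}$ are $R_{l}$, not $\pr_{\text{res}}(A)_{l,l}$. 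The difference $R_{l}-\pr_{\text{res}}(A)_{l,l}$ is exactly the residue part of the identity you are trying to prove, and it is nonzero in general (already for $k=3$, $m=2$ it equals $-(c_{1}-c_{2})(g_{1})_{1,2}(g_{1})_{2,1}$ with $B_{3}=\diag(c_{1}I,c_{2}I)$); so declaring the ``residue discrepancy'' to be killed ``by the defining property of $\Phi$'' is circular. A second, independent problem is the claim that the remaining corrections vanish by valuation bookkeeping: conjugating by $u_{-}$ produces triple products such as $Q_{l,s}P_{s,j}Q^{*}_{j,l}$ with $s<l<j$, whose valuation is only bounded below by $1-d(s,j)$, which is negative as soon as $d(s,j)\ge 2$. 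These terms do not vanish term by term, so the asserted ``non-negative valuation'' estimate fails and some actual cancellation would have to be exhibited.

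The paper sidesteps both issues by never fully conjugating. It works with the linear relation $A(I_{n}+Q)=(I_{n}+Q)(B+R+U)$ and extracts its $(l,l)$-block, which contains only single factors of $Q$ (equation $(\ref{eq1})$); the only truncation fact needed there is $Q_{l,s}U_{s,l}\in M(n,\mathbb{C}[x])$. The quadratic terms $Q_{l,s}P_{s,l}$ and $P_{l,j}Q_{j,l}$ then enter not through an expansion of $u_{-}(\cdot)u_{-}^{-1}$ but through the separate identity $\bar{A}_{s,t}=\sum_{j<s}Q_{s,j}P_{j,t}+P_{s,t}$ imported from \cite{HY}, which is substituted into the right-hand side of the proposition and matched against $(\ref{eq1})$. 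If you want to keep your organisation, you must either (a) replace the reconstruction by the correct one, $u_{-}^{-1}Au_{-}=B+P+U$, and then prove that the genuinely higher-order terms in $Q$ cancel (which is where the real work lies and where your degree count is insufficient), or (b) carry out the induction on $k$ you sketch at the end, which is a legitimate alternative but is currently only a one-sentence outline: the splitting lemma changes the residue blocks, and the $(Q,P)$-data of the degree-$(k-1)$ orbit are not simply truncations of those of the degree-$k$ orbit, so the inductive step needs to be written out.
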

\begin{proof}
	First we note that putting $u=I_{n}+Q$, we have
	$u^{-1}Au \in B+(\mathfrak{U}^{-}_{k})^{*}\oplus
	(\mathfrak{u}_{1}^{+}\oplus \mathfrak{u}_{1}^{-})z^{-1}$
	by Lemma \ref{P-orbit}.
	Denote the $(\mathfrak{U}^{-}_{k})^{*}$ and 
	$(\mathfrak{u}^{+}_{1}\oplus\mathfrak{u}^{-}_{1})z^{-1}$ components of
	$u^{-1}Au$ by $R$ and $U$ respectively.
	Since $Au=u(B+R+U)$, we have 
	\begin{equation}\label{eq1}
		B_{l,l}+\sum_{s=1}^{l-1}Q_{l,s}(R+U)_{s,l}
	=
	A_{l,l}+\sum_{s=l+1}^{m}A_{l,s}Q_{s,l}.
	\end{equation}
	Recalling that $Q\in M(n,z\mathbb{C}[z])$,
	we have $Q_{l,s}(R+U)_{s,l}=Q_{l,s}R_{s,l}$  in the above equation.

	Let us set $\bar{A}=A_{\text{irr}}$.
	Note that 
	\[
		\bar{A}_{s,t}=\sum_{j=1}^{s-1}Q_{s,j}P_{j,t}+P_{s,t}
	\] for $1\le s<t\le m$.
	Then right hand side of the proposition is written as follows,
	\begin{align*}
		&\bar{A}_{l,l+1}Q_{l+1,l}+\cdots+\bar{A}_{l,m}Q_{m,l}\\
		&-Q_{l,1}(P_{1,l}+P_{1,l+1}Q_{l+1,l}+\cdots+P_{1,m}Q_{m,l})\\
		&-\cdots\\
		&-Q_{l,l-1}(P_{l-1,l}+P_{l-1,l+1}Q_{l+1,l}+\cdots
		+P_{l-1,m}Q_{m,l})\\
		=&A_{l,l+1}Q_{l+1,l}+\cdots+A_{l,m}Q_{m,l}
		-Q_{l,1}R_{1,l}-\cdots
		-Q_{l,l-1}R_{l-1,l}.
	\end{align*}
	Here we use fact that each component of  $Q_{l,l'}$ 
	$(l>l')$ is in $z\mathbb{C}[z]$, which deduces
	$A_{l',l}Q_{l,l'}=\bar{A}_{l',l}Q_{l,l'}$.
	Then the above equation and the equation $(\ref{eq1})$  
	induce the required equations.
\end{proof}
\begin{prop}\label{orbit to quiver}
	The map 
	\[
		\begin{array}{cccc}
			\Psi_{B}\colon &\mathcal{O}^{o}_{B}&\longrightarrow&
			\left(\mathfrak{U}^{-}_{k}\times
			(\mathfrak{U}^{-}_{k})^*\right)\times
			(\mathfrak{u}_{1}^{+}\oplus \mathfrak{u}_{1}^{-})\\
			&A&\longmapsto&
			(\Phi(A_{\text{irr}}),
			A_{\text{res}}|_{\mathfrak{u}_{1}^{+}\oplus
			\mathfrak{u}_{1}^{-}})
		\end{array}
	\]
	is bijective.
\end{prop}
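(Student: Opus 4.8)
The plan is to factor $\Psi_{B}$ through the bijection $\Phi$ recalled above. Write $\Theta\colon\mathcal{O}^{o}_{B}\to\mathcal{O}_{B_{\text{irr}}}\times(\mathfrak{u}_{1}^{+}\oplus\mathfrak{u}_{1}^{-})$ for the map $A\mapsto(\pr_{\text{irr}}(A),\,\pr_{\text{res}}(A)|_{\mathfrak{u}_{1}^{+}\oplus\mathfrak{u}_{1}^{-}})$, so that $\Psi_{B}=(\Phi\times\id)\circ\Theta$; since $\Phi$ is bijective it suffices to show $\Theta$ is bijective. The first point to record is that $\pr_{\text{irr}}$ intertwines conjugation by $G^{o}_{k}$ on $\mathcal{O}^{o}_{B}$ with the coadjoint action on $\mathcal{O}_{B_{\text{irr}}}$, that is $\pr_{\text{irr}}(gAg^{-1})=g\cdot\pr_{\text{irr}}(A)$ for $g\in G^{o}_{k}$, because the residue part $\pr_{\text{res}}(A)x^{-1}$, conjugated by an element of $G^{o}_{k}$ with constant term $I_{n}$, acquires only non-negative powers of $x$ and hence contributes nothing to $\pr_{\text{irr}}$. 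In particular $\pr_{\text{irr}}(\mathcal{O}^{o}_{B})=\mathcal{O}_{B_{\text{irr}}}$, so $\Theta$ is well defined, and it remains to understand the fibres of $\pr_{\text{irr}}$.

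Next I would fix $\bar{A}\in\mathcal{O}_{B_{\text{irr}}}$, choose $g\in G^{o}_{k}$ with $g\cdot B_{\text{irr}}=\bar{A}$, and use that the stabiliser of $B_{\text{irr}}$ in $G^{o}_{k}$ is $\mathcal{H}_{k}$ (Lemma~3.4 of \cite{HY}; note $\mathcal{H}_{k}\subset\mathcal{P}^{+}_{k}$). By the equivariance above, $A\in\mathcal{O}^{o}_{B}$ lies over $\bar{A}$ exactly when $A=g\eta B\eta^{-1}g^{-1}$ for some $\eta\in\mathcal{H}_{k}$, so the fibre is $g(\mathcal{H}_{k}\cdot B)g^{-1}$. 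I would then pin down $\mathcal{H}_{k}\cdot B$ by squeezing it between the two available statements: Lemma~\ref{centralizer} gives $\eta B\eta^{-1}\in B_{\text{irr}}+(B_{1}+\mathfrak{u}_{1}^{+}\oplus\mathfrak{u}_{1}^{-})x^{-1}$ for all $\eta\in\mathcal{H}_{k}$, while Lemma~\ref{P-orbit} identifies the $\mathcal{P}^{+}_{k}$-orbit of $B$ with $B+(\mathfrak{U}^{-}_{k})^{*}\oplus(\mathfrak{u}_{1}^{+}\oplus\mathfrak{u}_{1}^{-})x^{-1}$; intersecting the latter with $\pr_{\text{irr}}^{-1}(B_{\text{irr}})$, which by equivariance equals $\mathrm{Stab}_{\mathcal{P}^{+}_{k}}(B_{\text{irr}})\cdot B=\mathcal{H}_{k}\cdot B$, forces $\mathcal{H}_{k}\cdot B=\{\,B_{\text{irr}}+(B_{1}+w)x^{-1}\mid w\in\mathfrak{u}_{1}^{+}\oplus\mathfrak{u}_{1}^{-}\,\}$. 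Since $w\mapsto B_{\text{irr}}+(B_{1}+w)x^{-1}$ and conjugation by $g$ are both injective, the fibre $\pr_{\text{irr}}^{-1}(\bar{A})$ is parametrised bijectively by $w\in\mathfrak{u}_{1}^{+}\oplus\mathfrak{u}_{1}^{-}$ via $A(w)=g\bigl(B_{\text{irr}}+(B_{1}+w)x^{-1}\bigr)g^{-1}$.

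Finally, because $g$ has constant term $I_{n}$ its conjugation fixes the $x^{0}$-coefficient, so $\pr_{\text{res}}(A(w))=\pr_{\text{res}}(gB_{\text{irr}}g^{-1})+B_{1}+w$, and therefore $\Theta(A(w))=(\bar{A},\,c_{\bar{A}}+w)$ with $c_{\bar{A}}=\bigl(\pr_{\text{res}}(gB_{\text{irr}}g^{-1})+B_{1}\bigr)|_{\mathfrak{u}_{1}^{+}\oplus\mathfrak{u}_{1}^{-}}$ depending only on $\bar{A}$. This is an affine bijection of the fibre onto $\{\bar{A}\}\times(\mathfrak{u}_{1}^{+}\oplus\mathfrak{u}_{1}^{-})$, and letting $\bar{A}$ range over $\mathcal{O}_{B_{\text{irr}}}$ gives that $\Theta$, hence $\Psi_{B}$, is a bijection. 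Injectivity can alternatively be argued directly: $\Psi_{B}(A)$ determines $(Q,P)=\Phi(\pr_{\text{irr}}(A))$ and so $\pr_{\text{irr}}(A)$, Proposition~\ref{residue} then recovers the diagonal blocks $A_{l,l}$ of the residue from $(Q,P)$, and the off-diagonal blocks are read off from the second component of $\Psi_{B}(A)$. The step demanding the most care is the identification of the fibre $\pr_{\text{irr}}^{-1}(\bar{A})$: it hinges on the exact bookkeeping of truncations modulo $\mathbb{C}[\![x]\!]$ in the coadjoint action, on the equivariance of $\pr_{\text{irr}}$, and on correctly descending the $\mathcal{P}^{+}_{k}$-orbit description of Lemma~\ref{P-orbit} to the subgroup $\mathcal{H}_{k}$ with the help of Lemma~\ref{centralizer}.
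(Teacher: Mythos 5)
Your argument is correct. The injectivity half coincides with the paper's: $\Phi$ recovers $\pr_{\text{irr}}(A)$, Proposition \ref{residue} recovers the diagonal residue blocks, and the off-diagonal blocks are the second component of $\Psi_B$. Where you genuinely diverge is surjectivity. The paper proves it constructively: given $((Q,P),U)$ it \emph{defines} the diagonal residue $h$ by the formula forced by Proposition \ref{residue}, observes that $\Phi^{-1}((Q,P))+(h+U)x^{-1}$ lies in some $\mathcal{O}^o_{B'}$ with $\pr_{\text{irr}}(B')=B_{\text{irr}}$, and concludes $B'=B$. You instead fibre $\mathcal{O}^o_B$ over $\mathcal{O}_{B_{\text{irr}}}$ via the equivariant projection $\pr_{\text{irr}}$, identify each fibre with $g(\mathcal{H}_k\cdot B)g^{-1}$ using that $\mathcal{H}_k$ is the stabiliser of $B_{\text{irr}}$, and pin down $\mathcal{H}_k\cdot B=B+(\mathfrak{u}_1^+\oplus\mathfrak{u}_1^-)x^{-1}$ by intersecting the $\mathcal{P}^+_k$-orbit of Lemma \ref{P-orbit} with $\pr_{\text{irr}}^{-1}(B_{\text{irr}})$ (the $(\mathfrak{U}^-_k)^*$ part sits in degrees $\le -2$, so it must vanish, which also subsumes the inclusion you attribute to Lemma \ref{centralizer}). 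The residue of $g(B+wx^{-1})g^{-1}$ is then affine in $w$ because $g$ has constant term $I_n$. Your route buys a cleaner structural statement — $\pr_{\text{irr}}\colon\mathcal{O}^o_B\to\mathcal{O}_{B_{\text{irr}}}$ is an affine bundle with fibre $\mathfrak{u}_1^+\oplus\mathfrak{u}_1^-$ — and avoids writing down the diagonal residue explicitly, at the price of invoking the stabiliser identification and Lemma \ref{P-orbit} at this step; the paper's construction is more computational but makes the inverse map explicit. One minor caveat: your constant $c_{\bar A}$ depends on the chosen $g$, not only on $\bar A$ (different $g\in g\mathcal{H}_k$ can shift it), but since you only need that the fibre map is an affine bijection for a fixed parametrisation, this does not affect the conclusion.
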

\begin{proof}
	For $A\in \mathcal{O}^{o}_{B}$, let us put 
	$(Q,P)=\Phi(A_\text{irr})$.
	Then Proposition \ref{residue} shows that 
	$A_{\text{res}}|_{\mathfrak{h}_{1}}$
	is uniquely determined by $(Q,P)$ and $B$,
	which implies that $\Psi_{B}$ is injective.
	
	Let us take an arbitrary $\left( (Q,P ),U\right)\in 
	\left(\mathfrak{U}^{-}_{k}\times
	(\mathfrak{U}^{-}_{k})^*\right)\times
	(\mathfrak{u}_{1}^{+}\oplus \mathfrak{u}_{1}^{-})$.
	Define
	\[
		h=\mathrm{diag}(h_{1},\ldots,h_{m})\in \mathfrak{h}_{1}
	\]
	by
	\[
		h_{i}=R_{i}+\left(
		\sum_{j=1}^{i-1}Q_{i,j}P_{j,i}
		-\sum_{j=i+1}^{m}P_{i,j}Q_{j,i}
		\right)_{\text{res}}.
	\]
	Here recall that
	$R_{i}=(B_{\text{res}})_{i,i}$ for $i=1,\ldots,m$.
	Then there exists an HTL normal form $B'\in \mathfrak{g}^{*}_{k}$
	such that $B'_{\text{irr}}=B_{\text{irr}}$ and 
	$\Phi^{-1}( (Q,P))+(h+U)x^{-1}$ is contained in 
	some $\mathcal{O}^{o}_{B'}$.
	However $B=B'$ by Proposition \ref{residue} and the construction
	of $h$.
	Thus $\Phi^{-1}( (Q,P))+(h+U)x^{-1}$ is 
	the inverse image 
	of $( (Q,P),U)$ by $\Psi_{B}$.
	Hence $\Psi_{B}$ is surjective.
\end{proof}

Let $C_{{R}_{i}}$ be the conjugacy class of each
$R_{i}$, block diagonal component of the 
residue of $B$. 
Recalling that $h^{-1}Bh=B_{\text{irr}}+h^{-1}B_{\text{res}}h$ for $h\in H$,
we have one to one correspondence,
\[
	\begin{array}{ccc}
		\mathrm{Ad}_{H}(B):=\{hBh^{-1}\mid h\in H\}&
		\longrightarrow &\prod_{i=1}^{m}C_{R_{i}}\\
		B'&\longmapsto&( (B'_{\text{res}})_{i,i})_{i=1,\ldots,m}
	\end{array}.
\]

Also recall that  
$\mathrm{Ad}_{H}(\mathcal{O}^{o}_{B})$ is  
the disjoint union of $\mathcal{O}^{o}_{B'}$ for $B'\in \mathrm{Ad}_{H}(B)$,
i.e., $\mathrm{Ad}_{H}(\mathcal{O}^{o}_{B})
=\bigsqcup_{B'\in \mathrm{Ad}_{H}(B)}\mathcal{O}^{o}_{B'}$,
see Lemma 2.9 in \cite{HY}.

For any $B'\in \mathrm{Ad}_{H}(B)$ we can define $\Psi_{B'}$ 
as in the above proposition.
Thus we can define the map
\begin{equation*}
	\Psi\colon
	\mathrm{Ad}_{H}(\mathcal{O}^{o}_{B})=
	\bigsqcup_{B'\in \mathrm{Ad}_{H}(B)}\mathcal{O}^{o}_{B'}
	\longrightarrow
	\left(\prod_{i=1}^{m}C_{R_{i}}\right)\times
	\left(
	\left(\mathfrak{U}^{-}_{k}\times
	(\mathfrak{U}^{-}_{k})^*\right)\times
	(\mathfrak{u}_{1}^{+}\oplus \mathfrak{u}_{1}^{-})
	\right)
\end{equation*}
by $\Psi(A)=(((B'_{\text{res}})_{i,i})_{1\le i\le m},\Psi_{B'}(A))$ 
for  $A\in \mathcal{O}^{o}_{B'}$ with $B'=
\sum_{i=1}^{k-1}B'_{i}x^{-i}\in S_{B}$. 
Then it
is bijective.

Under these preparations, now we can define a quiver $\mathsf{Q}$ as follows. 
The set of vertices is
\[\mathsf{Q}_{0}:=\{0\}\cup\{1,\ldots,m\}.\]
The set of arrows is 
\begin{align*}
    \mathsf{Q}_{1}:=&\left\{
        \rho^{[j]}_{i,i'}\colon i\rightarrow i'\,\bigg|\,
	\begin{array}{l}
	1\le i<i' \le m,\\
	1\le j\le d(i,i')
	\end{array}
    \right\}
    \cup
    \left\{
        \rho_{i}\colon 0\rightarrow i\mid 
        i=1,\ldots,m
    \right\}.
\end{align*}
Fix the dimension vector $\alpha=(\alpha_{a})_{a\in \mathsf{Q}_{0}}$ defined by
$\alpha_{0}=:n$ and $\alpha_{i}:=\mathrm{dim}_{\mathbb{C}}V_{i}$,
$i=1,\ldots,m$.

Let us construct a map from $\mathrm{Ad}_{H}(\mathcal{O}_{B}^{o})$ 
to the representation space
of $\overline{\mathsf{Q}}$.
For $A\in \mathcal{O}^{o}_{B'}$, $B'\in \mathrm{Ad}_{H}(B)$, we set 
$(Q,P)=\Phi(A_{\text{irr}})$ and 
define the representation 
$
x_{A}\in \mathrm{Rep}(\overline{\mathsf{Q}},\alpha)$ 
as follows:
\begin{align*}
	(x_{A})_{\rho^{[j]}_{i,i'}}&:=P^{[j]}_{i,i'},
	&(x_{A})_{(\rho^{[j]}_{i,i'})^{*}}&:=
    Q^{[j]}_{i',i},\\
    (x_{A})_{\rho_i}&:=(I_{n})_{i,*}, 
    &(x_{A})_{\rho_{i}^{*}}&:=\left(A_{\text{res}}
    \right)_{*,i},
\end{align*}
for $i,i'=1,\ldots,m$.
Here we set $P=\sum_{i=1}^{k-2}P^{[i]}z^{-i-1}$ and
$Q=\sum_{i=1}^{k-2}Q^{[i]}z^{i}$.
Then Proposition \ref{residue} tells us 
that $\mu_{\alpha}(x_{A})_{i}=(B'_{\text{res}})_{i,i}\in C_{R_{i}}$
for $i=1,\ldots,m$.
\begin{prop}\label{orbit to quiver1}
The following map is bijective,
\begin{multline*}
	\tilde{\Psi}\colon \mathrm{Ad}_{H}(\mathcal{O}^{o}_{B})\longrightarrow
	\left\{
            x\in 
	    \mathrm{Rep\,}(\overline{\mathsf{Q}},\alpha)
           \,\middle|\,
	   \begin{array}{l}
           (\psi_{\rho_{i}})_{1\le i\le m}=I_n,\,
	   \mu_{\alpha}(x)_{i}\in C_{R_i}\\
	   \text{ for }
           i=1,\ldots,m
   	\end{array}
        \right\},
\end{multline*}
which is defined by $\tilde{\Psi}(A):=x_{A}$ for $A\in 
\mathrm{Ad}_{H}(\mathcal{O}^{o}_{B})$ as above.
Moreover $\tilde{\Psi}$ preserves $H$-actions, i.e.,
$\tilde{\Psi}(hAh^{-1})=h\cdot x_{A}$ for all $h\in H$.
\end{prop}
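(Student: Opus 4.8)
The plan is to derive this from the bijectivity of the map $\Psi$ constructed just above, by exhibiting $\tilde{\Psi}$ as $\Psi$ followed by a purely combinatorial relabelling of its target. Fix $A\in\mathcal{O}^{o}_{B'}$ with $B'=\sum_{i}B'_{i}x^{-i}\in S_{B}$, put $(Q,P)=\Phi\circ\pr_{\text{irr}}(A)$, and write $Q=\sum_{j}Q^{[j]}x^{j}$, $P=\sum_{j}P^{[j]}x^{-j-1}$. First I would check that $M_{A}$ really lies in the indicated subset of $\mathrm{Rep}(\overline{\mathsf{Q}},\alpha)$: by construction it is a representation of $\overline{\mathsf{Q}}$ of dimension vector $\alpha$ with $(\psi_{\rho_{i}})_{1\le i\le m}=I_{n}$. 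The only thing requiring proof is $\mu_{\alpha}(M_{A})_{i}\in C_{R_{i}}$. Writing $\mu_{\alpha}(M_{A})_{i}$ out from the definition of the moment map, the arrows among $\{1,\dots,m\}$ incident to $i$ contribute exactly the sums of products of the blocks of the $P^{[j]}$ and $Q^{[j]}$ appearing in Proposition \ref{residue}, while the arrow $\rho_{i}$ contributes $\psi_{\rho_{i}}\psi_{\rho_{i}^{*}}=(\pr_{\text{res}}(A))_{i,i}$; taking residues ($x^{-1}$-coefficients) in the identity of Proposition \ref{residue} then shows that these contributions add up to $\pr_{\text{res}}(B'_{i,i})$, which lies in $C_{R_{i}}$ because $B'\in S_{B}$. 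In particular $\tilde{\Psi}$ records the same data as $\Psi$, the factor $\prod_{i}C_{R_{i}}$ now being carried by $(\mu_{\alpha}(M_{A})_{i})_{1\le i\le m}$.

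For bijectivity I would write down the inverse directly. Given $M=(M_{a},\psi_{\rho})$ in the target set, read off the constant matrices $Q^{[j]}$ from the maps $\psi_{(\rho^{[j]}_{i,i'})^{*}}$, the $P^{[j]}$ from the maps $\psi_{\rho^{[j]}_{i,i'}}$, and the off-diagonal part $U\in\mathfrak{u}^{+}_{1}\oplus\mathfrak{u}^{-}_{1}$ of the residue from the off-diagonal blocks of the $\psi_{\rho_{i}^{*}}$; let $B'\in S_{B}$ be the unique element whose residue has $(i,i)$-block $\mu_{\alpha}(M)_{i}$ for each $i$, using the identification $S_{B}\stackrel{\sim}{\rightarrow}\prod_{i}C_{R_{i}}$. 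Then $\Phi^{-1}(Q,P)\in\mathcal{O}_{B_{\text{irr}}}$ together with the residue whose off-diagonal part is $U$ and whose diagonal blocks are those forced from $(Q,P)$ and $B'$ by Proposition \ref{residue} recovers an element of $\mathcal{O}^{o}_{B'}\subset\mathrm{Ad}_{H}(\mathcal{O}^{o}_{B})$; equivalently one applies $\Psi_{B'}^{-1}$ from Proposition \ref{orbit to quiver}. That these two assignments are mutually inverse is then immediate, since $M\leftrightarrow(B';(Q,P);U)$ is just a re-indexing of matrix blocks into arrows together with the bijections $\Phi$ and $\Psi_{B'}$.

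Finally, for the $H$-equivariance I would first fix the meaning of the $H$-action on $\mathrm{Rep}(\overline{\mathsf{Q}},\alpha)$: $h=\mathrm{diag}(h_{1},\dots,h_{m})\in H$ acts through $(h,h_{1},\dots,h_{m})\in\prod_{a\in\mathsf{Q}_{0}}\mathrm{GL}(\alpha_{a},\mathbb{C})$, i.e.\ by $h$ on $M_{0}=\mathbb{C}^{n}$ and by $h_{i}$ on $M_{i}=V_{i}$. Since each $h_{i}$ preserves $V_{i}$, this action preserves the condition $(\psi_{\rho_{i}})_{1\le i\le m}=I_{n}$ and replaces $\mu_{\alpha}(M)_{i}$ by $h_{i}\mu_{\alpha}(M)_{i}h_{i}^{-1}$, so the target subset is $H$-stable and the asserted identity makes sense. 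The identity $\tilde{\Psi}(hAh^{-1})=h\cdot M_{A}$ is then checked arrow by arrow from $\pr_{\text{irr}}(hAh^{-1})=h\,\pr_{\text{irr}}(A)\,h^{-1}$, $\pr_{\text{res}}(hAh^{-1})=h\,\pr_{\text{res}}(A)\,h^{-1}$, and the way $(Q,P)=\Phi\circ\pr_{\text{irr}}(A)$ transforms under conjugation by $H$; this is a routine block computation.

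The step I expect to be the main obstacle is the moment-map identification $\mu_{\alpha}(M_{A})_{i}=\pr_{\text{res}}(B'_{i,i})$: it requires carefully matching, sign by sign, the two summations defining $\mu_{\alpha}$ at the vertex $i$ with the two summations of Proposition \ref{residue}, keeping track of the orderings chosen on the sets $J_{t}$ in Section \ref{Section3} and of the conventions for $\mathfrak{u}^{\pm}_{t}$ that they induce. Everything else is bookkeeping, but that comparison is where the precise conventions of Section \ref{Section3} must be used with care.
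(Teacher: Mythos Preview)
Your proposal is correct and follows essentially the same approach as the paper. The paper's proof is a terse two-liner---``Proposition \ref{orbit to quiver} shows that $\tilde{\Psi}$ is bijective. The last assertion can be directly checked.''---and you have simply unpacked what that means: the moment-map identity at each vertex $i$ is exactly the residue of the equation in Proposition \ref{residue} (this is also noted in the paper just before the statement of the proposition), bijectivity reduces to the bijectivity of $\Psi$ (equivalently, of each $\Psi_{B'}$ from Proposition \ref{orbit to quiver}) since $\tilde{\Psi}$ differs from $\Psi$ only by the relabelling that packages $((B'_{1})_{i,i})_{i}\in\prod_{i}C_{R_{i}}$ as the moment-map values, and the $H$-equivariance is a block-by-block check.
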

\begin{proof}
	Proposition \ref{orbit to quiver} 
	shows that $\tilde{\Psi}$ is bijective.
	The last assertion can be directly checked.
\end{proof}
Finally we can obtain a correspondence between 
$\mathcal{O}_{B}$ and  representations of $\overline{\mathsf{Q}}$.
\begin{prop}\label{symplectic}
    There exists a bijection
    \begin{multline*}
	    \mathcal{O}_{B}\cong \mathrm{GL}(n,\mathbb{C})
	    \times_{H}\mathrm{Ad}_{H}(\mathcal{O}^{o}_{B})
	   \longrightarrow\\
        \left\{
           x\in \mathrm{Rep\,}(\overline{\mathsf{Q}},\alpha)
           \,\bigg|\,
	   \begin{array}{l}
           \mathrm{det\,}(x_{\rho_{i}})_{1\le i\le m}\neq 0,
	   \mu_{\alpha}(x)_{i}\in C_{R_{i}}\\\text{ for }
           i=1,\ldots,m
   \end{array}
        \right\}/
	\prod_{i=1}^{m}\mathrm{GL}(\alpha_{i},\mathbb{C}).
\end{multline*}
\end{prop}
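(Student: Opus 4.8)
The plan is to combine Proposition~\ref{modified truncated orbit}, which already identifies $\mathcal{O}_{B}$ with $\mathrm{GL}(n,\mathbb{C})\times_{H}\mathrm{Ad}_{H}(\mathcal{O}^{o}_{B})$, with the bijection $\tilde{\Psi}$ of Proposition~\ref{orbit to quiver1}. Write $\mathcal{R}$ for the set of $M=(M_{a},\psi_{\rho})\in\mathrm{Rep}(\overline{\mathsf{Q}},\alpha)$ with $\det(\psi_{\rho_{i}})_{1\le i\le m}\neq 0$ and $\mu_{\alpha}(M)_{i}\in C_{R_{i}}$ for $i=1,\ldots,m$, so that the right-hand side of the proposition is $\mathcal{R}/\prod_{i=1}^{m}\mathrm{GL}(\alpha_{i},\mathbb{C})$. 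The target of Proposition~\ref{orbit to quiver1} is exactly the ``slice'' $\mathcal{S}=\{M\in\mathcal{R}\mid (\psi_{\rho_{i}})_{1\le i\le m}=I_{n}\}$, and $\tilde{\Psi}$ is an $H$-equivariant bijection $\mathrm{Ad}_{H}(\mathcal{O}^{o}_{B})\to\mathcal{S}$ (recall $\alpha_{i}=n_{i}$, so $\prod_{i=1}^{m}\mathrm{GL}(\alpha_{i},\mathbb{C})$ is canonically identified with $H$). Hence it is enough to prove that $\mathcal{R}/\prod_{i=1}^{m}\mathrm{GL}(\alpha_{i},\mathbb{C})$ is the space obtained from $\mathcal{S}$ by enlarging the structure group at the central vertex $0$ from $H$ to $\mathrm{GL}(n,\mathbb{C})$; concatenating with the bijection of Proposition~\ref{modified truncated orbit} then finishes the proof.

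First I would record the basic bookkeeping. The set $\mathcal{R}$ carries two commuting gauge actions preserving it: that of $\mathrm{GL}(n,\mathbb{C})$ at the vertex $0$ and that of $\prod_{i=1}^{m}\mathrm{GL}(\alpha_{i},\mathbb{C})$ at the vertices $1,\ldots,m$. On the ``stacked'' matrix $\Psi_{M}:=(\psi_{\rho_{i}})_{1\le i\le m}\in\mathrm{GL}(n,\mathbb{C})$ the vertex-$0$ action of $g$ acts by $\Psi_{M}\mapsto\Psi_{M}g^{-1}$ and leaves every $\mu_{\alpha}(M)_{i}$, $i\ge 1$, unchanged, whereas $h=\mathrm{diag}(h_{1},\ldots,h_{m})$ acts by $\Psi_{M}\mapsto h\Psi_{M}$; in particular the vertex-$0$ action on $\mathcal{R}$ is free. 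Using the $H$-equivariance $\tilde{\Psi}(hAh^{-1})=h\cdot\tilde{\Psi}(A)$ of Proposition~\ref{orbit to quiver1}, I would then define $\Xi(g,A)$ to be the representation obtained from $\tilde{\Psi}(A)$ by the vertex-$0$ action of $g$, note that its stacked matrix is $g^{-1}$ so that $\Xi(g,A)\in\mathcal{R}$, and check that $\Xi(gh^{-1},hAh^{-1})$ and $\Xi(g,A)$ differ by the action of $(h_{1},\ldots,h_{m})\in\prod_{i=1}^{m}\mathrm{GL}(\alpha_{i},\mathbb{C})$. Thus $\Xi$ descends to a map $\bar{\Xi}\colon\mathrm{GL}(n,\mathbb{C})\times_{H}\mathrm{Ad}_{H}(\mathcal{O}^{o}_{B})\to\mathcal{R}/\prod_{i=1}^{m}\mathrm{GL}(\alpha_{i},\mathbb{C})$.

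For surjectivity of $\bar{\Xi}$, given $M\in\mathcal{R}$ I would act at the vertex $0$ by $\Psi_{M}$, which brings $M$ into $\mathcal{S}$ and hence into the image of $\tilde{\Psi}$, and read off a preimage of $M$. For injectivity, if $\Xi(g_{1},A_{1})$ and $\Xi(g_{2},A_{2})$ lie in one $\prod_{i=1}^{m}\mathrm{GL}(\alpha_{i},\mathbb{C})$-orbit, comparing stacked matrices forces the connecting element to be block diagonal, i.e.\ to come from some $h\in H$ with $g_{2}=g_{1}h^{-1}$; cancelling the free vertex-$0$ action then gives $h\cdot\tilde{\Psi}(A_{1})=\tilde{\Psi}(A_{2})$, hence $\tilde{\Psi}(hA_{1}h^{-1})=\tilde{\Psi}(A_{2})$ by $H$-equivariance, hence $hA_{1}h^{-1}=A_{2}$ since $\tilde{\Psi}$ is injective, so $(g_{1},A_{1})\sim(g_{2},A_{2})$. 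I do not expect a genuine obstacle here: all the substantive work is already contained in Propositions~\ref{modified truncated orbit} and~\ref{orbit to quiver1}, and the only thing that needs care is keeping the two commuting actions straight and remembering that the quiver side is quotiented only by the gauge groups at the non-central vertices, which is exactly why the outcome is the induced object $\mathrm{GL}(n,\mathbb{C})\times_{H}(-)$ rather than a naive quotient by $H$.
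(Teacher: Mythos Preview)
Your proposal is correct and follows essentially the same route as the paper. The paper defines the explicit map $\overline{\Psi}\colon\mathrm{GL}(n,\mathbb{C})\times\mathrm{Ad}_{H}(\mathcal{O}^{o}_{B})\to\mathcal{R}$ by the formulas $\psi_{\rho_{i}}=(g^{-1})_{i,*}$, $\psi_{\rho_{i}^{*}}=(\pr_{\text{res}}(gA))_{*,i}$ (with the $P,Q$ data unchanged), observes via Proposition~\ref{orbit to quiver1} that $\overline{\Psi}$ is bijective, and then remarks that it is $H$-equivariant so descends to the quotients; your $\Xi(g,A)$, obtained by letting $g$ act at the vertex~$0$ on $\tilde{\Psi}(A)$, is exactly this $\overline{\Psi}$, and your surjectivity/injectivity checks unpack the same two ingredients. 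One small point worth tightening: the $H$-equivariance in Proposition~\ref{orbit to quiver1} is with respect to the \emph{diagonal} embedding $H\hookrightarrow\mathrm{GL}(n,\mathbb{C})\times\prod_{i}\mathrm{GL}(\alpha_{i},\mathbb{C})$ (otherwise the slice $\mathcal{S}$ is not preserved), and this is precisely what makes your cancellation step in the injectivity argument land on $\tilde{\Psi}(hA_{1}h^{-1})=\tilde{\Psi}(A_{2})$.
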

\begin{proof}
	Let us define a map $\overline{\Psi}$ from 
	$\mathrm{GL}(n,\mathbb{C})\times \mathrm{Ad}_{H}(\mathcal{O}^{o}_{B})$
	to  
	\begin{equation*}
        \left\{
	    x\in \mathrm{Rep\,}(\overline{\mathsf{Q}},\alpha)
           \,\bigg|\,
	   \begin{array}{l}
           \mathrm{det\,}(x_{\rho_{i}})_{1\le i\le m}\neq 0,\,
	   \mu_{\alpha}(x)_{i}\in C_{R_i}\\
	   \text{ for }
           i=1,\ldots,m
   	\end{array}
        \right\}.
	\end{equation*}
	For $(g,A)\in \mathrm{GL}(n,\mathbb{C})
	\times \mathrm{Ad}_{H}(\mathcal{O}^{o}_{B})$,
	$x=\overline{\Psi}( (g,A))=
	$ 
	is defined as below:
	\begin{align*}
    	x_{\rho^{[j]}_{i,i'}}&=P^{[j]}_{i,i'},
    	&x_{(\rho^{[j]}_{i,i'})^{*}}=
    	Q^{[j]}_{i',i},\\
	x_{\rho_i}&=(g^{-1})_{i,*}, 
	&x_{\rho_{i}^{*}}=\left((gA)_{\text{res}}
    	\right)_{*,i},
	\end{align*}
	where $(Q,P)=\Phi(A_{\text{irr}})$ and 
	write $P=\sum_{i=1}^{k-2}P^{[i]}z^{-i-1}$,
	$Q=\sum_{i=1}^{k-2}Q^{[i]}z^{i}$.
	Proposition \ref{orbit to quiver1} shows that this map is bijective.
	Moreover we can directly check that this map preserves $H$-actions.
	Thus we are done.
\end{proof}

For example, let us consider an HTL normal form $B=\sum_{i=1}^{4}B_{i}z^{-i}$
such that 
\begin{align*}
	B_{4}&=\mathrm{diag\,}(a^{(4)}_{1},a^{(4)}_{2},a^{(4)}_{2},
	a_{2}^{(4)}),&
	B_{3}&=\mathrm{diag\,}(\ast,a^{(3)}_{1},a^{(3)}_{2},
	a^{(3)}_{2}),\\
	B_{2}&=\mathrm{diag\,}(\ast,\ast,a^{(2)}_{1},a_{2}^{(2)}),&
	B_{1}&=\mathrm{diag\,}(\ast,\ast,\ast,\ast),
\end{align*}
where $a^{(i)}_{1}\neq a^{(i)}_{2}$.
Then the corresponding quiver is as follows.

\[
\begin{xy}
	(0,27)*++!D{1}*\cir<5pt>{}="A",
	(0,18)*+!D+!R{2}*\cir<5pt>{}="D",
	(0,9)*+!U+!R{3}*\cir<5pt>{}="C",
	(0,0)*++!U{4}*\cir<5pt>{}="B",
	(-23,13.5)*++!D{0}*\cir<5pt>{}="E"
\ar@/^13mm/@{=>} "A";"B",
\ar@/^8mm/@{=>}"A";"C",
\ar@/^/@{=>} "A";"D",
\ar@/^3mm/@{->} "D";"B",
\ar@/^4mm/@{->} "D";"C",
\ar@{->} "E";"A",
\ar@{->} "E";"B",
\ar@{->} "E";"C",
\ar@{->} "E";"D",
\end{xy}
\]
\begin{prop}[cf. Lemma 3.1 in \cite{HY}]\label{sub-inv}
	Let $(g,A)\in 
	\mathrm{GL}(n,\mathbb{C})\times_{H}\mathrm{Ad}_{H}(\mathcal{O}^{o}_{B})
	\cong\mathcal{O}_{B}$ and $x\in 
	\mathrm{Rep}(\overline{\mathsf{Q}},\alpha)$ the corresponding elements
	under the isomorphism in Proposition \ref{symplectic}.
	\begin{itemize}
		\item[$(i)$]
			Let us suppose that $x$ has a 
			subrepresentation 
			$N=(N_{a},\psi_{\rho})_{a\in \mathsf{Q}_{0},\rho
			\in \overline{\mathsf{Q}}_{1}}$
			satisfying 
			$\mathrm{dim}_{\mathbb{C}}\bigoplus_{i=1}^{m}N_{i}
			=\mathrm{dim}_{\mathbb{C}}N_{0}$, then
			$W=\bigoplus_{i=1}^{m}N_{i}$ is 
			invariant under all 
			$A_{i}$ where $A=\sum_{i=1}^{k}A_{i}z^{-i}$
			and similarly 
			$gW=N_{0}$ is invariant under 
			all $gA_{i}g^{-1}$.
		\item[$(ii)$]
			Any subspace $S\subset \mathbb{C}^{n}$ invariant
			under all $A_{i}$ is homogeneous with respect 
			to the decomposition $\mathbb{C}^{n}
			=\bigoplus_{i=1}^{m}V_{i}$. Moreover
			$N_{i}=S\cap V_{i}$, i=1,\ldots,m,
			and $N_{0}=gS$ define
			the subrepresentation $N=(N_{a},\psi_{\rho})$ 
			where 
			$\psi_{\rho}$ are restrictions of $x_{\rho}$
			on $N_{i}$.
	\end{itemize}
\end{prop}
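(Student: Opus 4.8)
The plan is to read both assertions off the explicit formula for the correspondence of Proposition~\ref{symplectic}, after unwinding it into the following dictionary. The vertex $0$ of $\mathsf{Q}$ records $\mathbb{C}^{n}$ seen through the constant matrix $g$: the arrow $\rho_{i}$ carries the block row $\psi_{\rho_{i}}=(g^{-1})_{i,*}$ of $g^{-1}$ and the arrow $\rho_{i}^{*}$ carries the block column $\psi_{\rho_{i}^{*}}=(\pr_{\text{res}}(gA))_{*,i}=(gA_{1})_{*,i}$ of the residue; the vertices $1,\dots,m$ record the blocks $V_{1},\dots,V_{m}$; and the arrows $\rho^{[j]}_{i,i'}$ together with their reverses record the pair $(Q,P)=\Phi\circ\pr_{\text{irr}}(A)$, i.e. the unipotent $u_{-}=I_{n}+Q$ and its partner $P$, which by Proposition~\ref{orbit to quiver} and Proposition~\ref{residue} recover the whole irregular part $\bar A=\pr_{\text{irr}}(A)=\sum_{i=2}^{k}A_{i}x^{-i}$ from $u_{-}$, $u_{-}^{-1}$, $P$ and the diagonal $B_{\text{irr}}$.

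For $(i)$, let $N$ be a subrepresentation with $\dim_{\mathbb{C}}\bigoplus_{i=1}^{m}N_{i}=\dim_{\mathbb{C}}N_{0}$ and put $W=\bigoplus_{i=1}^{m}N_{i}\subset\mathbb{C}^{n}$. The conditions $\psi_{\rho_{i}}(N_{0})\subset N_{i}$ for all $i$ say that every block component of $g^{-1}N_{0}$ lies in the corresponding $N_{i}$, i.e. $g^{-1}N_{0}\subset W$; since $g$ is invertible and $\dim N_{0}=\dim W$, this forces $g^{-1}N_{0}=W$, so $N_{0}=gW$. The conditions on the arrows $\rho^{[j]}_{i,i'}$ and $(\rho^{[j]}_{i,i'})^{*}$ say exactly that every coefficient of $P$ and of $Q$ maps each $N_{i}$ into the relevant $N_{i'}$; hence the homogeneous subspace $W$ is stable under $u_{-}=I_{n}+Q$, under $u_{-}^{-1}$, and under $P$. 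As $B_{\text{irr}}$ preserves every homogeneous subspace and $\bar A$ is built from $u_{-}$, $u_{-}^{-1}$, $P$ and $B_{\text{irr}}$, the coefficients $A_{2},\dots,A_{k}$ all preserve $W$; and $\psi_{\rho_{i}^{*}}(N_{i})\subset N_{0}$ for all $i$ gives $gA_{1}W\subset N_{0}=gW$, hence $A_{1}W\subset W$. Thus $W$ is invariant under every $A_{i}$, and conjugating by the constant $g$ shows $N_{0}=gW$ is invariant under every $gA_{i}g^{-1}$, which are the coefficients of the element of $\mathcal{O}_{B}$ attached to $(g,A)$ by Proposition~\ref{modified truncated orbit}.

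For $(ii)$, since the $H$-action preserves the decomposition $\mathbb{C}^{n}=\bigoplus_{i}V_{i}$ we may assume $A\in\mathcal{O}^{o}_{B}$. That any subspace $S$ invariant under all the $A_{i}$ is then homogeneous with respect to $\bigoplus_{i}V_{i}$ is Lemma~3.1 of \cite{HY}; it is a statement about the truncated orbit of the HTL form $B$ alone and reflects the pairwise distinctness of $q_{1},\dots,q_{m}$ through the refining filtration $V_{\langle k-1,\cdot\rangle}\supseteq\cdots\supseteq V_{\langle 1,\cdot\rangle}$. Granting it, put $N_{i}=S\cap V_{i}$ for $i=1,\dots,m$, $N_{0}=gS$, choose arbitrary vector-space complements, and let $\psi'_{\rho}$ be the restrictions of the $\psi_{\rho}$. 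Running the argument of $(i)$ backwards verifies the subrepresentation conditions: $S=\bigoplus_{i}N_{i}$ is invariant under $\bar A$, hence (peeling off the unipotent part via Lemma~\ref{P-orbit}) under $u_{-}$ and under $P$, so each coefficient of $Q$ and of $P$ restricts to the $N_{i}$ and yields the conditions $\psi_{\rho^{[j]}_{i,i'}}(N_{i})\subset N_{i'}$ and $\psi_{(\rho^{[j]}_{i,i'})^{*}}(N_{i'})\subset N_{i}$; the identity $g^{-1}N_{0}=S=\bigoplus_{i}N_{i}$ yields $\psi_{\rho_{i}}(N_{0})\subset N_{i}$; and $gA_{1}S\subset gS=N_{0}$ yields $\psi_{\rho_{i}^{*}}(N_{i})\subset N_{0}$. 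So $N$ is a subrepresentation, and by construction $N=\{0\}$ iff $S=\{0\}$ and $N=M$ iff $S=\mathbb{C}^{n}$.

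The step I expect to be the main obstacle is the homogeneity used in $(ii)$, i.e. Lemma~3.1 of \cite{HY}. It is \emph{not} a formal consequence of the fact that among the irregular coefficients of an element of $\mathcal{O}^{o}_{B}$ only the leading one equals the diagonal matrix $B_{k}$: the lower pole-order coefficients $A_{k-1},\dots,A_{2}$ acquire commutator corrections, so a naive descending induction peeling off one coefficient at a time already breaks down once the pole order exceeds $3$, and the true argument must use the whole refining filtration $V_{\langle i,j\rangle}$ together with $q_{i}\neq q_{j}$. With that in hand the rest is bookkeeping, the only genuine hazard being to keep the coadjoint action on $\mathfrak{g}^{*}_{k}$ (conjugation followed by discarding the part of non-negative order in $x$) apart from honest products of matrix-valued Laurent series, and to thread correctly through the several layers of the correspondence (Propositions~\ref{modified truncated orbit}, \ref{orbit to quiver}, \ref{orbit to quiver1} and \ref{symplectic}).
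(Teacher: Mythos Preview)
Your approach is essentially the same as the paper's: both argue by direct bookkeeping with the explicit dictionary of Proposition~\ref{symplectic}, checking the arrow conditions one type at a time and delegating the nontrivial step (homogeneity of any $A$-invariant $S$ and the compatibility with the $(Q,P)$-arrows) to \cite{HY}. The only difference is that the paper cites Lemma~3.10 of \cite{HY} as a black box for both the $A_{i}$-invariance of $W$ in $(i)$ (for $i\ge 2$) and the well-definedness of the $\rho^{[j]}_{i,i'}$-restrictions in $(ii)$, whereas you partially unpack that lemma by arguing that $\bar A$ is built from $u_{-},u_{-}^{-1},P,B_{\text{irr}}$ and running the construction backwards.

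One small caution: your reconstruction step in $(i)$ (``$\bar A$ is built from $u_{-},u_{-}^{-1},P,B_{\text{irr}}$, so stability under the pieces gives stability under each $A_{i}$'') and its converse in $(ii)$ are exactly the content of Lemma~3.10 in \cite{HY}; they are not quite immediate from Proposition~\ref{orbit to quiver} alone, since the inverse of $\Phi$ is implicit rather than written as an explicit polynomial in those ingredients. The paper sidesteps this by citing the lemma directly. Your identification of homogeneity as the main obstacle is spot on, and in fact the paper does not reprove it either.
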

\begin{proof}
	If $x$ has a subrepresentation $N$ as above,
	then Lemma 3.10 in \cite{HY} shows that 
	$W$ is invariant under $A_{i}$ for $i\ge 2$ and obviously
	invariant under $A_{1}$ since
	$A_{1}=(x_{\rho^{*}_{i}})_{1\le i\le m}
	(x_{\rho_{i}})_{1\le i\le m}$.
	
	Conversely let $S\subset \mathbb{C}^{n}$ be a subspace 
	invariant under all
	$A_{i}$.
	We need to check that all $\psi_{\rho}$ are well-defined, namely
	$x_{\rho}(N_{s(\rho)})\subset N_{t(\rho)}$ which
	are already checked in Lemma 3.10 in \cite{HY}
	for $\rho^{[j]}_{i,i'}$ and $(\rho^{[j]}_{i,i'})^{*},
	i,i'=1,\ldots,m,\,j=1,\ldots,d(i,i')$.
	For $\rho_{i}$, we have 
	$x_{\rho_{i}}(N_{i})=gN_{i}\subset gS=N_{0}$.
	For $\rho^{*}_{i}$, we have
	$x_{\rho_{i}^{*}}(N_{0})=V_{i}
	\cap A_{1}g^{-1}N_{0}=V_{i}\cap A_{1}S\subset N_{i}$.

\end{proof}
\subsection{Quivers associated with differential equations}\label{equationsandquiver}
Now we are ready to consider a correspondence between
moduli spaces $\mathfrak{M}(\mathbf{B})$ of arbitrary $k_{i}$ and 
subsets of quiver varieties $\mathfrak{M}_{\lambda}(\mathsf{Q},\alpha)$
as we saw in Section \ref{Section2} under the restriction $k_{0}=\cdots
=k_{p}=2$.

For  $i=0,\ldots,p$, let us fix a collection of nonzero positive integers 
$k_{i}$ and HTL normal forms 
$B_{i}=\sum_{j=1}^{k_{i}}B^{(i)}_{j}z^{-j}\in 
 \mathfrak{g}^{*}_{k_{i}}$.
Then write
\[
    B_{i}=
    \mathrm{diag}\left(
    q_{[i,1]}(z^{-1})I_{n_{[i,1]}}+R_{[i,1]}z^{-1},\ldots,
    q_{[i,m_i]}(z^{-1})I_{n_{[i,m_i]}}+R_{[i,m_{i}]}z^{-1}
    \right)
\]
for $i=0,\ldots,p$ 
where $q_{[i,j]}(z^{-1})\in z^{-2}\mathbb{C}[z^{-1}]$ satisfying 
$q_{[i,j]}\neq q_{[i,j']}$ if $j\neq j'$ 
and $R_{[i,j]}\in M(n_{[i,j]},\mathbb{C})$.

For each  $i=0,\ldots,p$,
decompose $\mathbb{C}^{n}=\bigoplus_{j=1}^{m_{i}(s)}V^{(i)}_{\langle
s,j\rangle}$
as simultaneous  $(B^{(i)}_{s+1},\ldots,B^{(i)}_{k_{i}})$-invariant 
subspaces.
In particular we write 
$V_{[i,j]}:=V^{(i)}_{\langle 1,j\rangle}$ for 
$i=0,\ldots,p$ and $j=1,\ldots,m_{i}$.
Here we note $m_{i}(1)=m_{i}$.

For each pair $j,j'\in \{1,\ldots,m_{i}\}$, attach the integer
$d_{i}(j,j')$ defined by
\[
	d_{i}(j,j'):=\mathrm{deg\,}_{\mathbb{C}[z^{-1}]}(q_{[i,j]}(z^{-1})-
	q_{[i,j']}(z^{-1}))-2
\]
if $j\neq j'$ or $d_{i}(j,j'):=-1$ if $j=j'$.
Set  $I_{\text{irr}}:=\{i\in\{0,\ldots,p\}\mid m_{i}>1
\}\cup \{0\}$ and $I_{\text{reg}}:=\{0,\ldots,p\}\backslash I_{\text{irr}}$. 
\begin{rem}\normalfont
	Suppose that $m_{i}=1$ for some $i\in \{0,\ldots,p\}$. Then
	the truncated orbit of the normal form $B_{i}$ is 
	trivial, namely $\mathcal{O}_{B_{i}}\cong C_{R_{[i,1]}}$. 
	Thus $I_{\text{irr}}$ can be seen as the set of singular points 
	at which truncated orbits are nontrivial and we add the point $0$
	as a ``base point'' to $I_{\text{irr}}$.
\end{rem}
Now consider the following quiver $\mathsf{Q}^{\text{irr}}$.
The set of vertices is 
\begin{align*}
	\mathsf{Q}^{\text{irr}}_{0}:=\left\{
        [i,j]\mid i\in I_{\text{irr}},\,j=1,\ldots,m_{i}
    \right\}.
\end{align*}
Also define
\begin{align*}
	\mathsf{Q}^{0\to i_{\text{irr}}}_{1}&:=
    \left\{
        \rho^{[0,j]}_{[i,j']}\colon
        [0,j]\rightarrow [i,j']\,\middle|\,
	\begin{array}{l}j=1,\ldots,m_{0},\\
        i\in I_{\text{irr}}\backslash\{0\},\\
        j=1,\ldots,m_{i}
\end{array}
    \right\},\\
    \mathsf{Q}^{B_{i}}_{1}&:=
    \left\{
        \rho^{[k]}_{[i,j],[i,j']}\colon
        [i,j]\rightarrow [i,j']\,\middle|\,
	\begin{array}{l}
        i\in I_{\text{irr}},\\
        1\le j<j'\le m_{i},\\
	1\le k\le d_{i}(j,j')
	\end{array}
    \right\}.
\end{align*}
Then the set of vertices is 
\[
	\mathsf{Q}^{\text{irr}}_{1}:=\mathsf{Q}^{0\to i_{\text{irr}}}_{1}\sqcup
	\bigsqcup_{i\in I_{\text{irr}}} \mathsf{Q}^{B_{i}}_{1}.
\]

Let us define the dimension vector 
$\alpha^{\text{irr}}=(\alpha_{a})_{a\in \mathsf{Q}^{\text{irr}}_{0}}$
by $\alpha_{[i,j]}:=n_{[i,j]}$. 

\begin{prop}\label{orbitquiv}
    There exists a bijection
    \begin{multline*}
        \Phi\colon
        \left\{
           \left(\sum_{j=1}^{k_{i}}A^{(i)}_{j}z^{-j}
	   \right)_{0\le i\le p}
            \in
    \prod_{i=0}^{p}\mathcal{O}_{B_{i}}\,
	    \middle|\,
	    \begin{aligned}
	    \sum_{i=0}^{p}A^{(i)}_{1}=0,\\
		\text{irreducible}
	\end{aligned}
    \right\}\big/\mathrm{GL}(n,\mathbb{C})\longrightarrow\\
    \left\{\begin{array}{c}
		    (x,(L_{i})_{i\in I_{\text{reg}}})\in\\ 
	    \mathrm{Rep}(\overline{\mathsf{Q}^{\text{irr}}},\alpha^{
		    \text{irr}})\times 
	    \prod_{i\in I_{\text{reg}}}\mathcal{O}_{B_{i}}\end{array}
    	\,\middle|\,
	    \begin{aligned}
		    \text{$\mathcal{L}^{\text{irr}}$-irreducible},\\
	    \mathrm{det}\left(
            x_{\rho^{[0,j]}_{[i,j']}}
            \right)_{\substack{1\le j\le m_{0}\\
            1\le j'\le m_{i}}}&\neq 0
            ,\,i\in I_{\text{irr}}\backslash\{0\},\\
	    \mu_{\alpha^{\text{irr}}}(x)_{[i,j]}&\in C_{R_{[i,j]}},
	    i\in I_{\text{irr}}
    \end{aligned}
    \right\}\\
    \big/\prod_{a\in \mathsf{Q}^{\text{irr}}_{0}}\mathrm{GL}(\alpha_{a},\mathbb{C}).
    \end{multline*}
    Here 
    \[
	    \mathcal{L}^{\text{irr}}:=\left\{
		    \beta\in \mathbb{Z}^{\mathsf{Q}^{\text{irr}}_{0}}
		    \,\middle|\,
		    \sum_{j=1}^{m_{0}}\beta_{[0,j]}=\sum_{j=1}^{m_{i}}
		    \beta_{[i,j]}\text{ for all }i\in I_{\text{irr}}\backslash
		    \{0\}
	    \right\}.
    \]
\end{prop}
\begin{proof}
	Setting $H_{0}:=\{\mathrm{diag}(h_{1},\ldots,h_{m_{0}})
	\mid h_{i}\in \mathrm{GL}(n_{[0,i]},\mathbb{C})\}$, 
	we can identify 
	$\mathcal{O}_{B_{0}}/\mathrm{GL}(n,\mathbb{C})$ with 
	$\mathrm{Ad}_{H_{0}}(\mathcal{O}^{o}_{B_{0}})/H_{0}$,
	see Proposition \ref{modified truncated orbit}.	
	Let us set 
	\begin{equation*}
		V:=\\
		\left\{\begin{array}{c}
           \left(\sum_{j=1}^{k_{i}}A^{(i)}_{j}z^{-j}\right)
	   _{0\le i\le p}
            \in\\
	    \mathrm{Ad}_{H_{0}}(\mathcal{O}^{o}_{B_{0}})
	    \times\prod_{i\in I_{\text{irr}}\backslash\{0\}}
	    \mathcal{O}_{B_{i}}\times 
    \prod_{i\in I_{\text{reg}}}\mathcal{O}_{B_{i}}\end{array}
	    \,
	    \middle|\,
    \sum_{i=0}^{p}A^{(i)}_{1}=0
	\right\}.
	\end{equation*}
	Then we  can identify 
	\[\left\{
           \left(\sum_{j=1}^{k_{i}}A^{(i)}_{j}z^{-j}
	   \right)_{0\le i\le p}
            \in
    \prod_{i=0}^{p}\,
	    \middle|\,
		\sum_{i\in I_{\text{irr}}}A^{(i)}_{1}=0
	\right\}/\mathrm{GL}(n,\mathbb{C})\]
	with $V/H_{0}$.
	
	Comparing with Proposition \ref{trunc order2}, 
	we see that 
	Propositions \ref{orbit to quiver1} and \ref{symplectic}
	give a bijection from $V$ to 
	\begin{multline*}
        \left\{
		x\in 
		\mathrm{Rep}(\overline{\mathsf{Q}^{\text{irr}}},\alpha^{
			\text{irr}})\,\middle|\,
	    \begin{array}{l}\mathrm{det}\left(
            x_{\rho^{[0,j]}_{[i,j']}}
            \right)_{\substack{1\le j\le m_{0}\\
            1\le j'\le m_{i}}}\neq 0
            ,\,i\in I_{\text{irr}}\backslash\{0\},\\
	    \mu_{\alpha^{\text{irr}}}(x)_{[i,j]}\in C_{R_{[i,j]}},
	    i\in I_{\text{irr}}
    \end{array}
    \right\}\\
    \bigg/\prod_{a\in \mathsf{Q}^{\text{irr}}_{0}\backslash\{[0,j]\mid 
	    j=1,\ldots,m_{0}\}}\mathrm{GL}(\alpha_{a},\mathbb{C})
    \end{multline*}
    and the bijection preserves $H_{0}$-actions. 
    Thus we have the required bijection.
    The correspondence between the irreducibility and 
    $\mathcal{L}^{\text{irr}}$-irreducibility follows from
    Proposition \ref{sub-inv} and the arguments in the proof of 
    Proposition \ref{trunc order2}.
\end{proof}

Finally as we saw in Section \ref{reviewoffuchsian},
we shall associate conjugacy classes of 
residue matrices of HTL normal forms to representations of quivers. 
For each $R_{[i,j]}$, $i=0,\ldots,p$ and  $j=1,\ldots,m_{i}$,
let us choose  
$\xi^{[i,j]}_{1},\ldots,\xi^{[i,j]}_{e_{[i,j]}}\in \mathbb{C}$
so that 
\[
	\prod_{k=1}^{e_{[i,j]}}(R_{[i,j]}-\xi^{[i,j]}_{k})=0.
\]
Let  $\xi=\left(
\left\{\xi^{[i,j]}_{1},\ldots,\xi^{[i,j]}_{e_{[i,j]}}\right\}
\right)_{\substack{0\le i\le p,\\1\le j\le m_{i}}}$
be the collection of the ordered sets 
$\left\{\xi^{[i,j]}_{1},\ldots,\xi^{[i,j]}_{e_{[i,j]}}\right\}$.

Now consider the following quiver $\mathsf{Q}$.
Set 
\[
	\mathsf{Q}_{0}^{\text{leg}}:=\left\{
        [i,j,k]\,\middle|\,
	\begin{array}{l}i=0,\ldots,p,\\
		j=1,\ldots,m^{(i)},\\
        k=1,\ldots,e_{[i,j]}-1
	\end{array}
    \right\}
\]
Then the set of vertices is 
\[
	\mathsf{Q}_{0}:=\mathsf{Q}^{\text{irr}}_{0}\sqcup 
	\mathsf{Q}_{0}^{\text{leg}}.
\]

Also set 
\begin{align*}
	\mathsf{Q}_{1}^{\text{leg}_{i}}&:=
	\left\{
		\rho_{[i,j,k]}\colon [i,j,k]\rightarrow
		[i,j,k-1]\,\middle|\,
		\begin{array}{l}
			j=1,\ldots,m_{i},\\
			k=2,\ldots,e_{[i,j]}-1
		\end{array}
	\right\},\\
	\mathsf{Q}_{1}^{\text{leg}_{i}\to B_{i}}&:=\left\{
		\rho_{[i,j,1]}\colon [i,j,1]\rightarrow [i,j]\mid
		j=1,\ldots,m_{i}
	\right\},\\
	\mathsf{Q}^{\text{leg}_{i}\to 0}_{1}&:=\left\{
		\rho^{[i,1,1]}_{[0,j]}\colon
		[i,1,1]\rightarrow [0,j]\mid 
		i\in I_{\text{reg}},\,j=1,\ldots,m_{0}
	\right\}.
\end{align*}
The set of arrows is 
\begin{align*}
	\mathsf{Q}_{1}:=
	\mathsf{Q}_{1}^{0\to I_{\text{irr}}}
	&\sqcup
	\bigsqcup_{i\in I_{\text{irr}}}\left(
		\mathsf{Q}_{1}^{B_{i}}\sqcup
		\mathsf{Q}_{1}^{\text{leg}_{i}\to B_{i}}\sqcup
		\mathsf{Q}_{1}^{\text{leg}_{i}}
	\right)\\
	&\sqcup
	\bigsqcup_{i\in I_{\text{reg}}}\left(
		\mathsf{Q}_{1}^{\text{leg}_{i}\to 0}\sqcup
		\mathsf{Q}_{1}^{\text{leg}_{i}}
	\right).
\end{align*}

For example, let us consider the following 
$\mathbf{B}=(B_{0},B_{1},B_{2})$.
	\begin{align*}
		B^{(0)}&=
		\begin{pmatrix}
			a^{(0)}_{4}&&&\\
			&a^{(0)}_{4}&&\\
		        &&a^{(0)}_{4}&\\
			&&&b^{(0)}_{4}
		\end{pmatrix}z^{-4}
		+
		\begin{pmatrix}
			a^{(0)}_{3}&&&\\
			&a^{(0)}_{3}&&\\
			&&b^{(0)}_{3}&\\
			&&&c^{(0)}_{3}
		\end{pmatrix}z^{-3}
		\\
		&+
		\begin{pmatrix}
			a^{(0)}_{2}&&&\\
			&b^{(0)}_{2}&&\\
			&&c^{(0)}_{2}&\\
			&&&d^{(0)}_{2}
		\end{pmatrix}z^{-2}
		+
		\begin{pmatrix}
			\xi^{[0,1]}_{1}&&&\\
			&\xi^{[0,2]}_{1}&&\\
		 &&\xi^{[0,3]}_{1}&\\
			&&&\xi^{[0,4]}_{1}
		\end{pmatrix}z^{-1},
	\end{align*}
	\begin{align*}
		B_{1}&=
		\begin{pmatrix}
			a^{(1)}_{2}&&&\\
			&a^{(1)}_{2}&&\\
			&&a^{(1)}_{2}&\\
			&&&b^{(1)}_{2}
		\end{pmatrix}
		z^{-2}+
		\begin{pmatrix}
			\xi^{[1,1]}_{1}&&&\\
		      &\xi^{[1,1]}_{2}&&\\
		       &&\xi^{[1,1]}_{3}&\\
		       &&&\xi^{[1,2]}_{1}
		\end{pmatrix}z^{-1},
	\end{align*}
	\begin{align*}
		B_{2}&=
		\begin{pmatrix}
			\xi^{[2,1]}_{1}&&&\\
		       &\xi^{[2,1]}_{2}&&\\
		       &&\xi^{[2,1]}_{3}&\\
		       &&&\xi^{[2,1]}_{4}
		\end{pmatrix}z^{-1}.
	\end{align*}
	Here any distinct 
	two of $\{a^{(i)}_{j},b^{(i)}_{j},c^{(i)}_{j},d^{(i)}_{j}\}$
	stand for distinct complex numbers and
	$\xi^{[i,j]}_{k}\neq \xi^{[i,j]}_{k'}$ if $k\neq k'$. 

	Then we can associate the following quiver to this $\mathbf{B}$.
	\[
	\begin{xy}
		(0,12) *++!U{[0,3]}*\cir<5pt>{}="A",
		(0,0) *++!U{[0,4]}*\cir<5pt>{}="B",
		(0,36)*++!D{[0,1]}*\cir<5pt>{}="C",
		(0,24)*++!D{[0,2]}*\cir<5pt>{}="D",
		(-30,28)*+!R+!D{[1,1]}*\cir<5pt>{}="E",
		(-30,40)*++!D{[1,2]}*\cir<5pt>{}="F",
		(-42,28)*++!U{[1,1,1]}*\cir<5pt>{}="G",
		(-54,28)*++!U{[1,1,2]}*\cir<5pt>{}="H",
		(-30,-4)*++!U{[2,1,1]}*\cir<5pt>{}="I",
		(-42,-4)*++!U{[2,1,2]}*\cir<5pt>{}="J",
		(-54,-4)*++!U{[2,1,3]}*\cir<5pt>{}="K",
		\ar@/^50pt/@{=>}"D";"B",	
		\ar@/^70pt/@{=>}"C";"B",
		\ar@/^30pt/@{=>}"A";"B",
		\ar@/^20pt/@{->}"C";"A",
		\ar@/^10pt/@{->}"D";"A",
		\ar@{->}"H";"G",
		\ar@{->}"G";"E",
		\ar@{->}"K";"J",
		\ar@{->}"J";"I",
		\ar@{->}"I";"A",
		\ar@{->}"I";"B",
		\ar@{->}"I";"C",
		\ar@{->}"I";"D",
		\ar@{<-}"E";"A",
		\ar@{<-}"E";"B",
		\ar@{<-}"E";"C",
		\ar@{<-}"E";"D",
		\ar@{<-}"F";"A",
		\ar@{<-}"F";"B",
		\ar@{<-}"F";"C",
		\ar@{<-}"F";"D",
		\end{xy}
	\]

Define the dimension vector $\alpha=(\alpha_{a})_{a\in \mathsf{Q}_{0}}$ 
by 
\begin{align*}
	\alpha_{[i,j]}&:=n_{[i,j]},&
\alpha_{[i,j,k]}&:=
\mathrm{rank\,}\prod_{l=1}^{k}(R_{[i,j]}-\xi^{[i,j]}_{l})
.
\end{align*}
Also define 
	$\lambda=(\lambda_{a})_{a\in \mathsf{Q}_{0}}$ by 
\begin{align*}
\lambda_{[i,j]}&:=-\xi^{[i,j]}_{1},&&
\text{ for }i\in I_{\text{irr}}\backslash\{0\},\,
j=1,\ldots,m_{i},\\
\lambda_{[0,j]}&:=-\xi^{[0,j]}_{1}
-\sum_{i\in I_{\text{reg}}}\xi^{[i,1]}_{1}&&
\text{ for }j=1,\ldots,m_{0},\\
\lambda_{[i,j,k]}&:=\xi^{[i,j]}_{k}-\xi^{[i,j]}_{k+1}&&
\text{ for }\begin{array}{l}i=0,\ldots,p,\, j=1,\ldots,m_{i},\\
k=1,\ldots,e_{[i,j]}-1.
\end{array}
\end{align*}
Then Propositions \ref{conjclass} and \ref{orbitquiv} show the following.

\begin{prop}\label{irregularquiver}
    Let $B_{0},\ldots,B_{p}$ be HTL normal forms chosen as above.
    Then there exists a bijection
    \begin{align*}
	    \Phi_{\xi}\colon
        &\left\{
		\left(\sum_{j=1}^{k_{i}}A^{(i)}_{j}x^{-j}\right)
		_{0\le i\le p}
            \in
	    \prod_{i=0}^{p}\mathcal{O}_{B_{i}}\,\bigg|\,
            \sum_{i=0}^{p}A^{(i)}_{1}=0
    \right\}/\mathrm{GL}(n,\mathbb{C})
    \end{align*}
    \begin{align*}
        &\rightarrow
        \Big\{
		x\in\mu^{-1}(\lambda)
		\subset
            \mathrm{Rep}(\overline{\mathsf{Q}},\alpha)\,\big|\,\\
            &\quad\quad\mathrm{det}\left(
            x_{\rho^{[0,j]}_{[i,j']}}
            \right)_{\substack{1\le j\le m_{0}\\
            1\le j'\le m_{i}}}\neq 0
            ,\,i\in I_{\text{irr}}\backslash\{0\},\\
            &\quad\quad\left(
            x_{\rho^{[i,1,1]}_{[0,j]}}
            \right)_{1\le j\le m_{0}}\colon
	    \mathbb{C}^{\alpha_{[i,1,1]}}\rightarrow 
	    \bigoplus_{j=1}^{m_{0}}
	    \mathbb{C}^{\alpha_{[0,j]}}
            ,\text{ injective},\,
            i\in I_{\text{reg}},\\ 
            &\quad\quad\left(
            x_{\left(\rho^{[i,1,1]}_{[0,j]}\right)^{*}}
            \right)_{1\le j\le m_{0}}\colon
            \bigoplus_{j=1}^{m_{0}}
	    \mathbb{C}^{\alpha_{[0,j]}}
	    \rightarrow 
	    \mathbb{C}^{\alpha_{[i,1,1]}}
            ,\text{ surjective},\,
            i\in I_{\text{reg}},\\
	    &\quad\quad x_{\rho_{[i,j,k]}},\text{ injective},\
	   x_{(\rho_{[i,j,k]})^{*}},\text{ surjective}
    \Big\}/\prod_{a\in \mathsf{Q}_{0}}\mathrm{GL}(\alpha_{a},\mathbb{C}).
    \end{align*}
\end{prop}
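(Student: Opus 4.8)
The plan is to deduce the bijection $\Phi_\xi$ by composing the two preceding results: Proposition~\ref{orbitquiv}, which treats the irregular singular points and builds a representation of $\overline{\mathsf{Q}'}$ whose moment map at $[i,j]$ is required to lie in the conjugacy class $C_{R^{(i)}_j}$, and Proposition~\ref{conjclass}, which resolves each such conjugacy class into the chain of vertices $[i,j,k]$. The regular singular points are attached exactly as in the Fuchsian construction reviewed in Section~\ref{reviewoffuchsian}.

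First I would separate the two types of points. For $i\in I_{\text{reg}}$ we have $m^{(i)}=1$, so by the scalar gauge transformation recalled in the introduction we may take $k_i=1$, and then $\mathcal{O}_{B^{(i)}}$ is simply the conjugacy class $C_i=C_{R^{(i)}_1}$ in $M(n,\mathbb{C})$; thus $\prod_{i=0}^p\mathcal{O}_{B^{(i)}}=\bigl(\prod_{i\in I_{\text{irr}}}\mathcal{O}_{B^{(i)}}\bigr)\times\bigl(\prod_{i\in I_{\text{reg}}}C_i\bigr)$. Using Proposition~\ref{modified truncated orbit} to fix the frame so that $B^{(0)}_{\text{irr}}$ is in standard position and applying Proposition~\ref{orbitquiv} to the $I_{\text{irr}}$-factor, one obtains a representation of $\overline{\mathsf{Q}'}$ with the stated determinant conditions on the $\psi_{\rho^{[0,j]}_{[i,j']}}$ and with $\mu_{\alpha'}(M)_{[i,j]}\in C_{R^{(i)}_j}$ for $i\in I_{\text{irr}}$; the residue relation appearing in Proposition~\ref{orbitquiv} is $\sum_{i\in I_{\text{irr}}}A^{(i)}_1=0$, and the discrepancy with the relation $\sum_{i=0}^pA^{(i)}_1=0$ of the present statement --- namely the residues $A^{(i)}_1\in C_i$ of the regular points --- will be absorbed, together with the attachment of those points, into the moment-map condition at the hub vertices $[0,j]$.

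Next, for each vertex $[i,j]$ with $i\in I_{\text{irr}}$ I would glue on the leg $[i,j,e_{[i,j]}-1]\to\cdots\to[i,j,1]\to[i,j]$ and apply Proposition~\ref{conjclass} to $C_{R^{(i)}_j}$ with the chosen scalars $\xi^{[i,j]}_1,\dots,\xi^{[i,j]}_{e_{[i,j]}}$: this replaces the membership $\mu_{\alpha'}(M)_{[i,j]}\in C_{R^{(i)}_j}$ by the moment-map equalities $\mu_\alpha(M)_{[i,j,k]}=\xi^{[i,j]}_k-\xi^{[i,j]}_{k+1}=\lambda_{[i,j,k]}$ together with injectivity of each $\psi_{\rho_{[i,j,k]}}$ and surjectivity of each $\psi_{(\rho_{[i,j,k]})^*}$, while $\mu_\alpha(M)_{[i,j]}$ picks up the summand contributed by the new arrow $\rho_{[i,j,1]}$; combining this summand with the recovery formula $\psi_{\rho_{[i,j,1]}}\psi_{(\rho_{[i,j,1]})^*}+\xi^{[i,j]}_1\in C_{R^{(i)}_j}$ of Proposition~\ref{conjclass} and the residue identity of Proposition~\ref{residue} (transported through Proposition~\ref{orbitquiv}) turns the condition at $[i,j]$ into $\mu_\alpha(M)_{[i,j]}=\lambda_{[i,j]}$. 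For $i\in I_{\text{reg}}$ I would likewise glue the leg $[i,1,e_{[i,1]}-1]\to\cdots\to[i,1,1]$ and the arrows $\rho^{[i,1,1]}_{[0,j]}\colon[i,1,1]\to[0,j]$, applying Proposition~\ref{conjclass} to $C_i$ with the central space $\mathbb{C}^n$ replaced by $\bigoplus_{j=1}^{m^{(0)}}V_{[0,j]}$, so that the distinguished arrow $\rho_1$ of the leg quiver becomes the collection $(\rho^{[i,1,1]}_{[0,j]})_{1\le j\le m^{(0)}}$; its injectivity and the surjectivity of its dual are precisely the conditions imposed on $(\psi_{\rho^{[i,1,1]}_{[0,j]}})_j$ and $(\psi_{(\rho^{[i,1,1]}_{[0,j]})^*})_j$ in the statement.

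The step I expect to be the main obstacle is the moment-map bookkeeping at the hub vertices $[0,j]$, which receive contributions from the $\mathsf{Q}'$-arrows $\rho^{[0,j]}_{[i,j']}$ and $\rho^{[k]}_{[0,j'],[0,j]}$, from the regular legs $\rho^{[i,1,1]}_{[0,j]}$, and from the leg $\rho_{[0,j,1]}$ resolving $C_{R^{(0)}_j}$. One must expand $\mu_\alpha(M)_{[0,j]}$ in terms of the data produced by Propositions~\ref{orbitquiv} and~\ref{conjclass}, substitute the $[0,j]$-block of $\sum_{i=0}^pA^{(i)}_1=0$, and check that the $\xi$-shift from each regular factor and from the block $R^{(0)}_j$ reassemble exactly into $\mu_\alpha(M)_{[0,j]}=-\xi^{[0,j]}_1-\sum_{i\in I_{\text{reg}}}\xi^{[i,1]}_1=\lambda_{[0,j]}$; this is the computation carried out for $\#I_{\text{irr}}\le 1$ in Section~\ref{reviewoffuchsian} and in \cite{HY}, now performed block by block over the eigenspaces $V_{[0,j]}$. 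The remaining points --- well-definedness, the fact that the $\prod_{a\in\mathsf{Q}_0}\mathrm{GL}(\alpha_a,\mathbb{C})$-quotient agrees with those of Propositions~\ref{orbitquiv} and~\ref{conjclass} because the added legs carry no residual gauge freedom beyond the $\mathrm{GL}(\alpha_{[i,j,k]},\mathbb{C})$ factors, and bijectivity of the composite --- are then routine, and assembling the steps gives $\Phi_\xi$.
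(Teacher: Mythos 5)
Your proposal is correct and follows exactly the route the paper intends: the paper's entire justification is the single sentence ``Then Propositions \ref{conjclass} and \ref{orbitquiv} show the following,'' i.e.\ compose the irregular-part dictionary of Proposition \ref{orbitquiv} with the leg-resolution of conjugacy classes from Proposition \ref{conjclass}, attaching the regular singular points as in the Fuchsian construction of Section \ref{reviewoffuchs}. Your write-up in fact supplies more detail than the paper does, in particular the moment-map bookkeeping at the hub vertices $[0,j]$ that reconciles $\sum_{i\in I_{\text{irr}}}A^{(i)}_{1}=0$ with $\sum_{i=0}^{p}A^{(i)}_{1}=0$ and produces $\lambda_{[0,j]}=-\xi^{[0,j]}_{1}-\sum_{i\in I_{\text{reg}}}\xi^{[i,1]}_{1}$.
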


Finally we shall close this section
 giving the isomorphism from 
 the moduli space of meromorphic connections $\mathfrak{M}(\mathbf{B})$ as 
 follows.
 
Define a sublattice of $\mathbb{Z}^{\mathsf{Q}_{0}}$, 
\[
	\mathcal{L}:=\left\{\beta\in 
\mathbb{Z}^{\mathsf{Q}_{0}}\,\middle|\,
	\sum_{j=1}^{m^{(0)}}\beta_{[0,j]}=\sum_{j=1}^{m^{(i)}}\beta_{[i,j]}
\text{ for all }i\in I_{\text{irr}}\backslash\{0\}
\right\}.
\]
Set $\mathcal{L}^{+}:=\mathcal{L}\cap (\mathbb{Z}_{\ge 0})^{\mathsf{Q}_{0}}$.

\begin{df}[$\mathcal{L}$-irreducible]
    \normalfont
    If $x\in \mathrm{Rep}(\overline{
	    \mathsf{Q}},\alpha)$
	    has 
    no nontrivial proper subrepresentation
    $\{0\}\neq y\subsetneqq x$  
    with 
    $\mathbf{dim\,}y\in \mathcal{L}$,
    then $x$ is said to be {\em $\mathcal{L}$-irreducible}.
\end{df}
Then we define a subset of the quiver variety $\mathfrak{M}_{\lambda}
(\mathsf{Q},\alpha)$ as follows,
\[
	\mathfrak{M}_{\lambda}(\mathsf{Q},\alpha)^{\text{dif}}:=
	\mu_{\alpha}^{-1}(\lambda)^{\text{dif}}/\mathbf{G}(\alpha)
\]
where 
\[
	\mu_{\alpha}^{-1}(\lambda)^{\text{dif}}:=
	\left\{
		x\in\mu_{\alpha}^{-1}(\lambda)\,\middle|\,
		\begin{array}{l}
		x\text{ is $\mathcal{L}$-irreducible},\\
		\mathrm{det\,}(x_{\rho^{[0,j]}_{[i,j']}})_{
		\substack{1\le j\le m_{0}\\
	1\le j'\le m_{i}}}\neq 0,\,i
	\in I_{\text{irr}}\backslash
\{0\}
	\end{array}
	\right\}.
\]
Also define 
\[
	\mu_{\alpha}^{-1}(\lambda)^{\text{det}}:=
	\left\{
		x\in\mu_{\alpha}^{-1}(\lambda)\,\middle|\,
		\mathrm{det\,}(x_{\rho^{[0,j]}_{[i,j']}})_{
		\substack{1\le j\le m_{0}\\
	1\le j'\le m_{i}}}\neq 0,\,i
	\in I_{\text{irr}}\backslash
\{0\}
	\right\}
\]
for the latter use.
\begin{thm}\label{irreducibleandquasi}
	The restriction of the map in Proposition \ref{irregularquiver},
    \begin{equation*}
	    \Phi_{\xi}\colon
	    \mathfrak{M}(\mathbf{B})
        \longrightarrow
		\mathfrak{M}_{\lambda}(\mathsf{Q},\alpha)^{\text{dif}}
	\end{equation*}
    is well-defined and bijective.
\end{thm}
\begin{proof}
	The map $\Phi_{\xi}$ in Proposition \ref{irregularquiver}
	sends irreducible $\mathbf{A}\in 
	\prod_{i=0}^{p}\mathcal{O}_{B_{i}}$ to 
	the $\mathcal{L}$-irreducible representation $\Phi_{\xi}(\mathbf{A})$
	and vice versa 
	by Propositions \ref{conjclass} and \ref{orbitquiv}.
	Thus we only need to check that 
	the $\mathcal{L}$-irreducibility of a representation
	$x,\rho\in \mu^{-1}(\lambda)^{\text{dif}}
	$
	implies
	that $\left(x_{\rho^{[i,1,1]}_{0,j}}\right)_{1\le j\le m_{0}}$
	and $x_{\rho_{[i,j,k]}}$ are injective, and 
	$\left(x_{\left(\rho^{[i,1,1]}_{0,j}\right)^{*}}
	\right)_{1\le j\le m_{0}}$
	and $x_{\left(\rho_{[i,j,k]}\right)^{*}}$ are surjective.
	This can be shown as in the proof of Theorem \ref{preproof}.
\end{proof}
\section{Middle convolutions and reflections}
\label{middleconvolutionandreflection}
In the previous section we gave an isomorphism $
\mathfrak{M}(\mathbf{B})\cong \mathfrak{M}_{\lambda}(\mathsf{Q},\alpha)^{\text{dif}}$. However $\mathfrak{M}_{\lambda}(\mathsf{Q},\alpha)^{\text{dif}}$
does not coincide with $\mathfrak{M}_{\lambda}^{\text{reg}}(\mathsf{Q},\alpha)$
in general. Namely, we can not refer to 
Theorem 1.2 in \cite{C1} by Crawley-Boevey (see Corollary \ref{nonemptyquiv} )
for the non-emptiness of $\mathfrak{M}(\mathbf{B})$ in our general setting.
Therefore in the remaining sections, we shall determine the condition for 
the non-emptiness of $\mathfrak{M}_{\lambda}(\mathsf{Q},\alpha)^{\text{dif}}$
tracing the case of $\mathfrak{M}_{\lambda}^{\text{reg}}(\mathsf{Q},\alpha)$
done by Crawley-Boevey in \cite{C1}.
In the case of $\mathfrak{M}_{\lambda}^{\text{reg}}(\mathsf{Q},\alpha)$,
reflection functors (see \cite{CBH} and \cite{Nak2}) plays very impotent role
as we can see in Kac's theorem for the existence of 
indecomposable representations of quivers 
in \cite{Kac}.
Unfortunately it, however, is easy to see 
that all reflection functors on $\mathfrak{M}(
\mathsf{Q},\alpha)$ do not preserve $\mathfrak{M}_{\lambda}(\mathsf{Q},\alpha)^{
	\text{dif}}$ because of the open condition 
	$\mathrm{det}(x_{\rho^{[0,j]}_{[i,j']}})_{\substack{1\le j\le m_{0},\\
	1\le j'\le m_{i}}}\neq 0$.
Thus in this section we shall introduce some operations for 
$\mathfrak{M}_{\lambda}(\mathsf{Q},\alpha)^{\text{dif}}$
which are induced from operations originally defined for 
$\mathfrak{M}(\mathbf{B})$, so-called 
middle convolutions,
additions, etc. 
The map in Proposition \ref{irregularquiver} enable us to 
define analogous operations on $\mathfrak{M}_{\lambda}(\mathsf{Q},\alpha)^{\text{dif}}$.

In this section we retain the notation in the previous section.
\subsection{A review of middle convolutions}
Let us give a review of middle convolutions on differential 
equations with irregular singular points.
The middle convolution is originally defined by N. Katz in 
\cite{Katz} and 
reformulated as an operation on Fuchsian systems by 
Dettweiler-Reiter \cite{DR2}, see also \cite{DR}  and V\"olklein's paper \cite{V}.
There are several studies to generalize the middle convolution
to non-Fuchsian differential equations, 
see \cite{A},\cite{Kaw},\cite{T},\cite{Y1}
for example.
Among them we shall give a review of middle convolutions following  \cite{Y1}.

From $\mathbf{A}=(\sum_{j=1}^{k_{i}}A^{(i)}_{j}z^{-j})_{0\le i\le p}\in \prod_{i=0}^{p}
\mathcal{O}_{B_{i}}$, 
let us construct a 5-tuple
$(V,W,T,Q,P)$ consisting of $\mathbb{C}$-vector spaces $V$, $W$
and 
$T\in \mathrm{End}_{\mathbb{C}}(W)$,
$Q\in \mathrm{Hom}_{\mathbb{C}}(W,V)$, 
$P\in \mathrm{Hom}_{\mathbb{C}}(V,W)$.
Set $V:=\mathbb{C}^{n}$ and $\widehat{W}_{i}:=V^{\oplus k_{i}}$ for 
 $i=0,\ldots,p$.
Then define
\begin{align*}
	\widehat{Q}_{i}&:=(A^{(i)}_{k_{i}},A^{(i)}_{k_{i}-1},\ldots,A^{(i)}_{1})
	\in \mathrm{Hom}_{\mathbb{C}}(\widehat{W}_{i},V),\\
	\widehat{P}_{i}&:=\begin{pmatrix}0\\\vdots\\0\\
		\mathrm{Id}_{V}
	\end{pmatrix}
	\in \mathrm{Hom}_{\mathbb{C}}(V,\widehat{W}_{i}),\
	\widehat{N}_{i}:=
	\begin{pmatrix}
		0&\mathrm{Id}_{V}&&0\\
		&0&\ddots&\\
		&&\ddots&\mathrm{Id}_{V}\\
		0&&&0
	\end{pmatrix}
	\in \mathrm{End}_{\mathbb{C}}(\widehat{W}_{i}).
\end{align*}
Setting  
\begin{align*}\widehat{W}&:=\bigoplus_{i=0}^{p}\widehat{W}_{i},\\
\widehat{T}&:=(\widehat{N}_{i})_{0\le i\le p}\in \bigoplus_{i=0}^{p}
\mathrm{End}_{\mathbb{C}}(\widehat{W}_{i})\subset\mathrm{End}_{\mathbb{C}}
(\widehat{W}),\\
\widehat{Q}&:=(\widehat{Q}_{i})_{0\le i\le p}\in 
\bigoplus_{i=0}^{p}\mathrm{Hom}_{\mathbb{C}}(\widehat{W}_{i},V)
=\mathrm{Hom}_{\mathbb{C}}(\widehat{W},V),\\
\widehat{P}&:=(\widehat{P}_{i})_{0\le i\le p}\in 
\bigoplus_{i=0}^{p}\mathrm{Hom}_{\mathbb{C}}(V,\widehat{W}_{i})
=\mathrm{Hom}_{\mathbb{C}}(V,\widehat{W}),
\end{align*}
we have a 5-tuple $(V,\widehat{W},\widehat{T},\widehat{Q},\widehat{P})$.
Further setting
\[
	\widehat{A}_{i}:=
\begin{pmatrix}
	A^{(i)}_{k_{i}}&A^{(i)}_{k_{i}-1}&\cdots&A^{(i)}_{1}\\
	&A^{(i)}_{k_{i}}&\ddots&\vdots\\
	&&\ddots&A^{(i)}_{k_{i}-1}\\
	0&&&A^{(i)}_{k_{i}}
\end{pmatrix}
\in \mathrm{End}_{\mathbb{C}}(\widehat{W}_{i}),
\]
we define 
$W_{i}:=\widehat{W}_{i}/\mathrm{Ker}\widehat{A}_{i}$ and 
$W:=\bigoplus_{i=0}^{p}W_{i}$.
Then $T,Q,P$ are the maps induced from $\widehat{T},\widehat{Q},
\widehat{P}$ respectively.
\begin{df}[Yamakawa \cite{Y1}]\normalfont
	The 5-tuple $(V,W,T,Q,P)$ given above is called the 
	{\em canonical datum} for $\mathbf{A}\in
	\prod_{i=0}^{p}\mathcal{O}_{B_{i}}$.
\end{df}
\begin{lem}\label{sizeofW}
	Let  $(V,W,T,Q,P)$ is the canonical datum of $\mathbf{A}
	\in\prod_{i=0}^{p}\mathcal{O}_{B_{i}}$. Then
	\[
		\mathrm{dim}_{\mathbb{C}}W=
		\sum_{i=0}^{p}\sum_{j=0}^{k_{i}-1}
		\left(n-\mathrm{dim}_{\mathbb{C}}
		\bigcap_{l=0}^{j} \mathrm{Ker}B^{(i)}_{k_{i}-l}
		\right).
	\]
\end{lem}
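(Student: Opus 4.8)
The plan is to compute $\dim_{\mathbb C}W$ by summing $\dim_{\mathbb C}W_i$ over $i$, and for each $i$ to analyze the rank of the block upper-triangular Toeplitz operator $\widehat A_i\in\operatorname{End}_{\mathbb C}(\widehat W_i)$ with $\widehat W_i=V^{\oplus k_i}$. Since $W_i=\widehat W_i/\operatorname{Ker}\widehat A_i$, we have $\dim_{\mathbb C}W_i=\dim_{\mathbb C}\widehat W_i-\dim_{\mathbb C}\operatorname{Ker}\widehat A_i=k_i n-\dim_{\mathbb C}\operatorname{Ker}\widehat A_i$, so the whole task reduces to identifying $\dim_{\mathbb C}\operatorname{Ker}\widehat A_i$. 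First I would note that $\mathbf A\in\prod_{i=0}^p\mathcal O_{B^{(i)}}$ means each $A^{(i)}(x)=\sum_j A^{(i)}_j x^{-j}$ is conjugate in $G_{k_i}$ to the HTL normal form $B^{(i)}$; but the rank of $\widehat A_i$ — viewed as the matrix of multiplication by $A^{(i)}(x)$ on the truncated module $\mathbb C^n[\![x]\!]/x^{k_i}\mathbb C^n[\![x]\!]$ — is invariant under the $G_{k_i}$-action, since conjugating $A^{(i)}(x)$ by $g(x)\in G_{k_i}$ conjugates $\widehat A_i$ by the corresponding invertible block-Toeplitz matrix built from $g$. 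Hence we may replace $A^{(i)}(x)$ by $B^{(i)}$ and compute $\operatorname{Ker}\widehat A_i$ for the normal form.

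For $B^{(i)}$ in HTL normal form the operator $\widehat A_i$ on $\widehat W_i=V^{\oplus k_i}$ is the block-Toeplitz matrix with blocks $B^{(i)}_{k_i},B^{(i)}_{k_i-1},\dots,B^{(i)}_1$ along the diagonals. I would filter $\widehat W_i=V^{\oplus k_i}$ by the ``leading coordinate'' subspaces and compute the dimension of the image degree by degree: thinking of an element of $\widehat W_i$ as a polynomial vector $v(x)=v_0+v_1x+\cdots+v_{k_i-1}x^{k_i-1}$ with $v_j\in V$, the operator $\widehat A_i$ is (up to the chosen indexing convention) multiplication by $B^{(i)}(x)=B^{(i)}_{k_i}+B^{(i)}_{k_i-1}x+\cdots$ modulo $x^{k_i}$, i.e. $v\mapsto \pr_{<k_i}\!\big(B^{(i)}(x)\,v(x)\big)$. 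Reading off coefficients, the $x^0$-coefficient of the image is $B^{(i)}_{k_i}v_0$; the $x^1$-coefficient is $B^{(i)}_{k_i}v_1+B^{(i)}_{k_i-1}v_0$; and in general the $x^j$-coefficient is $\sum_{l=0}^j B^{(i)}_{k_i-l}v_{j-l}$. An element lies in $\operatorname{Ker}\widehat A_i$ iff all these vanish; an induction on $j$ (solving for $v_0$, then $v_1$, etc.) shows that the projection of $\operatorname{Ker}\widehat A_i$ onto the first $j+1$ coordinates surjects onto $\bigcap_{l=0}^{j}\operatorname{Ker}B^{(i)}_{k_i-l}$ in each new coordinate slot, so that $\dim_{\mathbb C}\operatorname{Ker}\widehat A_i=\sum_{j=0}^{k_i-1}\dim_{\mathbb C}\bigcap_{l=0}^{j}\operatorname{Ker}B^{(i)}_{k_i-l}$. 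Combining with $\dim_{\mathbb C}W_i=k_i n-\dim_{\mathbb C}\operatorname{Ker}\widehat A_i$ and $k_i n=\sum_{j=0}^{k_i-1}n$ gives $\dim_{\mathbb C}W_i=\sum_{j=0}^{k_i-1}\big(n-\dim_{\mathbb C}\bigcap_{l=0}^{j}\operatorname{Ker}B^{(i)}_{k_i-l}\big)$, and summing over $i=0,\dots,p$ yields the claim.

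The main obstacle I anticipate is bookkeeping: the indexing in the matrix for $\widehat A_i$ displayed in the excerpt runs with $A^{(i)}_{k_i}$ on the diagonal and increasing subscripts of the $A^{(i)}_j$ moving rightward, which is the transpose of the ``multiplication by a polynomial'' convention, so I need to be careful about whether it is the kernel or the cokernel that I am reading off, and about which partial intersections $\bigcap_{l=0}^{j}\operatorname{Ker}B^{(i)}_{k_i-l}$ appear (top-degree coefficients first, as in the statement) versus $\bigcap\operatorname{Ker}B^{(i)}_{1+l}$. I would resolve this by doing a small explicit case, say $k_i=2$ and $k_i=3$, to fix the direction of the induction, and then record the invariance-under-$G_{k_i}$ observation carefully so that reducing to the normal form $B^{(i)}$ is rigorously justified (the $j=0$ term needs $\operatorname{rank}B^{(i)}_{k_i}$, which for a genuine HTL form is $n$ unless $k_i=1$, but the formula is stated so as to hold uniformly, so I should not special-case it).
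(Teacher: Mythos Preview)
Your reduction step—conjugating $\widehat A_i$ to $\widehat B_i$ by the invertible block upper-triangular Toeplitz matrix built from $g^{(i)}\in G_{k_i}$—is exactly the paper's argument; the paper then writes ``$\dim_{\mathbb C}\operatorname{Ker}\widehat A_i=\dim_{\mathbb C}\operatorname{Ker}\widehat B_i$ as required'' and leaves the computation of $\dim\operatorname{Ker}\widehat B_i$ to the reader.

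Your attempt to supply that computation has a genuine gap, however. The induction you describe (``solve for $v_0$, then $v_1$, \dots'') does \emph{not} yield $\dim\operatorname{Ker}\widehat B=\sum_j\dim\bigcap_{l\le j}\operatorname{Ker}B_{k-l}$ for a general block-Toeplitz operator: already for $k=2$ one has $\dim\operatorname{Ker}\bigl(\begin{smallmatrix}B_2&B_1\\0&B_2\end{smallmatrix}\bigr)=\dim\operatorname{Ker}B_2+\dim\{v\in\operatorname{Ker}B_2:B_1v\in\operatorname{Im}B_2\}$, and with $n=3$, $B_2=E_{33}$, $B_1=E_{31}$ this equals $2+2=4$ while your formula gives $2+1=3$. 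What makes the identity hold for $\widehat B_i$ is the HTL structure itself, which your induction never invokes: on each eigenblock $V_m$ the coefficients $B^{(i)}_l|_{V_m}$ for $l\ge 2$ are scalar, so the first nonzero one is invertible on $V_m$, and $\widehat B_i|_{V_m^{\oplus k_i}}$ is multiplication on $V_m[x]/(x^{k_i})$ by a power of $x$ times a unit; both the kernel and the chain of intersections $\bigcap_{l\le j}\operatorname{Ker}B^{(i)}_{k_i-l}|_{V_m}$ are then immediate and agree blockwise. Relatedly, your parenthetical that $\operatorname{rank}B^{(i)}_{k_i}=n$ for an HTL form with $k_i>1$ is false in general: any block $V_m$ with $\deg q^{(i)}_m<k_i$ contributes a zero block to $B^{(i)}_{k_i}$.
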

\begin{proof}
	For $\mathbf{A}=
	(\sum_{j=1}^{k_{i}}A^{(i)}_{j}z^{-j})
	_{0\le i\le p}\in \prod_{i=0}^{p}\mathcal{O}_{B_{i}}$,
there exists $g^{(i)}=g^{(i)}_{0}+g^{(i)}_{1}z+\cdots
+g^{(i)}_{k_{i}-1}z^{k_{i}-1}
\in G_{k_{i}}$ such that 
$(g^{(i)})^{-1}(\sum_{j=1}^{k_{i}}A^{(i)}_{j}z^{-j})g^{(i)}=B_{i}$.
Thus
\begin{align*}
	&\begin{pmatrix}
		g^{(i)}_{0}&g^{(i)}_{1}&\cdots&g^{(i)}_{k_{i}-1}\\
	&g^{(i)}_{0}&\ddots&\vdots\\
	&&\ddots&g^{(i)}_{1}\\
	0&&&g^{(i)}_{0}
\end{pmatrix}^{-1}
\begin{pmatrix}
	A^{(i)}_{k_{i}}&A^{(i)}_{k_{i}-1}&\cdots&A^{(i)}_{1}\\
	&A^{(i)}_{k_{i}}&\ddots&\vdots\\
	&&\ddots&A^{(i)}_{k_{i}-1}\\
	0&&&A^{(i)}_{k_{i}}
\end{pmatrix}
\begin{pmatrix}
	g^{(i)}_{0}&g^{(i)}_{1}&\cdots&g^{(i)}_{k_{i}-1}\\
	&g^{(i)}_{0}&\ddots&\vdots\\
	&&\ddots&g^{(i)}_{1}\\
	0&&&g^{(i)}_{0}
\end{pmatrix}\\
&=\begin{pmatrix}
	B^{(i)}_{k_{i}}&B^{(i)}_{k_{i}-1}&\cdots&B^{(i)}_{1}\\
	&B^{(i)}_{k_{i}}&\ddots&\vdots\\
	&&\ddots&B^{(i)}_{k_{i}-1}\\
	0&&&B^{(i)}_{k_{i}}
\end{pmatrix}=\widehat{B}_{i}.
\end{align*}
This implies 
that $\mathrm{dim}_{\mathbb{C}}\mathrm{Ker}\widehat{A}_{i}=
\mathrm{dim}_{\mathbb{C}}\mathrm{Ker}\widehat{B}_{i}$
as required.
\end{proof}

Fix $t\in \{0,\ldots,p\}$, take a polynomial $p_{t}(z^{-1})
=\sum_{j=1}^{k_{t}}p^{(t)}_{j}z^{-j}\in 
z^{-1}\mathbb{C}[z^{-1}]$ and define an operation, 
called {\em addition}, as follows.
For an element $\mathbf{A}=
(A_{i}(x^{-1}))_{0\le i\le p}\in 
\prod_{i=0}^{p}\mathcal{O}_{B_{i}}$,
we define $\mathrm{Add}^{(t)}_{p_{t}(z^{-1})}(\mathbf{A})
:=(A'_{i}(z^{-1}))_{0\le i\le p}$
by
\[
	A'_{i}(z^{-1}):=
	\begin{cases}
		A_{i}(z^{-1})&\text{if }i\neq t,\\
		A_{t}(z^{-1})-p_{t}(x^{-1})&
		\text{if }i=t.
	\end{cases}
\]
Then $
\mathrm{Add}^{(t)}_{p_{t}(x^{-1})}(\mathbf{A})\in \prod_{i=0}^{p}\mathcal{O}
_{B'_{i}}$
where 
\[
	B'_{i}:=
	\begin{cases}
		B_{i}&\text{if }i\neq t,\\
		B_{t}-p_{t}(z^{-1})&\text{if }i=t.
	\end{cases}
\]

Set \[
	\mathcal{J}_{i}:=\{[i,j]\mid j=1,\ldots,m_{i}\}
\]
for $i=0,\ldots,p$
and  
\[
	\mathcal{J}:=\prod_{i=0}^{p}\mathcal{J}_{i}.
\]
Then let us define
\[
	\mathrm{Add}_{\mathbf{i}}:=\prod_{i=0}^{p}
	\mathrm{Add}^{(i)}_{q_{[i,j_{i}]}(z^{-1})+\xi^{[i,j_{i}]}_{1}z^{-1}},
\]  
for $\mathbf{i}=([i,j_{i}])_{0\le i\le p}\in \mathcal{J}$.
Here we use the notation $\prod_{i\in \{a,b,\ldots,\}}f_{i}=f_{a}\circ f_{b}
\circ\cdots$ and 
 note that the operators 
 $\mathrm{Ad}^{(i)}_{q_{[i,j_{i}]}(z^{-1})+\xi^{[i,j_{i}]}_{1}z^{-1}}$
for $i\in {0,\ldots,p}$ are commutative.

Take $\mathbf{A}=(A_{i}(z^{-1}))_{0\le i\le p}
\in \prod_{i=0}^{p}\mathcal{O}_{B_{i}}$ satisfying
$\sum_{i=0}^{p}\mathrm{Res}A_{i}(z^{-1})=0$.
Suppose that we can 
choose $\mathbf{i}\in \mathcal{J}$ so that \[
	\xi_{\mathbf{i}}:=
\sum_{i=0}^{p}\xi^{[i,j_{i}]}_{1}\neq 0.
\]
Let $(V,W,T,Q,P)$  be the canonical datum of 
$\mathrm{Add}_{\mathbf{i}}(\mathbf{A})$. 
Following Example 3 in \cite{Y1}, we construct a new 5-tuple
$(V',W,T,Q',P')$ as follows.
Note that $QP=-\xi_{\mathbf{i}}\mathrm{Id}_{V}$.
Thus $Q$ and $P$ are surjective and injective respectively.
Let us set $V':=\mathrm{Coker\,}P$ and $Q'\colon W\rightarrow V'$,
the natural projection. Then we have the split exact sequence
\[
	0\longrightarrow V\stackrel{P}{\longrightarrow}W
	\stackrel{Q'}{\longrightarrow}V'\longrightarrow 0
\]
with the left splitting $(-\xi_{\mathbf{i}}^{-1}Q)P=\mathrm{Id}_{V}$.
Then from the splitting, 
we can define $P'\colon V'\rightarrow W$ be the injection such that 
$Q'(\xi_{\mathbf{i}}^{-1}P')=\mathrm{Id}_{V'}$.
Then we have a 5-tuple
$(V',W,T,Q',P')$.

Next we set $Q'_{i}$ (resp. $P'_{i}$) to be the 
$\mathrm{Hom}_{\mathbb{C}}(W_{i},V)$
(resp. $\mathrm{Hom}_{\mathbb{C}}(V,W_{i})$) component of $Q'$ (resp. $P'$).
Also set $N_{i}$ to be the $\mathrm{End}_{\mathbb{C}}(W_{i})$-component
of $T$.
Define 
\[
	(A')^{(i)}_{j}:=Q'_{i}N_{i}^{j-1}P'_{i}
\]
and $\mathbf{A}':=(A'_{i}(z^{-1}))_{0\le i\le p}$ where 
$A'_{i}(z^{-1}):=\sum_{j=1}^{k_{i}}(A')^{(i)}_{j}z^{-j}$.
We note that $\sum_{i=0}^{p}(A')^{(i)}_{1}=Q'P'
=\xi_{\mathbf{i}}\mathrm{Id}_{V'}$.

Finally let us set 
\[
	\mathbf{A}'':=\mathrm{Add}_{\mathbf{i}}^{-1}\circ
	\mathrm{Add}^{(0)}_{2\xi_{\mathbf{i}}z^{-1}}(\mathbf{A}').
\]
Then $\mathbf{A}''=(A''_{i}(x^{-1}))_{0\le i\le p}$ satisfies that 
$\sum_{i=0}^{p}\mathrm{Res}A''_{i}(x^{-1})=0$.
Let us denote $\mathbf{A}''$ by 
$\mathrm{mc}_{\mathbf{i}}(\mathbf{A})$ and 
call the operator $\mathrm{mc}_{\mathbf{i}}$ the {\em middle convolution}
at $\mathbf{i}$.

Let us recall basic properties of middle convolutions.
\begin{lem}\label{sizeW}
	Let $(V,W,T,Q,P)$ be the canonical datum of 
	the above $\mathrm{mc}_{\mathbf{i}}(\mathbf{A})$. 
	Then  
	$\mathrm{dim}_{\mathbb{C}}W=\sum_{i=0}^{p}w_{i}$ where 
	\begin{align*}
		w_{i}:=
		\sum_{j=1}^{m_{i}}&
			\left((d_{i}(j,j_{i})+1)
			\mathrm{dim}_{\mathbb{C}}V_{[i,j]}\right)\\
			&+
			(n-\mathrm{dim}_{\mathbb{C}}V_{[i,j_{i}]})+
			\mathrm{rank}(R_{[i,j_{i}]}-\xi_{[i,j_{i},1]}).
	\end{align*}
\end{lem}
\begin{proof}
	From Lemma \ref{sizeofW}, we have  
	\[
		w_{i}=\sum_{t=0}^{k_{i}-1}\left(n-\mathrm{dim}_{\mathbb{C}}
		\bigcap_{l=0}^{t}\mathrm{Ker }\tilde{B}^{(i)}_{k_{i}-l}
	\right)
	\]
	where $\tilde{B}_{i}:=B_{i}-\left(q_{[i,j_{i}](z^{-1})
	+\xi^{[i,j_{i}]}_{1}z^{-1}}\right)I_{n}
	$ for $i=0,\ldots,p$.
	Suppose that $0\le t\le k_{i}-2$. Then we have
	$
		V_{[i,j]}\subset 
		\bigcap_{l=0}^{t}\mathrm{Ker }\tilde{B}^{(i)}_{k_{i}-l}
	$
	if and only if $\mathrm{deg}_{\mathbb{C}[z^{-1}]}(
	q_{[i,j]}-q_{[i,j_{i}]}) < k_{i}-t$.
	Equivalently, we have 
	\[
		n-\mathrm{dim}_{\mathbb{C}}
		\bigcap_{l=0}^{t}\mathrm{Ker }\tilde{B}^{(i)}_{k_{i}-l}
		=\sum_{j\in\{
		j'\mid d_{i}(j',j_{i})\ge k_{i}-t-2\}}
		\mathrm{dim}_{\mathbb{C}}V_{[i,j]}
	\]
	for $0\le t\le k_{i}-2$. Thus
	\[
		\sum_{t=0}^{k_{i}-2}\left(
			n-\mathrm{dim}_{\mathbb{C}}
		\bigcap_{l=0}^{t}\mathrm{Ker }\tilde{B}^{(i)}_{k_{i}-l}
		\right)
		=\sum_{j=0}^{m_{i}}
		(d_{i}(j,j_{i})+1)\mathrm{dim}_{\mathbb{C}}V_{[i,j]}.
	\]
	Combining this computation and the fact
	\[
		\mathrm{dim}_{\mathbb{C}}\mathrm{Ker}
		\bigcap_{l=0}^{k_{i}-1}\tilde{B}^{(i)}_{k_{i}-l}
		=\mathrm{dim}_{\mathbb{C}}
		\mathrm{Ker}(R_{[i,j_{i}]}-\xi^{[i,j_{i}]}_{1}),
	\]
	we obtain the required result.
\end{proof}
\begin{prop}[Yamakawa \cite{Y1}]\label{middleconv}
	Let us take $\mathbf{A}=(A_{i}(z^{-1}))_{0\le i\le p}\in 
	\prod_{i=0}^{p}\mathcal{O}_{B_{i}}$ satisfying 
	$\sum_{i=0}^{p}\mathrm{Res}A_{i}(z^{-1})=0$.
	Suppose we can choose $\mathbf{i}\in\mathcal{J}$ so that
	$\xi_{\mathbf{i}}\neq 0$.
	\begin{enumerate}
		\item If $\mathbf{A}$ is irreducible, then 
			$\mathrm{mc}_{\mathbf{i}}(\mathbf{A})$ is irreducible.
			\item If $\mathbf{A}$ is irreducible, 
			\[
				\mathrm{mc}_{\mathbf{i}}\circ
				\mathrm{mc}_{\mathbf{i}}(\mathbf{A})
				\sim \mathbf{A},
			\]
			i.e., there exists $g\in \mathrm{GL}(n,\mathbb{C})$
			 such that $$\mathrm{mc}_{\mathbf{i}}\circ
				\mathrm{mc}_{\mathbf{i}}(\mathbf{A})
				=g\mathbf{A}g^{-1}:=(gA_{i}(z^{-1})g^{-1})_{
				0\le i\le p}.$$

		\item Let us define elements in $M( n'_{[i,j]},\mathbb{C})$
			by
			\[
				R'_{[i,j]}:=
				\begin{cases}
					R_{[i,j]}+
				(d_{i}(j,j_{i})+2)
				\xi_{\mathbf{i}}I_{n_{[i,j]}}&
				\text{if }i\neq 0,\\	
				R_{[0,j]}+
				d_{0}(j,j_{0})
				\xi_{\mathbf{i}}I_{n_{[0,j]}}&
				\text{if }i=0
			\end{cases}
			\]
			for all $i\in \{0,\ldots,p\}$ and 
			$j\in\{1,\ldots,m_{i}\}\backslash\{j_{i}\}$.
			Here we put $n'_{[i,j]}:=n_{[i,j]}$.
			
			Further define $R'_{[i,j_{i}]}
			\in M( n'_{[i,j_{i}]},\mathbb{C})$ 
			for $i=1,\ldots,p$ so that 
			equations hold,
			\begin{align*}
				&\mathrm{rank\,}
				( R'_{[i,j_{i}]}-\xi^{[i,j_{i}]}_{1})
				=	\mathrm{rank\,}
				( R_{[i,j_{i}]}-\xi^{[i,j_{i}]}_{1}),\\
				&\begin{aligned}
				\mathrm{rank\,}
				( R'_{[i,j_{i}]}-\xi^{[i,j_{i}]}_{1})
				&\prod_{k=2}^{l}
				( R'_{[i,j_{i}]}-\xi^{[i,j_{i}]}_{k}-
				\xi_{\mathbf{i}})\\
				&=
				\mathrm{rank\,}
				\prod_{k=1}^{l}
				(R_{[i,j_{i}]}-\xi_{k}^{[i,j_{i}]}),\quad
				l=2,\ldots,e_{[i,j_{i}]}.
			\end{aligned}
			\end{align*}
			Similarly define  
			$R'_{[0,j_{0}]}
			\in M( n'_{[0,j_{0}]},\mathbb{C})$ 
			so that 
			the following equations hold,
			\begin{align*}
				&\mathrm{rank\,}
				( R'_{[0,j_{0}]}-\xi^{[0,j_{0}]}_{1}+2
				\xi_{\mathbf{i}})=
				\mathrm{rank\,}
				(R_{[0,j_{0}]}-\xi_{1}^{[0,j_{0}]}),\\
				&\begin{aligned}
				\mathrm{rank\,}
				( R'_{[0,j_{0}]}-\xi^{[0,j_{0}]}_{1}+2
				\xi_{\mathbf{i}})
				&\prod_{k=2}^{l}
				( R'_{[0,j_{0}]}-\xi^{[0,j_{0}]}_{k}+
				\xi_{\mathbf{i}})\\
				&=
				\mathrm{rank\,}
				\prod_{k=1}^{l}
				(R_{[0,j_{0}]}-\xi_{k}^{[0,j_{0}]}),\quad
				l=2,\ldots,e_{[0,j_{0}]}.
			\end{aligned}
			\end{align*}
			Here we put 
			\[
				n'_{[i,j_{i}]}:=
				n_{[i,j_{i}]}+\mathrm{dim}_{\mathbb{C}}W
				-2n.
			\]
			Finally define 
			\begin{multline*}
				B'_{i}:=\\
				\mathrm{diag}\left(
					q_{[i,1]}(z^{-1})
					I_{n'_{[i,1]}}+
					R'_{[i,1]}z^{-1},\ldots,
					q_{[i,m_{i}]}(z^{-1})
					I_{n'_{[i,m_{i}]}}
					+R'_{[i,m_{i}]}
				z^{-1}
				\right)
			\end{multline*}
			for $i=0,\ldots,p$.
			
			Then $\mathrm{mc}_{\mathbf{i}}(\mathbf{A})
			\in \prod_{i=0}^{p}\mathcal{O}_{B'_{i}}$.
	\end{enumerate}
\end{prop}
\begin{proof}
	Corollary 4 in \cite{Y1} shows (1). 
	Lemma 11, Remark 14 and Proposition 16 in \cite{Y1} show 
	(2).
\end{proof}

By this proposition, we obtain the bijection 
\[
	\mathrm{mc}_{\mathbf{i}}\colon \mathfrak{M}(\mathbf{B})\longrightarrow
	\mathfrak{M}(\mathbf{B}')
\]
where $\mathbf{B}':=(B'_{i})_{0\le i\le p}$ defined in the proposition.

Proposition \ref{middleconv} shows the following.
\begin{prop}\label{middletoroot}
	Let $\xi$ and $\mathfrak{M}_{\lambda}(\mathsf{Q},\alpha)^{
		\text{dif}}$ be  same  as in 
	Theorem \ref{irreducibleandquasi}.
	Suppose that
	we can choose $\mathbf{i}=([i,j_{i}])_{0\le i\le p}
	\in \mathcal{J}$ so that 
	$$\lambda_{\mathbf{i}}
	:=\sum_{i\in I_{\text{irr}}}\lambda_{[i,j_{i}]}
	=-\xi_{\mathbf{i}}\neq 0.$$
	Define $\mathrm{mc}_{\mathbf{i}}(\alpha)
	:=(\alpha'_{a})_{a\in \mathsf{Q}_{0}}
	\in \mathbb{Z}^{\mathsf{Q}_{0}}$ and $
	\mathrm{mc}_{\mathbf{i}}(\lambda):=(\lambda'_{a})_{a\in \mathsf{Q}_{0}}
	\in \mathbb{C}^{\mathsf{Q}_{0}}$ by
	\begin{align*}
		\alpha'_{[i,j]}&:=
		\begin{cases}
			\alpha_{[i,j]}&\text{if }j\neq j_{i},\\
			\alpha_{[i,j_{i}]}+n_{\mathbf{i}}&
			\text{if }j=j_{i},
		\end{cases}\\
		\alpha'_{[i,j,k]}&:=\alpha_{[i,j,k]},\\
		\lambda'_{[i,j]}&:=
		\begin{cases}
			\lambda_{[i,j_{i}]}
			&\text{if }
			[i,j]=[i,j_{i}]\,\text{ and }\,i\neq 0,\\
			\lambda_{[0,j_{0}]}-2\lambda_{\mathbf{i}}&
			\text{if }
			[i,j]=[0,j_{0}],\\
			\lambda_{[i,j]}+(d_{i}(j,j_{i})+2)
			\lambda_{\mathbf{i}}&
			\text{if }i\neq 0\,\text{ and }\,j\neq m_{i},\\
			\lambda_{[0,j]}+d_{0}(j,j_{0})\lambda_{\mathbf{i}}
			&\text{if }i=0\,\text{ and }\,j\neq m_{0},
		\end{cases}\\
		\lambda'_{[i,j,k]}&:=
		\begin{cases}
			\lambda_{[i,j,k]}&
			\text{if }
			[i,j,k]\neq [i,j_{i},1],\\
			\lambda_{[i,j_{i},1]}+\lambda_{\mathbf{i}}.
		\end{cases}
	\end{align*}
	Here
	\begin{equation*}
		n_{\mathbf{i}}:=\sum_{i\in I_{\text{irr}}}
		\sum_{j=1}^{m^{(i)}}(d_{i}(j,j_{i})+1)
		\alpha_{[i,j]}+
		\sum_{i\in I_{\text{irr}}}((n-\alpha_{[i,j_{i}]})
		+\alpha_{[i,j_{i},1]})
		+\sum_{i\in I_{\text{reg}}}\alpha_{[i,1,1]}
		-2n.
	\end{equation*}
	Then there exists a bijection
	\[
		\mathrm{mc}_{\mathbf{i}}\colon 
		\mathfrak{M}_{\lambda}(\mathsf{Q},\alpha)^{\text{dif}}
		\longrightarrow
		\mathfrak{M}_{\mathrm{mc}_{\mathbf{i}}(\lambda)}
		(\mathsf{Q},\mathrm{mc}_{\mathbf{i}}(\alpha))^{\text{dif}}.
	\]  
\end{prop}
\begin{proof}
	We already have the bijection
	\[
		\mathrm{mc}_{\mathbf{i}}\colon \mathfrak{M}(\mathbf{B})
		\longrightarrow \mathfrak{M}(\mathbf{B}')
	\]
	for $\mathbf{B}'$ defined as in Proposition \ref{middleconv}.
	
	If we choose $\xi'=\left(\left\{
		(\xi')^{[i,j]}_{1},\ldots,(\xi')^{[i,j]}_{e_{[i,j]}}
	\right\}\right)_{\substack{0\le i\le p\\
		1\le j\le m^{(i)}}}$
	so that 
	\begin{align*}
		(\xi')^{[i,j]}_{k}=&
		\begin{cases}
			\xi^{[i,j]}_{k}+(d_{i}(j,j_{i})+2)\xi_{\mathbf{i}}&
			\text{if }i\neq 0\text{ and }j\neq j_{i},\\
			\xi^{[0,j]}_{k}+d_{0}(j,j_{0})\xi_{\mathbf{i}}&
			\text{if }i=0\text{ and }j\neq j_{0},\\
			\xi^{[i,j_{i}]}_{1}&\text{if }i\neq 0,\,j=j_{i}
			\text{ and }k=1,\\
			\xi^{[i,j_{i}]}_{k}+\xi_{\mathbf{i}}&
			\text{if }i\neq 0,\,j=j_{i}
			\text{ and }k\neq 1,\\
			\xi^{[0,j_{0}]}_{1}-2\xi_{\mathbf{i}}&\text{if }
			i=0,\,j=j_{0}\text{ and }k=1,\\
			\xi^{[0,j_{0}]}_{k}-\xi_{\mathbf{i}}&\text{if }
			i=0,\,j=j_{0}\text{ and }k\neq 1,
		\end{cases}
	\end{align*}
	then we have the bijection 
	\[
		\Phi_{\xi'}\colon \mathfrak{M}(\mathbf{B}')\longrightarrow
		\mathfrak{M}_{\mathrm{mc}_{\mathbf{i}}(\lambda)}
		(\mathsf{Q},\mathrm{mc}_{\mathbf{i}}(\alpha))^{\text{dif}}
	\]
	by Theorem \ref{irreducibleandquasi}.
	Thus we have the bijection 
	$\Phi_{\xi'}\circ\mathrm{mc}_{\mathbf{i}}\circ 
	\Phi^{-1}_{\xi}\colon \mathfrak{M}_{\lambda}(\mathsf{Q},\alpha)^{\text{dif}}
		\longrightarrow
		\mathfrak{M}_{\mathrm{mc}_{\mathbf{i}}(\lambda)}
		(\mathsf{Q},\mathrm{mc}_{\mathbf{i}}(\alpha))^{\text{dif}}.$

	The last equation is obtained by Lemma \ref{sizeW} as follows.
	Since $n_{\mathbf{i}}=n'_{[i,j_{i}]}-n_{[i,j_{i}]}$,
	we have
	\begin{align*}
		n_{\mathbf{i}}&=n'_{[i,j_{i}]}-n_{[i,j_{i}]}
		=\mathrm{dim}_{\mathbb{C}}W-2n\\
		&=\sum_{i\in I_{\text{irr}}}\sum_{\substack{1\le j\le m_{i}\\
		j\neq j_{i}}}d_{i}(j,j_{i})\alpha_{[i,j]}
		+\sum_{i\in I_{\text{irr}}}(2(n-\alpha_{[i,j_{i}]})+
		\alpha_{[i,j_{i},1]})\\
		&\quad+\sum_{i\in I_{\text{reg}}}\alpha_{[i,1,1]}-2n\\
		&=\sum_{i\in I_{\text{irr}}}
		\sum_{j=1}^{m_{i}}(d_{i}(j,j_{i})+1)
		\alpha_{[i,j]}+
		\sum_{i\in I_{\text{irr}}}((n-\alpha_{[i,j_{i}]})
		+\alpha_{[i,j_{i},1]})\\
		&\quad+\sum_{i\in I_{\text{reg}}}\alpha_{[i,1,1]}
		-2n.
	\end{align*}
\end{proof}
\subsection{More operations on 
$\mathfrak{M}_{\lambda}(\mathsf{Q},\alpha)^{\text{dif}}$}\label{operations}
Proposition \ref{middletoroot} 
enable us to define $\mathrm{mc}_{\mathbf{i}}$ as an operation
for $\mathfrak{M}_{\lambda}(\mathsf{Q},\alpha)^{\text{dif}}$.
We shall introduce some other operations. 

Let us note that the map $\Phi_{\xi}$ in Proposition \ref{irregularquiver} 
depends on the order of 
$\{\xi^{[i,j]}_{1},\ldots,\xi^{[i,j]}_{e_{[i,j]}}\}$ for 
$i=0,\ldots,p$ and $j=1,\ldots,m_{i}$.
Thus we shall see what happens when we change the order.
Let us define 
\[
	\sigma^{[i_{0},j_{0}]}_{s}(\xi):=
(\{\zeta^{[i,j]}_{1},\ldots,\zeta^{[i,j]}_{e_{[i,j]}}\})
_{\substack{0\le i\le p\\1\le j\le m_{i}}}
\]
by
\begin{align*}
	\zeta^{[i,j]}_{l}:=
	\begin{cases}
		\xi^{[i,j]}_{l}&\text{if }[i,j]\neq [i_{0},j_{0}],\\
		\xi^{[i_{0},j_{0}]}_{\sigma_{s}(l)}&\text{if }[i,j]=
		[i_{0},j_{0}]
	\end{cases}
\end{align*}
where $\sigma_{s}$ is the permutation $(s,s+1)$.
Then we can extend these permutations to operations on 
representations of $\overline{\mathsf{Q}}$,
\[
	\sigma^{[i_{0},j_{0}]}_{s}\colon 
	\mathfrak{M}_{\lambda}(\mathsf{Q},\alpha)^{\text{dif}}
	\longrightarrow
	\mathfrak{M}_{\lambda'''}(\mathsf{Q},\alpha''')^{\text{dif}}
\]
defined by $\Phi_{\sigma^{[i_{0},j_{0}]}_{s}(\xi)}\circ \Phi_{\xi}^{-1}$.
Here $\alpha'''$ and $\lambda'''$ can be computed as below.
\begin{prop}\label{changeorder}
The above $\alpha'''$ and $\lambda'''$ are defined as follows,
	\begin{align*}
		\alpha'''&=\begin{cases}
			\alpha&\text{if }\xi^{[i_{0},j_{0}]}_{s}=
			\xi^{[i_{0},j_{0}]}_{s+1},\\
			s_{[i_{0},j_{0},s]}(\alpha)
			&\text{otherwise},
		\end{cases}\\
		\lambda'''&=\begin{cases}
			\lambda&\text{if }\xi^{[i_{0},j_{0}]}_{s}=
			\xi^{[i_{0},j_{0}]}_{s+1},\\
			r_{[i_{0},j_{0},s]}(\lambda)&\text{otherwise}.
		\end{cases}
	\end{align*}
\end{prop}
\begin{proof}
	If $\xi^{[i_{0},j_{0}]}_{s}=
			\xi^{[i_{0},j_{0}]}_{s+1}$, then 
			$\Phi_{\sigma^{[i_{0},j_{0}]}_{s}(\xi)}
			\circ \Phi_{\xi}^{-1}=\Phi_{\xi}\circ \Phi_{\xi}^{-1}=
			\mathrm{id}$.
	Thus we obtain the result.		
	Next  we suppose 
	$\xi^{[i_{0},j_{0}]}_{s}\neq \xi^{[i_{0},j_{0}]}_{s+1}$.
	To see that $\alpha'=s_{[i_{0},j_{0},s]}(\alpha)$,
	it suffices to show that 
	\begin{multline*}
		\mathrm{rank\,}\prod_{l=1}^{s-1}(R_{[i_{0},j_{0}]}-
		\xi^{[i_{0},j_{0}]}_{l})-
		\mathrm{rank\,}\prod_{l=1}^{s}(R_{[i_{0},j_{0}]}-
		\xi^{[i_{0},j_{0}]}_{l})\\
		=\mathrm{rank\,}(R_{[i_{0},j_{0}]}
		-\xi^{[i_{0},j_{0}]}_{s+1})\prod_{l=1}^{s-1}
		(R_{[i_{0},j_{0}]}-
		\xi^{[i_{0},j_{0}]}_{l})-
		\mathrm{rank\,}\prod_{l=1}^{s+1}(R_{[i_{0},j_{0}]}-
		\xi^{[i_{0},j_{0}]}_{l}),
	\end{multline*}
	and 
	\begin{multline*}
		\mathrm{rank\,}\prod_{l=1}^{s}(R_{[i_{0},j_{0}]}-
		\xi^{[i_{0},j_{0}]}_{l})-
		\mathrm{rank\,}\prod_{l=1}^{s+1}(R_{[i_{0},j_{0}]}-
		\xi^{[i_{0},j_{0}]}_{l})\\
		=\mathrm{rank\,}\prod_{l=1}^{s-1}(R_{[i_{0},j_{0}]}-
		\xi^{[i_{0},j_{0}]}_{l})-
		\mathrm{rank\,}(R_{[i_{0},j_{0}]}-\xi^{[i_{0},j_{0}]}_{s+1})
		\prod_{l=1}^{s-1}(R_{[i_{0},j_{0}]}-
		\xi^{[i_{0},j_{0}]}_{l}).
	\end{multline*}
These equations follow from the following fact.
Let us suppose that $A,A'\in M(n,\mathbb{C})$ satisfy $AA'=A'A $ and 
$\mathrm{Ker\,}A\cap \mathrm{Ker\,}A'=\{0\}$. Let $V$ be an 
$A'$-invariant
subspace of $\mathbb{C}^{n}$. 
Then we have 
\begin{equation}\label{eq}
	\mathrm{dim}_{\mathbb{C}}V-\mathrm{dim}_{\mathbb{C}}A'V=
	\mathrm{dim}_{\mathbb{C}}AV-\mathrm{dim}_{\mathbb{C}}AA'V.
\end{equation}
Indeed, setting $\widetilde{V}=V\cap\mathrm{Ker\,}A$ and $\widetilde{W}
=A'V\cap \mathrm{Ker\,}A$, we have that $A'$ gives an injection 
from $\widetilde{V}\rightarrow \widetilde{W}$ since 
$\mathrm{Ker\,}A\cap \mathrm{Ker\,}A'=\{0\}$.
Since $\widetilde{W}\subset 
\widetilde{V}$, this implies that $\widetilde{W}=\widetilde{V}$,
which shows the equation $(\ref{eq})$.

Since $\xi^{[i_{0},j_{0}]}_{s}\neq \xi^{[i_{0},j_{0}]}_{s+1}$,
$(R_{[i_{0},j_{0}]}-\xi^{[i_{0},j_{0}]}_{s})$ and $(R_{[i_{0},j_{0]}}
-\xi^{[i_{0},j_{0}]}_{s+1})$ satisfy the above assumption.
Then equation $(\ref{eq})$ gives the required equations.

The remaining assertion
follows from a direct computation.
\end{proof}

Furthermore let us introduce an operation which is trivial on $\mathfrak{M}(
\mathbf{B})$. 
Let $t_{(i,0)}$ be the permutation $(i,0)$ for
$i\in I_{\text{irr}}\backslash\{0\}$. 
Define 
\[
	t_{(i,0)}(\mathbf{B}):=(B_{t_{(i,0)}(j)})_{j=0,\ldots,p}.
\]
Then we can define the bijection
\[
	\begin{array}{rccc}
		T_{(i,0)}\colon&\mathfrak{M}(\mathbf{B})&
		\longrightarrow&
		\mathfrak{M}(t_{(i,0)}(\mathbf{B}))\\
		&(A_{j}(z^{-1}))_{0\le j\le p}&
		\longmapsto
		&(A_{t_{(i,0)}(j)}(z^{-1}))_{0\le j\le p}
	\end{array}.
\]
Then $t_{(i,0)}(\xi)$, $t_{(i,0)}(\mathsf{Q})$, $t_{(i,0)}(\alpha)$ and 
$t_{(i,0)}(\lambda)$ can be defined by replacing $i$ with $0$ and vice versa.
Thus we can define the bijection
\[
	T_{(i,0)}\colon \mathfrak{M}_{\lambda}(\mathsf{Q},\alpha)^\text{dif}
	\longrightarrow
	\mathfrak{M}_{t_{(i,0)}(\lambda)}(t_{(i,0)}(\mathsf{Q}),t_{(i,0)}(
	\alpha))^{\text{dif}}
\]
by $\Phi_{t_{(i,0)}(\xi)}\circ T_{(i,0)}\circ \Phi^{-1}_{\xi}$.
\subsection{Middle convolution and reflection}
As we saw in the above proposition which shows that 
permutations on $\xi$ can be obtained by 
reflection,
middle convolutions $\mathrm{mc}_{\mathbf{i}}$  can also be obtained by 
reflections as follows.

For $\mathbf{i}=([i,j_{i}])_{0\le i\le p}
\in \mathcal{J}$, let us define $\epsilon_{\mathbf{i}}
\in \mathbb{Z}^{\mathsf{Q}_{0}}$ by
\[
	(\epsilon_{\mathbf{i}})_{a}:=
	\begin{cases}
		1&\text{if }a=[i,j_{i}],\,i\in I_{\text{irr}},\\
		0&\text{otherwise}.
	\end{cases}
\]
We note that $\epsilon_{\mathbf{i}}$ for $\mathbf{i}\in \mathcal{J}$
are positive real roots of $\mathsf{Q}$.
Let us define 
\[
	s_{\mathbf{i}}(\beta):=\beta-(\beta,\epsilon_{\mathbf{i}})
\epsilon_{\mathbf{i}}
\]
for $\mathbf{i}\in \mathcal{J}$ and $\beta\in \mathbb{Z}^{\mathsf{Q}_{0}}$.

Let us see this reflection $s_{\mathbf{i}}$ can be obtained by a 
product of simple reflections.
\begin{lem}\label{modifiedreflection}
	Let us take $\mathbf{i}=([i,j_{i}])_{0\le i\le p}\in \mathcal{J}$.
	Then we have 
	\[
		\left(\prod_{i\in I_{\text{irr}}\backslash\{0\}}
		s_{[i,j_{i}]}\right)\circ
		s_{[0,j_{0}]}\circ
		\left(\prod_{i\in I_{\text{irr}}\backslash\{0\}}
		s_{[i,j_{i}]}\right)(\beta)
		=s_{\mathbf{i}}(\beta)
	\]
	for any $\beta\in \mathbb{Z}^{\mathsf{Q}_{0}}$.
\end{lem}
\begin{proof}
Set $r:=\prod_{i\in I_{\text{irr}}\backslash\{0\}}
		s_{[i,j_{i}]}$ for short.
Note that $r$ is an involution and  
$\epsilon_{\mathbf{i}}=r(\epsilon_{[0,j_{0}]})$.		
Then 
\begin{align*}
	r\circ s_{[0,j_{0}]}\circ r(\beta)&=
	r(r(\beta)-(r(\beta),\epsilon_{[0,j_{0}]})\epsilon_{[0,j_{0}]})\\
	&=r^{2}(\beta)-(\beta,r^{-1}(\epsilon_{[0,j_{0}]}))
	r(\epsilon_{[0,j_{0}]})\\
	&=\beta-(\beta,r(\epsilon_{[0,j_{0}]}))r(\epsilon_{[0,j_{0}]})\\
	&=\beta-(\beta,\epsilon_{\mathbf{i}})\epsilon_{\mathbf{i}}\\
	&=s_{\mathbf{i}}(\beta).
\end{align*}
\end{proof}

This lemma tells us that  
$\mathrm{mc}_{\mathbf{i}}$ can be regarded as a reflection 
and a product of simple reflections as below.
\begin{prop}\label{middletoweyl}
	Retain the notation in Proposition \ref{middletoroot}.
	Then we have
	\begin{align*}
		\mathrm{mc}_{\mathbf{i}}(\alpha)
		&=s_{\mathbf{i}}(\alpha)\\
		&=\left(\prod_{i\in I_{\text{irr}}\backslash\{0\}}
		s_{[i,j_{i}]}\right)\circ
		s_{[0,j_{0}]}\circ
		\left(\prod_{i\in I_{\text{irr}}\backslash\{0\}}
		s_{[i,j_{i}]}\right)(\alpha).
	\end{align*}
\end{prop}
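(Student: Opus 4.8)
The plan is to prove the identity by a direct computation, comparing the composite reflection on the right-hand side with the explicit description of $\alpha'$ already recorded in Proposition~\ref{middletoroot}. Write $w=\prod_{i\in I_{\text{irr}}\backslash\{0\}}s_{[i,j_{i}]}$, so that the asserted equality becomes $\alpha'=w\,s_{[0,j_{0}]}\,w(\alpha)$. First I would read off the adjacency data of $\mathsf{Q}$ at the vertices being reflected directly from the definition of $\mathsf{Q}_{1}$: for $i\in I_{\text{irr}}\backslash\{0\}$ the neighbours of $[i,j_{i}]$ are the $[0,j]$ ($j=1,\ldots,m^{(0)}$, one arrow each), the $[i,j']$ with $j'\neq j_{i}$ ($d_{i}(j_{i},j')$ arrows), and $[i,j_{i},1]$ (one arrow); the neighbours of $[0,j_{0}]$ are the $[i,j']$ for $i\in I_{\text{irr}}\backslash\{0\}$ (one arrow each), the $[0,j']$ with $j'\neq j_{0}$ ($d_{0}(j_{0},j')$ arrows), $[0,j_{0},1]$ (one arrow), and $[i,1,1]$ for $i\in I_{\text{reg}}$ (one arrow each). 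In particular none of these vertices carries a loop, so each $s_{[i,j]}$ is a genuine fundamental reflection, and since no arrow joins $[i,j_{i}]$ to $[i',j_{i'}]$ for distinct $i,i'\in I_{\text{irr}}\backslash\{0\}$, the reflections composing $w$ pairwise commute and $w$ is an involution.

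Next I would exploit the constraint $\sum_{j=1}^{m^{(i)}}\alpha_{[i,j]}=n$ for every $i$ (which holds because $\alpha_{[i,j]}=n^{(i)}_{j}$ and $\sum_{j}n^{(i)}_{j}=n$, i.e.\ $\alpha\in\mathcal{L}^{+}$), combined with $s_{a}(\beta)_{a}=-\beta_{a}+\sum_{b}\bigl(\#\{\text{arrows between }a,b\}\bigr)\beta_{b}$ and $s_{a}(\beta)_{c}=\beta_{c}$ for $c\neq a$. The computation proceeds in three steps. (i) Applying $w$: for $i\in I_{\text{irr}}\backslash\{0\}$ one gets $w(\alpha)_{[i,j_{i}]}=-\alpha_{[i,j_{i}]}+n+\sum_{j'\neq j_{i}}d_{i}(j_{i},j')\alpha_{[i,j']}+\alpha_{[i,j_{i},1]}$, all other coordinates being those of $\alpha$; equivalently $\sum_{j'}w(\alpha)_{[i,j']}=2n-2\alpha_{[i,j_{i}]}+\sum_{j'\neq j_{i}}d_{i}(j_{i},j')\alpha_{[i,j']}+\alpha_{[i,j_{i},1]}$. (ii) Applying $s_{[0,j_{0}]}$: since $w(\alpha)_{[0,j_{0}]}=\alpha_{[0,j_{0}]}$, feeding the previous sums into the reflection formula at $[0,j_{0}]$ gives, after rearrangement,
\begin{multline*}
	\bigl(s_{[0,j_{0}]}w(\alpha)\bigr)_{[0,j_{0}]}-\alpha_{[0,j_{0}]}
	=-2\alpha_{[0,j_{0}]}+\sum_{j\neq j_{0}}d_{0}(j_{0},j)\alpha_{[0,j]}+\alpha_{[0,j_{0},1]}+\sum_{i\in I_{\text{reg}}}\alpha_{[i,1,1]}\\
	+\sum_{i\in I_{\text{irr}}\backslash\{0\}}\Bigl(2n-2\alpha_{[i,j_{i}]}+\sum_{j\neq j_{i}}d_{i}(j_{i},j)\alpha_{[i,j]}+\alpha_{[i,j_{i},1]}\Bigr),
\end{multline*}
and all coordinates other than $[0,j_{0}]$ are unchanged. (iii) Applying $w$ again: this time $w$ sees the modified value at $[0,j_{0}]$, and for $i\in I_{\text{irr}}\backslash\{0\}$ the internal terms cancel to leave $\bigl(w\,s_{[0,j_{0}]}w(\alpha)\bigr)_{[i,j_{i}]}=\alpha_{[i,j_{i}]}-\alpha_{[0,j_{0}]}+\bigl(s_{[0,j_{0}]}w(\alpha)\bigr)_{[0,j_{0}]}$, while $[0,j_{0}]$ and every vertex not in $\{[0,j_{0}]\}\cup\{[i,j_{i}]:i\in I_{\text{irr}}\backslash\{0\}\}$ is left untouched.

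Finally I would match this against Proposition~\ref{middletoroot}: expanding $n_{\mathbf{i}}$ there, using $d_{i}(j_{i},j_{i})=-1$ so that $\sum_{j}(d_{i}(j,j_{i})+1)\alpha_{[i,j]}=\sum_{j\neq j_{i}}d_{i}(j_{i},j)\alpha_{[i,j]}+(n-\alpha_{[i,j_{i}]})$, one finds that $n_{\mathbf{i}}$ equals exactly the right-hand side of the display above, i.e.\ $\bigl(s_{[0,j_{0}]}w(\alpha)\bigr)_{[0,j_{0}]}=\alpha_{[0,j_{0}]}+n_{\mathbf{i}}=\alpha'_{[0,j_{0}]}$; plugging this into step (iii) gives $\bigl(w\,s_{[0,j_{0}]}w(\alpha)\bigr)_{[i,j_{i}]}=\alpha_{[i,j_{i}]}+n_{\mathbf{i}}=\alpha'_{[i,j_{i}]}$ for $i\in I_{\text{irr}}\backslash\{0\}$, and all remaining coordinates agree with $\alpha=\alpha'$. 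The real difficulty here is organisational rather than conceptual: one must enumerate the incident arrows (with their multiplicities $d_{i}(j,j')$ and the ``leg'' vertices $[i,j,1]$, $[i,1,1]$) and track which coordinates move at each of the three reflection steps; the one indispensable input is the membership $\alpha\in\mathcal{L}$, without which the sums $\sum_{j}\alpha_{[i,j]}$ would not collapse to $n$ and the composite $w\,s_{[0,j_{0}]}\,w$ would fail to reproduce the middle-convolution shift $n_{\mathbf{i}}$.
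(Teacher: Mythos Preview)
Your proposal is correct and follows essentially the same route as the paper's own proof: a direct three-step computation of $\alpha_{1}=w(\alpha)$, $\alpha_{2}=s_{[0,j_{0}]}(\alpha_{1})$, and $w(\alpha_{2})$, using the arrow multiplicities of $\mathsf{Q}$ and the constraint $\sum_{j}\alpha_{[i,j]}=n$, then matching the outcome against the explicit expression for $n_{\mathbf{i}}$ in Proposition~\ref{middletoroot}. The paper packages the intermediate steps via auxiliary scalars $u_{[i,j_{i}]}$, $u_{[0,j_{0}]}$ and shows $u_{[0,j_{0}]}=-n_{\mathbf{i}}$, but this is exactly your step~(ii); your organisation (noting at the outset that the $s_{[i,j_{i}]}$ for $i\in I_{\text{irr}}\backslash\{0\}$ commute, so $w^{2}=\mathrm{id}$) is a minor streamlining of the same argument.
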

\begin{proof}
	From the definition of $\mathrm{mc}_{\mathbf{i}}(\alpha)$
	given in Proposition \ref{middletoroot}, it suffices to show 
	\[
		n_{\mathbf{i}}=-(\alpha,\epsilon_{\mathbf{i}}).
	\]
	Indeed 
	\begin{align*}
		(\alpha,\epsilon_{\mathbf{i}})&=\sum_{i\in I_{\text{irr}}}
		(\alpha,\epsilon_{[i,j_{i}]})\\
		&=\sum_{i\in I_{\text{irr}}\backslash\{0\}}
		\left(2\alpha_{[i,j_{i}]}-\sum_{\substack{1\le j\le m_{i}\\
			j\neq j_{i}}}d_{i}(j,j_{i})\alpha_{[i,j]}
			-\alpha_{[i,j_{i},1]}-\sum_{j=1}^{m_{0}}
			\alpha_{[0,j]}
		\right)\\
		&\quad +
		2\alpha_{[0,j_{0}]}-\sum_{\substack{1\le j\le m_{0}\\
		j\neq j_{0}}}d_{0}(j,j_{0})\alpha_{[0,j]}
		-\alpha_{[0,j_{0},1]}
		-\sum_{i\in I_{\text{irr}\backslash\{0\}}}\sum_{j=1}^{m_{i}}
		\alpha_{[i,j]}\\
		&\quad -\sum_{i\in I_{\text{reg}}\alpha_{[i,1,1]}}
		\alpha_{[i,j]}.
	\end{align*}
	Recalling that $d_{i}(j,j)=-1$ and $\sum_{j=1}^{m_{i}}\alpha_{[i,j]}=n$,
	we can continue the above computation,
	\begin{align*}
		(\alpha,\epsilon_{\mathbf{i}})&=
		-\sum_{i\in I_{\text{irr}}}\left(
		\sum_{j=1}^{m_{i}}(d_{i}(j,j_{i})\alpha_{[i,j]})
		+(n-\alpha_{[i,j_{i}]})+\alpha_{[i,j_{i},1]}\right)\\
		&\quad -\sum_{i\in I_{\text{reg}}}\alpha_{[i,1,1]}
		-(\#I_{\text{irr}}-2)n\\
		&=-\sum_{i\in I_{\text{irr}}}\left(
		\sum_{j=1}^{m_{i}}(d_{i}(j,j_{i}+1)\alpha_{[i,j]})
		+(n-\alpha_{[i,j_{i}]})+\alpha_{[i,j_{i},1]}\right)\\
		&\quad -\sum_{i\in I_{\text{reg}}}\alpha_{[i,1,1]}
		+2n\\
		&=-n_{\mathbf{i}}.
	\end{align*}
\end{proof}

These observations lead us to define transformations on 
$\mathbb{Z}^{\mathsf{Q}_{0}}\times \mathbb{C}^{\mathsf{Q}_{0}}$ as an 
analogy of middle 
convolutions and other operations.
For 
$(\beta,\nu)\in \mathbb{Z}^{\mathsf{Q}_{0}}\times \mathbb{C}^{\mathsf{Q}_{0}}$ 
with $\nu_{\mathbf{i}}=\sum_{i\in I_{\text{irr}}}\nu_{[i,j_{i}]}\neq 0$
define 
\[s_{\mathbf{i}}( (\beta,\nu)):=(\mathrm{mc}_{\mathbf{i}}
		(\beta'),\mathrm{mc}_{\mathbf{i}}(\nu')).
\]
Also for $(\beta,\nu)\in \mathbb{Z}^{\mathsf{Q}_{0}}\times \mathbb{C}^{\mathsf{Q}_{0}}$ with $\nu_{[i,j,k]}\neq 0$, define 
\[
	s_{[i,j,k]}( (\beta,\nu)):=(s_{[i,j,k]}(\beta),r_{[i,j,k]}(\nu)).
\]

Let us define
\[
	\mathcal{S}:=\left\{
		(\beta,\nu)\in \mathcal{L}\times \mathbb{C}^{\mathsf{Q}_{0}}
		\,\bigg|\,
		\beta\cdot \nu=\sum_{a\in \mathsf{Q}_{0}}\beta_a\nu_{a}=0
	\right\}.
\]
Then we can see that 
$s_{\mathbf{i}}$ and $s_{[i,j,k]}$ preserve $\mathcal{S}$,
see Propositions \ref{middletoroot} and \ref{changeorder}.

\subsection{Irreducibility and $\mathcal{L}$-irreducibility}
The $\mathcal{L}$-irreducibility is a weaker condition than the 
usual irreducibility. We shall show that if we shift the parameter $\lambda$
by using the operation $\mathrm{Add}$, then 
these two irreducibility can be identical.

Fix $i_{0}\in I_{\text{irr}}\backslash\{0\}$ and define an operation on 
$\mathcal{S}$ as an analogue of 
$\mathrm{Add}^{(i_{0})}_{z^{-1}}\circ\mathrm{Add}^{(0)}_{-z^{-1}}$
as follows.
Let us define $z^{(i_{0})}=(z_{a}^{(i_{0})})_{a\in \mathsf{Q}_{0}}
\in \mathbb{C}^{\mathsf{Q}_{0}}$ by 
\begin{align*}
	z^{(i_{0})}_{[i,j]}&=
	\begin{cases}
		1&\text{if }i=i_{0},\\
		-1&\text{if }i=0,\\
		0&\text{otherwise},
	\end{cases}\\
	z^{(i_{0})}_{[i,j,k]}&=0.
\end{align*}
Then let us define
\[
	\begin{array}{cccc}
		\mathrm{add}^{(i_{0})}_{\gamma}\colon
		&\mathcal{S}&\longrightarrow&\mathcal{S}\\
		&(\beta,\nu)&\longmapsto
		&(\beta,\nu+\gamma z^{(i_{0})})
	\end{array}
\]
for $i_{0}\in I_{\text{irr}}\backslash\{0\}$ and $\gamma\in \mathbb{C}$.

For $\nu\in \mathbb{C}^{\mathsf{Q}_{0}}$, 
let $R_{\nu}^{+}$ be the set of positive roots $\beta$ of $\mathsf{Q}$
satisfying $\beta\cdot \nu=0$.
Denote $\mathcal{L}\cap R^{+}_{\nu}$ by $\widetilde{R}^{+}_{\nu}$.
The subset  $\Sigma_{\nu}$ of $R_{\nu}^{+}$ consists 
of $\beta$ satisfying that  
$p(\beta)>\sum_{t} p(\beta_{t})$ for 
any decomposition $\beta=\beta_{1}+\cdots +\beta_{r}$ with 
$r\ge 2$ and $\beta_{t}\in R^{+}_{\nu}$. 
Similarly define $\widetilde{\Sigma}_{\nu}$ consisting of 
$\beta\in \widetilde{R}^{+}_{\nu}$ satisfying that 
$\beta\cdot \nu=0$ and $p(\beta)>\sum_{t} p(\beta_{t})$ for 
any decomposition $\beta=\beta_{1}+\cdots +\beta_{r}$
with $r\ge 2$ and $\beta_{t}\in \widetilde{R}^{+}_{\nu}$. 

If $\beta,\beta'\in (\mathbb{Z}_{\ge 0})^{\mathsf{Q}_{0}}$ satisfy that 
$\beta'_{a}\le \beta_{a}$ for all $a\in \mathsf{Q}_{0}$, 
then we write $\beta'\le \beta$.

\begin{lem}\label{slide}
	Fix $(\beta,\nu)\in \mathcal{S}$. There exist $\gamma_{i}\in 
	\mathbb{C}$ for  $i\in 
	I_{\text{irr}}\backslash\{0\}$ such that $\nu'=
	\nu+\sum_{i\in I_{\text{irr}}\backslash\{0\}}\gamma_{i}z^{(i)}$
	satisfies the following.
	If $\beta'\in (\mathbb{Z}_{\ge 0})^{\mathsf{Q}_{0}}$ satisfies
	that $\beta'\le \beta$ and $\beta'\cdot \nu'=0$,
	then $\beta'\in \mathcal{L}$.
\end{lem}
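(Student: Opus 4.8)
The plan is to choose the $\gamma_{i}$ by a routine genericity argument, avoiding finitely many affine hyperplanes. First I would record the basic identity: by the definition of $z^{(i)}$, for any $\beta'\in\mathbb{Z}^{\mathsf{Q}_{0}}$ and any $i\in I_{\text{irr}}\backslash\{0\}$,
\[
\beta'\cdot z^{(i)}=\sum_{j=1}^{m^{(i)}}\beta'_{[i,j]}-\sum_{j=1}^{m^{(0)}}\beta'_{[0,j]},
\]
so that $\beta'\in\mathcal{L}$ if and only if $\beta'\cdot z^{(i)}=0$ for every $i\in I_{\text{irr}}\backslash\{0\}$. Consequently, for $\mu'=\mu+\sum_{i\in I_{\text{irr}}\backslash\{0\}}\gamma_{i}z^{(i)}$ and any $\beta'$ one has
\[
\beta'\cdot\mu'=\beta'\cdot\mu+\sum_{i\in I_{\text{irr}}\backslash\{0\}}\gamma_{i}\,(\beta'\cdot z^{(i)}).
\]
In particular, taking $\beta'=\beta$ and using $\beta\in\mathcal{L}$, the pair $(\beta,\mu')$ again lies in $\mathcal{S}$, so the conclusion will be compatible with the later use of the lemma.

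Next I would observe that the set $T=\{\beta'\in(\mathbb{Z}_{\ge 0})^{\mathsf{Q}_{0}}\mid \beta'\le\beta\}$ is finite; if some coordinate of $\beta$ were negative it would even be empty and the lemma vacuous, so we may assume $\beta\in\mathcal{L}^{+}$. If $I_{\text{irr}}=\{0\}$ then $\mathcal{L}=\mathbb{Z}^{\mathsf{Q}_{0}}$ and there is nothing to prove, so we take $\mu'=\mu$; assume henceforth $I_{\text{irr}}\backslash\{0\}\neq\emptyset$. For each $\beta'\in T\backslash\mathcal{L}$, regard $\gamma\mapsto \beta'\cdot\mu+\sum_{i\in I_{\text{irr}}\backslash\{0\}}\gamma_{i}\,(\beta'\cdot z^{(i)})$ as an affine-linear function of the tuple $\gamma=(\gamma_{i})_{i\in I_{\text{irr}}\backslash\{0\}}$; since $\beta'\notin\mathcal{L}$, at least one coefficient $\beta'\cdot z^{(i)}$ is nonzero, so this function is a nonzero polynomial, and its zero set is a proper affine subvariety. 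The product of these finitely many nonzero affine-linear polynomials is a nonzero polynomial in the $\gamma_{i}$, hence does not vanish identically because $\mathbb{C}$ is infinite; I would fix a tuple $\gamma=(\gamma_{i})$ at which all of them are nonzero.

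Finally, with this choice, suppose $\beta'\in(\mathbb{Z}_{\ge 0})^{\mathsf{Q}_{0}}$ satisfies $\beta'\le\beta$ and $\beta'\cdot\mu'=0$. Then $\beta'\in T$, and if $\beta'\notin\mathcal{L}$ the displayed identity would give $\beta'\cdot\mu+\sum_{i}\gamma_{i}\,(\beta'\cdot z^{(i)})=0$, contradicting the choice of $\gamma$; hence $\beta'\in\mathcal{L}$, which is precisely the assertion. There is essentially no hard step here: the whole argument is the standard ``avoid a finite hyperplane arrangement'' trick, and the only points needing a word of care are the finiteness of $T$ (immediate from $0\le\beta'\le\beta$) and disposing of the degenerate case $I_{\text{irr}}=\{0\}$ separately.
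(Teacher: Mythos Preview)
Your proof is correct and follows essentially the same approach as the paper: both argue that the set of $\beta'\le\beta$ with $\beta'\notin\mathcal{L}$ is finite, that each such $\beta'$ imposes a proper affine-hyperplane condition on the choice of $(\gamma_i)$ (because some $\beta'\cdot z^{(i)}\neq 0$), and that one can therefore pick $(\gamma_i)$ avoiding all of them. The only cosmetic difference is that the paper phrases the argument inside the ambient space $\mathbb{C}^{\mathsf{Q}_0}$ by intersecting the affine subspace $W=\{\mu+\sum_i t_i z^{(i)}\}$ with the union $V_\beta$ of hyperplanes and comparing dimensions, whereas you work directly in the parameter space of the $\gamma_i$ and take a product of nonzero affine-linear forms; this is the same ``avoid a finite arrangement'' trick in two equivalent packagings.
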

\begin{proof}
	Let $F_{\beta}$ be the set of all elements $\beta'$ in
	$(\mathbb{Z}_{\ge 0})^{\mathsf{Q}_{0}}$ satisfying 
	$\beta' \le \beta$ and $\beta'\notin \mathcal{L}$.
	Note that $F_{\beta}$ is a finite set.
	Define a closed subset of $\mathbb{C}^{\mathsf{Q}_{0}}$ by
	\[
		V_{\beta}:=\bigcup_{\beta'\in F_{\beta}}
		\{\eta\in \mathbb{C}^{\mathsf{Q}_{0}}\mid \beta'\cdot\eta=0
		\}.
	\]
	Namely, if $\nu\notin V_{\beta}$, then $\beta'\cdot \nu=0$  
	and  $\beta'\le \beta$ imply $\beta'\in \mathcal{L}$.
	Thus let us suppose $\nu \in V_{\beta}$.
	Consider the affine space  
	$W_{\nu}:=\{\nu+\sum_{i\in I_{\text{irr}}\backslash\{0\}}
	t_{i}z^{(i)}\mid t_{i}\in \mathbb{C}\}$. 
	Then 
	$W_{\nu}\cap 
	\{\eta\in \mathbb{C}^{\mathsf{Q}_{0}}\mid \beta'\cdot\eta=0\}$
	is a proper closed subset of $W_{\mu}$ for any $\beta'\in F_{\beta}$.
	Indeed, 
	since $\beta'\notin\mathcal{L}$ there exists $i_{0}\in I_{\text{irr}}
	\backslash\{0\}$ such that $\sum_{j=1}^{m^{(0)}}\beta'_{[0,j]}
	\neq \sum_{j=1}^{m^{(i_{0})}}\beta'_{[i_{0},j]}$.
	Then 
	the line $\{\nu+tz^{(i_{0})}\mid t\in \mathbb{C}\}\subset W_{\nu}$ 
	is not contained in the hyperplane
	$\{\eta\in \mathbb{C}^{\mathsf{Q}_{0}}\mid \beta'\cdot\eta=0\}$.
	Thus $\mathrm{dim\,}W_{\nu}>\mathrm{dim\,}\left(W_{\nu}\cap 
	\{\eta\in \mathbb{C}^{\mathsf{Q}_{0}}\mid \beta'\cdot\eta=0\}\right)$
	for any $\beta'\in F_{\beta}$ since $W_{\mu}$ is an irreducible 
	algebraic set. This shows the inequality,
	\begin{equation*}
		\mathrm{dim\,}W_{\nu}\cap V_{\beta}=
		\mathrm{max}_{\beta'\in F_{\beta}}
		\left\{\mathrm{dim\,}\left(W_{\nu}
			\cap 
	\{\eta\in \mathbb{C}^{\mathsf{Q}_{0}}\mid \beta'\cdot\eta=0\}\right)
	\right\}
	<\mathrm{dim\,}W_{\nu}.
	\end{equation*}
	Hence there exists $\nu'\in W_{\nu}$ 
	which is not contained in $V_{\beta}$ 
	as required.
\end{proof}

\begin{lem}\label{slide1}
	We have $\widetilde{\Sigma}_{\nu}=\widetilde{\Sigma}_{\nu+\gamma z^{(i)}}$
	for any $i\in I_{\text{irr}}\backslash\{0\}$ and 
	$\gamma \in \mathbb{C}$.
\end{lem}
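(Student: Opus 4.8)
The plan is to reduce the statement to the single elementary fact that every element of the lattice $\mathcal{L}$ is orthogonal to $z^{(i)}$ for each $i\in I_{\text{irr}}\backslash\{0\}$. Indeed, since all $[i',j,k]$-components of $z^{(i)}$ vanish and $z^{(i)}_{[i',j]}$ equals $1$ for $i'=i$, equals $-1$ for $i'=0$ and equals $0$ otherwise, one computes for $\beta\in\mathcal{L}$ that
\[
\beta\cdot z^{(i)}=\sum_{j=1}^{m^{(i)}}\beta_{[i,j]}-\sum_{j=1}^{m^{(0)}}\beta_{[0,j]}=0,
\]
the last equality being exactly the defining relation of $\mathcal{L}$. (This is also precisely the computation showing that $\mathrm{add}^{(i)}_{\gamma}$ preserves $\mathcal{S}$, which is why only $i\in I_{\text{irr}}\backslash\{0\}$ occurs in the statement.)

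First I would deduce that $\widetilde{R}^{+}_{\mu}=\widetilde{R}^{+}_{\mu+z^{(i)}}$. By definition $\widetilde{R}^{+}_{\nu}=\mathcal{L}\cap R^{+}_{\nu}$, where $R^{+}_{\nu}$ is the set of positive roots $\beta$ of $\mathsf{Q}$ with $\beta\cdot\nu=0$. For $\beta\in\mathcal{L}$ the identity above gives $\beta\cdot(\mu+z^{(i)})=\beta\cdot\mu+\beta\cdot z^{(i)}=\beta\cdot\mu$, hence $\beta\in R^{+}_{\mu}$ if and only if $\beta\in R^{+}_{\mu+z^{(i)}}$; intersecting with $\mathcal{L}$ yields the claimed equality of sets.

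Then I would finish by observing that $\widetilde{\Sigma}_{\nu}$ depends on $\nu$ only through the set $\widetilde{R}^{+}_{\nu}$ together with the $\nu$-independent function $p$: unwinding the definition, $\beta\in\widetilde{\Sigma}_{\nu}$ precisely when $\beta\in\widetilde{R}^{+}_{\nu}$ and $p(\beta)>\sum_{t}p(\beta_{t})$ for every decomposition $\beta=\beta_{1}+\cdots+\beta_{r}$ with $r\ge 2$ and all $\beta_{t}\in\widetilde{R}^{+}_{\nu}$. Applying this to $\nu=\mu$ and to $\nu=\mu+z^{(i)}$ and inserting $\widetilde{R}^{+}_{\mu}=\widetilde{R}^{+}_{\mu+z^{(i)}}$ gives $\widetilde{\Sigma}_{\mu}=\widetilde{\Sigma}_{\mu+z^{(i)}}$.

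I do not expect a genuine obstacle here; the only point requiring any care is bookkeeping the components that enter the pairing $\beta\cdot z^{(i)}$ and recognizing that the condition cutting out $\mathcal{L}$ is exactly what makes this pairing vanish. In particular, the delicate estimates of Lemma \ref{slide} play no role: this lemma is a purely formal consequence of the definitions of $\mathcal{L}$, $\widetilde{R}^{+}_{\mu}$ and $\widetilde{\Sigma}_{\mu}$.
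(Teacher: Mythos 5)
Your proof is correct and is essentially the paper's argument spelled out in full: the paper's one-line proof ("$(\beta,\mu)\in\mathcal{S}$ if and only if $(\beta,\mu+z^{(i)})\in\mathcal{S}$") is exactly the orthogonality $\beta\cdot z^{(i)}=0$ for $\beta\in\mathcal{L}$ that you compute, combined with the observation that $\widetilde{\Sigma}_{\nu}$ depends on $\nu$ only through $\widetilde{R}^{+}_{\nu}$. No gap; your version just makes the bookkeeping explicit.
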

\begin{proof}
	Recall that $(\beta,\nu)\in \mathcal{S}$ if and only if 
	$(\beta,\nu+\gamma z^{(i)})\in \mathcal{S}$.
\end{proof}

Obviously $\mathcal{L}\cap \Sigma_{\nu}\subset\widetilde{\Sigma}_{\nu}$.
The above lemmas show that for any $\beta\in \widetilde{\Sigma}_{\nu}$,
there exists $\nu'\in \mathbb{C}^{\mathsf{Q}_{0}}$ 
such that $\beta\in \Sigma_{\nu'}$.
\begin{prop}\label{slide2}
	For any $\beta \in \widetilde{\Sigma}_{\nu}$,
	there exist $\gamma_{i}\in \mathbb{C}$ for 
	$i\in I_{\text{irr}}\backslash\{0\}$ such that 
	$\beta\in \Sigma_{\nu+\sum_{i\in I_{\text{irr}}\backslash\{0\}}
	\gamma_{i} z^{(i)}}$.
\end{prop}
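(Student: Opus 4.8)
The plan is to assemble Lemmas~\ref{slide} and~\ref{slide1}. The single identity that makes everything work is that $\beta\cdot z^{(i)}=0$ for every $\beta\in\mathcal{L}$: indeed $z^{(i)}_{[i,j]}=1$, $z^{(i)}_{[0,j]}=-1$ and all other entries of $z^{(i)}$ vanish, so $\beta\cdot z^{(i)}=\sum_{j}\beta_{[i,j]}-\sum_{j}\beta_{[0,j]}=0$ by the definition of $\mathcal{L}$. Consequently, adding any complex combination $\sum_{i\in I_{\text{irr}}\backslash\{0\}}\gamma_i z^{(i)}$ to $\mu$ leaves the pairing with elements of $\mathcal{L}$ untouched, hence leaves $\widetilde{R}^{+}$ and $\widetilde{\Sigma}$ unchanged (this is exactly Lemma~\ref{slide1}, read for arbitrary $\gamma_i$ rather than $\gamma_i=1$); what it can change is the larger set $R^{+}$ of positive roots with vanishing pairing, and Lemma~\ref{slide} says we can use this freedom to push all the relevant ones back into $\mathcal{L}$.

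Concretely, first I would apply Lemma~\ref{slide} to $(\beta,\mu)\in\mathcal{S}$ to obtain $\gamma_i\in\mathbb{C}$ ($i\in I_{\text{irr}}\backslash\{0\}$) such that $\mu'=\mu+\sum_{i\in I_{\text{irr}}\backslash\{0\}}\gamma_i z^{(i)}$ has the property: whenever $\beta'\in(\mathbb{Z}_{\ge 0})^{\mathsf{Q}_{0}}$ satisfies $\beta'\le\beta$ and $\beta'\cdot\mu'=0$, one has $\beta'\in\mathcal{L}$. By the identity above, $\beta\cdot\mu'=\beta\cdot\mu=0$ and $\beta$ is still a positive root of $\mathsf{Q}$, so $\beta\in R^{+}_{\mu'}$; the same identity also gives $\widetilde{R}^{+}_{\mu'}=\widetilde{R}^{+}_{\mu}$ and $\widetilde{\Sigma}_{\mu'}=\widetilde{\Sigma}_{\mu}$.

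Then I would verify the defining condition of $\Sigma_{\mu'}$. Let $\beta=\beta_{1}+\cdots+\beta_{r}$ be any decomposition with $r\ge 2$ and each $\beta_{t}\in R^{+}_{\mu'}$. Since the $\beta_{t}$ are nonnegative and sum to $\beta$, we have $\beta_{t}\le\beta$, and also $\beta_{t}\cdot\mu'=0$ by definition of $R^{+}_{\mu'}$; hence the choice of $\mu'$ forces $\beta_{t}\in\mathcal{L}$, so $\beta_{t}\in\mathcal{L}\cap R^{+}_{\mu'}=\widetilde{R}^{+}_{\mu'}=\widetilde{R}^{+}_{\mu}$. Now $\beta\in\widetilde{\Sigma}_{\mu}$ applies to exactly this kind of decomposition and yields $p(\beta)>p(\beta_{1})+\cdots+p(\beta_{r})$. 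Together with $\beta\in R^{+}_{\mu'}$ this shows $\beta\in\Sigma_{\mu'}$, proving the proposition with these $\gamma_i$.

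There is no genuine obstacle here; the statement is a packaging of the two preceding lemmas. The only step deserving attention is the distinction between decompositions with summands in $R^{+}_{\mu'}$ (permitted in the definition of $\Sigma_{\mu'}$) and those with summands in $\widetilde{R}^{+}_{\mu'}$ (permitted in the definition of $\widetilde{\Sigma}_{\mu}$): Lemma~\ref{slide} is precisely the device that bridges this gap, forcing every $\mu'$-isotropic summand of $\beta$ back into $\mathcal{L}$, after which the hypothesis $\beta\in\widetilde{\Sigma}_{\mu}$ can be invoked verbatim.
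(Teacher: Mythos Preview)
Your proof is correct and follows essentially the same approach as the paper: choose $\mu'$ via Lemma~\ref{slide}, then observe that any summand in an $R^{+}_{\mu'}$-decomposition of $\beta$ is forced into $\mathcal{L}$, so the hypothesis $\beta\in\widetilde{\Sigma}_{\mu}=\widetilde{\Sigma}_{\mu'}$ applies. The paper phrases this as the equality $\widetilde{\Sigma}_{\mu'}=\mathcal{L}\cap\Sigma_{\mu'}$ and then invokes Lemma~\ref{slide1}, whereas you unpack the argument directly; both are the same reasoning.
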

\begin{proof}
	For $\beta\in \widetilde{\Sigma}_{\nu}$, 
	let us choose $\gamma_{i}$ as in Lemma \ref{slide} and 
	set $\nu'=\nu+\sum_{i\in I_{\text{irr}}\backslash\{0\}}
	\gamma_{i} z^{(i)}$.
	Then Lemma \ref{slide} shows that $\beta\in \Sigma_{\nu'}$.
\end{proof}

\begin{lem}\label{birational}
	Suppose that $\mu^{-1}(\lambda)^{\text{dif}}\neq \emptyset$.
	Fix $i_{0}\in I_{\text{irr}}$ and $\gamma\in \mathbb{C}$.
	Then there exists a $\mathbf{G}(\alpha)$-equivariant 
	analytic bijection 
	\[
		\mathrm{add}^{(i_{0})}_{\gamma}\colon 
		\mu_{\alpha}^{-1}(\lambda)^{\text{dif}}\longrightarrow 
		\mu_{\alpha}^{-1}(\lambda+\gamma z^{(i_{0})})^{\text{dif}}.
	\]
\end{lem}
\begin{proof}
	The required map is obtained by 
	$\Phi_{\xi'}\circ \mathrm{Add}^{(i_{0})}_{-\gamma z^{-1}}
	\circ \mathrm{Add}^{(0)}_{\gamma z^{-1}}\circ \Phi_{\xi}^{-1}$
	with suitable $\xi$ and $\xi'$.
	Thus it follows that the map preserves the $\mathcal{L}$-irreducibility
	since $\mathrm{Add}$ preserves the irreducibility of differential
	equations.
	
	We can directly check that 
	for $x\in \mu_{\alpha}^{-1}(\lambda)^{\text{dif}}$,  
	its image $x':=\mathrm{add}^{(i_{0})}_{\gamma}(x)\in 
	\mu_{\alpha}^{-1}(\lambda+\gamma z^{(i_{0})})$  is written as follows.
	Set 
	\begin{align*}
		x_{\rho_{i_{0}}}&:=\left(x_{\rho^{[0,j]}_{[i_{0},j']}}
	\right)_{\substack{1\le j\le m_{0}\\
	1\le j'\le m_{i_{0}}}},& 
	x_{\rho^{*}_{i_{0}}}&:=\left(x_{(\rho^{[0,j]}_{[i_{0},j']})^{*}}
	\right)_{\substack{1\le j\le m_{0}\\
	1\le j'\le m_{i_{0}}}}.
\end{align*}
	Then 
	\[
		x'_{(\rho^{[0,j]}_{[i_{0},j']})^{*}}=
			\left(
				x_{\rho^{*}_{i_{0}}}
				+\gamma\cdot x_{\rho_{i_{0}}}^{-1}
			\right)_{[0,j],[i_{0},j']}
	\]
	for $1\le j\le m_{0}$ and $1\le j'\le m_{i_{0}}$ and 
	\[
		x'_{\rho}=x_{\rho}
	\]
	for the remaining $\rho\in \overline{\mathsf{Q}}_{1}$,
	which tells us that the map is analytic.
\end{proof}

\begin{thm}\label{ifnonempty}
	If $\mu_{\alpha}^{-1}(\lambda)^{\text{dif}}\neq \emptyset$, then
	$\alpha\in \widetilde{\Sigma}_{\lambda}$.
\end{thm}
\begin{proof}
	Let us suppose that there exists 
	an $\mathcal{L}$-irreducible representation $x\in 
	\mu_{\alpha}^{-1}(\lambda)^{\text{det}}$.
	Choose $\gamma_{i}\in\mathbb{C}$ for $i\in I_{\text{irr}}
	\backslash\{0\}$  as in Lemma \ref{slide} and put 
	$\lambda'=\lambda+\sum_{i\in I_{\text{irr}}\backslash\{0\}}
	\gamma_{i}z^{(i)}$.
	Then the operation $\prod_{i\in I_{\text{irr}}\backslash\{0\}}
	\mathrm{add}^{(i)}_{\gamma_{i}}
	$
	sends $x$ to the 
	$\mathcal{L}$-irreducible element 
	$x'\in \mu_{\alpha}^{-1}(\lambda')^{\text{det}}$.
	However Lemma \ref{slide} shows that 
	if an element in 
	$\mu_{\alpha}^{-1}(\lambda')$
	is $\mathcal{L}$-irreducible, then it is irreducible.
	Thus $x'$ is irreducible, which shows $\alpha\in 
	\mathcal{L}\cap \Sigma_{\lambda'}$ by Crawley-Boevey's
	result (see Theorem \ref{CB}).
	Hence $\alpha\in \mathcal{L}\cap \Sigma_{\lambda'}
	\subset \widetilde{\Sigma}_{\lambda'}=\widetilde{\Sigma}_{\lambda}$
	by Lemma \ref{slide1}.
\if0
	Next let us assume 
	$\alpha\in \widetilde{\Sigma}_{\lambda}$.
	Then Proposition \ref{slide2} shows that 
	there exists $\gamma_{i}\in \mathbb{C}$, 
	$i\in I_{\text{irr}}\backslash\{0\}$, such that 
	$\alpha\in \Sigma_{\lambda'}$ where 
	$\lambda'=\lambda+\sum_{i\in I_{\text{irr}}\backslash\{0\}}
	\gamma_{i}z^{(i)}$.
	Then it suffices to show that the existence of 
	an $\mathcal{L}$-irreducible element in 
	$\mu^{-1}(\lambda')^{\text{dif}}_{\lambda'}\neq
	\emptyset$ by using
	the operation $\prod_{i\in I_{\text{irr}}\backslash\{0\}}
	\mathrm{add}^{(i)}_{\gamma_{i} z}$ as above.
	Then Theorem \ref{CB} again shows that 
	$\mu^{-1}(\lambda')$ 
	contains an 
	irreducible representation. 
	Theorem 1.2 in \cite{C} moreover shows that 
	the set of all irreducible elements is a dense subset 
	of $\mu^{-1}(\lambda')$. 
	Thus there exists an irreducible element in  
	$\mu^{-1}(\lambda')^{\text{dif}}$
	which is a non-empty open subset of 
	the irreducible variety $\mu^{-1}(\lambda')$.
	Thus we are done.
\fi
\end{proof}

We close this section by seeing that 
$\mathfrak{M}_{\lambda}(\mathsf{Q},\alpha)^{\text{dif}}$ is a 
connected manifold if it is non-empty.
Thus the moduli space of connections $\mathfrak{M}(\mathbf{B})$ is 
also to be connected complex manifold.

\begin{thm}\label{moduliconnected}
	If $\mathfrak{M}_{\lambda}(\mathsf{Q},\alpha)^{\text{dif}}$ is 
	non-empty, then it is a connected complex manifold.
	Thus $\mathfrak{M}(\mathbf{B})$ is a connected complex manifold
	if it is non-empty, 
	under the identification by the isomorphism $\Phi_{\xi}$ given 
	in Theorem \ref{irreducibleandquasi}.
\end{thm}
\begin{proof}
	Let us take $\gamma_{i}\in \mathbb{C}$ for $i\in I_{\text{irr}}
	\backslash\{0\}$ as in Lemma \ref{slide} and set 
	$\lambda':=\lambda+\sum_{i\in I_{\text{irr}}\{0\}}\gamma_{i}z^{(i)}$.
	Then it suffices to consider $\mathfrak{M}_{\lambda'}(\mathsf{Q},
	\alpha)^{\text{dif}}$ by Lemma \ref{birational}.
	Note that 
	Lemma $\ref{slide}$ shows that the $\mathcal{L}$-irreducibility
	coincides with the  irreducibility in $\mu_{\alpha}^{-1}(\lambda')$.
	Thus $\mathfrak{M}_{\lambda'}(\mathsf{Q},\alpha)^{\text{dif}}$
	becomes an open subset of the complex manifold
	$\mathfrak{M}^{\text{reg}}_{\lambda'}(\mathsf{Q},\alpha)$.
	
	To show the connectedness, let us recall that 
	$\mu_{\alpha}^{-1}(\lambda')$ is 
	irreducible topological space from Theorem 1.2 in \cite{C1}
	and $\mu_{\alpha}^{-1}(\lambda')^{\text{irr}}$ is 
	open in $\mu_{\alpha}^{-1}(\lambda')$.
	Thus 
	\[
		\mu_{\alpha}^{-1}(\lambda')^{\text{dif}}=\left\{
			x\in \mu_{\alpha}{-1}(\lambda')^{\text{irr}}\,\middle|\,
	 \mathrm{det}\left(x_{\rho^{[0,j]}_{[i,j']}}\right)_{
 \substack{1\le j\le m_{0}\\1\le j'\le m_{i}}}\neq 0,\, i\in I_{\text{irr}}
 \backslash\{0\}\right\}
\]
is connected since it is open in the irreducible space 
$\mu_{\alpha}^{-1}(\lambda')$.
\end{proof}
\begin{thm}\label{embedding}
	If $\mathfrak{M}(\mathbf{B})\neq \emptyset$, there exist $\lambda'
	\in \mathbb{C}^{\mathsf{Q}_{0}}$ and 
	the injection 
	\[
		\Phi\colon \mathfrak{M}(\mathbf{B})\hookrightarrow 
		\mathfrak{M}_{\lambda'}^{\text{reg}}(\mathsf{Q},\alpha)
	\]
	whose image is $\left(
		\mu_{\alpha}^{-1}(\lambda')^{\text{det}}\cap
	\mu_{\alpha}^{-1}(\lambda')^{\text{irr}}\right)/\mathbf{G}(\alpha)$.
\end{thm}
\begin{proof}
	Let us choose $\lambda'$ as in the proof of 
	Theorem \ref{moduliconnected}. Then
	$\mathfrak{M}(\mathbf{B})\cong 
	\mathfrak{M}_{\lambda}(\mathsf{Q},\alpha)^{\text{dif}}
	\cong \mathfrak{M}_{\lambda'}(\mathsf{Q},\alpha)^{\text{dif}}
	=\left(
		\mu_{\alpha}^{-1}(\lambda')^{\text{det}}\cap
	\mu_{\alpha}^{-1}(\lambda')^{\text{irr}}\right)/\mathbf{G}(\alpha)
	\subset \mathfrak{M}_{\lambda'}^{\text{reg}}(\mathsf{Q},\alpha)$.
\end{proof}
\section{$\mathcal{L}$-fundamental set}\label{modifset}
This section and the next one are dedicated to 
show the converse of  Theorem \ref{ifnonempty}.
Namely we shall show that if $\alpha\in \widetilde{\Sigma}_{\lambda}$,
then $\mu_{\alpha}^{-1}(\lambda)^{\text{dif}}\neq 
\emptyset$.
For this purpose, first let us introduce an analogue of the fundamental set $F$.
\begin{df}[$\mathcal{L}$-fundamental set]\normalfont
	Let us define the subset of $\mathcal{L}$ by
	\[
	\tilde{F}:=\left\{\beta\in \mathcal{L}^{+}\backslash\{0\}
	\,\bigg|\, \begin{array}{c}
		(\beta,\epsilon_{a})\le 0\text{ for all }
		a\in \mathcal{J}\cup \mathsf{Q}_{0}^{\text{leg}},\\
		\text{support of $\beta$ is connected}
	\end{array}\right\}
	\]
	and call {\em $\mathcal{L}$-fundamental set}.
\end{df}
The aim of this section is to show that 
$\tilde{F}$ consists of positive imaginary roots of $\mathsf{Q}$ and 
$\mu_{\alpha}^{-1}(\lambda)^{\text{dif}}\neq \emptyset$ if $\alpha\in 
\widetilde{\Sigma}_{\lambda}
\cap \tilde{F}$.
\subsection{A symmetric Kac-Moody root lattice associated with 
	$\mathcal{L}$}
	As we saw in the previous section, the sublattice $\mathcal{L}\subset
	\mathbb{Z}^{\mathsf{Q}_{0}}$ has the action of 
	$\langle s_{a}\mid a\in \mathcal{J}\cup \mathsf{Q}_{0}^{\text{leg}}
	\rangle$
	which is a subgroup of the Weyl group of $\mathsf{Q}$.
	We would like to define an analogy of root system on $\mathcal{L}$
	and regard $\tilde{F}$ as the fundamental set of positive 
	imaginary roots 
	of $\mathcal{L}$.
	For this purpose, 
	we shall define a Kac-Moody root lattice $\mathcal{M}$ 
	and regard $\mathcal{L}$ as a quotient lattice of $\mathcal{M}$.
	And then $\tilde{F}$ will be the image of the fundamental set of 
	positive imaginary roots of $\mathcal{M}$.
\subsubsection{Lift of $\mathcal{L}$ to a Kac-Moody root lattice}
Let us note that $\mathcal{L}$ is  generated by
\[
	\left\{\epsilon_{a}\mid a\in \mathcal{J}\cup
	\mathsf{Q}_{0}^{\text{leg}}\right\},
\]
see Theorem 3.6 in \cite{H}.
It is easy to verify that
\begin{align}
	(\epsilon_{\mathbf{i}},\epsilon_{\mathbf{i}'})&=
	2-\sum_{\substack{0\le i\le p\\j_{i}\neq j'_{i}}}
	(d_{i}(j_{i},j'_{i})+2),\label{equ5}\\
	(\epsilon_{\mathbf{i}},\epsilon_{[i,j,k]})&=
	\begin{cases}
		-1&\text{if }j=j_{i}\text{ and }k=1,\label{equ6}\\
		0&\text{otherwise},
	\end{cases}\\
	(\epsilon_{[i,j,k]},\epsilon_{[i',j',k']})&=
	\begin{cases}
		2&\text{if }[i,j,k]=[i',j',k'],\\
		-1&\text{if }(i,j)=(i',j')\text{ and }
		|k-k'|=1,\\
		0&\text{otherwise},
	\end{cases}\label{equ7}
\end{align}
cf. section 3.2 in \cite{H}.
Here $\mathbf{i}=([i,j_{i}])_{0\le i\le p}$,
$\mathbf{i}'=([i,j'_{i}])_{0\le i\le p}\in \mathcal{J}$.
Thus we consider a new lattice $\mathcal{M}$
generated by the set of indeterminate
\[
	\mathcal{C}:=\left\{c_{a}\mid a\in \mathcal{J}\cup
	\mathsf{Q}_{0}^{\text{leg}}\right\},
\]
namely all $c_{a}\in \mathcal{C}$ have no relations, 
and define a symmetric bilinear form $(\,,\,)$ on $\mathcal{M}$
in accordance with equations $(\ref{equ5}),(\ref{equ6})$ and 
$(\ref{equ7})$.

We can attach $\mathcal{M}$ to a diagram, called {\em Dynkin diagram},
regarding elements in $\mathcal{C}$ as vertices and 
connecting $c,c'\in \mathcal{C}$ by $|(c,c')|$ edges if $c\neq c'$. 
We say $c,c'\in\mathcal{C}$ are {\em connected} if there exists 
a sequence $c_{0}=c,c_{1},\ldots,c_{r}=c'$ in $\mathcal{C}$ 
such that $(c_{i-1},c_{i})\neq 0$ for all $i=1,\ldots,r$.
Then we may define {\em Dynkin diagram} of $\gamma\in \mathcal{M}$
which is a subdiagram obtained by 
connecting the vertices in $\mathrm{supp}(\beta)$ in the same manner.

Also we can define reflections $s_{a}$ on $\mathcal{M}$ by 
\[
	s_{a}(\gamma):=\gamma-(\gamma,c_{a})c_{a}
\]
for $a\in 
\mathcal{J}\cup \mathsf{Q}_{0}^{\text{leg}}$ 
and $\gamma \in \mathcal{M}$.
Let us denote the set of all positive elements in $\mathcal{M}$
by $\mathcal{M}^{+}$.

Then the inclusion 
$\mathcal{L}\hookrightarrow \mathbb{Z}^{\mathsf{Q}_{0}}$
induces
\[
	\Xi\colon \mathcal{M}\longrightarrow
	\mathbb{Z}^{\mathsf{Q}_{0}}
\]	
where for $\gamma=\sum_{c\in\mathcal{C}}\gamma_{c}c\in \mathcal{M}$,
the image $\Xi(\gamma)=(\beta_{a})_{a\in \mathsf{Q}_{0}}$ is given by
\begin{align*}
	\beta_{[i,j]}&:=\sum_{\left\{\mathbf{i}=([i,j_{i}])\in \mathcal{J}\mid 
	j_{i}=j\right\}}\gamma_{c_{\mathbf{i}}},\\
	\beta_{[i,j,k]}&:=\gamma_{c_{[i,j,k]}}.
\end{align*}
\begin{prop}[Theorem 3.6 in \cite{H}]\label{quotientmap}
	We have the following.
	\begin{enumerate}
		\item We have $(\gamma,\gamma')=(\Xi(\gamma),\Xi(\gamma'))$
			for any $\gamma,\gamma'\in \mathcal{M}$.
		\item The image of $\Xi$ is $\mathcal{L}$.
		\item The map $\Xi$ is injective   
			if and only if 
			\[
				\#\{i\in \{0,\ldots,p\}\mid 
				m_{i}>1,\,i=0,\ldots,p\}\le 1.
			\]
		\item For $\gamma\in \mathcal{M}$ and $a\in \mathcal{J}
			\cup \mathsf{Q}_{0}^{\text{leg}}$, we have 
			\[
				\Xi(s_{a}(\gamma))=s_{a}(\Xi(\gamma)).
			\]
	\end{enumerate}
\end{prop}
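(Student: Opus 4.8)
The plan is to handle the four assertions in turn, dispatching (1) and (4) as formalities and reducing (2) and (3) to $\mathbb{Z}$-linear algebra for the marginal (``contingency-table'') map $\mathbb{Z}^{\mathcal{J}}\to\bigoplus_{i\in I_{\text{irr}}}\mathbb{Z}^{\mathcal{J}_i}$. First I would record that $\Xi(c_a)=\epsilon_a$ for every generator $a\in\mathcal{J}\cup(\mathsf{Q}_0\setminus\mathsf{Q}_0')$: this is read off from the definition of $\Xi$, since for $a=\mathbf{i}=([i,j_i])_{0\le i\le p}$ the tuple $\mathbf{i}$ is the unique element of $\mathcal{J}$ having $j_i$ as its $i$-th entry, and for $a=[i,j,k]$ the formula gives $\epsilon_{[i,j,k]}$ directly. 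The form on $\mathcal{M}$ was defined so that $(c_a,c_b)$ equals the right-hand side of $(\ref{equ5})$, $(\ref{equ6})$ or $(\ref{equ7})$, and those right-hand sides are exactly the values of the bilinear form of $\mathsf{Q}$ on the $\epsilon_a$ — which one checks by an arrow count in $\mathsf{Q}$ ($d_i(j,j')$ arrows between $[i,j]$ and $[i,j']$, one arrow between $[0,j]$ and $[i,j']$ for $i\in I_{\text{irr}}\setminus\{0\}$, one between $[i,j]$ and $[i,j,1]$, and so on), then expanding $\epsilon_{\mathbf{i}}=\sum_{i\in I_{\text{irr}}}\epsilon_{[i,j_i]}$ and simplifying using $\#I_{\text{irr}}=\#(I_{\text{irr}}\setminus\{0\})+1$ together with the convention $d_i(j,j)=-1$. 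Hence $(c_a,c_b)=(\Xi(c_a),\Xi(c_b))$ on generators, and (1) follows by bilinearity and $\mathbb{Z}$-linearity of $\Xi$. Assertion (4) is then one line: $\Xi(s_a(\gamma))=\Xi(\gamma)-(\gamma,c_a)\Xi(c_a)=\Xi(\gamma)-(\Xi(\gamma),\epsilon_a)\epsilon_a=s_a(\Xi(\gamma))$, where the middle step uses (1).

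For (2), the containment $\Xi(\mathcal{M})\subseteq\mathcal{L}$ is checked on generators, $\mathcal{L}$ being a sublattice and $\Xi$ linear: $\epsilon_{\mathbf{i}}$ has a single entry $1$ in each block $\{[i,j]\}_j$ with $i\in I_{\text{irr}}$, so its block sums are all $1$, while $\epsilon_{[i,j,k]}$ has all block sums $0$. For the reverse containment, the defining equations of $\mathcal{L}$ involve only $\mathsf{Q}_0'$-coordinates, so $\mathcal{L}=\mathcal{L}'\oplus\mathbb{Z}^{\mathsf{Q}_0\setminus\mathsf{Q}_0'}$ with $\mathcal{L}'=\{\beta'\in\mathbb{Z}^{\mathsf{Q}_0'}\mid\text{all block sums equal}\}$; the $\epsilon_{[i,j,k]}$ span the second summand, so it remains to show the $\epsilon_{\mathbf{i}}$ (which are supported on $\mathsf{Q}_0'$) span $\mathcal{L}'$. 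Given $\beta'\in\mathcal{L}'$ with common block sum $N$ and any fixed $\mathbf{i}_0\in\mathcal{J}$, the element $\beta'-N\epsilon_{\mathbf{i}_0}$ has block sum $0$ in every block, hence lies in $\bigoplus_{i\in I_{\text{irr}}}\{v\in\mathbb{Z}^{\mathcal{J}_i}\mid\sum_j v_j=0\}$; each summand is a root lattice of type $A_{m^{(i)}-1}$, generated over $\mathbb{Z}$ by the coordinate differences $\epsilon^{(i)}_j-\epsilon^{(i)}_{j+1}$, and each such difference equals $\epsilon_{\mathbf{i}}-\epsilon_{\mathbf{i}'}$ for $\mathbf{i},\mathbf{i}'\in\mathcal{J}$ agreeing off the $i$-th coordinate. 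Therefore $\beta'\in\Xi(\mathcal{M})$, and $\Xi(\mathcal{M})=\mathcal{L}$.

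For (3), the decomposition just used shows $\ker\Xi$ is supported on the $c_{\mathbf{i}}$, so $\Xi$ is injective precisely when the $\epsilon_{\mathbf{i}}$ are $\mathbb{Z}$-independent. If at most one index has $m^{(i)}>1$, then all but (at most) one of the $\mathcal{J}_i$ are singletons, so distinct $\mathbf{i}\in\mathcal{J}$ differ only in a single fixed coordinate block; restricted to that block the $\epsilon_{\mathbf{i}}$ are distinct standard basis vectors, hence independent, and $\Xi$ is injective. If two distinct indices $i_1,i_2\in I_{\text{irr}}$ both have $m^{(i)}>1$, choose tuples $\mathbf{i}_{ab}$ ($a,b\in\{1,2\}$) agreeing on every coordinate except with $i_1$-coordinate $a$ and $i_2$-coordinate $b$; then $c_{\mathbf{i}_{11}}-c_{\mathbf{i}_{12}}-c_{\mathbf{i}_{21}}+c_{\mathbf{i}_{22}}$ is nonzero in $\mathcal{M}$, but a routine tally of block sums shows every block contributes zero to its $\Xi$-image, so it lies in $\ker\Xi$ and $\Xi$ is not injective. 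This establishes the equivalence in (3).

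The point that needs genuine care is that (2) and (3) must be argued over $\mathbb{Z}$ and not merely over $\mathbb{Q}$; this is exactly why the argument routes through the explicit $A$-type generators $\epsilon^{(i)}_j-\epsilon^{(i)}_{j+1}$ rather than a bare dimension count. If one does not take $(\ref{equ5})$--$(\ref{equ7})$ for granted, their verification by arrow-counting — in particular the index bookkeeping that collapses the naive double sum over $I_{\text{irr}}$ into the closed form $(\ref{equ5})$ — is the most laborious ingredient; everything else is formal (parts (1) and (4)) or routine lattice combinatorics.
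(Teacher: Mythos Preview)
Your proof is correct. The paper does not supply its own argument for this proposition: it is stated as a citation of Theorem~3.6 in \cite{H}, and the preliminary formulas $(\ref{equ5})$--$(\ref{equ7})$ are likewise attributed to \S3.2 of \cite{H}. So there is nothing in the present paper to compare your proof against; you have given a complete, self-contained proof where the paper defers to an external reference.

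A few confirmations on the points you flagged as needing care. For (1), your arrow count leading to $(\ref{equ5})$ is exactly the computation the paper calls ``easy to verify''; the only subtlety is remembering that for $i\in I_{\text{reg}}$ one always has $j_i=j'_i=1$, so the sum over $\{0,\ldots,p\}$ with $j_i\neq j'_i$ collapses to a sum over $I_{\text{irr}}$, and that the $[i,1,1]\to[0,j]$ arrows (for $i\in I_{\text{reg}}$) make $(\ref{equ6})$ hold uniformly. For (2), your insistence on the type-$A$ integral generators $\epsilon^{(i)}_j-\epsilon^{(i)}_{j+1}$ is the right move to keep the argument valid over $\mathbb{Z}$ rather than $\mathbb{Q}$. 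For (3), note that the ``contingency-table'' relation you write down is the standard generator of the kernel of the marginal map, so your non-injectivity witness is the minimal one.
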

From this proposition
$\mathcal{M}$ can be seen as a ``lift'' of 
$\mathcal{L}$
to a Kac-Moody root lattice in which $s_{\mathbf{i}}$ for  $\mathbf{i}\in 
\mathcal{J}$ are simple reflections.
\subsubsection{Some special examples}\label{specialexa}
The Dynkin diagram of $\mathbb{Z}^{\mathsf{Q}_{0}}$
can be defined as well as that of $\mathcal{M}$ and  
the Dynkin diagram of $\beta\in \mathbb{Z}^{\mathsf{Q}_{0}}$
is also defined.
Here we note that these diagrams coincide with diagrams obtained by 
forgetting the orientation of the quiver $\mathsf{Q}$ and the subquiver 
associated with $\mathrm{supp}(\beta)$ respectively.
Let us compare Dynkin diagrams of $\beta\in \mathcal{L}$ and the 
inverse image $\Xi^{-1}(\beta)\in \mathcal{M}$ in the following special cases for 
the latter use.

The kernel of $\Xi$ is a big space in general.
Thus if we consider the inverse image of an element $\beta\in \mathcal{L}$,
it is convenient to restrict $\Xi$ to some smaller space as follows.
Fix $\beta\in \mathcal{L}$.
Define $\mathcal{J}_{\beta}:=\{([i,j_{i}])_{i=0,\ldots,p}\in \mathcal{J}\mid 
\beta_{[i,j_{i}]}\neq 0\text{ for all }
i\in I_{\text{irr}}\}$,
$\mathsf{Q}_{0}^{\text{leg}}(\beta):=\mathrm{supp}(\beta)\cap 
\mathsf{Q}_{0}^{\text{leg}}$,
and 
a sublattice $\mathcal{M}_{\beta}:=
\sum_{\{a\in \mathcal{J}_{\beta}\cup 
\mathsf{Q}_{0}^{\text{leg}}(\beta)\}}\mathbb{Z}c_{a}$.
Denote the set of all positive elements in $\mathcal{M}_{\beta}$
by $\mathcal{M}^{+}_{\beta}$.
We write the restriction of $\Xi$ on $\mathcal{M}_{\beta}$
by $\Xi_{\beta}$.

First consider $\beta\in \mathcal{L}$ satisfying that
\[
	\{[i,j]\mid \beta_{[i,j]}\neq 0\}=\{[i,1]\}\quad 
	\text{ for all }i\in I_{\text{irr}}\backslash\{0\},
\]
\[
	\{[0,j]\mid \beta_{[0,j]}\neq 0\}=\{[0,1],\ldots, [0,m_{0}']\}
\]
for some $m_{0}'\le m_{0}$, and $\mathrm{supp}(\beta)$ is connected.
Then Proposition \ref{quotientmap} implies that $\Xi_{\beta}\colon 
\mathcal{M}_{\beta}
\rightarrow \mathcal{L}$ is injective and we can show that this bijection
preserves Dynkin diagrams of elements in $\mathcal{M}$ and $\mathcal{L}$
in the following way.
\begin{prop}\label{firstexa}
	For the above $\beta\in \mathcal{L}$, let us define 
	\[
		\beta^{\text{red}}:=
		\left(\prod_{i\in I_{\text{irr}}\backslash\{0\}}
		s_{[i,1,e_{[i,1]}-1]}\circ \cdots
			\circ s_{[i,1,2]}\circ s_{[i,1,1]}\circ 
			s_{[i,1]}
	\right)(\beta).
	\]
	Then Dynkin diagram of $\beta^{\text{red}}$
	coincides with that of $\Xi_{\beta}^{-1}(\beta)\in 
	\mathcal{M}_{\beta}$.
\end{prop}
\begin{proof}
	From Proposition \ref{quotientmap}, the map $\Xi_{\beta}$ is injective.
	And note the identification 
	\[
		\begin{array}{ccc}\mathcal{J}_{\beta}=
			\{1,\ldots,m'_{0}\}\times 
			\left(\prod_{i=1}^{p}\{1\}\right) 
			&
			\longrightarrow&
		\{1,\ldots,m'_{0}\}\\
		(j_{0},1_{1},\ldots,1_{p}) 
		&\longmapsto& 
			j_{0}
		\end{array}.
		\]
		Thus for $\beta\in \mathcal{L}$ of the form
		\begin{align*}
			\beta=&
			\sum_{j=1}^{m'_{0}}\left(
				\beta_{[0,j]}\epsilon_{[0,j]}
				+\sum_{k=1}^{e_{[0,j]}-1}\beta_{[0,j,k]}
				\epsilon_{[0,j,k]}
			\right)\\
			&+
			\sum_{i\in I_{\text{irr}}\backslash\{0\}}
			\left(\beta_{[i,1]}\epsilon_{[i,1]}
			+\sum_{k=1}^{e_{[i,1]}-1}\beta_{[i,1,k]}
			\epsilon_{[i,1,k]}\right)\\
			&+\sum_{i\in I_{\text{reg}}}\sum_{k=1}^{
		e_{[i,1]}-1}\beta_{[i,1,k]}\epsilon_{[i,1,k]},
		\end{align*}
		the inverse image $\tilde{\gamma}
		:=\Xi_{\beta}^{-1}(\beta)$ is written by
		\begin{align*}
			\tilde{\beta}=&\sum_{j\in \mathcal{J}_{\beta}=
		\{1,\ldots,m'_{0}\}}\left(\beta_{[0,j]}c_{j}+
				\sum_{k=1}^{e_{[0,j]}-1}\beta_{[0,j,k]}
				c_{[0,j,k]}
			\right)\\
			&+
			\sum_{i\in I_{\text{irr}}\backslash\{0\}}
			\sum_{k=1}^{e_{[i,1]}-1}\beta_{[i,1,k]}
			c_{[i,1,k]}\\
			&+\sum_{i\in I_{\text{reg}}}\sum_{k=1}^{
		e_{[i,1]}-1}\beta_{[i,1,k]}c_{[i,1,k]}.
		\end{align*}
		Noting that $\beta_{[i,1]}=\sum_{j=1}^{m_{0}}\beta_{[0,j]}$
		for $i=1,\ldots,p$ since $\beta\in \mathcal{L}$, 
		we can obtain that 
		\begin{align*}
			\beta^{\text{red}}=&
			\sum_{j=1}^{m'_{0}}\left(
				\beta_{[0,j]}\epsilon_{[0,j]}
				+\sum_{k=1}^{e_{[0,j]}-1}\beta_{[0,j,k]}
				\epsilon_{[0,j,k]}
			\right)\\
			&+
			\sum_{i\in I_{\text{irr}}\backslash\{0\}}
			\left(\beta_{[i,1,1]}\epsilon_{[i,1]}
			+\sum_{k=1}^{e_{[i,1]}-2}\beta_{[i,1,k+1]}
			\epsilon_{[i,1,k]}\right)\\
			&+\sum_{i\in I_{\text{reg}}}\sum_{k=1}^{
		e_{[i,1]}-1}\beta_{[i,1,k]}\epsilon_{[i,1,k]},
		\end{align*}
		from direct computation.
		Let us recall that $(\epsilon_{[i,j]},\epsilon_{[i,j,1]})=1$
		and 
\[
(\epsilon_{[i,j,k-1]},\epsilon_{[i,j,k]})=(c_{[i,j,k-1]},c_{[i,j,k]})=1
\]
for $i=0,\ldots,p$,\,$j=1,\ldots,m_{i}$ and
$k=2,\ldots, e_{[i,1]}-1$.
Also 
\begin{align*}
	(\epsilon_{[0,j]},\epsilon_{[0,j']})&=(c_{j},c_{j'})&
	 &\text{ for }j,j'=1,\ldots,m_{0},\\
	(\epsilon_{[0,j]},\epsilon_{[i,1]})&=(c_{j},c_{[i,1,1]})=1&
	&\text{ for }j=1,\ldots,m_{0},\,i\in I_{\text{irr}}\backslash\{0\},\\
	(\epsilon_{[0,j]},\epsilon_{[i,1,1]})&=(c_{j},c_{[i,1,1]})=1&
	&\text{ for }j=1,\ldots,m_{0},\,j\in I_{\text{reg}},\\
	(\epsilon_{[0,j]},\epsilon_{[0,j,1]})&=(c_{j},c_{[0,j,1]})=1&
	&\text{ for }j=1,\ldots,m_{0}.
\end{align*}
All the other pairs in $\mathrm{supp}(\beta)$ (resp. 
$\mathrm{supp}(\tilde{\beta})$) are zero.
Thus we are done.
\end{proof}

Next let us consider the following special $\beta\in \mathcal{L}$,
\[
	\{[0,j]\mid \beta_{[0,j]}\neq 0\}=\{[0,1],[0,2]\},\quad 
	\{[1,j]\mid \beta_{[1,j]}\neq 0\}=\{[1,1],[1,2]\},
\]
\[
	\{[i,j]\mid \beta_{[i,j]}\neq 0\}=\{[i,1]\}\text{ for all }i\in 
	I_{\text{irr}}\backslash\{0,1\},
\]
\[
	d_{0}(1,2)=d_{1}(1,2)=1,
\]
and 
\[
	\mathsf{Q}_{0}^{\text{leg}}\cap \mathrm{supp}(\beta)=\emptyset.
\]
Then
the Dynkin diagram of $\mathcal{M}_{\beta}$ is 
	\begin{xy}
		\ar@{=}(0,0)*+!R+!U(0.5){c_{\mathbf{i}_{1}}}*\cir<4pt>{}
		;(7,7) *+!L+!U(0.5){c_{\mathbf{i}_{4}}}*\cir<4pt>{},
		\ar@{=}(0,7)*+!R+!U(0.5){c_{\mathbf{i}_{2}}}*\cir<4pt>{}
		;(7,0) *+!L+!U(0.5){c_{\mathbf{i}_3}}*\cir<4pt>{},
        \ar@{} (8,-5) *{}
    \end{xy}
    where $\mathbf{i}_{1},\ldots,\mathbf{i}_{4}\in 
    \mathcal{J}=\{1_{0},2_{0}\}\times \{1_{1},2_{1}\}\times \{1_{2}\}
    \times \cdots \times \{1_{p}\}$ are defied by
    \begin{align*}
	    &\mathbf{i}_{1}:=(1_0,1_{1},1_{2},\ldots,1_{p}),&
	    &\mathbf{i}_{2}:=(2_{0},1_{1},1_{2},\ldots,1_{p}),\\
	    &\mathbf{i}_{3}:=(1_{0},2_{1},1_{2},\ldots,1_{p}),&
	    &\mathbf{i}_{4}:=(2_{0},2_{1},1_{2},\ldots,1_{p}).
    \end{align*}
    In this case corresponding subquiver $\mathsf{Q}_{\beta}$ 
    generated by $\mathrm{supp}(\beta)$
    is 
    \[	\begin{xy}
		(-10,-2)*+!R+!U{[2,1]}*\cir<4pt>{}="E",
		(-30,-2)*+!R+!U{[r,1]}*\cir<4pt>{}="F",
		(-13,-2)="G",
		(-27,-2)="H",
		\ar@{..}"G";"H",
		\ar@{<-}(0,0)*++!U{[1,1]}*\cir<4pt>{}="A"
		;(7,7) *+!L+!D{[0,2]}*\cir<4pt>{}="B",
		\ar@{<-}(7,0)*+!L+!U{[1,2]}*\cir<4pt>{}="C"
		;(0,7) *++!D{[0,1]}*\cir<4pt>{}="D",
		\ar@{->}"D";"A",
		\ar@{->}"B";"C",
	        \ar@{} (8,-5) *{}
		\ar@{->}"D";"E",
		\ar@{->}"B";"E",
		\ar@{->}"D";"F",
		\ar@{->}"B";"F",
    \end{xy}.
\]
Then the image of  $\tilde{\beta}=\beta_{1}c_{\mathbf{i}_{1}}+\cdots +
\beta_{4}c_{\mathbf{i}_{4}}\in \mathcal{M}_{\beta}$
by $\Xi_{\beta}\colon\mathcal{M}_{\beta}\rightarrow \mathcal{L}$ is 
\begin{align*}
	\beta:=\Xi(\tilde{\beta})=
	&(\beta_{1}+\beta_{3})\epsilon_{[0,1]}+
	(\beta_{2}+\beta_{4})\epsilon_{[0,2]}
	+(\beta_{1}+\beta_{2})\epsilon_{[1,1]}+
	(\beta_{3}+\beta_{4})\epsilon_{[1,2]}\\
	&+(\beta_{1}+\cdots +\beta_4)\epsilon_{[2,1]}+
	\cdots
	+(\beta_{1}+\cdots +\beta_{4})\epsilon_{[r,1]}.
\end{align*}
Let us define 
\[
	\beta^{\text{red}}:=\left(\prod_{i=2}^{r}s_{[i,1]}\right)(\beta).
\]
Then the Dynkin diagram of $\beta^{\text{red}}$ is of type $A_{3}^{(1)}$,
   \[	\begin{xy}
		\ar@{<-}(0,0)*++!U{[1,1]}*\cir<4pt>{}="A"
		;(7,7) *+!L+!D{[0,2]}*\cir<4pt>{}="B",
		\ar@{<-}(7,0)*+!L+!U{[1,2]}*\cir<4pt>{}="C"
		;(0,7) *++!D{[0,1]}*\cir<4pt>{}="D",
		\ar@{->}"D";"A",
		\ar@{->}"B";"C",
	            \end{xy}
\]
if $\beta_{s}+
\beta_{t}\neq 0$ for all $(s,t)=(1,2),(1,3),(2,4),(3,4)$.
\begin{rem}\label{reflfunc}\normalfont
	Recalling the orientation of $\mathsf{Q}$, we can check that 
	reflections 
	defining $\beta^{\text{reg}}$, 
	$\prod_{i\in I_{\text{irr}}\backslash\{0\}}
	s_{[i,1,e_{[i,1]}-1]}\circ \cdots
	\circ s_{[i,1,2]}\circ s_{[i,1,1]}\circ 
	s_{[i,1]}$ in Proposition \ref{firstexa}
	and $\prod_{i=2}^{r}s_{[i,1]}$ 
	in the latter example,
	are  products of {\em admissible reflections},
	i.e., reflection 
	functors of Bernstein-Gel'fand-Ponomarev
	on representations of $\mathsf{Q}$
	associated to these reflections are well-defined.
	The detail of the reflection functor can be found 
	in \cite{Kac} for example.
\end{rem}
\subsubsection{Inverse image of a positive imaginary root is a positive 
imaginary root}	
The following lemma shows that if $\beta\in \mathcal{L}^{+}$,
then $\Xi^{-1}_{\beta}(\beta)\cap \mathcal{M}_{\beta}^{+}\neq \emptyset.$
Namely there exist at least one positive element in the inverse image of a
positive element in $\mathcal{L}$.
\begin{lem}[Lemma 3 in \cite{HO}]\label{inverseroot}
	Take $\beta\in \mathcal{L}^{+}\backslash\{0\}$ and  set 
	\begin{align*}
	\overline{m}_{i}&:=\mathrm{max}\{j\in \{1,\ldots,m_{i}\}
		\mid \beta_{[i,j]}\neq 0\},\\
	\underline{m}_{i}&:=\mathrm{min}\{j\in \{1,\ldots,m_{i}\}
		\mid \beta_{[i,j]}\neq 0\},
	\end{align*}
		for $i\in I_{\text{irr}}$.
		Further set $\overline{\mathbf{i}}:=([i,\overline{m}_{i}])
	_{0\le i\le p}$ and 
	$\underline{\mathbf{i}}:=([i,\underline{m}_{i}])
	_{0\le i\le p}$
	where we put $\overline{m}_{i}=\underline{m}_{i}:=1$ for 
	$i\in I_{\text{reg}}$.
	
	Then there exists 
	$\tilde{\beta}\in \mathcal{M}^{+}_{\beta}$ such that 
	$\Xi(\tilde{\beta})=\beta$ and 
	$\tilde{\beta}_{c_{\overline{\mathbf{i}}}}\cdot
	\tilde{\beta}_{c_{\underline{\mathbf{i}}}}\neq 0$.

\end{lem}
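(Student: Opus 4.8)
The plan is to reinterpret $\Xi$, restricted to the coordinates indexed by $\mathcal{J}$, as the operation of forming the ``marginals'' of a coupling, and then to solve the resulting transportation problem subject to an extra positivity constraint at the two extremal tuples $\overline{\mathbf{i}}$ and $\underline{\mathbf{i}}$.

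First I would note that if $\tilde{\beta}\in\mathcal{M}$ satisfies $\Xi(\tilde{\beta})=\beta$, then its coordinates at the tail vertices are forced: $\tilde{\beta}_{[i,j,k]}=\beta_{[i,j,k]}\ge 0$, the inequality because $\beta\in\mathcal{L}^{+}$. So the whole content is to find nonnegative integers $(\tilde{\beta}_{\mathbf{i}})_{\mathbf{i}\in\mathcal{J}}$ with
\[
\sum_{\{\mathbf{i}=([i,j_{i}])\in\mathcal{J}\ \mid\ j_{i}=j\}}\tilde{\beta}_{\mathbf{i}}=\beta_{[i,j]}\qquad(i\in I_{\text{irr}},\ 1\le j\le m^{(i)}),
\]
and moreover $\tilde{\beta}_{\overline{\mathbf{i}}}>0$ and $\tilde{\beta}_{\underline{\mathbf{i}}}>0$. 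The defining relation of $\mathcal{L}$ says precisely that the marginal histograms $j\mapsto\beta_{[i,j]}$, $i\in I_{\text{irr}}$, all have the same total $N:=\sum_{j}\beta_{[0,j]}$; when this common total is $0$ the tuples $\overline{\mathbf{i}},\underline{\mathbf{i}}$ are not defined and there is nothing to prove, so I assume $N\ge 1$. Any solution automatically lies in $\mathcal{M}^{+}_{\beta}$: if $\tilde{\beta}_{\mathbf{i}}>0$ then $\beta_{[i,j_{i}]}\ge\tilde{\beta}_{\mathbf{i}}>0$ for all $i\in I_{\text{irr}}$, hence $\mathbf{i}\in\mathcal{J}_{\beta}$, while the tail vertices lie in $\mathsf{Q}_{0}\backslash\mathsf{Q}_{0}'$ by definition.

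I would then split into two cases. If $\overline{\mathbf{i}}=\underline{\mathbf{i}}$, then $\overline{m}^{(i)}=\underline{m}^{(i)}$ for every $i\in I_{\text{irr}}$, so each histogram $j\mapsto\beta_{[i,j]}$ is supported on the single index $\overline{m}^{(i)}$, where it takes the value $N$; thus the only admissible choice is $\tilde{\beta}_{\overline{\mathbf{i}}}=N$ with all other $\tilde{\beta}_{\mathbf{i}}=0$, and then $\tilde{\beta}_{\overline{\mathbf{i}}}\,\tilde{\beta}_{\underline{\mathbf{i}}}=N^{2}\ne 0$. If $\overline{\mathbf{i}}\ne\underline{\mathbf{i}}$, then some $i\in I_{\text{irr}}$ has $\overline{m}^{(i)}\ne\underline{m}^{(i)}$, i.e. $\beta_{[i,\cdot]}$ has two distinct nonzero entries, whence $N\ge 2$. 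I would set $\tilde{\beta}_{\overline{\mathbf{i}}}:=1$ and $\tilde{\beta}_{\underline{\mathbf{i}}}:=1$ and pass to the residual target $\beta'$ on $\mathsf{Q}_{0}'$ obtained from $\beta$ by subtracting $1$ from $\beta_{[i,\overline{m}^{(i)}]}$ and from $\beta_{[i,\underline{m}^{(i)}]}$ for each $i\in I_{\text{irr}}$. Here is the one delicate point: one must verify $\beta'\ge 0$. For $i$ with $\overline{m}^{(i)}\ne\underline{m}^{(i)}$ we subtract $1$ from two entries each $\ge 1$; for $i$ with $\overline{m}^{(i)}=\underline{m}^{(i)}$ we subtract $2$ from the single entry $\beta_{[i,\overline{m}^{(i)}]}$, which equals $N\ge 2$. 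Hence $\beta'\ge 0$, and its marginals all have the common total $N-2$.

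Finally I would solve the plain transportation problem for $\beta'$ by induction on $N-2$: if $N-2=0$, leave all remaining $\tilde{\beta}_{\mathbf{i}}$ equal to $0$; if $N-2>0$, for each $i\in I_{\text{irr}}$ choose $j_{i}$ with $\beta'_{[i,j_{i}]}\ge 1$ (possible since the total is positive), increase the coordinate $\tilde{\beta}_{\mathbf{j}}$ of the tuple $\mathbf{j}=([i,j_{i}])_{0\le i\le p}$ (with $j_{i}=1$ forced for $i\in I_{\text{reg}}$) by $1$, decrease each $\beta'_{[i,j_{i}]}$ by $1$, and recurse. Together with $\tilde{\beta}_{[i,j,k]}:=\beta_{[i,j,k]}$ this produces $\tilde{\beta}\in\mathcal{M}^{+}_{\beta}$ with $\Xi(\tilde{\beta})=\beta$ and $\tilde{\beta}_{\overline{\mathbf{i}}},\tilde{\beta}_{\underline{\mathbf{i}}}\ge 1$, as required. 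The substantive part is the case split and the nonnegativity check for $\beta'$; the existence of a coupling for $\beta'$ is elementary.
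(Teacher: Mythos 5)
Your argument is correct. Note first that the paper does not actually prove this lemma: it is quoted verbatim from Lemma 3 of \cite{HO}, so there is no in-text proof to compare against. What you supply is a complete, self-contained and elementary substitute. Your key observation --- that on the tail vertices $[i,j,k]$ the coordinates of any preimage are forced, so that finding $\tilde{\beta}$ reduces to a transportation problem of constructing nonnegative integers $(\tilde{\beta}_{\mathbf{i}})_{\mathbf{i}\in\mathcal{J}}$ whose marginals along each $i\in I_{\text{irr}}$ reproduce the histograms $j\mapsto\beta_{[i,j]}$ --- is exactly the right way to read the definition of $\Xi$, and the defining relation of $\mathcal{L}$ is precisely the compatibility condition (equal totals $N$) that makes such a coupling exist. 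The two points that genuinely require care are handled correctly: the automatic membership in $\mathcal{M}^{+}_{\beta}$ (any $\mathbf{i}$ carrying positive mass has $\beta_{[i,j_i]}>0$ in every $I_{\text{irr}}$-component, hence lies in $\mathcal{J}_{\beta}$), and the nonnegativity of the residual target $\beta'$ after placing one unit at each of $\overline{\mathbf{i}}$ and $\underline{\mathbf{i}}$, where the case $\overline{m}^{(i)}=\underline{m}^{(i)}$ forces the single nonzero entry to equal $N\ge 2$. The degenerate case $N=0$, where $\overline{\mathbf{i}}$ and $\underline{\mathbf{i}}$ are undefined, is dispatched appropriately. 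The greedy resolution of the residual coupling is standard and terminates. In short, where the paper defers to an external classification result, you give a direct combinatorial existence proof; this is arguably more transparent for this particular statement, though it of course does not recover the finer structural information about $\Xi_{\beta}^{-1}(\beta)$ that \cite{HO} provides and that the paper uses later (e.g.\ in the tame-case classification).
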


From this lemma and the above proposition, we can see the following.
\begin{cor}\label{negative}
	For $\beta\in \tilde{F}$ we have $q(\beta)\le 0$.
\end{cor}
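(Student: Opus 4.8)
The plan is to transport the question to the Kac--Moody root lattice $\mathcal{M}_{\beta}$ produced in Lemma~\ref{inverseroot}, where the defining inequalities of the quasi-fundamental set turn into ``fundamental domain'' inequalities and force $q\le 0$ by a one-line argument. So the first step is: given $\beta\in\tilde{F}$, apply Lemma~\ref{inverseroot} to obtain $\tilde\beta\in\mathcal{M}^{+}_{\beta}$ with $\Xi(\tilde\beta)=\beta$. Writing $\tilde\beta=\sum_{a}\tilde\beta_{a}c_{a}$ over the generators $c_{a}$, $a\in\mathcal{J}_{\beta}\cup(\mathsf{Q}_{0}\backslash\mathsf{Q}_{0}')$, of $\mathcal{M}_{\beta}$, all coefficients satisfy $\tilde\beta_{a}\ge 0$ since $\tilde\beta\in\mathcal{M}^{+}_{\beta}$.

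The key step is then the computation
\[
2q(\beta)=(\beta,\beta)=(\Xi(\tilde\beta),\Xi(\tilde\beta))=(\tilde\beta,\tilde\beta)=\sum_{a}\tilde\beta_{a}(\tilde\beta,c_{a})=\sum_{a}\tilde\beta_{a}(\beta,\epsilon_{a}),
\]
where the two middle equalities come from Proposition~\ref{quotientmap}(1), and the last one uses $\Xi(c_{a})=\epsilon_{a}$ (that is, $\Xi(c_{\mathbf{i}})=\epsilon_{\mathbf{i}}$ for $\mathbf{i}\in\mathcal{J}$ and $\Xi(c_{[i,j,k]})=\epsilon_{[i,j,k]}$), which is immediate from the definition of $\Xi$, together with Proposition~\ref{quotientmap}(1) once more. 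Each summand is nonpositive: $\tilde\beta_{a}\ge 0$ by the previous step, and $(\beta,\epsilon_{a})\le 0$ for every $a\in\mathcal{J}_{\beta}\cup(\mathsf{Q}_{0}\backslash\mathsf{Q}_{0}')\subseteq\mathcal{J}\cup(\mathsf{Q}_{0}\backslash\mathsf{Q}_{0}')$ precisely because $\beta$ lies in $\tilde{F}$. Hence $q(\beta)\le 0$.

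I do not expect a serious obstacle here, since the substance is already packaged in Lemma~\ref{inverseroot} (existence of a nonnegative lift in $\mathcal{M}_{\beta}$) and Proposition~\ref{quotientmap} (compatibility of $\Xi$ with the bilinear forms). The two points that need a little care are: to expand $(\tilde\beta,\tilde\beta)$ over the generators of $\mathcal{M}_{\beta}$, where nonnegativity of the coefficients is guaranteed, rather than over $\mathsf{Q}_{0}$; and to make sure the identification $\Xi(c_{a})=\epsilon_{a}$ is set up so that the inequality $(\beta,\epsilon_{a})\le 0$ from the definition of $\tilde{F}$ applies verbatim. The connectedness of $\supp(\beta)$ plays no role in this bound; it is only the extra input needed afterwards to strengthen $q(\beta)\le 0$ to the assertion that $\beta$ is a positive imaginary root.
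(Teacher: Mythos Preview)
Your proof is correct and follows essentially the same approach as the paper: lift $\beta$ to $\tilde\beta\in\mathcal{M}^{+}$ via Lemma~\ref{inverseroot}, use Proposition~\ref{quotientmap}(1) to identify the forms, and conclude $q(\beta)=q(\tilde\beta)\le 0$ from the inequalities $(\tilde\beta,c_{a})\le 0$. The only difference is that the paper outsources the final step to ``see the proof of Proposition~5.2 in \cite{Kacbook}'', whereas you spell out the expansion $2q(\beta)=\sum_{a}\tilde\beta_{a}(\beta,\epsilon_{a})\le 0$ explicitly.
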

\begin{proof}
	Let us take $\tilde{\beta}\in \mathcal{M}^{+}$ as 
	Lemma \ref{inverseroot}.
	Then Proposition \ref{quotientmap} says that 
	$(\tilde{\beta},c_{a})\le 0$ for all $a\in \mathcal{J}
	\cup \mathsf{Q}_{0}^{\text{leg}}$.
	Thus $q(\beta)=q(\tilde{\beta})\le 0$, see the proof of 
	Proposition 5.2 in \cite{Kacbook} for example.
\end{proof}

For $\beta\in \mathcal{L}$, let us define
\[
	\mathrm{Irr}(\beta)=\left\{i\in I_{\text{irr}}\,\bigg|\,
	\begin{array}{l}
	\text{there exist at least two distinct  $j,j'$ such that }\\
	\beta_{[i,j]}\neq 0\text{ and }\beta_{[i,j']}\neq 0
	\end{array}
	\right\}.
\]
Then we can identify 
\[
	\mathcal{J}_{\beta}\cong \prod_{i\in \mathrm{Irr}(\beta)}
	\{[i,j]\mid \beta_{[i,j]}\neq 0\}.
\]
Proposition \ref{quotientmap} shows that 
if $\#\mathrm{Irr}(\beta)\le 1$ for $\beta\in \mathcal{L}^{+}$,
then $\Xi_{\beta}$ is injective, which 
implies that $\Xi_{\beta}^{-1}(\beta)=\{\tilde{\beta}\}$.
Here we take $\tilde{\beta}$ as in Lemma \ref{inverseroot}.

\if0 
For $\lambda\in \mathbb{C}^{\mathsf{Q}_{0}}$, let us define 
$\tilde{\lambda}\in \mathbb{C}^{\mathcal{C}}$ as follows.
\begin{align*}
	\tilde{\lambda}_{c}=
	\begin{cases}
		\sum_{i\in I_{\text{irr}}}\lambda_{[i,j_{i}]}&
		\text{if }c=c_{\mathbf{i}},\,\mathbf{i}=
		([i,j_{i}])_{0\le i\le p}\in \mathcal{J},\\
		\lambda_{[i,j,k]}&\text{if }
		c=c(i,j,k).
	\end{cases}
\end{align*}
\begin{prop}
	For $(\alpha,\lambda)\in \mathcal{S}$, the following hold.
	\begin{enumerate}
		\item We have $\tilde{\alpha}\cdot \tilde{\lambda}=0$
			for all $\tilde{\alpha}\in \Xi^{-1}(\alpha)$.
		\item For $\mathbf{i}\in \mathcal{J}$,
			we denote $(\alpha',\lambda')=
			s_{\mathbf{i}}( (\alpha,\lambda))$. Then
			\[
				\tilde{\lambda'}_{c}=
				\tilde{\lambda}_{c}-(c,c_{\mathbf{i}})
				\tilde{\lambda}_{c_{\mathbf{i}}}=
				r_{c_{\mathbf{i}}}(\tilde{\lambda})_{c}.
			\]
	\end{enumerate}
\end{prop}
\begin{proof}
For the first assertion it suffices to see that 
\[
	\sum_{\mathbf{i}\in \mathcal{J}}\tilde{\lambda}_{c_{\mathbf{i}}}
\tilde{\alpha}_{c_{\mathbf{i}}}=
\sum_{i=0}^{p}\sum_{j=1}^{m^{(i)}}\lambda_{[i,j]}\alpha_{[i,j]}.
\]
Actually we have
\begin{align*}
	\sum_{\mathbf{i}\in \mathcal{J}}\tilde{\lambda}_{c_{\mathbf{i}}}
\tilde{\alpha}_{c_{\mathbf{i}}}&=
\sum_{\mathbf{i}=([i,j_{i}])\in \mathcal{J}}\left(
\sum_{i\in I_{\text{irr}}}(\lambda_{[i,j_{i}]}
\alpha_{c_{\mathbf{i}}})
\right)\\
&=\sum_{i\in I_{\text{irr}}}\sum_{j=1}^{m^{(i)}}
\sum_{\{\mathbf{i}=([i,j_{i}])\in \mathcal{J}\mid j=j_{i}\}}
\lambda_{[i,j]}\alpha_{c_{\mathbf{i}}}\\
&=\sum_{i=0}^{p}\sum_{j=1}^{m^{(i)}}\lambda_{[i,j]}\alpha_{[i,j]}
\end{align*}
as required.

The second assertion follows from the direct computation from 
Proposition \ref{middletoroot}.
\end{proof}
\fi

Next we shall see that for $\beta\in \tilde{F}$, the support of $\tilde{\beta}
\in \mathcal{M}_{\beta}^{+}$ in Lemma \ref{inverseroot} 
is connected with one exception.

\begin{lem}\label{glue}
	For $\mathbf{i}=([i,j_{i}])_{0\le i\le p}$, 
	$\mathbf{i}'=([i,j'_{i}])_{0\le i\le p}\in \mathcal{J}$ and 
	$\mathbf{i}\neq \mathbf{i}'$, 
	we have $(c_{\mathbf{i}},c_{\mathbf{i}'})=0$ 
	if and only if the following are satisfied;
	\begin{enumerate}
		\item $\#\{i\mid j_{i}\neq j'_{i}\}=1$,
		\item setting $\{i_{0}\}:=
	\{i\mid j_{i}\neq j'_{i}\}$, we have 
		 $d_{i_{0}}(j_{i_{0}},j'_{i_{0}})=0$.
 \end{enumerate}
\end{lem}
\begin{proof}
	Since $d_{i}(j,j')\ge 0$ for $j\neq j'$, 
	$(c_{\mathbf{i}},c_{\mathbf{i}'})=2-\sum_{\substack{0\le i\le p\\j_{i}\neq j'_{i}}}
	(d_{i}(j_{i},j'_{i})+2)<0$ 
	if $\#\{i\mid j_{i}\neq j'_{i}\}\ge 2$.
	Also if $\{i_{0}\}=\{i\mid j_{i}\neq j'_{i}\}$, then 
	$(c_{\mathbf{i}},c_{\mathbf{i}'})=2-(d_{i}(j_{i},j'_{i})+2)$.
	Thus we obtain the result.
\end{proof}
\begin{lem}\label{connected}
	For $\beta\in \tilde{F}$, take $\tilde{\beta}
	\in {\mathcal{M}}^{+}$ as in Lemma \ref{inverseroot}.
	\begin{enumerate}
		\item If $c_{[i,j,k]}\in \mathrm{supp}(\tilde{\beta})$,
			then 
			$c_{[i,j,k']}\in \mathrm{supp}(\tilde{\beta})$
	for  $k'\le k$ and there exists 
	$\mathbf{i}=([i',j_{i'}])_{i'=0,\ldots,p}$ 
	satisfying  $j_{i}=j$ such that $c_{\mathbf{i}}
	\in\mathrm{supp}(\tilde{\beta}) $. 
\item If one of the following is satisfied,
	\begin{enumerate}
		\item $\#\mathrm{Irr}(\beta)\ge 3$,
		\item there exist $[i,j],[i,j']\in \mathrm{supp}(\beta)$
			such that $d_{i}(j,j')\ge 1$,
	\end{enumerate}
	then any distinct 
	$c_{\mathbf{i}},c_{\mathbf{i}'}\in \mathrm{supp}(\tilde{\beta})$
	for $\mathbf{i},\mathbf{i}'\in \mathcal{J}_{\beta}$ are connected.
	\end{enumerate}
\end{lem}
\begin{proof}
	Let us note that 
	$\sum_{\left\{\mathbf{i}\in \mathcal{J}
	\mid j_{i}=j\right\}}\tilde{\beta}_{c_{\mathbf{i}}}\ge 
	\tilde{\beta}_{c_{[i,j,1]}}\ge 
	\tilde{\beta}_{c_{[i,j,2]}}\ge 
	\tilde{\beta}_{c_{[i,j,e_{[i,j]}-1]}}$,
	since $\beta\in \widetilde{F}$.
	This shows $(1)$.

	Let us suppose that $\#\mathrm{Irr}(\beta)\ge 3$.	
	If $(c_{\mathbf{i}},c_{\mathbf{i}'})\neq 0$ for a pair
	of distinct 
	$c_{\mathbf{i}},
	c_{\mathbf{i}'}\in \mathrm{supp}(\tilde{\beta})$, 
	then they are obviously connected.
	Thus we consider the case 
	$(c_{\mathbf{i}},c_{\mathbf{i}'})=0$.
	Then 
	$\#\{i\mid j_{i}\neq j'_{i}\}=1$ by Lemma \ref{glue}.
	The hypothesis $\#\mathrm{Irr}(\beta)\ge 3$ assures that 
	the existence of $c_{\mathbf{i}''}\in \mathrm{supp}(\tilde{\beta})$
	such that $\#\{i\mid j_{i}\neq j''_{i}\}\neq 1$ and 
	$\#\{i\mid j'_{i}\neq j''_{i}\}\neq 1$ are satisfied.
	Namely $(c_{\mathbf{i}},c_{\mathbf{i}''})\le -1$ and 
	$(c_{\mathbf{i}'},c_{\mathbf{i}''})\le -1$.
	Thus $c_{\mathbf{i}}$ and $c_{\mathbf{i}'}$ are connected.
	
	Next suppose that  
there exist $[\underline{i},\underline{j}],[\underline{i},
\underline{j}']\in \mathrm{supp}(\beta)$
such that $d_{\underline{i}}(\underline{j},\underline{j}')\ge 1$.
	
	We note that for any $i\in\{0,\ldots,p\}$ and $j,j',j''
	\in \{1,\ldots,m_{i}\}$, the triangle inequality 
	\[
		d_{i}(j,j')\le \mathrm{max}
		\left\{d_{i}(j,j''),d_{i}(j'',j')
		\right\}
	\]
	holds (see equations (26) in \cite{HO}).
	Thus $d_{\underline{i}}(j,\underline{j})\ge 1$ 
	or $d_{\underline{i}}(j,\underline{j}')\ge 1$
	holds for $j\in \{1,\ldots,m_{\underline{i}}\}
	\backslash\{\underline{j},\underline{j}'\}$
	since the condition 
	$d_{\underline{i}}(j,\underline{j})=
	d_{\underline{i}}(j,\underline{j}')=0$ breaks the triangle 
	inequality and $d_{\underline{i}}(\underline{j},\underline{j}')\ge 1$. 
	
	Let us fix $\mathbf{i}=([i,j_{i}])_{0\le i\le p}\in 
	\mathcal{J}$ such that $c_{\mathbf{i}}\in \mathrm{supp}(\tilde{\beta})$
	and define $\mathcal{J}_{\mathbf{i},\underline{i}}:=\{
	([i,j'_{i}])_{0\le i\le p}\in \mathcal{J}\mid 
	j'_{i}=j_{i} \text{ for all } i\neq \underline{i}
\}$. Then for all $\mathbf{i}',\mathbf{i}''\in 
\mathcal{J}_{\mathbf{i},\underline{i}}$, $c_{\mathbf{i}'}$ and 
$c_{\mathbf{i}''}$ are connected. 
	Indeed define $\mathbf{i}_{\underline{j}}=
	([i,j^{(1)}_{i}])_{0\le i\le p} \in 
	\mathcal{J}_{\mathbf{i},\underline{i}}$
	and $\mathbf{i}_{\underline{j}'}=([i,j^{(2)}_{i}])_{0\le i\le p}\in 
	\mathcal{J}_{\mathbf{i},\underline{i}}$
	by 
	\[
		j^{(1)}_{i}=\begin{cases}
			\underline{j}&\text{ if }i=\underline{i},\\
			j_{i}&\text{ otherwise}
		\end{cases}
	\]
	and 
	\[
		j^{(2)}_{i}=\begin{cases}
			\underline{j}'&\text{ if }i=\underline{i},\\
			j_{i}&\text{ otherwise}
		\end{cases}
	\]
	respectively. Then
		$c_{\mathbf{i}'}$ and $c_{\mathbf{i}''}$
	are connected with $c_{\mathbf{i}_{\underline{j}}}$ or 
	$c_{\mathbf{i}_{\underline{j}'}}$ since  
	$d_{\underline{i}}(j,\underline{j})\ge 1$ 
	or $d_{\underline{i}}(j,\underline{j}')\ge 1$ holds for 
	$j\in \{1,\ldots,m_{\underline{i}}\}
	\backslash\{\underline{j},\underline{j}'\}$ as we saw above.

	Moreover $c_{\mathbf{i}_{\underline{j}}}$ and $c_{
	\mathbf{i}_{\underline{j}'}}$ are connected. Thus 
	$c_{\mathbf{i}'}$ and $c_{\mathbf{i}''}$ are connected.
	Finally consider $c_{\mathbf{i}'''}\in \mathrm{supp}(\tilde{\beta})$
	with 
	$\mathbf{i}'''=([i,j'''_{i}])_{0\le i\le p}
	\notin \mathcal{J}_{\mathbf{i},\underline{i}}$,
	namely there exists $0\le i_{0}(\neq \underline{i})\le p$
	such that $j'''_{i_{0}}\neq j_{i}$.
	Then we can choose $\mathbf{i}''''=(i,j''''_{i})_{0\le i\le p}\in 
	\mathcal{J}_{\mathbf{i},\underline{i}}$ so that
	$j'''_{\underline{i}}\neq j''''_{\underline{i}}$ 
	since $\#\{1,\ldots,m_{\underline{i}}\}\ge 2$.
	Thus 
	\[
		(c_{\mathbf{i}'''},c_{\mathbf{i}''''})\le 
	2-(d_{i_{0}}(j'''_{i_{0}},j_{i_{0}})+2)+
		(d_{\underline{i}}(j'''_{\underline{i}},j_{\underline{i}})+2)
		<0.
	\]
	
	Summing up these results, we can conclude that 
	any distinct $c_{\mathbf{i}^{(1)}},c_{\mathbf{i}^{(2)}}
	\in \mathrm{supp}(\tilde{\beta})$ is connected with 
	some $c_{\mathbf{i}^{(3)}},c_{\mathbf{i}^{(4)}}
	\in \mathrm{supp}(\tilde{\beta})$, 
	$\mathbf{i}^{(3)},\mathbf{i}^{(4)}
	\in \mathcal{J}_{\mathbf{i},\underline{i}}$
	respectively. Since $c_{\mathbf{i}^{(3)}}$ and 
	$c_{\mathbf{i}^{(4)}}$ are connected, 
	$c_{\mathbf{i}^{(1)}}$ and  $c_{\mathbf{i}^{(2)}}$
	are connected as well.
\end{proof}
 
\begin{prop}\label{connectedsupport}
	For $\beta\in \tilde{F}$ take $\tilde{\beta}
	\in {\mathcal{M}}^{+}$ as in Lemma \ref{inverseroot}.
	Then the support of $\tilde{\beta}$ is connected or the 
	following holds.
	There exist $i_{1},i_{2}\in I_{\text{irr}}$ and $j_{s,t}
	\in \{1,\ldots,m_{i_{s}}\}$ for $s,t=1,2$ such that 
	$\mathrm{Irr}(\beta)=\{i_{1},i_{2}\}$ 
	and $\mathrm{supp\,}(\tilde{\beta})=\{c_{\mathbf{i}_{u,v}}\mid
		u,v=1,2
	\}$ where
	\[
		\mathbf{i}_{u,v}:=
		([i_{1},j_{1,u}],[i_{2},j_{2,v}])\in \mathcal{J}_{\beta}\cong
		\prod_{s=1}^{2}\{[i_{s},j_{s,t}]\mid t=1,2 \}
	\]
	and
	\begin{align*}
		(c_{\mathbf{i}_{u,v}},c_{\mathbf{i}_{u'v'}})=
		\begin{cases}
			0&\text{if }u=u'\text{ or }v=v',\\
			-2&\text{otherwise},
		\end{cases}
	\end{align*}
	namely the Dynkin diagram of $\mathrm{supp}(\tilde{\beta})$ is 
	\begin{xy}
		\ar@{=}(0,0)*+!R+!U{c_{\mathbf{i}_{1,1}}}*\cir<4pt>{}
		;(7,7) *+!L+!D{c_{\mathbf{i}_{2,2}}}*\cir<4pt>{},
		\ar@{=}(0,7)*+!R+!D{c_{\mathbf{i}_{2,1}}}*\cir<4pt>{}
		;(7,0) *+!L+!U{c_{\mathbf{i}_{1,2}}}*\cir<4pt>{},
        \ar@{} (8,-5) *{}
    \end{xy}.
	In the latter case we have $q(\beta)=0$.
\end{prop}
\begin{proof}
	From (1) and (2)-(a) in Lemma \ref{connected}, 
	it suffices to consider the case $\#\mathrm{Irr}(\beta)\le 2$.
	First suppose that $\#\mathrm{Irr}(\beta)\le 1$.
	Then there exists $i_{0}\in \{0,\ldots,p\}$ and we may identify
	$\mathcal{J}_{\beta}\cong\{[i_{0},1],\ldots,[i_{0},m'_{i_{0}}]\}$
	with some $m_{i_{0}}'\le m_{i_{0}}$.
	Suppose that $\supp(\tilde{\beta})$ is not connected.
	Then $c_{[i,j,k]}\notin \mathrm{supp}(\tilde{\beta})$ for any 
	$i\neq i_{0}$, $j=1,\ldots,m_{i}$ and $k=1,\ldots,e_{[i,j]}-1$.
	Indeed, all $c_{[i,j,k]}\in \mathrm{supp}(\tilde{\beta})$ 
	($i\neq i_{0}$) are connected with 
	all $c_{\mathbf{i}}$ $(\mathbf{i}\in \mathcal{J}_{\beta})$
	from (1) in Lemma \ref{connected}. And any $c_{[i_{0},j,k]}\in
	\mathrm{supp}(\tilde{\beta})$ is connected with one of $c_{\mathbf{i}}
	$ for $\mathbf{i}\in \mathcal{J}_{\beta}$. Thus $\mathrm{supp}(
	\tilde{\beta})$ is connected.
	Moreover $(c_{\mathbf{i}},c_{\mathbf{i}'})=0$ for all $\mathbf{i}\neq 
	\mathbf{i}'\in \mathcal{J}_{\beta}$ from (2)-(b) in Lemma 
	\ref{connected}.
	Thus connected components of 
	the Dynkin diagram of $\mathrm{supp}(\tilde{\beta})$ are 
	of type $A_{n}$,
	\begin{xy}
		*++!U{c_{\mathbf{i}}}*\cir<4pt>{}="A";
		\ar@{-} "A"; (13,0) *++!U{c_{[i_0,j,1]}}*{}*\cir<4pt>{}="B",
		\ar@{-} "B";(26,0) *++!U{c_{[i_0,j,2]}}*\cir<4pt>{}="C",
		\ar@{-} "C";(39,0) *++!U{c_{[i_0,j,3]}}*\cir<4pt>{}="D",
		\ar@{.}"D";(52,0) *+!L+!D{}*{}
    	\end{xy},
	$\mathbf{i}=[i_{0},j] \in \mathcal{J}_{\beta}=\{[i_{0},1],
	\ldots,[i_{0},m'_{i_{0}}]\}$.
	However the root system of type $A_{n}$ has no imaginary root
	which contradict to $\beta\in \widetilde{F}$.

	The remaining case is $\#\mathrm{Irr}(\beta)=2$. 
	We may suppose that $\mathrm{Irr}(\tilde{\beta})=\{0,1\}$
	without loss of generality.
	
	Recall that 
	for 
	$\overline{\mathbf{i}}=([0,\overline{m_{0}}],[1,\overline{m_{1}}])$
	and 
	$\underline{\mathbf{i}}=([0,\underline{m_{0}}],[1,
	\underline{m_{1}}])$,
	$c_{\overline{\mathbf{i}}},c_{\underline{\mathbf{i}}}\in 
	\mathrm{supp}(\tilde{\beta})$ and $(c_{\overline{\mathbf{i}}},
	c_{\underline{\mathbf{i}}})
	\le 2-(d_{0}(\overline{m_{0}},\underline{m_{0}})+2)-
	(d_{1}(\overline{m_{1}},\underline{m_{1}})+2)<0$.
	Let us suppose that $\mathrm{supp}(\tilde{\beta})$ is not connected.
	Then $c_{[i,j,k]}\notin\mathrm{supp}(\tilde{\beta})$ for $i\neq 0,1$
	as above.
	And there exists 
	$c_{\mathbf{i}}\in \mathrm{supp\,}(\tilde{\beta})$ such that 
	neither $c_{\underline{\mathbf{i}}}$ nor $c_{\overline{\mathbf{i}}}$ 
	is connected with $c_{\mathbf{i}}$.
	Here $\mathbf{i}$ must be 
	$\mathbf{i}_{1}=([0,\overline{m}_{0}],[1,\underline{m}_{1}])$ or 
	$\mathbf{i}_{2}=([0,\underline{m}_{0}],[1,\overline{m}_{1}])$
	because of the condition 1 in Lemma \ref{glue}.
	And $d_{i}(\overline{m_{i}},\underline{m_{i}})=0$ for $i=0,1$.
	We may suppose 
	$\mathbf{i}=\mathbf{i}_{1}$.
	Then $\mathrm{supp\,}(\tilde{\beta})\cap
	\{c_{\mathbf{j}}\mid \mathbf{j}\in \mathcal{J}\}
	\subset \{c_{\underline{\mathbf{i}}},
	c_{\overline{\mathbf{i}}},c_{\mathbf{i}_{1}},c_{\mathbf{i}_{2}}\}$.
	Indeed, if there exists  $\mathbf{i}'\in \mathcal{J}$ such that 
	$c_{\mathbf{i}'}\in \mathrm{supp\,}(\tilde{\beta})\backslash
	\{c_{\underline{\mathbf{i}}},
	c_{\overline{\mathbf{i}}},c_{\mathbf{i}_{1}},c_{\mathbf{i}_{2}}\}$,
	then the condition 1 in Lemma \ref{glue} shows that 
	$c_{\mathbf{i}'}$ must be connected with all 
	$c_{\underline{\mathbf{i}}},
	c_{\overline{\mathbf{i}}},c_{\mathbf{i}_{1}},c_{\mathbf{i}_{2}}$.
	For example,
	\begin{xy}
		(0,14)*+!R+!D{c_{\mathbf{i}'}}*\cir<4pt>{}="C",
		\ar@{=}(0,0)*+!R+!U{c_{\underline{\mathbf{i}}}}*\cir<4pt>{}
		;(7,7) *+!L+!D{c_{\overline{\mathbf{i}}}}*\cir<4pt>{}="A",
		\ar@{=}(0,7)*+!R+!D{c_{\mathbf{i}_{1}}}*\cir<4pt>{}
		;(7,0) *+!L+!U{c_{\mathbf{i}_{2}}}*\cir<4pt>{}="B",
		\ar@{=}"A";"C",
		\ar@{=}"B";"C",
    \end{xy}.
	Then ${c}_{\mathbf{i}}$ is connected with 
	$c_{\underline{\mathbf{i}}}$
	and $c_{\overline{\mathbf{i}}}$, which is a contradiction.

	Let us note that $c_{\mathbf{i}_{1}}$ can be connected 
	only with $c_{\mathbf{i}_{2}}$ in $\mathrm{supp\,}(\tilde{\beta})$
	because if there exists $c_{[i,j,1]}$ which is connected with
	$c_{\mathbf{i}_{1}}$, then $c_{[i,j,1]}$ must be connected with
	$c_{\underline{\mathbf{i}}}$ or $c_{\overline{\mathbf{i}}}$ which 
	implies that $c_{\mathbf{i}_{1}}$ is connected with them.
	The same argument shows that  $c_{\mathbf{i}_{2}}$ can be connected 
	only with $c_{\mathbf{i}_{1}}$ in $\mathrm{supp\,}(\tilde{\beta})$.
	Thus if $c_{\mathbf{i}_{2}}\notin \mathrm{supp\,}(\tilde{\beta})$,
	then $c_{\mathbf{i}_{1}}$ is isolated in 
	$\mathrm{supp\,}(\tilde{\beta})$, which contradicts to that 
	$\beta\in \widetilde{F}$.
	
	Hence the remaining possibility is $\mathrm{supp\,}(\tilde{\beta})=
	\{c_{\underline{\mathbf{i}}},
	c_{\overline{\mathbf{i}}},c_{\mathbf{i}_{1}},c_{\mathbf{i}_{2}}\}
	$ and 
	\begin{align*}
		(c_{\underline{\mathbf{i}}},c_{\overline{\mathbf{i}}})&=-2,&
		(c_{\underline{\mathbf{i}}},c_{\mathbf{i}_{i}})&=0,\ \ i=1,2,\\
		(c_{\mathbf{i}_{1}},c_{\mathbf{i}_{2}})&=-2,&
		(c_{\overline{\mathbf{i}}},c_{\mathbf{i}_{i}})&=0,\ \ i=1,2.
	\end{align*}
\end{proof}
\subsection{Wild case}
Let us separate our argument into the two cases, namely  the wild case, $q(\beta)<0$,
and the tame case, $q(\beta)=0$, for $\beta\in \tilde{F}$.
We consider the wild case first.
\begin{prop}\label{nontame}
	Let $\beta=\gamma_{1}+\cdots +\gamma_{r}\in \tilde{F}$ with 
	$q(\beta)<0$, 
	$r\ge 2$ and $\gamma_{1},\ldots,\gamma_{r}\in \mathcal{L}^{+}
	\backslash\{0\}$ then
	$q(\beta)< q(\gamma_{1})+\cdots +q(\gamma_{r})$.
\end{prop}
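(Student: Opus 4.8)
The identity $q(\beta)=\sum_{t}q(\gamma_{t})+\sum_{s<t}(\gamma_{s},\gamma_{t})$ reduces the claim to the inequality $\sum_{s<t}(\gamma_{s},\gamma_{t})<0$, so the plan is to control these mixed pairings. The natural arena is the Kac--Moody root lattice $\mathcal{M}$ of this section, in which the restriction of the form to $\mathcal{L}$ is realized as the invariant form of a genuine symmetric generalized Cartan matrix with the $\mathbf{i}\in\mathcal{J}$ among its simple roots. I would apply Lemma~\ref{inverseroot} to each $\gamma_{t}\in\mathcal{L}^{+}\setminus\{0\}$ to obtain positive lifts $\tilde{\gamma}_{t}\in\mathcal{M}^{+}$ and set $\tilde{\beta}=\sum_{t}\tilde{\gamma}_{t}\in\mathcal{M}^{+}$. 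Since $\Xi$ preserves the form and sends the generator $c_{a}$ to $\epsilon_{a}$, this gives $(\gamma_{s},\gamma_{t})=(\tilde{\gamma}_{s},\tilde{\gamma}_{t})$, $q(\tilde{\beta})=q(\beta)<0$, and, because $\beta\in\tilde{F}$ together with Proposition~\ref{quotientmap}, $(\tilde{\beta},c_{a})=(\beta,\epsilon_{a})\le 0$ for every $a\in\mathcal{J}\cup\mathsf{Q}_{0}\setminus\mathsf{Q}_{0}'$. Hence $\tilde{\beta}$ is a fundamental-region vector of $\mathcal{M}$, and $\tilde{\beta}=\sum_{t}\tilde{\gamma}_{t}$ is a decomposition into $r\ge 2$ non-zero positive vectors with $\supp(\tilde{\gamma}_{t})\subseteq\supp(\tilde{\beta})$.

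Next I would split $\supp(\tilde{\beta})$ into its connected components $S_{1},\ldots,S_{N}$ for the Dynkin diagram of $\mathcal{M}$ (Proposition~\ref{connectedsupport}, combined with $q(\beta)<0$ which rules out the exceptional four-vertex configuration there, shows this support is essentially connected; in any event pairings between distinct components vanish, so $q(\tilde{\beta})=\sum_{l}q(\tilde{\beta}|_{S_{l}})$ and $\sum_{s<t}(\gamma_{s},\gamma_{t})=\sum_{l}\bigl(q(\tilde{\beta}|_{S_{l}})-\sum_{t}q(\tilde{\gamma}_{t}|_{S_{l}})\bigr)$). For each $l$ the vector $\tilde{\beta}|_{S_{l}}$ is a fundamental-region vector for the submatrix $A_{S_{l}}$ with connected support, so $q(\tilde{\beta}|_{S_{l}})\le 0$; the finite-type case does not occur, and the affine-type case forces $\tilde{\beta}|_{S_{l}}$ to be a multiple of the null root, contributing $\le 0$. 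Because $q(\beta)<0$, at least one component, say $S_{1}$, is of indefinite type with $q(\tilde{\beta}|_{S_{1}})<0$; there the classical strict-subadditivity statement for the quadratic form of an indefinite symmetric generalized Cartan matrix on its fundamental region (cf.\ the argument of Proposition~5.2 in \cite{Kacbook}) gives $q(\tilde{\beta}|_{S_{1}})<\sum_{t}q(\tilde{\gamma}_{t}|_{S_{1}})$ provided at least two of the $\tilde{\gamma}_{t}|_{S_{1}}$ are non-zero. Summing over $l$ then yields $\sum_{s<t}(\gamma_{s},\gamma_{t})<0$, i.e.\ $q(\beta)<q(\gamma_{1})+\cdots+q(\gamma_{r})$.

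The delicate point, which I expect to be the main obstacle, is precisely this proviso: one must exclude the situation in which a single $\tilde{\gamma}_{t}$ already equals $\tilde{\beta}$ on the whole wild component $S_{1}$, for then $S_{1}$ contributes only $0$. This should be ruled out using the connectedness of $\supp(\beta)$ in $\mathsf{Q}_{0}$ and the hypothesis $r\ge 2$: if every other $\tilde{\gamma}_{t'}$ vanishes on $S_{1}$, then $\gamma_{t}$ agrees with $\beta$ at each vertex of $\mathsf{Q}_{0}$ that $\Xi$ feeds only from $S_{1}$, while the $\gamma_{t'}$ with $t'\ne t$ are supported on the complementary vertices; connectedness of $\supp(\beta)$ forces either a common vertex or an arrow joining the two parts, and unravelling this through the explicit form of $\Xi$ (several elements of $\mathcal{J}$ mapping to one vertex $[i,j]$) produces the missing strictly negative cross term. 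Once this bookkeeping is settled the proof is complete; I also expect that in the special case $\#\mathrm{Irr}(\beta)\le 1$, where $\Xi_{\beta}$ is injective by Proposition~\ref{quotientmap}, the lifts are forced to sum to the connected lift $\tilde{\beta}$ of Lemma~\ref{inverseroot} and the argument becomes immediate.
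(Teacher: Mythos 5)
Your proof is correct and follows essentially the same route as the paper's: lift the summands to $\mathcal{M}^{+}$ via Lemma~\ref{inverseroot}, observe that $\tilde{\beta}=\sum_{t}\tilde{\gamma}_{t}$ is a fundamental-region vector whose support is connected (Proposition~\ref{connectedsupport} together with $q(\beta)<0$ excludes the exceptional configuration), and invoke the standard strict subadditivity of $q$ for such vectors of negative norm (the paper cites Lemma 2 of Kraft--Riedtmann at exactly this point). The ``delicate point'' you flag at the end is moot: once connectedness is in hand there is a single component carrying all $r\ge 2$ of the nonzero positive lifts, so the proviso of the classical lemma holds automatically and no further bookkeeping with $\Xi$ is required.
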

\begin{proof}
	For the above $\gamma_{1},\ldots,\gamma_{r}$, take 
	$\tilde{\gamma_{1}},\ldots,\tilde{\gamma_{r}}\in 
	\mathcal{M}^{+}\backslash\{0\}$ as in Lemma \ref{inverseroot}
	and define $\tilde{\beta}=
	\tilde{\gamma_{1}}+\cdots+\tilde{\gamma_{r}}$.
	Then $\tilde{\beta}$ satisfies conditions in Lemma \ref{inverseroot}.
	Thus the support of $\tilde{\beta}$ is connected from 
	Proposition \ref{connectedsupport}.
	Recall that $(\tilde{\beta},c_{a})\le 0$ for 
	all $a\in \mathcal{J}\cup \mathsf{Q}_{0}^{\text{leg}}$ and the assumption $q(\beta)<0$.
	Then the standard argument (see Lemma 2 in \cite{KR} for example) shows
	that $q(\beta)=(\tilde{\beta},\tilde{\gamma})< \sum_{i=1}^{r}
	(\tilde{\gamma}_{i},\tilde{\gamma}_{i})=
	\sum_{i=1}^{r}q(\gamma_{i})$. 
\end{proof}

Let us fix $\beta \in \widetilde{F}$ with $q(\beta)<0$.
Define a non-empty open subset of $\mathrm{Rep}(\mathsf{Q},\beta)$ by
\begin{multline*}
	\mathrm{Rep}(\mathsf{Q},\beta)^{\text{det}}:=\\
	\left\{
		x\in 
            \mathrm{Rep}(\mathsf{Q},\beta)\,\middle|\,
            \mathrm{det}\left(
            x_{\rho^{[0,j]}_{[i,j']}}
            \right)_{\substack{1\le j\le m_{0}\\
            1\le j'\le m_{i}}}\neq 0
            ,\,i\in I_{\text{irr}}\backslash\{0\}
    \right\}.
\end{multline*}

\begin{lem}\label{homogeneousdecomposition}
	If $x\in \mathrm{Rep}(\mathsf{Q},\beta)^{\text{det}}$ is decomposed as 
	$x=x_{1}\oplus \cdots\oplus x_{r}$
	in $\mathrm{Rep}(\mathsf{Q},\beta)$, then 
	${\mathrm{\mathbf{dim}\,}}x_{i}\in \mathcal{L}^{+}$ for all 
	$i=1,\ldots,r$.
\end{lem}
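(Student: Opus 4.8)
The plan is to observe that membership in $\mathcal{L}$ is detected entirely by the "$[0,\ast]$-to-$[i,\ast]$" blocks of arrows, which are controlled by the invertibility condition built into $\mathrm{Rep}(\mathsf{Q},\beta)^{\text{det}}$, and that this invertibility passes to direct summands. First I would fix $i\in I_{\text{irr}}\backslash\{0\}$ and assemble the maps attached to the arrows $\rho^{[0,j]}_{[i,j']}$ into a single linear map $\Psi^{(i)}\colon\bigoplus_{j=1}^{m^{(0)}}M_{[0,j]}\to\bigoplus_{j'=1}^{m^{(i)}}M_{[i,j']}$, whose matrix is exactly $(\psi_{\rho^{[0,j]}_{[i,j']}})_{1\le j\le m^{(0)},\,1\le j'\le m^{(i)}}$. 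By the defining condition of $\mathrm{Rep}(\mathsf{Q},\beta)^{\text{det}}$ this matrix has nonzero determinant, so $\Psi^{(i)}$ is an isomorphism (in particular $\beta\in\mathcal{L}$, which we already know since $\beta\in\widetilde{F}$).

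The key step is then to use that each $M_{s}$ is a direct summand, hence a subrepresentation with a complement, so for every arrow $\rho$ the structure map $\psi_{\rho}$ carries $(M_{s})_{s(\rho)}$ into $(M_{s})_{t(\rho)}$ and kills nothing of it into the complementary summands. Applied to all arrows $\rho^{[0,j]}_{[i,j']}$ simultaneously, this shows $\Psi^{(i)}$ is block diagonal with respect to the decompositions $\bigoplus_{j}M_{[0,j]}=\bigoplus_{s}\bigl(\bigoplus_{j}(M_{s})_{[0,j]}\bigr)$ and $\bigoplus_{j'}M_{[i,j']}=\bigoplus_{s}\bigl(\bigoplus_{j'}(M_{s})_{[i,j']}\bigr)$; write $\Psi^{(i)}=\bigoplus_{s}\Psi^{(i)}_{s}$. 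Since a block-diagonal linear map is an isomorphism only if every diagonal block is, each $\Psi^{(i)}_{s}\colon\bigoplus_{j}(M_{s})_{[0,j]}\to\bigoplus_{j'}(M_{s})_{[i,j']}$ is an isomorphism, whence $\sum_{j=1}^{m^{(0)}}\dim_{\mathbb{C}}(M_{s})_{[0,j]}=\sum_{j'=1}^{m^{(i)}}\dim_{\mathbb{C}}(M_{s})_{[i,j']}$. Letting $i$ range over $I_{\text{irr}}\backslash\{0\}$ gives $\mathbf{dim}\,M_{s}\in\mathcal{L}$, and since a dimension vector has nonnegative entries, $\mathbf{dim}\,M_{s}\in\mathcal{L}^{+}$, for every $s$.

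I do not expect any real obstacle here; the argument is essentially bookkeeping. The only points requiring a little care are verifying the "block diagonal" claim strictly from the paper's definition of a direct sum decomposition of quiver representations (conditions (1)--(3) for subrepresentations together with condition (3) on the complement), and the elementary linear-algebra fact that a block-diagonal isomorphism has each diagonal block an isomorphism — both of which are immediate. Note that the hypothesis $q(\beta)<0$ standing in this subsection plays no role in this particular lemma.
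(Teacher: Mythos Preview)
Your proof is correct and close in spirit to the paper's, but the mechanism differs slightly. You use the full direct-sum structure to see that $\Psi^{(i)}$ is block diagonal and then invoke that a block-diagonal isomorphism forces each block to be an isomorphism, obtaining the dimension equalities in one stroke. The paper instead uses only that each $M_{t}$ is a \emph{subrepresentation}: restricting the injective map $\Psi^{(i)}$ to $\bigoplus_{j}(M_{t})_{[0,j]}$ gives the inequality $\sum_{j}(\gamma_{t})_{[0,j]}\le\sum_{j'}(\gamma_{t})_{[i,j']}$ for every $t$, and then summing over $t$ and comparing with $\beta\in\mathcal{L}$ forces all inequalities to be equalities. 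Your argument is a bit more direct; the paper's uses a weaker hypothesis on each summand (only the subrepresentation condition) balanced by the global constraint $\sum_t\gamma_t=\beta$. Your closing remark that $q(\beta)<0$ is irrelevant here is also correct.
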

\begin{proof}
	Since $x\in  \mathrm{Rep}(\mathsf{Q},\beta)^{\text{det}}$,
	subrepresentations $x_{t}$ with 
	$\gamma_{t}=
	{\mathrm{\mathbf{dim}}\,}x_{t}$ for $t=1,\ldots,r$ satisfy
	$\sum_{j=1}^{m_{0}}(\gamma_{t})_{[0,j]}\le 
	\sum_{j=1}^{m_{i}}(\gamma_{t})_{[i,j]}$
	for all $i\in I_{\text{irr}}\backslash\{0\}$.
	If there exists ``$<$'' among these inequalities, then
	$\beta=\gamma_{1}+
	\cdots +\gamma_{r}\notin \mathcal{L}^{+}$ which contradicts
	to the assumption $\beta \in \widetilde{F}\subset\mathcal{L}^{+}$.
	Thus $\gamma_{t}\in \mathcal{L}^{+}$ for all $t=1,\ldots,r$.
\end{proof}

Let us recall the notion of {\em generic decomposition}.
A decomposition $\beta=\gamma_{1}+\cdots+\gamma_{r}$, 
$\gamma_{t}\in (\mathbb{Z}_{\ge 0})^{\mathsf{Q}_{0}}\backslash\{0\}$, is called 
the generic decomposition if 
\begin{multline*}
	\mathrm{Ind}(\mathsf{Q};\gamma_{1},\ldots,\gamma_{r})=\\
	\left\{
		x_{1}\oplus \cdots\oplus x_{r}\in \mathrm{Rep}(\mathsf{Q},\beta)\,
		\bigg|\,
		\begin{array}{c}
			\mathrm{\mathbf{dim}\,}x_{t}=\gamma_{t} \text{ and }
			x_{t}\text{ are }\\\text{ indecomposable for }t=1,\ldots,r
		\end{array}
	\right\}
\end{multline*}
contains a non-empty open dense subset of $\mathrm{Rep}(\mathsf{Q},\beta)$.
It is known that the generic decomposition  uniquely exists
for any $\beta'\in (\mathbb{Z}_{\ge 0})^{\mathsf{Q}_{0}}\backslash\{0\}$, 
see Proposition 2.7 in 
\cite{KR} for example.

\begin{prop}\label{canonicaldecomposition}
	Let us take $\beta\in \widetilde{F}$ with $q(\beta)<0$.
	If $\beta=\gamma_{1}+\cdots+\gamma_{r}$ is the generic decomposition
	of $\beta\in \widetilde{F}$, then $r=1$.
\end{prop}
\begin{proof}
	If $\beta=\gamma_{1}+\cdots+\gamma_{r}$ is the generic decomposition,
	then 
	\[
		\mathrm{Ind}(\mathsf{Q};\gamma_{1},\ldots,\gamma_{r})\cap
		\mathrm{Rep}(\mathsf{Q},\beta)^{\text{det}}\neq \emptyset.
	\]
	Thus $\gamma_{i}\in \mathcal{L}^{+}$ for $i=1,\ldots,r$ by Lemma 
	\ref{homogeneousdecomposition}.
	Then Proposition \ref{nontame} shows that 
	$q(\beta)<q(\gamma_{1})+\cdots +q(\gamma_{r})$ if $r\ge 2$.
	This contradicts to that $\beta=\gamma_{1}+\cdots+\gamma_{r}$
	is the generic decomposition by the standard argument,
	see Theorem 3.3 in \cite{KR} for example.
	Thus $r=1$.
\end{proof}

\begin{cor}\label{wildroot}
	If $\beta\in \widetilde{F}$ and $q(\beta)<0$, then 
	$\beta$ is a positive root of $\mathsf{Q}$.
\end{cor}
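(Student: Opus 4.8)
The plan is to obtain this as an immediate consequence of Proposition \ref{canonicaldecomposition} combined with Kac's theorem identifying the dimension vectors of indecomposable representations of a quiver with its positive roots.

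First I would record that, since $\beta\in\widetilde{F}$ with $q(\beta)<0$, Proposition \ref{canonicaldecomposition} applies and tells us that the generic decomposition of $\beta$ is trivial, i.e. $r=1$. By the definition of the generic decomposition this means exactly that $\mathrm{Ind}(\mathsf{Q};\beta)$ contains a non-empty open dense subset of $\mathrm{Rep}(\mathsf{Q},\beta)$; in particular there is at least one indecomposable representation $M$ of $\mathsf{Q}$ with $\mathbf{dim}\,M=\beta$.

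Next I would invoke Kac's theorem (see \cite{Kac}): for a finite quiver, a vector $\gamma\in(\mathbb{Z}_{\ge 0})^{\mathsf{Q}_{0}}$ is the dimension vector of an indecomposable representation if and only if $\gamma$ is a positive root of $\mathsf{Q}$, in the sense of the root system $\Delta$ recalled in Section \ref{reviewoffuchs}. Applying this to the indecomposable representation $M$ produced above, and noting that $\beta\in\mathcal{L}^{+}\subset(\mathbb{Z}_{\ge 0})^{\mathsf{Q}_{0}}$, we conclude that $\beta$ is a positive root of $\mathsf{Q}$.

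I do not expect any real obstacle here: the substantive content has already been carried out in Propositions \ref{nontame} and \ref{canonicaldecomposition} (and in the preparatory Lemma \ref{inverseroot} and Proposition \ref{connectedsupport} used to establish them). The only point that requires a little care is that the generic decomposition is set up precisely so that each of its summands admits indecomposable generic representations, so that $r=1$ genuinely yields an indecomposable representation of dimension $\beta$ rather than merely a direct sum that happens to be generic; with that observation the corollary follows.
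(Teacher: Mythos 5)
Your argument is correct and is exactly the paper's proof: Proposition \ref{canonicaldecomposition} gives $r=1$ in the generic decomposition, hence an indecomposable representation of dimension vector $\beta$ exists, and Kac's theorem (Theorem 1.10 in \cite{Kac}) then identifies $\beta$ as a positive root of $\mathsf{Q}$. No further comment is needed.
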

\begin{proof}
	Proposition \ref{canonicaldecomposition} shows that 
	$\mathrm{Rep}(\mathsf{Q},\beta)$ contains a indecomposable 
	representation. Then Kac's theorem (Theorem 1.10 in 
	\cite{Kac}) tells us that $\beta$ is 
	a positive root of $\mathsf{Q}$.
\end{proof}

\begin{cor}\label{existencewild}
	Let us take  $\beta\in \widetilde{\Sigma}_{\nu}$ 
	and suppose that $\beta\in \tilde{F}$ and $q(\beta)<0$.
	Then $\mu^{-1}_{\beta}(\nu)^{\text{dif}}\neq 
	\emptyset$.
\end{cor}
\begin{proof}
	Let us take $\nu'$ as in Lemma \ref{slide}
	and show that 
	$\mu^{-1}_{\beta}(\nu')^{\text{dif}}\neq 
	\emptyset$.
	Let us note that $\sum_{j=1}^{m_{i}}\beta_{[i,j]}
	\ge 1$ for all $i\in I_{\text{irr}}$ since $\mathrm{supp}(\beta)$
	is connected.
		Then Proposition 
	\ref{canonicaldecomposition} shows that the subset $Z$ of 
	$\mathrm{Rep}(\mathsf{Q},\beta)$ consisting of 
	all indecomposable representations is a dense subset.
	Thus $Z\cap \mathrm{Rep}(\mathsf{Q},\beta)^{\text{det}}\neq 
	\emptyset$. Then Theorem 3.3 in \cite{C1} shows that 
	$\mu^{-1}_{\beta}(\nu')^{\text{det}}\neq \emptyset$.
	Moreover Theorem 1.2 in \cite{C1} says that the set of all
	irreducible representations in 
	the irreducible topological set $\mu^{-1}_{\beta}(\nu')$ is a dense 
	subset. Here we note that $\beta\in \Sigma_{\nu'}$.
	Thus the non-empty open subset 
	$\nu^{-1}_{\beta}(\nu')^{\text{det}}$
	contains a irreducible representation $x$.
	Thus $\mu^{-1}_{\beta}(\nu')^{\text{dif}}\neq \emptyset$
	which shows that $\mu^{-1}_{\beta}(\nu)^{\text{dif}}\neq \emptyset$ 
	by Lemma \ref{birational}.
	\end{proof}
\subsection{Tame case}
Let us consider the remaining case $q(\beta)=0$ for  
$\beta\in \widetilde{F}$.
The proof depends on a classification of Dynkin diagrams of 
$\mathrm{supp}(\beta)$. 
To recall the classification, first let us introduce the shape
of $\beta\in \mathcal{L}$.
\begin{df}[shape]\normalfont 
	Fix a Kac-Moody root lattice $L=\bigoplus_{i\in I}\mathbb{Z}\alpha_{i}$ 
	and $\alpha=\sum_{i\in I}m_{i}\alpha_{i}\in L$.
For the Dynkin diagram of the support of $\alpha$, we attach each coefficient $m_{i}$ of $\alpha$ to the vertex corresponding 
to $\alpha_{i}$, then  we obtain the diagram with the coefficients, which
we call the {\em shape} of $\alpha$.
\end{df}
For example, if $\alpha=m_{1}\alpha_{i_1}+m_{2}\alpha_{i_2}
+m_{3}\alpha_{i_3}\in L$ with the diagram of the support
$
\begin{xy}
\ar@{-} *++!U{\alpha_{i_1}}*\cir<4pt>{};(7,0) *++!U{\alpha_{i_2}}*\cir<4pt>{}="A",
\ar@{-} "A"; (14,0) *++!U{\alpha_{i_3}}*\cir<4pt>{}
\end{xy}$,
the diagram with coefficients is 
$
\begin{xy}
    \ar@{-} *++!D{m_{1}}*++!U{\alpha_{i_1}}*\cir<4pt>{}
      ;(7,0) *++!D{m_{2}}*++!U{\alpha_{i_2}}*\cir<4pt>{}="A",
    \ar@{-} "A"; (14,0) *++!D{m_{3}}*++!U{\alpha_{i_3}}*\cir<4pt>{}
\end{xy}
$.

By using this we define  shapes of 
elements in $\mathcal{L}$ as follows.
\begin{df}\normalfont
	For $\beta\in \mathcal{L}$,
	the {\em shape} of $\beta$ is 
	the set of shapes of 
	elements in $\Xi_{\beta}^{-1}(\beta)\subset \mathcal{M}_{\beta}$.
\end{df}
\begin{exa}\label{exmp}\normalfont
	For example, suppose $p=1$, $m_{0}=m_{1}=2$, $e_{[i,j]}=1\ (i=0,1$ and $j=1,2)$, $d_0(1,2)=d_1(1,2)=0$.
	Consider $\beta=\epsilon_{[0,1]}+\epsilon_{[0,2]}+\epsilon_{[1,1]}+
	\epsilon_{[1,2]}$.
Then the shape of $\beta$ is 
$
\begin{xy}
    \ar@{=}(0,-4)*+!R+!D{1-a}*\cir<4pt>{};(7,3) *++!D{1-a}*\cir<4pt>{},
    \ar@{=}(0,3)*+!R+!D{a}*\cir<4pt>{};(7,-4) *+!L+!D{a}*\cir<4pt>{}
\end{xy}
\quad(a\in \mathbb{Z}),
$
where we simply denote $\{x_{a}\mid a\in \mathbb{Z}\}$ by $x_{a}\ (a\in \mathbb{Z})$. 

Suppose $p=0$, $m_{0}=4$, $d_0(i,j)=1$ for $1\le i<j\le 4$ and $e_{0,\nu}=1$
for $1\le\nu\le 4$. Then
the shape of $\beta=\sum_{\nu=1}^{4}\epsilon_{[0,\nu]}$ is  
$
\begin{xy}
    \ar@{-}(0,-4)*++!R{1}*\cir<4pt>{}="A";(7,3) *++!L{1}*\cir<4pt>{}="B",
    \ar@{-}(0,3)*++!R{1}*\cir<4pt>{}="C";(7,-4) *++!L{1}*\cir<4pt>{}="D",
    \ar@{-} "A";"C",
    \ar@{-} "A";"D",
    \ar@{-} "B";"C",
    \ar@{-} "B";"D"
\end{xy}.
$
\end{exa}

Then a classification of shapes of $\alpha\in \tilde{F}$ is know as follows.
\begin{thm}[
	Theorem 9 in \cite{HO}]\label{classifyHO}
	For $\beta\in \tilde{F}$ with $q(\beta)=0$, there exists a positive integer 
	$m$ and the shape of $\beta$ is  one of the following.
\begin{gather*}
    \begin{xy}
        *++!D{m}*\cir<4pt>{}="A";
        \ar@{-} "A"; (7,0) *++!D{2m}*{}*\cir<4pt>{}="B",
        \ar@{-} "B";(14,0) *+!L+!D{3m}*\cir<4pt>{}="C",
        \ar@{-} "C";(21,0) *+!L+!D{2m}*\cir<4pt>{}="D",
        \ar@{-} "D";(28,0) *+!L+!D{m}*\cir<4pt>{}
        \ar@{-} "C";(14,7) *++!L{2m}*\cir<4pt>{}="E",
        \ar@{-} "E";(14,14) *++!L{m}*\cir<4pt>{}
    \end{xy}
    \quad
    \begin{xy}
        *++!D{m}*\cir<4pt>{}="A";
        \ar@{-} "A"; (7,0) *++!D{2m}*\cir<4pt>{}="B",
        \ar@{-} "B";(14,0) *++!D{3m}*\cir<4pt>{}="C",
        \ar@{-} "C";(21,0) *+!L+!D{4m}*\cir<4pt>{}="D",
        \ar@{-} "D";(28,0) *+!L+!D{3m}*\cir<4pt>{}="E",
        \ar@{-} "D";(21,7) *++!L{2m}*\cir<4pt>{},
        \ar@{-} "E";(35,0) *+!L+!D{2m}*\cir<4pt>{}="F",
        \ar@{-} "F";(42,0) *+!L+!D{m}*\cir<4pt>{}
    \end{xy}\quad\\
    \begin{xy}
        *++!D{2m}*\cir<4pt>{}="A";
        \ar@{-} "A"; (7,0) *++!D{3m}*\cir<4pt>{}="B",
        \ar@{-} "B";(14,0) *++!D{4m}*\cir<4pt>{}="C",
        \ar@{-} "C";(21,0) *++!D{5m}*\cir<4pt>{}="D",
        \ar@{-} "D";(28,0) *+!L+!D{6m}*\cir<4pt>{}="E",
        \ar@{-} "E";(35,0) *+!L+!D{4m}*\cir<4pt>{}="F",
        \ar@{-} "F";(42,0) *+!L+!D{2m}*\cir<4pt>{},
        \ar@{-} "A";(-7,0) *++!D{m}*\cir<4pt>{}="F",
        \ar@{-} "E";(28,7) *++!L{3m}*\cir<4pt>{},
    \end{xy}\quad
    \begin{xy}
        \ar@{-} *+!R+!D{2m}*\cir<4pt>{}="A";(7,0) *++!D{m}*\cir<4pt>{},
        \ar@{-} "A"; (0,7) *++!L{m}*\cir<4pt>{},
        \ar@{-} "A"; (-7,0) *+!R+!D{m}*\cir<4pt>{},
        \ar@{-} "A"; (0,-7) *+!R+!D{m}*\cir<4pt>{}
    \end{xy}\quad
    \begin{xy}
        \ar@{-} *+!R+!D{m}*\cir<4pt>{}="A"; (7,0) *+!L+!D{m}*\cir<4pt>{}="B",
        \ar@{-} (0,7) *++!D{m}*\cir<4pt>{}="C";(7,7) *++!D{m}*\cir<4pt>{}="D",
        \ar@{-} "A";"C",
        \ar@{-} "B";"D"
    \end{xy}\\
    \begin{xy}
        \ar@{-}*++!D{m}*\cir<4pt>{}="A";(10,0)*++!D{m}*\cir<4pt>{}="B",
        \ar@{-}"A";(5,7)*++!D{m}*\cir<4pt>{}="C",
        \ar@{-}"B";"C"
    \end{xy}\quad
    \begin{xy}
        \ar@{=}*++!D{m}*\cir<4pt>{};(7,0)*++!D{m}*\cir<4pt>{}
    \end{xy}\ 
    \begin{xy}
        \ar@{=}(0,0)*+!R+!D{m-a}*\cir<4pt>{};(7,7) *++!D{m-a}*\cir<4pt>{},
        \ar@{=}(0,7)*+!R+!D{a}*\cir<4pt>{};(7,0) *+!L+!D{a}*\cir<4pt>{},
        \ar@{} (8,-5) *{(a\in \mathbb{Z})}
    \end{xy}
    \end{gather*}
    Here the last shape corresponds to the latter case in Proposition 
    \ref{connectedsupport}. 
    For the other shapes, the condition $\#\mathrm{Irr}(\beta)\le 1$ holds,
    namely $\Xi^{-1}(\beta)=\{\tilde{\beta}\}$.
\end{thm}
By using this classification, we can show that $\mu^{-1}_{\beta}(\nu)\neq 
\emptyset$ for $\beta\in \widetilde{\Sigma}_{\nu}\cap \tilde{F}$ and 
$q(\beta)=0$.
\begin{thm}\label{existencetame}
	For $\beta\in \widetilde{\Sigma}_{\nu}\cap \tilde{F}$ and 
	$q(\beta)=0$, we have $\mu^{-1}_{\beta}(\nu)^{\text{dif}}\neq \emptyset$.
\end{thm}
\begin{proof}
	Applying $T_{(i,0)}\colon 
	\mathfrak{M}_{\nu}(\mathsf{Q},\beta)^{\text{dif}}
	\rightarrow 
	\mathfrak{M}_{t_{(i,0)(\nu)}}(t_{(i,0)}(\mathsf{Q}),
	t_{(i,0)}(\beta))^{\text{dif}}$ defined in Section \ref{operations} if necessary, 
	we can assume $0\in \mathrm{Irr}(\beta)$ if $\mathrm{Irr}(\beta)\neq 
	\emptyset$.
	Since $\beta\in \widetilde{\Sigma}_{\nu}\cap \tilde{F}$ and 
	$q(\beta)=0$, the coefficients of $\beta$ is indivisible, i.e.,
	have no common divisor 
	other than $1$, namely the integer $m=1$ in Theorem \ref{classifyHO}
	for $\Xi^{-1}(\beta)$.
	Then 
	we obtain that $\beta^{\text{reg}}$ defined in Section \ref{specialexa}
	is the indivisible null root of a Euclidean root system.
	Indeed, if the shape of $\beta$ is one of 
	the first 7 cases in Theorem \ref{classifyHO} which 
	are indivisible null roots of Euclidean root systems. 
	Proposition \ref{firstexa} shows that $\beta^{\text{reg}}$ 
	is an indivisible null
	root.
	The last case in Theorem \ref{classifyHO} corresponds to 
	the second example in Section \ref{specialexa}. Thus 
	$\beta^{\text{reg}}$ is the indivisible null root of 
	the root system of type $A^{(1)}_{3}$.

	Since it is known that indivisible null roots of 
	Euclidean root systems are Schur roots of quivers,
	thus we obtain that $\beta^{\text{reg}}$ is a Schur root.
	As we noted in Remark \ref{reflfunc}, $\beta^{\text{reg}}$ is 
	obtained by the product of admissible reflections from $\beta$.
	This implies that $\beta$ is a Schur root as well.
	Therefore general elements of $\mathrm{Rep}(\mathsf{Q},\beta)$ 
	are indecomposable.
	Then we have the result by the same argument in Corollary 
	\ref{existencewild}.
\end{proof}

\if0

    Let us explain the above diagrams. 
The each diagram is the Dynkin diagram of $\mathrm{supp}(\Xi_{\beta}^{-1}(\beta))
=\bigcup_{b\in \Xi_{\beta}^{-1}(\beta)}\mathrm{supp}(b)$.
Coefficients written above the vertices are coefficients 
of the corresponding simple roots 
in $b\in \Xi_{\beta}^{-1}(\beta)$.
For example, the diagram 
 \begin{xy}
        \ar@{-} *+!R+!D{2m}*\cir<4pt>{}="A";(7,0) *++!D{m}*\cir<4pt>{},
        \ar@{-} "A"; (0,7) *++!L{m}*\cir<4pt>{},
        \ar@{-} "A"; (-7,0) *+!R+!D{m}*\cir<4pt>{},
        \ar@{-} "A"; (0,-7) *+!R+!D{m}*\cir<4pt>{}
    \end{xy}
    means that there exists simple roots $c_{0},c_{1}\ldots,c_{4}\in 
    \mathcal{C}$ such that 
    $\mathrm{supp}(\Xi_{\beta}^{-1}(\beta))=\{c_{0},c_{1},\ldots,c_{4}\}$
    and for $i\le j$ we have 
    \[(c_{i},c_{j})=\begin{cases}2&\text{if }i=j,\\-1&\text{if }i=0,j\neq 0,\\
	    0&\text{otherwise}.\end{cases}
    \]
    Moreover the coefficients $m$ mean that $\Xi_{\beta}^{-1}(\beta)=\{b\}$ where 
    $b=2mc_{0}+m(c_{1}+\cdots +c_{4})$.
    Also the diagram     \begin{xy}
        \ar@{=}(0,0)*+!R+!D{m-a}*\cir<4pt>{};(7,7) *++!D{m-a}*\cir<4pt>{},
        \ar@{=}(0,7)*+!R+!D{a}*\cir<4pt>{};(7,0) *+!L+!D{a}*\cir<4pt>{},
        \ar@{} (8,-5) *{(a\in \mathbb{Z})}
    \end{xy}
    means that there exist $c_1,\ldots,c_4\in \mathcal{C}$ satisfying 
    \[
	    (c_{i},c_{j})=\begin{cases}
		    0&\text{if }(i,j)=(1,2)\text{ or }(3,4),\\
		    -2&\text{if }(i,j)=(1,4)\text{ or }(2,3)
	    \end{cases}
    \]
    and $\Xi_{\beta}^{-1}(\beta)=\{b_{a}\mid a\in\mathbb{Z}\}$ where 
    $b_{a}=ac_{1}+(m-a)c_{2}+(m-a)c_{3}+ac_{4}$.

    This classification of $\Xi_{\beta}^{-1}(\beta)$ enable us to
    divide our argument on the property of $\beta$ into the following.
\begin{lem}\label{classificationtame}
	If $\beta \in \widetilde{F}$ with $q(\beta)=0$,
	then $\#\mathrm{Irr}(\beta)\le 2$.
	Moreover let us choose $\tilde{\beta}\in \mathcal{M}^{+}$ 
	as in Lemma \ref{inverseroot}.
	Then the following hold.
	\begin{itemize}
		\item[\textrm{(i)}]
			If $\mathrm{Irr}(\beta)\le 1$, then
			$\tilde{\beta}$ is an imaginary root of a Euclidean
			root system.
			Moreover 
			let us suppose $\#\mathrm{Irr}(\beta)=1$ and 
			$\mathrm{Irr}(\beta)=\{i_{0}\}$ where
			$i_{0}\in \{0,\ldots,p\}$, and set 
			$P=\{i\in \{0,\ldots,p\}\backslash\{i_0\}\mid 
			\text{there exists }\beta_{[i,j,1]}\neq 0\}$.
			Then we have $\#P\le 2$.
			In particular, if $\#P=2$, then
			$\tilde{\beta}$ is an imaginary root of 
			the Euclidean root system of type $A^{(1)}_{3}$.
		\item[\textrm{(ii)}]
			If $\mathrm{Irr}(\beta)=2$, then
			the support of $\tilde{\beta}$ is 
			$\{c_{\underline{\mathbf{i}}},
	c_{\overline{\mathbf{i}}},c_{\mathbf{i}_{1}},c_{\mathbf{i}_{2}}\}
	$ which is chosen as in the proof of Proposition \ref{connectedsupport}.
	Moreover there exist integers $l$ and $m$ such that 
	$\tilde{\beta}=lc_{\underline{\mathbf{i}}}+
	lc_{\overline{\mathbf{i}}}+(m-l)c_{\mathbf{i}_{1}}
	+(m-l)c_{\mathbf{i}_{2}}$.
	\end{itemize}
\end{lem}
\begin{proof}
	If $\#\mathrm{Irr}(\beta)\ge 3$, there must exists 
	a triple edge in the Dynkin diagram of $\mathrm{supp}(\Xi_{\beta}^{-1}(\beta))$.
	Thus $\#\mathrm{Irr}(\beta)\le 2$ from the classification of $\Xi_{\beta}^{-1}(\beta)$
	given above.

	If $\#\mathrm{Irr}(\beta)= 2$, 
	then $\Xi_{\beta}^{-1}(\beta)$ is not a singleton set and the Dynkin
	diagram of its support
	has a double edge.
	Thus the diagram should be \begin{xy}
        \ar@{=}(0,0)*+!R+!D{m-a}*\cir<4pt>{};(7,7) *++!D{m-a}*\cir<4pt>{},
        \ar@{=}(0,7)*+!R+!D{a}*\cir<4pt>{};(7,0) *+!L+!D{a}*\cir<4pt>{},
        \ar@{} (8,-5) *{(a\in \mathbb{Z})}
    \end{xy}
which corresponds to the case $(ii)$.

Let us suppose $\#\mathrm{Irr}(\beta)\le 1$. Then $\Xi_{\beta}^{-1}(\beta)=\{\tilde{\beta}\}$ where
$\tilde{\beta}$ is chosen as in Lemma \ref{inverseroot}. 
If the Dynkin diagram of $\mathrm{supp}(\tilde{\beta})$ is a star-shaped diagram,
then $\#\mathrm{Irr}(\beta)=0$ or $\#P\le 1$, see Remark 5 in \cite{HO}.
Moreover Theorem 9 in \cite{HO} shows that 
if the diagram of $\mathrm{supp}(\tilde{\beta})$ is  \begin{xy}
        \ar@{-}*++!D{m}*\cir<4pt>{}="A";(10,0)*++!D{m}*\cir<4pt>{}="B",
        \ar@{-}"A";(5,7)*++!D{m}*\cir<4pt>{}="C",
        \ar@{-}"B";"C"
    \end{xy} or 
    \begin{xy}
        \ar@{=}*++!D{m}*\cir<4pt>{};(7,0)*++!D{m}*\cir<4pt>{}
    \end{xy},
    then $\#P\le 1$ and 
if  the diagram of $\mathrm{supp}(\tilde{\beta})$ is  \begin{xy}
        \ar@{-} *+!R+!D{m}*\cir<4pt>{}="A"; (7,0) *+!L+!D{m}*\cir<4pt>{}="B",
        \ar@{-} (0,7) *++!D{m}*\cir<4pt>{}="C";(7,7) *++!D{m}*\cir<4pt>{}="D",
        \ar@{-} "A";"C",
        \ar@{-} "B";"D"
    \end{xy},
    then $\#P\le 2$.
\end{proof}
\begin{prop}\label{tameroot}
	If $\beta\in \widetilde{F}$ and $q(\beta)=0$, then
	$\beta$ is a positive root in $\mathsf{Q}$.
\end{prop}
\begin{proof}
	Let us take $\tilde{\beta}=\sum_{a\in\mathcal{J}_{\beta}\cup
	\mathsf{Q}_{0}\backslash \mathsf{Q}_{0}'}\tilde{\beta}_{a}c_{a}$
	as in Lemma \ref{inverseroot}.	
	We separate the argument into several cases.
 
	\noindent \textbf{Case 1.}
	We first consider the case $\#\mathrm{Irr}(\beta)\le 1$.
	In this case we have $\Xi_{\beta}^{-1}(\beta)=\{\tilde{\beta}\}$.
	Take $i_{0}\in \{0,\ldots,p\}$ so that 
	$\mathrm{Irr}(\beta)=\{i_{0}\}$ 
	or put $i_{0}=0$ if $\#\mathrm{Irr}(\beta)=0$.
	Set $\{j^{(i_0)}_{1},\ldots,j^{(i_0)}_{s}\}=\{j\in 
		\{1,\ldots,m^{(i_0)}\}\mid 
		\beta_{[i_{0},j]}\neq 0\}$ and 
	also set $j_{i}$ to be $\beta_{[i,j_{i}]}\neq 0$
	for $i\in I_{\text{irr}}\backslash\{i_0\}$.
	Let us set $\mathbf{i}_{t}=([i,j^{(i)}_{t}])\in \mathcal{J}$
	for $t=1,\ldots,s$
	to be 
	\[
		[i,j^{(i)}_{t}]=\begin{cases}
			[i,j_{i}]&\text{if }i\neq i_0,\\
			[i_0,j^{(i_0)}_{t}]&\text{if }i=i_0.
		\end{cases}
	\]
	Under these notation, we can write
	\begin{align*}
		\beta&=\Xi(\tilde{\beta})\\
		     &=\sum_{t=1}^{s}
		\tilde{\beta}_{\mathbf{i}_{t}}
		(\epsilon_{[i_0,j^{(i_0)}_{t}]}+\sum_{i\in I_{\text{irr}}
		\backslash\{i_0\}}\epsilon_{[i,j_{i}]})
		+\sum_{i\in I_{\text{irr}}\backslash\{i_0\}}
		\sum_{k=1}^{e_{[i,j_{i}]}-1}
		\tilde{\beta}_{[i,j_{i},k]}\epsilon_{[i,j_{i},k]}\\
		&\quad +\sum_{t=1}^{s}\sum_{k=1}^{e_{[i_0,j^{(i_0)}_{t}]}-1}
		\tilde{\beta}_{[i_0,j^{(i_0)}_{t},k]}
		\epsilon_{[i_0,j^{(i_0)}_{t},k]}
		+\sum_{i\in I_{\text{reg}}}\sum_{k=1}^{e_{[i,1]}-1}
		\tilde{\beta}_{[i,1,k]}\epsilon_{[i,1,k]}.
	\end{align*}
	For instance suppose that $\mathrm{supp}(\tilde{\beta})$ is  
	\begin{xy}
		\ar@{-} *+!L+!U{c_{[i_{\text{reg}},1,1]}}*\cir<4pt>{}="A";
		(7,0) *++!D{c_{\mathbf{i}_{1}}}*\cir<4pt>{},
		\ar@{-} "A"; (0,7) *++!D{c_{\mathbf{i}_{2}}}*\cir<4pt>{},
		\ar@{-} "A"; (-7,0) *++!D{c_{\mathbf{i}_{3}}}*\cir<4pt>{},
		\ar@{-} "A"; (0,-7) *++!U{c_{[i_{\text{reg}},1,2]}}*\cir<4pt>{}
    \end{xy}
    where $i_{\text{reg}}\in I_{\text{reg}}$. Then  $\mathrm{supp}(\beta)$ is 
    \[
    \begin{xy}
	    *+!R(0.5)+!U{\epsilon_{[i_0,j_{2}^{(i_{0})}]}}*\cir<4pt>{}="A",
	    (0,20) *++!R{\epsilon_{[i_{0},j_{1}^{(i_{0})}]}}*\cir<4pt>{}="B",
	    (0,-20) *++!U{\epsilon_{[i_{0},j_{3}^{(i_{0})}]}}*\cir<4pt>{}="C",
	    (20,0)	*++!L{\epsilon_{[i_{2},j_{i_2}]}}*\cir<4pt>{}="D",
	    (20,20) *++!L{\epsilon_{[i_{1},j_{i_1}]}}*\cir<4pt>{}="E",
	    (20,-20) *++!L{\epsilon_{[i_{3},j_{i_3}]}}*\cir<4pt>{}="F",
	    (-15,0) *+!R+!D{\epsilon_{[i_{\text{reg}},1,1]}}*\cir<4pt>{}="G",
		(-30,0) *++!U{\epsilon_{[i_{\text{reg}},1,2]}}*\cir<4pt>{}="H",
		(20,-30) *++!{}="I"
		\ar@{->} "H";"G",
		\ar@{<-}"C";"G",
		\ar@{<-}"B";"G",
		\ar@{<-}"A";"G",
		\ar@{->}"B";"E",
		\ar@{->}"B";"D",
		\ar@{->}"B";"F",
		\ar@{->}"A";"E",
		\ar@{->}"A";"D",
		\ar@{->}"A";"F",
		\ar@{->}"C";"E",
		\ar@{->}"C";"D",
		\ar@{->}"C";"F",
\end{xy}\]
    and 
    \[
	    \begin{xy}
	    *+!R(0.5)+!U{\epsilon_{[0,j_{0}]}}*\cir<4pt>{}="A",
	    (8,22) *++!R{\epsilon_{[i_{1},j_{1}]}}*\cir<4pt>{}="B",
	    (-20,17) *+!R+!U{\epsilon_{[i_{3},j_{3}]}}*\cir<4pt>{}="C",
	    (20,0)	*++!L{\epsilon_{[i_{0},j_{2}^{(i_{0})}]}}
	    *\cir<4pt>{}="D",
	    (20,20) *++!L{\epsilon_{[i_{0},j_{1}^{(i_{0})}]}}*\cir<4pt>{}="E",
	    (20,-20) *++!L{\epsilon_{[i_{0},j_{3}^{(i_{0})}]}}*\cir<4pt>{}="F",
	    (-15,0) *+!R+!D{\epsilon_{[i_{\text{reg}},1,1]}}*\cir<4pt>{}="G",
		(-30,0) *++!U{\epsilon_{[i_{\text{reg}},1,2]}}*\cir<4pt>{}="H",
		(-7,20) *++!R{\epsilon_{[i_{2},j_{2}]}}*\cir<4pt>{}="I"
		\ar@{->} "H";"G",
		\ar@{<-}"A";"G",
		\ar@{<-}"B";"A",
		\ar@{->}"A";"E",
		\ar@{->}"A";"D",
		\ar@{->}"A";"F",
		\ar@{<-}"C";"A",
		\ar@{->}"A";"I"
    \end{xy}
    \]
    if $i_{0}=0$ and $i_{0}\neq 0$ respectively. Here we set $I_{\text{irr}}
    =\{i_{0},i_{1},i_{2},i_{3}\}$.

	\noindent\textbf{Case 1.1.}
	Now let us assume $i_{0}=0$ and
	define
	\begin{align*}
		\beta_{1}&=\prod_{i\in I_{\text{irr}}\backslash\{0\}}
	(s_{[i,j_{i},e_{[i,j_{i}]}-1]}
	\circ \cdots\circ s_{[i,j_{i},2]}\circ s_{[i,j_{i},1]}\circ
	s_{[i,j_{i}]})(\beta)\\
	&=\sum_{t=1}^{s}\tilde{\beta}_{\mathbf{i}_{t}}
	\epsilon_{[0,j^{(0)}_{t}]}
		+\sum_{i\in I_{\text{irr}}\backslash\{0\}}
		\sum_{k=1}^{e_{[i,j_{i}]}-1}
		\tilde{\beta}_{[i,j_{i},k]}\epsilon_{[i,j_{i},k-1]}\\
		&\quad +\sum_{t=1}^{s}\sum_{k=1}^{e_{[0,j^{(0)}_{t}]}-1}
		\tilde{\beta}_{[0,j^{(0)}_{t},k]}\epsilon_{[0,j^{(0)}_{t},k]}
		+\sum_{i\in I_{\text{reg}}}\sum_{k=1}^{e_{[i,1]}-1}
		\tilde{\beta}_{[i,1,k]}\epsilon_{[i,1,k]}
	\end{align*}
	where we formally put $\epsilon_{[i,j_{i},0]}=\epsilon_{[i,j_{i}]}$
	for $i\in I_{\text{irr}}\backslash\{0\}$.
	Let us note that 
	\begin{align*}
		(c_{\mathbf{i}_{t}},c_{\mathbf{i}_{t'}})&=
		(\sum_{i\in I_{\text{irr}}\backslash\{0\}}\epsilon_{[i,j_{i}]},
		\epsilon_{[0,j^{(0)}_{t'}]})
		+(\sum_{i\in I_{\text{irr}}\backslash\{0\}}\epsilon_{[i,j_{i}]},
		\epsilon_{[0,j^{(0)}_{t}]})\\
		&\quad+
		(\epsilon_{[0,j^{(0)}_{t}]},\epsilon_{[0,j^{(0)}_{t'}]})
		+\sum_{i\in I_{\text{irr}}\backslash\{0\}}
		(\epsilon_{[i,j_{i}]},\epsilon_{[i,j_{i}]})\\
		&=-\#(I_{\text{irr}}\backslash\{0\})
		-\#(I_{\text{irr}}\backslash\{0\})\\
		&\quad+(\epsilon_{[0,j^{(0)}_{t}]},\epsilon_{[0,j^{(0)}_{t'}]})
		+2\#(I_{\text{irr}}\backslash\{0\})\\
		&=(\epsilon_{[0,j^{(0)}_{t}]},\epsilon_{[0,j^{(0)}_{t'}]}).
	\end{align*}
	Then  $(\beta_{1},\epsilon_{[0,j^{(0)}_{t}]})=(\tilde{\beta},
	c_{\mathbf{i}_{t}})$ for $t=1,\ldots,s$.
	Also we have $(\beta_{1},\epsilon_{[i,j_{i}]})=(\tilde{\beta},
	c_{[i,j_{i},1]})$ and 
	\[
		(\beta_{1},\epsilon_{[i,j,k]})=
		\begin{cases}
			(\tilde{\beta},c_{[0,j_{t}^{(0)},k]})&\text{if }
			[i,j,k]=[0,j^{(0)}_{t},k],\\
			(\tilde{\beta},c_{[i,j_{i},k+1]})&\text{if }
			[i,j,k]=[i,j_{i},k] \text{ for }i\in I_{\text{irr}}
			\backslash\{0\},\\
			(\tilde{\beta},c_{[i,1,k]})&\text{if }
			[i,j,k]=[i,1,k]\text{ for }
			i\in I_{\text{reg}}.
		\end{cases}
	\]
	Since Lemma \ref{classificationtame} shows that 
	$(\tilde{\beta},c_{a})=0$
	for all $a\in \mathcal{J}_{\beta}\cup \mathsf{Q}_{0}\backslash
	\mathsf{Q}_{0}'$,
	we have $(\beta_{1},\epsilon_{a})=0$ for all $a\in \mathsf{Q}_{0}$.
	Since the support of $\beta_{1}$ is connected, this shows that 
	$\beta_{1}$ is a positive imaginary root of $\mathsf{Q}$.
	Hence so is $\beta$.
	In the above example $\mathrm{supp}(\beta_{1})$ is  
	\[\begin{xy}
	    *+!R(0.5)+!U{\epsilon_{[i_0,j_{2}^{(i_{0})}]}}*\cir<4pt>{}="A",
	    (0,15) *++!R{\epsilon_{[i_{0},j_{1}^{(i_{0})}]}}*\cir<4pt>{}="B",
	    (0,-15) *++!U{\epsilon_{[i_{0},j_{3}^{(i_{0})}]}}*\cir<4pt>{}="C",
	    (-15,0) *+!R+!D{\epsilon_{[i_{\text{reg}},1,1]}}*\cir<4pt>{}="G",
		(-30,0) *++!U{\epsilon_{[i_{\text{reg}},1,2]}}*\cir<4pt>{}="H",
		(20,-30) *++!{}="I"
		\ar@{->} "H";"G",
		\ar@{<-}"C";"G",
		\ar@{<-}"B";"G",
		\ar@{<-}"A";"G",
\end{xy}.\]

	\noindent\textbf{Case 1.2.}
	Next let us consider the case $i_{0}\neq 0$.
	By Lemma \ref{classificationtame}, $\#P\le 2$.

	\noindent\textbf{Case 1.2.1.}
	Let us consider the case $\#P\le 1$ and 
	define 
	\begin{align*}
		\beta_{2}&=\prod_{i\in I_{\text{irr}}\backslash\{0,i_{0}\}}
	(s_{[i,j_{i},e_{[i,j_{i}]}-1]}
	\circ \cdots\circ s_{[i,j_{i},2]}\circ s_{[i,j_{i},1]}\circ
	s_{[i,j_{i}]})(\beta)\\
	&=\sum_{t=1}^{s}\tilde{\beta}_{\mathbf{i}_{t}}
	\epsilon_{[i_0,j^{(i_0)}_{t}]}
	+(\sum_{t=1}^{s}\tilde{\beta}_{\mathbf{i}_{t}})\epsilon_{[0,j_{0}]}
	+\sum_{t=1}^{s}\sum_{k=1}^{e_{[i_0,j^{(i_0)}_{t}]}-1}
		\tilde{\beta}_{[i_0,j^{(i_0)}_{t},k]}
		\epsilon_{[i_0,j^{(i_0)}_{t},k]}\\
	&\quad +
	\begin{cases}
		0&\text{if }\#P=0,\\
		\sum_{k=1}^{e_{[0,j_{0}]}-1}\tilde{\beta}_{[0,j_{0},k]}
		\epsilon_{[0,j_{0},k]}&\text{if }
		P=\{0\},\\
		\sum_{k=1}^{e_{[i_{1},j_{i_{1}}]-1}}
		\tilde{\beta}_{[i_{1},j_{i_{1}},k]}
		\epsilon_{[i_{1},j_{i_{1}},k-1]}&
		\text{if }P=\{i_{1}\}\text{ with }i_{1}\in 
		I_{\text{irr}}\backslash\{0,i_{0}\},\\
		\sum_{k=1}^{e_{[i_{2},1]}-1}\tilde{\beta}_{[i_2,1,k]}
		\epsilon_{[i_{2},1,k]}&
		\text{if }P=\{i_{2}\}\text{ with }i_{2}\in I_{\text{reg}}
	\end{cases}
	\end{align*}
	where we formally put $\epsilon_{[i,j_{i},0]}=\epsilon_{[i,j_{i}]}$
	for $i\in I_{\text{irr}}\backslash\{0\}$.
	In the above example, $\mathrm{supp}(\beta_{2})$ is 
	 \[
	    \begin{xy}
	    *+!R(0.5)+!U{\epsilon_{[0,j_{0}]}}*\cir<4pt>{}="A",
	    (15,0)	*++!L{\epsilon_{[i_{0},j_{2}^{(i_{0})}]}}
	    *\cir<4pt>{}="D",
	    (15,15) *++!L{\epsilon_{[i_{0},j_{1}^{(i_{0})}]}}*\cir<4pt>{}="E",
	    (15,-15) *++!L{\epsilon_{[i_{0},j_{3}^{(i_{0})}]}}*\cir<4pt>{}="F",
	    (-15,0) *+!R+!D{\epsilon_{[i_{\text{reg}},1,1]}}*\cir<4pt>{}="G",
		(-30,0) *++!U{\epsilon_{[i_{\text{reg}},1,2]}}*\cir<4pt>{}="H",
		\ar@{->} "H";"G",
		\ar@{<-}"A";"G",
		\ar@{->}"A";"E",
		\ar@{->}"A";"D",
		\ar@{->}"A";"F",
    \end{xy}.
    \]
	Moreover define 
	\begin{align*}
		\beta_{3}&=
		\begin{cases}
			s_{[0,j_{0}]}(\beta_{2})&\text{if }\#P=0,\\
			s_{[0,j_{0},e_{[0,j_{0}]}-1]}\circ
			\cdots\circ
			s_{[0,j_{0},1]}\circ s_{[0,j_{0}]}(\beta_{2})&
			\text{if }P=\{0\},\\
			s_{[i_{1},j_{i_{1}},e_{[i_{1},j_{i_{1}}]}-1]}
			\circ \cdots \circ
			s_{[i_{1},j_{i_{1}},1]}\circ s_{[0,j_{0}]}(\beta_{2})
			&\text{if }P=\{i_{1}\},\\
			s_{[i_{2},1,e_{[i_{2},1]}-1]}\circ \cdots \circ
			s_{[i_{2},1,1]}\circ s_{[0,j_{0}]}(\beta_{2})&
			\text{if }P=\{i_{2}\}
		\end{cases}\\
		&=\sum_{t=1}^{s}\tilde{\beta}_{\mathbf{i}_{t}}
	\epsilon_{[i_0,j^{(i_0)}_{t}]}
	+\sum_{t=1}^{s}\sum_{k=1}^{e_{[i_0,j^{(i_0)}_{t}]}-1}
		\tilde{\beta}_{[i_0,j^{(i_0)}_{t},k]}
		\epsilon_{[i_0,j^{(i_0)}_{t},k]}\\
	&\quad +
	\begin{cases}
		0&\text{if }\#P=0,\\
		\sum_{k=1}^{e_{[0,j_{0}]}-1}\tilde{\beta}_{[0,j_{0},k]}
		\epsilon_{[0,j_{0},k-1]}&\text{if }
		P=\{0\},\\
		\sum_{k=1}^{e_{[i_{1},j_{i_{1}}]-1}}
		\tilde{\beta}_{[i_{1},j_{i_{1}},k]}
		\epsilon_{[i_{1},j_{i_{1}},k-2]}&
		\text{if }P=\{i_{1}\},\\
		\sum_{k=1}^{e_{[i_{2},1]}-1}\tilde{\beta}_{[i_2,1,k]}
		\epsilon_{[i_{2},1,k-1]}&
		\text{if }P=\{i_{2}\}
	\end{cases}
	\end{align*}
	where we formally put $\epsilon_{[i_{1},j_{i_{1}},-1]}=
	\epsilon_{[0,j_{0},0]}
	=\epsilon_{[0,j_{0}]}$.
	Then the same argument as above shows that 
	$(\beta_{3},\epsilon_{a})=0$ for all $a\in\mathsf{Q}_{0}$.
	Thus $\beta$ is a positive root of $\mathsf{Q}_{0}$.
	In the above example $\supp(\beta_{3})$ is 
	\[\begin{xy}
	    *+!R(0.5)+!U{\epsilon_{[i_0,j_{2}^{(i_{0})}]}}*\cir<4pt>{}="A",
	    (0,15) *++!R{\epsilon_{[i_{0},j_{1}^{(i_{0})}]}}*\cir<4pt>{}="B",
	    (0,-15) *++!U{\epsilon_{[i_{0},j_{3}^{(i_{0})}]}}*\cir<4pt>{}="C",
	    (-15,0) *+!R+!D{\epsilon_{[i_{\text{reg}},1,0]}}*\cir<4pt>{}="G",
		(-30,0) *++!U{\epsilon_{[i_{\text{reg}},1,1]}}*\cir<4pt>{}="H",
		(20,-30) *++!{}="I"
		\ar@{-} "H";"G",
		\ar@{-}"C";"G",
		\ar@{-}"B";"G",
		\ar@{-}"A";"G",
\end{xy}.\]
	\noindent\textbf{Case 1.2.2.}
	Next suppose $\#P=2$. Then Lemma \ref{classificationtame} says that 
	$\tilde{\beta}$ is an imaginary root of type $A^{(1)}_{3}$.
	Thus 
	\begin{align*}
		\beta=&m(\epsilon_{[i_{0},j^{(i_{0})}_{1}]}
		+\epsilon_{[i_{0},j^{(i_{0})}_{2}]})
		+m\sum_{i\in I_{\text{irr}}\backslash\{0,i_{0}\}}
		\epsilon_{[i,j_{i}]}+2m\epsilon_{[0,j_{0}]}\\
		&+m(\epsilon_{[i_{1},j_{i_{1}},1]}
			+\epsilon_{[i_{2},j_{i_{2}},1]})
	\end{align*}
	where $\{i_{1},i_{2}\}=P$. If we suppose that $I_{^\text{irr}}=
	\{0,i_{0},i_{1},i_{2},i_{3}
	\}$, then
	$\mathrm{supp}(\beta)$ is  
	\[
	    \begin{xy}
	    *++!L{\epsilon_{[0,j_{0}]}}*\cir<4pt>{}="A",
	    (0,13) *++!R{\epsilon_{[i_{3},j_{3}]}}*\cir<4pt>{}="B",
	    (15,10)	*++!L{\epsilon_{[i_{0},j_{1}^{(i_{0})}]}}
	    *\cir<4pt>{}="D",
	    (15,-10) *++!L{\epsilon_{[i_{0},j_{2}^{(i_{0})}]}}*\cir<4pt>{}="F",
	    (-15,10) *+!R+!D{\epsilon_{[i_{2},j_2]}}*\cir<4pt>{}="G",
	    (-25,10)*+!R+!D{\epsilon_{[i_{2},j_2,1]}}*\cir<4pt>{}="K",
		(-15,-10) *+!R+!U{\epsilon_{[i_{1},j_1]}}*\cir<4pt>{}="H",
		(-25,-10) *+!R+!U{\epsilon_{[i_{1},j_1,1]}}*\cir<4pt>{}="L",

		\ar@{<-} "H";"A",
		\ar@{->}"A";"G",
		\ar@{<-}"B";"A",
		\ar@{->}"A";"D",
		\ar@{->}"A";"F",
		\ar@{->}"K";"G",
		\ar@{->}"L";"H"
    \end{xy}.
    \]

Let us define 
	\begin{align*}
		\beta_{4}&=\prod_{i\in I_{\text{irr}}\backslash\{0,i_{0}\}}
	(s_{[i,j_{i},e_{[i,j_{i}]}-1]}
	\circ \cdots\circ s_{[i,j_{i},2]}\circ s_{[i,j_{i},1]}\circ
	s_{[i,j_{i}]})(\beta).
	\end{align*}
	Then we have 
	\[
		\beta_{4}=m(\epsilon_{[i_{0},j^{(i_{0})}_{1}]}+
		\epsilon_{[i_{0},j^{(i_{0})}_{2}]})
		+2m\epsilon_{[0,j_{0}]}+
			m\epsilon_{[i_{1},j_{i_{1}},1]}
			+m\epsilon_{[i_{2},j_{i_{2}},1]},\\
	\]
	with 
	some integer $m$. 
	Namely $\supp(\beta_{4})$ is 
	\[
	    \begin{xy}
	    *++!L{\epsilon_{[0,j_{0}]}}*\cir<4pt>{}="A",
	    (15,10)	*++!L{\epsilon_{[i_{0},j_{1}^{(i_{0})}]}}
	    *\cir<4pt>{}="D",
	    (15,-10) *++!L{\epsilon_{[i_{0},j_{2}^{(i_{0})}]}}*\cir<4pt>{}="F",
	    (-15,10) *+!R+!D{\epsilon_{[i_{2},j_2]}}*\cir<4pt>{}="G",
		(-15,-10) *+!R+!U{\epsilon_{[i_{1},j_1]}}*\cir<4pt>{}="H",

		\ar@{->} "H";"A",
		\ar@{<-}"A";"G",
		\ar@{->}"A";"D",
		\ar@{->}"A";"F",
    \end{xy}.
    \]
	This shows that 
	$\beta_{4}$ is an imaginary root of 
	the Euclidean root system $D^{(1)}_{4}$.
	Thus $\beta$ is a positive root of $\mathsf{Q}_{0}$.

	\noindent\textbf{Case 2.}
	Finally 
	consider the case $\#\mathrm{Irr}(\beta)= 2$.
	
	\noindent\textbf{Case 2.1.}
	First suppose that $0\in \mathrm{Irr\,}(\beta)$.
	Then we may set  
	$\mathrm{Irr\,}(\beta)=\{0,1\}$,
	\begin{align*}
	\underline{\mathbf{i}}&=([i,1])_{0\le i\le p},&
	\overline{\mathbf{i}}&=([0,2],[1,2],[2,1],\ldots,[p,1]),\\
	\mathbf{i}_{1}&=([0,2],[1,1],[2,1],\ldots,[p,1]),&
	\mathbf{i}_{2}&=([0,1],[1,2],[2,1],\ldots,[p,1])
	\end{align*}
	without loss of generality.
	Then $\beta=\Xi(\tilde{\beta})=(l+(m-l))\epsilon_{[0,1]}
	+(l+(m-l))\epsilon_{[0,2]}+(l+(m-l))\epsilon_{[1,1]}+
	(l+(m-l))\epsilon_{[1,2]}+2m\sum_{i\in I_{\text{irr}}\backslash\{0,1\}}
	\epsilon_{[i,j_i]}=
	m(\epsilon_{[0,1]}+\epsilon_{[0,2]}+\epsilon_{[1,1]}
	+\epsilon_{[1,2]})
	+2m\sum_{i\in I_{\text{irr}}\backslash\{0,1\}}\epsilon_{[i,j_{i}]}$.
	If we set $I_{\text{irr}}=\{0,1,i_{2}\}$, then $\supp(\beta)$ is 
	\[\begin{xy}
		*+!R+!D{\epsilon_{[1,1]}}*\cir<4pt>{}="A",
		(10,0) *+!L+!U{\epsilon_{[0,1]}}*\cir<4pt>{}="B",
	(0,10) *++!D{\epsilon_{[1,2]}}*\cir<4pt>{}="C",
	(10,10) *++!D{\epsilon_{[0,2]}}*\cir<4pt>{}="D",
	(20,0) *++!L{\epsilon_{[i_{2},j_{i_{2}}]}}*\cir<4pt>{}="E",
	\ar@{->}"B";"A",
	\ar@{->}"B";"C",
	\ar@{->}"D";"A",
	\ar@{->}"D";"C"
	\ar@{->}"B";"E",
	\ar@{->}"D";"E",
    \end{xy}.\]
	Let us define
	\[
		\beta_{5}
		=\prod_{i\in I_{\text{irr}}\backslash\{0,1\}}s_{[i,j_{i}]}(\beta).
	\]
	Then $\mathrm{supp}(\beta_{5})$ is 
	\[\begin{xy}
		*+!R+!D{\epsilon_{[1,1]}}*\cir<4pt>{}="A",
		(10,0) *+!L+!D{\epsilon_{[0,1]}}*\cir<4pt>{}="B",
	(0,10) *++!D{\epsilon_{[1,2]}}*\cir<4pt>{}="C",
	(10,10) *++!D{\epsilon_{[0,2]}}*\cir<4pt>{}="D",
	\ar@{->}"B";"A",
	\ar@{->}"B";"C",
	\ar@{->}"D";"A",
	\ar@{->}"D";"C"
    \end{xy}.\]
	Thus we can see that $\beta_{5}$
	is an imaginary root of 
	the Euclidean root system of type $A^{(1)}_{3}$.
	
	\noindent\textbf{Case 2.2.}
	Next suppose $0\notin \mathrm{Irr\,}(\beta)$.
	We may set 
	$\mathrm{Irr\,}(\beta)=\{1,2\}$,
	\begin{align*}
	\underline{\mathbf{i}}&=([i,1])_{0\le i\le p},&
	\overline{\mathbf{i}}&=([0,1],[1,2],[2,2],[3,1],\ldots,[p,1]),\\
	\mathbf{i}_{1}&=([0,1],[1,2],[2,1],\ldots,[p,1]),&
	\mathbf{i}_{2}&=([0,1],[1,1],[2,2],[3,1]\ldots,[p,1])
	\end{align*}
	without loss of generality.
	In this case, $\beta=\Xi(\tilde{\beta})=m(2\epsilon_{[0,1]}+
	\epsilon_{[1,1]}+\epsilon_{[1,2]}+\epsilon_{[2,1]}+\epsilon_{[2,2]})
	+2m\sum_{i\in I_{\text{irr}}\backslash\{0,1,2\}}\epsilon_{[i,j_i]}$.
	If we set $I_{\text{irr}}=\{0,1,2,i_{3}\}$, then $\mathrm{supp}(\beta)$
	is 
	\[
	    \begin{xy}
	    *++!L{\epsilon_{[0,1]}}*\cir<4pt>{}="A",
	    (0,11) *++!D{\epsilon_{[i_{3},j_{i_3}]}}*\cir<4pt>{}="B",
	    (10,10)	*++!L{\epsilon_{[1,1]}}
	    *\cir<4pt>{}="D",
	    (10,-10) *++!L{\epsilon_{[1,2]}}*\cir<4pt>{}="F",
	    (-10,10) *++!R{\epsilon_{[2,1]}}*\cir<4pt>{}="G",
		(-10,-10) *++!R{\epsilon_{[2,2]}}*\cir<4pt>{}="H",

		\ar@{<-} "H";"A",
		\ar@{->}"A";"G",
		\ar@{<-}"B";"A",
		\ar@{->}"A";"D",
		\ar@{->}"A";"F",
    \end{xy}.
    \]
	Let us define 
	\[
		\beta_{6}=\prod_{i\in I_{\text{irr}}\backslash\{0,1,2\}}
		s_{[i,j_i]}(\beta).
	\]
	Then $\mathrm{supp}(\beta_{6})$ is 
	\[
	    \begin{xy}
	    *++!L{\epsilon_{[0,1]}}*\cir<4pt>{}="A",
	    (10,10)	*++!L{\epsilon_{[1,1]}}
	    *\cir<4pt>{}="D",
	    (10,-10) *++!L{\epsilon_{[1,2]}}*\cir<4pt>{}="F",
	    (-10,10) *++!R{\epsilon_{[2,1]}}*\cir<4pt>{}="G",
		(-10,-10) *++!R{\epsilon_{[2,2]}}*\cir<4pt>{}="H",

		\ar@{<-} "H";"A",
		\ar@{->}"A";"G",
		\ar@{->}"A";"D",
		\ar@{->}"A";"F",
    \end{xy}.
    \]
	Thus $\beta_{6}$ is an imaginary root of $D^{(1)}_{4}$,
	which shows that $\beta$ is an imaginary root.
\end{proof}
\begin{rem}\label{reflectionfunctor}\normalfont
	In the proof of the proposition, we see that 
	$\beta$ can be reduced to an imaginary root of a
	Euclidean root system by reflections.
	Let us note that reflections used to construct 
	$\beta_{1}$, $\beta_{4}$, $\beta_{5}$ and 
	$\beta_{6}$ are all {\em admissible}
	reflections, i.e., reflection functors of 
	Bernstein-Gel'fand-Ponomarev on the representations of 
	$\mathsf{Q}$
	associated to 
	these reflections are well-defined.
	The detail of the reflection functor can be found in \cite{Kac} for 
	example.
\end{rem}
\begin{cor}label{existencetame}
	Let us take  $\beta\in \widetilde{\Sigma}_{\mu}$ 
	and suppose that $\beta\in \tilde{F}$ and $q(\beta)=0$.
	Then $\mathrm{Rep}(\overline{\mathsf{Q}},\beta)^{\text{dif}}_{\mu}\neq 
	\emptyset$.
\end{cor}
\begin{proof}
	Since $\beta\in\widetilde{\Sigma}_{\mu}$ and $q(\beta)=0$,
	we have $\beta$ is indivisible, i.e.,
	the greatest common divisor of $(\beta_{a})_{a\in 
	\mathsf{Q}_{0}}$ is 1.
	If $\#\mathrm{Irr}(\beta)=2$, then as we see in the proof of 
	the previous proposition, 
	$\beta$ can be reduced to an indivisible imaginary root $\beta_{5}$
	or $\beta_{6}$ 
	of a Euclidean root system by reflections. 
	Thus both $\beta_{5}$ and $\beta_{6}$ are Schur roots.
	As we note in Remark \ref{reflectionfunctor}, $\beta$ is a Schur root
	of $\mathsf{Q}$ since reflection functors preserve Schur roots.
	Then the same argument in the proof of Corollary \ref{existencewild}
	shows that $\mathrm{Rep}(\overline{\mathsf{Q}},
	\beta)_{\mu}^{\text{dif}}
	\neq \emptyset$.

	Next suppose that $\#\mathrm{Irr}(\beta)\le 1$.
	Let us set $i_{0}$ to be $\mathrm{Irr}(\beta)=\{i_{0}\}$ if 
	$\#\mathrm{Irr}(\beta)= 1$ and $i_{0}=0$ otherwise.
	First suppose that $i_{0}=0$,
	or $i_{0}\neq 0$ and $\#P=2$. 
	Then $\beta$ can be reduced to $\beta_{1}$ or $\beta_{4}$
	by reflections. 
	Both  $\beta_{1}$ and $\beta_{4}$ are Schur roots. 
	Then Remark \ref{reflectionfunctor} shows that $\beta$ is 
	a Schur root of $\mathsf{Q}$ as well.

	The remaining case is $i_{0}\neq 0$ and $\#P\le 1$.
	Let us choose $\gamma_{i}\in\mathbb{C}$ for $i\in I_{\text{irr}}
	\backslash\{0\}$ so that 
	$\mu'=\mu+\sum_{i\in I_{\text{irr}}\backslash\{0\}}\gamma_{i}
	z^{(i)}$
	satisfies $\beta\in \Sigma_{\mu'}$ as in Proposition \ref{slide2}.
	Theorem 1.2 in \cite{C1} shows that 
	there exists an irreducible representation $M=(M_{a},\psi_{\rho})\in 
	\mathrm{Rep}(\overline{\mathsf{Q}},\beta)_{\mu'}$.
	We shall show that $M\in \mathrm{Rep}(\overline{\mathsf{Q}},
	\beta)_{\mu'}^{\text{dif}}$, i.e., 
	$(\psi_{\rho^{[0,j]}_{[i,j']}})_{\substack{1\le j\le m^{(0)}\\
	1\le j'\le m^{(i)}}}$ is injective for all $i\in I_{\text{irr}}
	\backslash\{0\}$.
	Thus suppose that there exists $i_{1}\in I_{\text{irr}}\backslash
	\{0\}$ such that $(\psi_{\rho^{[0,j]}_{[i_{1},j']}})
	_{\substack{1\le j\le m^{(0)}\\
	1\le j'\le m^{(i_{1})}}}$
	has nonzero element $x$ in the kernel.
	However for  $i_{2}\in I_{\text{irr}}\backslash\{i_{0},0\}$, 
	the product $(\psi_{\rho^{[0,j]}_{[i_{2},j_{i_{2}}]}})
	_{1\le j\le m^{(0)}}(\psi_{(\rho^{[0,j]}_{[i_{2},j_{i_{2}}]})^{*}})
	_{1\le j\le m^{(0)}}$
	is a scalar operator which is nonzero from the 
	irreducibility.  
	Therefore we have $\mathrm{Ker}
	(\psi_{\rho^{[0,j]}_{[i_{2},j']}})_{\substack{1\le j\le m^{(0)}\\
	1\le j'\le m^{(i_{2})}}}=
	\mathrm{Ker}(\psi_{\rho^{[0,j]}_{[i_{2},j_{i_{2}}]}})
	_{1\le j\le m^{(0)}}=\{0\}$. 
	Thus $i_{1}=i_{0}$. 
	
	Set $M_{[i,j_{i}]}\ni x_{i,j_{i}}=(\psi_{\rho^{[0,j]}_{[i,j_{i}]}})
	_{1\le j\le m^{(0)}}(x)$ for $i\in I_{\text{irr}}
	\backslash\{0,i_{0}\}$.
	If $\#P=1$, we set $P=\{i_{3}\}$,  
	$x_{i_{3},j_{i_{3}},1}=\psi_{\rho_{[i_{3},j_{i_{3}},1]}^{*}}(x)$ and 
	$x_{i_{3},j_{i_3},k}=\psi_{\rho_{[i_{3},j_{i_{3}},k]}^{*}}
	(x_{i_{3},j_{i_{3}},k-1})$
	for $k=1,\ldots,e_{[i_{3},j_{i_{3}}]}-1$.
	Then the same argument in Theorem \ref{irreducibleandquasi}
	shows that if$\#P=1$ (resp. $\#P=0$), 
	these $x,x_{i,j},x_{i,j,k}$ (resp. $x,x_{i,j}$) generate a 
	subrepresentation of $M$.
\end{proof}
\fi
Combining Corollary \ref{wildroot} and the proof in Theorem 
\ref{existencetame},
we have the following theorem.
\begin{thm}\label{quasiisroots}
	The $\mathcal{L}$-fundamental set is a subset of the set 
	of positive imaginary roots of $\mathsf{Q}$.
\end{thm}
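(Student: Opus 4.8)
The plan is to read off the statement directly from the case analysis of the two preceding subsections. The first step is to recall the reduction already made: for every $\beta \in \tilde{F}$ one has $q(\beta) \le 0$ (this is the corollary to Lemma \ref{inverseroot}, obtained by lifting $\beta$ to $\tilde\beta \in \mathcal{M}^{+}$ with $(\tilde\beta, c_a) \le 0$ for all $a \in \mathcal{J} \cup \mathsf{Q}_0 \setminus \mathsf{Q}_0'$ and invoking the standard Kac--Moody inequality, cf. the proof of Proposition 5.2 in \cite{Kacbook}). Hence it suffices to prove that each $\beta \in \tilde{F}$ is a \emph{positive root} of $\mathsf{Q}$: since by definition $\tilde{F} \subset \mathcal{L}^{+} \subset (\mathbb{Z}_{\ge 0})^{\mathsf{Q}_0}$ has connected support, and since $q(\beta) \le 0$ rules out $\beta$ being a real root (real roots satisfy $q = 1$, as recalled in Section \ref{reviewoffuchs}), being a positive root with $q(\beta) \le 0$ is exactly the same as being a positive imaginary root, and the theorem follows.

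The second step is to dispose of the two cases, which have already been handled. For $\beta \in \tilde{F}$ with $q(\beta) < 0$ (the wild case), Corollary \ref{wildroot} gives that $\beta$ is a positive root of $\mathsf{Q}$. For $\beta \in \tilde{F}$ with $q(\beta) = 0$ (the tame case), Proposition \ref{tameroot} gives the same conclusion. By the observation $q(\beta) \le 0$ these two cases exhaust $\tilde{F}$, so every element of the quasi-fundamental set is a positive root, and therefore a positive imaginary root of $\mathsf{Q}$.

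There is essentially no obstacle remaining at this stage: the substantive work lies in the two cited results, namely in Corollary \ref{wildroot} (via Proposition \ref{canonicaldecomposition} on the generic decomposition together with Kac's theorem from \cite{Kac}) and in Proposition \ref{tameroot} (via the classification of $\Xi_{\beta}^{-1}(\beta)$ from \cite{HO} and the reduction of $\beta$ to an imaginary root of a Euclidean root system by admissible reflections, as in Remark \ref{reflectionfunctor}). If any point deserves care it is bookkeeping: checking that the hypotheses on $\beta$ used in Corollary \ref{wildroot} and Proposition \ref{tameroot} are precisely those packaged in the definition of $\tilde{F}$, and confirming once more that in the quiver root system "positive root with $q \le 0$" coincides with "positive imaginary root".
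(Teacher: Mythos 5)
Your proposal is correct and matches the paper's own argument: the paper proves Theorem \ref{quasiisroots} exactly by combining Corollary \ref{wildroot} (the wild case $q(\beta)<0$) with Proposition \ref{tameroot} (the tame case $q(\beta)=0$), the two cases being exhaustive by the corollary to Lemma \ref{inverseroot}. Your additional remark that a positive root with $q\le 0$ is automatically imaginary is the right (and only) bookkeeping point needed.
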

Also Corollary \ref{existencewild} and Theorem \ref{existencetame} show the 
following theorem.
\begin{thm}\label{nonempty}
	For $\beta\in \widetilde{\Sigma}_{\nu}\cap \tilde{F}$,  
	we have $\mu_{\beta}^{-1}(\nu)^{\text{dif}}\neq 
	\emptyset$.
\end{thm}
\section{Existence of $\mathcal{L}$-irreducible representations}\label{proofofmain}
In this section we shall give a proof of our main theorem.
First we recall some basic properties of real roots of 
$\mathsf{Q}$ in $\mathcal{L}^{+}$.
The following lemma shows that 
$\epsilon_{\mathbf{i}}$
behaves as a simple root in $\widetilde{R}^{+}:=
\Delta^{+}\cap \mathcal{L}^{+}$
for each $\mathbf{i}\in\mathcal{J}$.
\begin{lem}\label{negaposi}
	If we have $s_{\mathbf{i}}(\alpha)\notin 
	\widetilde{R}^{+}$ for some $\mathbf{i}\in \mathcal{J}$ and $\alpha
	\in \widetilde{R}^{+}$, then $\alpha =\epsilon_{\mathbf{i}}$.
\end{lem}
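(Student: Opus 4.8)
The plan is to mimic the classical argument that a simple reflection sends every positive root other than the corresponding simple root to a positive root, transported to the setting of $\epsilon_{\mathbf{i}}$ and $\widetilde{R}^{+}=\mathcal{L}\cap\Delta^{+}$. First I would record two bookkeeping facts. The reflection $s_{\mathbf{i}}$ preserves the sublattice $\mathcal{L}$: since $\epsilon_{\mathbf{i}}\in\mathcal{L}$ (its only nonzero coordinates are $1$ at each $[i,j_{i}]$, $i\in I_{\text{irr}}$, and this clearly satisfies the defining relations of $\mathcal{L}$), the formula $s_{\mathbf{i}}(\beta)=\beta-(\beta,\epsilon_{\mathbf{i}})\epsilon_{\mathbf{i}}$ keeps $\mathcal{L}$ invariant. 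Moreover $s_{\mathbf{i}}$ lies in the Weyl group $W$ of $\mathsf{Q}$: by Proposition \ref{modifiedreflection} it is a product of the fundamental reflections $s_{[i,j_{i}]}$, $i\in I_{\text{irr}}\setminus\{0\}$, and $s_{[0,j_{0}]}$, each of which is a genuine fundamental reflection because $\mathsf{Q}$ has no loops. Hence $s_{\mathbf{i}}$ permutes the root system $\Delta$ of $\mathsf{Q}$.

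Now suppose $\alpha\in\widetilde{R}^{+}$ and $s_{\mathbf{i}}(\alpha)\notin\widetilde{R}^{+}$. Since $\alpha\in\mathcal{L}\cap\Delta^{+}$, the preliminary facts give $s_{\mathbf{i}}(\alpha)\in\mathcal{L}\cap\Delta$, so $s_{\mathbf{i}}(\alpha)$ must be a negative root; in particular every coordinate of $s_{\mathbf{i}}(\alpha)$ is $\le 0$. On the other hand $\epsilon_{\mathbf{i}}$ is supported only on $\{[i,j_{i}]\mid i\in I_{\text{irr}}\}$, so for every vertex $a$ outside this set the $a$-coordinate of $s_{\mathbf{i}}(\alpha)$ equals $\alpha_{a}\ge 0$; therefore $\alpha_{a}=0$ there. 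Thus $\alpha$ is supported on $\{[i,j_{i}]\mid i\in I_{\text{irr}}\}$, i.e.\ $\alpha=\sum_{i\in I_{\text{irr}}}\alpha_{[i,j_{i}]}\epsilon_{[i,j_{i}]}$, and the defining relations of $\mathcal{L}$ then force $\alpha_{[0,j_{0}]}=\alpha_{[i,j_{i}]}$ for all $i\in I_{\text{irr}}\setminus\{0\}$. Hence $\alpha=d\,\epsilon_{\mathbf{i}}$ for a single integer $d$, and $d\ge 1$ since $\alpha$ is a nonzero positive root.

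Finally, $\epsilon_{\mathbf{i}}$ is a positive \emph{real} root of $\mathsf{Q}$, so the only positive root proportional to it is $\epsilon_{\mathbf{i}}$ itself: a multiple $d\epsilon_{\mathbf{i}}$ with $d\ge 2$ has self-pairing $2d^{2}\neq 2$, so it cannot be $W$-conjugate to a fundamental root, and it fails the sign condition $(\cdot,\epsilon_{a})\le 0$ needed to be an imaginary root, so it is not a root at all. Therefore $d=1$ and $\alpha=\epsilon_{\mathbf{i}}$, which is the assertion. I do not expect a genuine obstacle here; the only point needing care is the first paragraph, verifying cleanly that $s_{\mathbf{i}}$ acts on both $\Delta$ and $\mathcal{L}$, after which the support/indivisibility argument is routine.
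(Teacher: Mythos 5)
Your proof is correct, but it takes a different route from the paper's. The paper proves the lemma by factoring $s_{\mathbf{i}}$ as $\bigl(\prod_{i\in I_{\text{irr}}\setminus\{0\}}s_{[i,j_{i}]}\bigr)\circ s_{[0,j_{0}]}\circ\bigl(\prod_{i\in I_{\text{irr}}\setminus\{0\}}s_{[i,j_{i}]}\bigr)$ via Proposition \ref{modifiedreflection} and then running a three-stage case analysis, asking at which stage the image $\alpha_{1},\alpha_{2},\alpha_{3}$ first leaves $\widetilde{R}^{+}$; at each stage the classical fact that a fundamental reflection negates only its own simple root forces the intermediate vector to be $\pm\epsilon_{[i_{0},j_{i_{0}}]}$ or $\pm\epsilon_{[0,j_{0}]}$, and membership in $\mathcal{L}$ rules out all cases except $\alpha=\epsilon_{\mathbf{i}}$. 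You instead treat $s_{\mathbf{i}}$ as a single reflection in the real root $\epsilon_{\mathbf{i}}$ and transplant the classical support argument wholesale: coordinates off $\supp(\epsilon_{\mathbf{i}})$ are fixed, hence vanish, $\mathcal{L}$ forces $\alpha=d\,\epsilon_{\mathbf{i}}$, and indivisibility of real roots gives $d=1$. Your version buys a cleaner logical structure — in particular you only ever apply $s_{\mathbf{i}}$ itself, which genuinely preserves $\mathcal{L}$, whereas the paper's intermediate vectors $\alpha_{1},\alpha_{2}$ need not lie in $\mathcal{L}$ and the paper has to be (and is somewhat loosely) careful about what ``$\notin\widetilde{R}^{+}$'' means at those stages. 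The cost is the extra final step showing $d\epsilon_{\mathbf{i}}$ is not a root for $d\ge 2$ (via $(d\epsilon_{\mathbf{i}},d\epsilon_{\mathbf{i}})=2d^{2}$, which excludes both real and imaginary roots), a step the paper's stagewise argument never needs. Both proofs rest on Proposition \ref{modifiedreflection} and on $\epsilon_{\mathbf{i}}$ being a positive real root, so the ingredients coincide even though the mechanisms differ.
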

\begin{proof}
	Let us take $\alpha\in \widetilde{R}^{+}$ and $\mathbf{i}=([i,j_{i}])
	\in 
	\mathcal{J}$ as above.
	By Proposition \ref{modifiedreflection}, 
	$s_{\mathbf{i}}(\alpha)=\left(\prod_{i\in I_{\text{irr}}\backslash\{0\}}
	s_{[i,j_{i}]}\right)
	\circ
	s_{[0,j_{0}]}\circ
	\left(\prod_{i\in I_{\text{irr}}\backslash\{0\}}
	s_{[i,j_{i}]}\right)(\alpha)
	$.
	
	Suppose that 
	$\alpha_{1}=\left(\prod_{i\in I_{\text{irr}}\backslash\{0\}}
	s_{[i,j_{i}]}\right)(\alpha)\notin \Delta^{+}$.
	Then there exists $i_{0}\in I_{\text{irr}}\backslash\{0\}$ 
	such that $\alpha_{1}=-\epsilon_{[i_{0},j_{i_{0}}]}$
	since $s_{[i,j_{i}]}$ for $i\in I_{\text{irr}}\backslash\{0\}$
	are commutative.
	This implies that $\alpha=\epsilon_{[i_{0},j_{i_{0}}]}\notin
	\mathcal{L}$ which contradicts to $\alpha\in \widetilde{R}^{+}$.

	Next suppose that 
	$\alpha_{2}=s_{[0,j_{0}]}\circ
	\left(\prod_{i\in I_{\text{irr}}\backslash\{0\}}
	s_{[i,j_{i}]}\right)(\alpha)\notin \Delta^{+}$.
	Then $\alpha_{2}=-\epsilon_{[0,j_{0}]}$ which
	shows that $\alpha=\epsilon_{\mathbf{i}}$ and 
	$s_{\mathbf{i}}(\alpha)=-\epsilon_{\mathbf{i}}$.

	Finally suppose that $\alpha_{2}\in \Delta^{+}$.
	Then $\alpha_{3}=s_{\mathbf{i}}(\alpha) 
	=-\epsilon_{[i_{0},j_{i_{0}}]}$ for some $i_{0}\in I_{\text{irr}}
	\backslash\{0\}$, which shows that 
	$\alpha\notin \widetilde{R}^{+}$ as above.
	This is a contradiction.
\end{proof}

\begin{lem}\label{Katzalgorithm}
	Take $\beta\in \widetilde{R}^{+}$ satisfying
	$\sum_{j=1}^{m_{i}}\beta_{[i,j]}>0
	$ for all $i\in I_{\text{irr}}$.
	Let us suppose that 
	there exists $a\in \mathcal{J}\cup \mathsf{Q}_{0}^{\text{leg}}$ 
	such that 
	$\beta'=s_{a}(\beta)$ satisfies that $\beta'\in \widetilde{R}^{+}$ and 
	$\beta'_{[i,j]}=0$ for all $i\in I_{\text{irr}}$ and 
	$j=1,\ldots,m_{i}$.
	Then there exist $\mathbf{i}=([i,j_{i}])_{0\le i\le p}
	\in \mathcal{J}$, $i_{0}\in \{0,\ldots,p\}$ 
	and $l\in \{1,\ldots,e_{[i_{0},j_{i_{0}}]}-1\}$ such that 
	$a=\mathbf{i}$ and 
	$\beta=\epsilon_{\mathbf{i}}+\epsilon_{[i_{0},j_{i_{0}},1]}+
	\cdots +\epsilon_{[i_{0},j_{i_{0}},l]}$.
	In this case $\beta$ is a real root.
\end{lem}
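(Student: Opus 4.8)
The plan is to determine, step by step, which $a$ can occur and then to read off $\beta$ completely. First I would rule out $a$ being an arm vertex $[i,j,k]\in\mathsf{Q}_0\backslash\mathsf{Q}_0'$: the reflection $s_{[i,j,k]}$ changes only the $[i,j,k]$-coordinate, so $\beta'$ and $\beta$ agree on all of $\mathsf{Q}_0'$, and then the hypothesis $\beta'_{[i,j]}=0$ would give $\sum_{j}\beta_{[i,j]}=0$ for every $i\in I_{\text{irr}}$, contradicting $\sum_{j}\beta_{[i,j]}>0$. Hence $a=\mathbf{i}$ for some $\mathbf{i}=([i,j_i])_{0\le i\le p}\in\mathcal{J}$. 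Using $s_{\mathbf{i}}(\beta)=\beta-(\beta,\epsilon_{\mathbf{i}})\epsilon_{\mathbf{i}}$ together with $\supp(\epsilon_{\mathbf{i}})\cap\mathsf{Q}_0'=\{[i,j_i]\mid i\in I_{\text{irr}}\}$, the vanishing of all $\beta'_{[i,j]}$ forces $\beta_{[i,j]}=0$ for $j\ne j_i$ and $\beta_{[i,j_i]}=n$ for every $i\in I_{\text{irr}}$, where $n:=(\beta,\epsilon_{\mathbf{i}})$; since $\beta\in\mathcal{L}$ and the hypothesis gives $n=\sum_j\beta_{[0,j]}=\beta_{[0,j_0]}>0$, we have $n\ge 1$, and $\mathbf{i}$ is the unique tuple cutting out $\supp(\beta)\cap\mathsf{Q}_0'$.

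Next I would analyse $\beta':=s_{\mathbf{i}}(\beta)=\beta-n\epsilon_{\mathbf{i}}$. Because $\epsilon_{\mathbf{i}}$ has no arm-vertex component, $\beta'$ is supported entirely inside $\mathsf{Q}_0\backslash\mathsf{Q}_0'$, with nonnegative coordinates $\beta'_{[i_0,j,k]}=\beta_{[i_0,j,k]}$. Since $\beta$ is a positive root and $s_{\mathbf{i}}\in W$ by Proposition \ref{modifiedreflection}, $\beta'$ is a nonzero root of $\mathsf{Q}$, hence has connected support. The full subquiver on $\mathsf{Q}_0\backslash\mathsf{Q}_0'$ is a disjoint union of type $A$ chains $[i,j,1]-[i,j,2]-\cdots$, each joined to $\mathsf{Q}_0'$ only at its bottom vertex $[i,j,1]$ (to $[i,j]$ if $i\in I_{\text{irr}}$, to every $[0,j']$ if $i\in I_{\text{reg}}$). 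Connectedness of $\supp(\beta)=\supp(\epsilon_{\mathbf{i}})\cup\supp(\beta')$ then forces $\supp(\beta')$ to be an initial segment $\{[i_0,j_{i_0},1],\ldots,[i_0,j_{i_0},l]\}$ of a single chain, with chain index necessarily $j_{i_0}$ so that the bottom vertex meets $\supp(\epsilon_{\mathbf{i}})$ (automatic when $i_0\in I_{\text{reg}}$, as $[i_0,1,1]$ is joined to $[0,j_0]$). On such a path the quadratic form equals $q(c_1,\ldots,c_l)=\tfrac12\bigl(c_1^2+\sum_{k}(c_k-c_{k+1})^2+c_l^2\bigr)$, so for nonzero nonnegative integer vectors with this connected support one has $q\ge 1$, with equality exactly at the all-ones vector; since $q(\beta')=q(\beta)\le 1$ (real roots have $q=1$, imaginary roots $q\le 0$), it follows that $\beta'=\epsilon_{[i_0,j_{i_0},1]}+\cdots+\epsilon_{[i_0,j_{i_0},l]}$, and $1\le l\le e_{[i_0,j_{i_0}]}-1$ since $\beta'\ne 0$.

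Finally I would compute $n$. Writing $\gamma=\epsilon_{[i_0,j_{i_0},1]}+\cdots+\epsilon_{[i_0,j_{i_0},l]}$, a direct calculation gives $(\epsilon_{\mathbf{i}},\epsilon_{\mathbf{i}})=2$, $(\gamma,\gamma)=2$, and $(\epsilon_{\mathbf{i}},\gamma)=(\epsilon_{\mathbf{i}},\epsilon_{[i_0,j_{i_0},1]})=-1$ (the bottom arm vertex is joined to exactly one vertex of $\supp(\epsilon_{\mathbf{i}})$ by a single arrow). Hence $q(\beta)=q(n\epsilon_{\mathbf{i}}+\gamma)=n^2-n+1$; but $q(\beta)=q(\beta')=q(\gamma)=1$, so $n^2=n$, i.e. $n=1$. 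Therefore $\beta=\epsilon_{\mathbf{i}}+\epsilon_{[i_0,j_{i_0},1]}+\cdots+\epsilon_{[i_0,j_{i_0},l]}$, and since $\beta=s_{\mathbf{i}}(\gamma)$ lies in the $W$-orbit of the real root $\gamma$ (equivalently $q(\beta)=1$), $\beta$ is a real root.

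The main obstacle I anticipate is the combinatorial bookkeeping in the second paragraph: correctly excluding arm chains that hang off a ``wrong'' vertex $[i,j]$ with $j\ne j_i$, handling the chains attached to several $[0,j]$ at once when $i\in I_{\text{reg}}$, and tracking which arrows survive in $\supp(\beta)$ so that the positive-definiteness and minimality statements for the type $A$ form really apply. Once the shape $n\epsilon_{\mathbf{i}}+\gamma$ is in hand, identifying $n=1$ and the real-root conclusion are immediate.
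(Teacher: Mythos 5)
Your argument is correct and in substance matches the paper's (very terse) proof: both identify $a$ as an element of $\mathcal{J}$ by noting that a reflection at an arm vertex cannot repopulate the $\mathsf{Q}_0'$-coordinates, both recognize $\beta'$ as a type $A$ positive root supported on a single arm (hence an all-ones segment), and both use connectivity together with $(\epsilon_{\mathbf{i}},\epsilon_{[i_0,j_{i_0},1]})=-1$ to force the segment to be the initial one on the chain attached at $[i_0,j_{i_0}]$. The only difference is direction: the paper reconstructs $\beta=s_{\mathbf{i}}(\beta')=\beta'+\epsilon_{\mathbf{i}}$ from $\beta'$, so the coefficient of $\epsilon_{\mathbf{i}}$ is automatically $1$, whereas you decompose $\beta=n\epsilon_{\mathbf{i}}+\gamma$ and pin down $n=1$ via the quadratic form — a harmless bit of extra bookkeeping.
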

\begin{proof}
	From the assumption, $\mathrm{supp}(\beta')\subset 
	\mathsf{Q}_{0}^{\text{leg}}$.
	Since $\beta'\in \widetilde{R}^{+}$, there exist $i_{0}\in 
	\{0,\ldots,p\}$, $j^{i_{0}}\in 
	\{1,\ldots,m_{i_{0}}\}$ and $1\le k<l \le m_{i_{0}}$ such that 
	$\beta'=\epsilon_{[i_{0},j^{i_{0}},k]}+\cdots 
	+\epsilon_{[i_{0},j^{i_{0}},l]}$.
	From the assumption, there exists $a\in \mathcal{J}\cup
	\mathsf{Q}_{0}^{\text{leg}}$ such that
	$s_{a}(\beta')=\beta$ and $\sum_{j=1}^{m^{(i)}}\beta_{[i,j]}>0$.
	Thus there exists $\mathbf{i}=([i,j_{i}])\in \mathcal{J}$
	such that $j_{i_{0}}=j^{i_{0}}$ and $a=\mathbf{i}$.
	Also it follows that $k=1$.
\end{proof}
\begin{lem}\label{existsimple}
	Let us take $\beta=\epsilon_{\mathbf{i}}$ or 
	$\beta=\epsilon_{\mathbf{i}}+\epsilon_{i_{0},j_{i_{0}},1}+
	\cdots +\epsilon_{i_{0},j_{i_{0}},l}$
	for some $\mathbf{i}=([i,j_{i}])_{0\le i\le p}
	\in \mathcal{J}$, $i_{0}\in \{0,\ldots,p\}$ 
	and $l\in \{1,\ldots,e_{[i_{0},j_{i_{0}}]}-1\}$.
	Then we have $\mu^{-1}_{\beta}(\nu)^{\text{dif}}\neq \emptyset$ for any 
	$\nu\in \mathbb{C}^{\mathsf{Q}_{0}}$ satisfying 
	$\beta\in\widetilde{\Sigma}_{\nu}$.
\end{lem}
\begin{proof}
	Applying the  operator $T_{(i_{0},0)}$ defined in Section 
	\ref{operations} if necessary, we may assume $i_{0}=0$.
	Let us choose $\gamma_{i}\in \mathbb{C}$ for all $i\in I_{\text{irr}}
	\backslash\{0\}$ such that $\beta\in \Sigma_{\nu'}$ where 
	$\nu':=\nu+\sum_{i\in I_{\text{irr}}\backslash\{0\}}\gamma_{i}z^{(i)}$
	as in Proposition \ref{slide2}.
	Note that that  $\nu'_{[i,j_{i}]}\neq 0$ for all $i\in 
I_{\text{irr}}\backslash\{0\}$. 
Indeed if there exists $i_{1}\in I_{\text{irr}}\backslash\{0\}$
such that $\nu'_{[i_{1},j_{i_{1}}]}=0$, then we have the decomposition
$\beta=\epsilon_{[i_{1},j_{i_{1}}]}+(\beta-\epsilon_{[i_{1},j_{i_{1}}]})$
such that 
$\nu'\cdot \epsilon_{[i_{1},j_{i_{1}}]}=0$, 
$\nu'\cdot (\beta-\epsilon_{[i_{1},j_{i_{1}}]})=0$, 
$\epsilon_{[i_{1},j_{i_{1}}]},(\beta-\epsilon_{[i_{1},j_{i_{1}}]})
\in \Delta^{+}$, and $p(\epsilon_{[i_{1},j_{i_{1}}]})
=p(\beta-\epsilon_{[i_{1},j_{i_{1}}]})=0$.
 This contradicts to the assumption $\beta\in \Sigma_{\nu'}$.
Thus we obtain that any $x\in \mu^{-1}_{\beta}(\nu)$ satisfies 
that 
$\mathrm{det}(x_{\rho^{[0,j]}_{[i,j']}})_{
\substack{1\le j\le m_{0}\\1\le j'\le m_{i}}}=
x_{\rho^{[0,j_{0}]}_{[i_{j_{i}}]}}\neq 0$ 
for all $i \in I_{\text{irr}}\backslash\{0\}$ because 
$x_{\rho^{[0,j_{0}]}_{[i,j_{i}]}}x_{(\rho^{[0,j_{0}]}_{[i,j_{i}]})^{*}}=
\nu_{[i,j_{i}]}\neq 0$.
Therefore $\mu^{-1}_{\beta}(\nu')^{\text{dif}}=\mu^{-1}_{\beta}(\nu')^{\text{irr}}
\neq \emptyset$ since $\beta\in \Sigma_{\nu'}$. 
Applying $\left(\prod_{i\in I_{\text{irr}}\backslash\{0\}}\mathrm{add}^{(i)}_{
\gamma_{i}z}\right)^{-1}$, we obtain $\mu_{\beta}^{-1}(\nu)^{\text{dif}}\neq 
\emptyset$.
\end{proof}

Now let us study the structure of $\widetilde{\Sigma}_{\mu}$.
Let $\sim$ be the smallest equivalence relation on $\mathcal{S}$
with $(\alpha,\lambda)\sim s_{a}(\alpha,\lambda)$ for 
$a\in \mathcal{J}\cup \mathsf{Q}_{0}^{\text{leg}}$ 
whenever $s_{a}$ is 
defined.
We write $\mathbb{N}\widetilde{R}^{+}_{\lambda}$ for the set 
of sums of elements of $\widetilde{R}^{+}_{\lambda}$ including 0.
Then the following statements are obtained by the same arguments in 
\cite{C1}.

\begin{lem}[cf. Lemma 5.1 in \cite{C1}]
	Given any pair $(\alpha,\lambda)$ with $\alpha\in \mathbb{N}
	\widetilde{R}^{+}_{\lambda}$, if $a\in \mathcal{J}\cup 
	\mathsf{Q}_{0}^{\text{leg}}$ with $\lambda_{a}=0$ and 
	$(\alpha,\epsilon_a)>0$, then $\alpha-\epsilon_a\in 
	\mathbb{N}\widetilde{R}^{+}_{\lambda}$.
\end{lem}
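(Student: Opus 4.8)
The plan is to run Crawley-Boevey's proof of Lemma 5.1 in \cite{C1}, substituting the modified reflection $s_{\mathbf{i}}$ for an honest simple reflection whenever the index $a$ lies in $\mathcal{J}$. Write $\alpha=\beta_{1}+\cdots+\beta_{r}$ with each $\beta_{t}\in\widetilde{R}^{+}_{\lambda}$. If $\beta_{t}=\epsilon_{a}$ for some $t$ then $\alpha-\epsilon_{a}=\sum_{s\neq t}\beta_{s}\in\mathbb{N}\widetilde{R}^{+}_{\lambda}$ and we are finished, so we may assume $\beta_{t}\neq\epsilon_{a}$ for every $t$. Since $(\alpha,\epsilon_{a})=\sum_{t}(\beta_{t},\epsilon_{a})>0$, fix an index $t$ with $(\beta_{t},\epsilon_{a})>0$.

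The core of the argument is to check that $s_{a}(\beta_{t})\in\widetilde{R}^{+}_{\lambda}$ and $\epsilon_{a}\in\widetilde{R}^{+}_{\lambda}$; granting this,
\[
	\alpha-\epsilon_{a}=s_{a}(\beta_{t})+\bigl((\beta_{t},\epsilon_{a})-1\bigr)\epsilon_{a}+\sum_{s\neq t}\beta_{s}
\]
displays $\alpha-\epsilon_{a}$ as a sum of elements of $\widetilde{R}^{+}_{\lambda}$, the coefficient $(\beta_{t},\epsilon_{a})-1$ being a nonnegative integer. To see $\epsilon_{a}\in\widetilde{R}^{+}_{\lambda}$: in both cases $\epsilon_{a}$ is a positive real root of $\mathsf{Q}$ (a fundamental root when $a=[i,j,k]\in\mathsf{Q}_{0}\backslash\mathsf{Q}_{0}'$, and a positive real root recorded before Proposition \ref{modifiedreflection} when $a=\mathbf{i}\in\mathcal{J}$); it lies in $\mathcal{L}$ since its $[i,j]$-coordinates either all vanish or equal $1$ precisely on the chosen vertices $[i,j_{i}]$ with $i\in I_{\text{irr}}$, so the defining equalities of $\mathcal{L}$ hold; and $\epsilon_{a}\cdot\lambda=\lambda_{a}=0$ by hypothesis. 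To see $s_{a}(\beta_{t})\in\widetilde{R}^{+}_{\lambda}$: because $\mathcal{L}$ is a sublattice containing $\epsilon_{a}$ and $\beta_{t}$, we get $s_{a}(\beta_{t})=\beta_{t}-(\beta_{t},\epsilon_{a})\epsilon_{a}\in\mathcal{L}$; since $\lambda_{a}=0$ we get $s_{a}(\beta_{t})\cdot\lambda=\beta_{t}\cdot\lambda-(\beta_{t},\epsilon_{a})\lambda_{a}=0$; and $s_{a}(\beta_{t})$ is again a positive root of $\mathsf{Q}$ --- for an ordinary vertex this is the standard fact that a fundamental reflection permutes the positive roots other than $\epsilon_{a}$, and for $a=\mathbf{i}$ it is exactly Lemma \ref{negaposi} applied to $\beta_{t}\neq\epsilon_{\mathbf{i}}$ (recall from Proposition \ref{modifiedreflection} that $s_{\mathbf{i}}$ is a genuine element of the Weyl group of $\mathsf{Q}$, hence sends roots to roots).

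The only point requiring care is the case $a=\mathbf{i}\in\mathcal{J}$, where $s_{\mathbf{i}}$ is not a simple reflection of $\mathsf{Q}$ but the product of three of them; here Lemma \ref{negaposi} is precisely the input that supplies the ``simple-root-like'' behaviour $s_{\mathbf{i}}\bigl(\widetilde{R}^{+}\setminus\{\epsilon_{\mathbf{i}}\}\bigr)\subset\widetilde{R}^{+}$, after which the argument is the verbatim analogue of \cite{C1}. One should also note that $s_{\mathbf{i}}$ is being used here only as a linear map on $\mathbb{Z}^{\mathsf{Q}_{0}}$, given by $\beta\mapsto\beta-(\beta,\epsilon_{\mathbf{i}})\epsilon_{\mathbf{i}}$, so it is available regardless of whether the corresponding transformation of $\mathcal{S}$ (defined only when $\lambda_{\mathbf{i}}\neq0$) makes sense.
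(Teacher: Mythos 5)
Your proof is correct and follows essentially the same route as the paper's: decompose $\alpha$ into elements of $\widetilde{R}^{+}_{\lambda}$, dispose of the case where some summand equals $\epsilon_{a}$, use Lemma \ref{negaposi} (resp.\ the standard simple-reflection fact) to see that reflection preserves membership in $\widetilde{R}^{+}_{\lambda}$, and then add back copies of $\epsilon_{a}\in\widetilde{R}^{+}_{\lambda}$. The only (harmless) difference is that you reflect a single summand $\beta_{t}$ with $(\beta_{t},\epsilon_{a})>0$, whereas the paper reflects the whole sum $s_{a}(\alpha)=\sum_{t}s_{a}(\gamma^{(t)})$; and you spell out the verifications ($\epsilon_{a}\in\mathcal{L}$, $s_{a}(\beta_{t})\in\mathcal{L}$, orthogonality to $\lambda$) that the paper leaves implicit.
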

\begin{proof}
	The proof is just an analogy of the proof of Lemma 5.1 in \cite{C1}.
	We can write $\alpha=\sum_{t=1}^{r}\gamma^{(t)}$ as
	a sum of positive roots. If any $\gamma^{(t)}$ is equal to
	$\epsilon_{a}$, then we are done.
	Otherwise all $s_{a}(\gamma^{(t)})$ are positive roots 
	by Lemma \ref{negaposi}, so in $\widetilde{R}^{+}_{\lambda}$.
	Thus $s_{a}(\alpha)=\alpha-(\alpha,\epsilon_{a})\epsilon_{a}
	\in \mathbb{N}\widetilde{R}^{+}_{\lambda}$. Then
	adding on a suitable number of copies of $\epsilon_{a}
	\in \widetilde{R}^{+}_{\lambda}$, it follows that $\alpha-
	\epsilon_{a}\in \mathbb{N}\widetilde{R}^{+}_{\lambda}$.
\end{proof}
\begin{lem}[cf. Lemma 5.2 in \cite{C1}]\label{lem0}
	If $(\alpha,\lambda)\sim (\alpha',\lambda')$ then
	\begin{enumerate}
		\item $\alpha\in \widetilde{R}^{+}_{\lambda}$ if and only if 
			$\alpha'\in \widetilde{R}^{+}_{\lambda'}$,
		\item $\alpha\in \mathbb{N}\widetilde{R}^{+}_{\lambda}$
			if and only if 
			$\alpha'\in \mathbb{N}\widetilde{R}^{+}_{\lambda'}$,
		\item $\alpha\in \widetilde{\Sigma}_{\lambda}$
			if and only if 
			$\alpha'\in \widetilde{\Sigma}_{\lambda'}$.
	\end{enumerate}
\end{lem}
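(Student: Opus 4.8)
This is the exact analogue of Lemma~5.2 in \cite{C1}, and the plan is to imitate Crawley--Boevey's proof, which here becomes slightly shorter because a reflection $s_{a}$ (for $a\in\mathcal{J}\cup\mathsf{Q}_{0}\backslash\mathsf{Q}_{0}'$) is declared to act on $\mathcal{S}$ only when the corresponding coordinate of the parameter is nonzero. First I would reduce to a single generating move. Since $\sim$ is by definition the smallest equivalence relation generated by the moves $(\beta,\mu)\mapsto s_{a}(\beta,\mu)$, the relation $(\alpha,\lambda)\sim(\alpha',\lambda')$ means the two pairs are joined by a finite chain of such moves, and each $s_{a}$ is an involution on (its domain of definition in) $\mathcal{S}$, as one checks directly from the formulas: for $a=\mathbf{i}$ the new parameter satisfies $\mu'_{\mathbf{i}}=-\mu_{\mathbf{i}}$, so that $s_{\mathbf{i}}$ remains defined at $s_{\mathbf{i}}(\alpha,\lambda)$ and applying it again restores $(\alpha,\lambda)$; for $a=[i,j,k]$ it is the ordinary reflection $r_{[i,j,k]}$. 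Hence it suffices to prove, for one move with $s_{a}$ defined, that each of the three properties is preserved in the direction ``$(\alpha,\lambda)\Rightarrow(\alpha',\lambda')$''; the opposite implication follows by running the same move starting from $(\alpha',\lambda')$.

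The key point I would isolate first is that \emph{whenever $\gamma\in\widetilde{R}^{+}_{\lambda}$ and $s_{a}$ is defined at $(\,\cdot\,,\lambda)$, one has $\gamma\neq\epsilon_{a}$}: indeed $\epsilon_{\mathbf{i}}\cdot\lambda=\sum_{i\in I_{\text{irr}}}\lambda_{[i,j_{i}]}=\mu_{\mathbf{i}}$ and $\epsilon_{[i,j,k]}\cdot\lambda=\lambda_{[i,j,k]}$, so $\gamma=\epsilon_{a}$ together with $\gamma\cdot\lambda=0$ would contradict the definedness of $s_{a}$. Granting this, Lemma~\ref{negaposi} (for $a=\mathbf{i}$), together with the fact that an ordinary fundamental reflection permutes $\Delta^{+}\backslash\{\epsilon_{a}\}$ and that $s_{[i,j,k]}$ preserves $\mathcal{L}$ because $\epsilon_{[i,j,k]}\in\mathcal{L}$ (for $a=[i,j,k]$), shows $s_{a}(\gamma)\in\widetilde{R}^{+}$ for every $\gamma\in\widetilde{R}^{+}_{\lambda}$; and since $s_{a}$ maps $\mathcal{S}$ into $\mathcal{S}$ (Propositions~\ref{middletoroot} and~\ref{changeorder}), $s_{a}(\gamma)\cdot\lambda'=0$, i.e.\ $s_{a}(\gamma)\in\widetilde{R}^{+}_{\lambda'}$. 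Taking $\gamma=\alpha$ gives (1).

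For (2) I would write $\alpha=\sum_{t}\gamma^{(t)}$ with $\gamma^{(t)}\in\widetilde{R}^{+}_{\lambda}$; by the key point no summand is $\epsilon_{a}$, so $\alpha'=\sum_{t}s_{a}(\gamma^{(t)})$ exhibits $\alpha'\in\mathbb{N}\widetilde{R}^{+}_{\lambda'}$. For (3), assuming $\alpha\in\widetilde{\Sigma}_{\lambda}$, part (1) gives $\alpha'\in\widetilde{R}^{+}_{\lambda'}$; given any decomposition $\alpha'=\sum_{t=1}^{r}\beta^{(t)}$ with $r\ge2$ and $\beta^{(t)}\in\widetilde{R}^{+}_{\lambda'}$, the key point applied to $\lambda'$ (legitimate, as $s_{a}$ is still defined at $(\alpha',\lambda')$ since $\mu'_{\mathbf{i}}=-\mu_{\mathbf{i}}\neq0$) gives $\beta^{(t)}\neq\epsilon_{a}$, so $\alpha=\sum_{t}s_{a}(\beta^{(t)})$ is a length-$r$ decomposition of $\alpha$ into members of $\widetilde{R}^{+}_{\lambda}$; since $s_{a}$ preserves $(\,,\,)$ — it is a reflection in the real root $\epsilon_{a}$ — we have $p\circ s_{a}=p$, and therefore $p(\alpha')=p(\alpha)>\sum_{t}p(s_{a}(\beta^{(t)}))=\sum_{t}p(\beta^{(t)})$, so $\alpha'\in\widetilde{\Sigma}_{\lambda'}$.

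I expect no serious obstacle here: the argument is essentially formal once Lemma~\ref{negaposi} is in hand. The one place that needs care is the verification that the ``$\gamma=\epsilon_{a}$'' case genuinely never occurs — this is exactly where the hypothesis that $s_{a}$ is defined gets used, and it is what lets us dispense with the analogue of Lemma~5.1 of \cite{C1} that Crawley--Boevey invokes at the corresponding step.
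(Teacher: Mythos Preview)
Your argument is correct and is exactly the approach the paper takes: it invokes Lemma~\ref{negaposi} as the replacement for the standard fact that a simple reflection permutes the positive roots other than the corresponding simple root, and then runs Crawley--Boevey's proof of Lemma~5.2 in \cite{C1} verbatim. Your observation that the case $\gamma=\epsilon_{a}$ is automatically excluded (because here $s_{a}$ is declared only when the relevant coordinate of the parameter is nonzero, forcing $\epsilon_{a}\cdot\lambda\neq0$) is the right way to streamline the argument, and it is consistent with how the paper sets up the equivalence relation on $\mathcal{S}$.
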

\begin{proof}
	Lemma \ref{negaposi} enable us to apply the same argument as 
	in Lemma 5.2 in \cite{C1} to this lemma.
\end{proof}

\begin{lem}[cf. Lemma 5.3 in \cite{C1}]
	Given any pair $(\alpha,\lambda)$ with $\alpha \in\mathbb{N}
	\widetilde{R}^{+}_{\lambda}$, there is an equivalent pair
	$(\alpha',\lambda')$ with the property that 
	$(\alpha',\epsilon_a)\le 0$ whenever $\lambda'_{a}\neq 0$
	for $a\in \mathcal{J}\cup \mathsf{Q}_{0}^{\text{leg}}$.
\end{lem}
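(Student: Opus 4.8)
This lemma (the analogue of Lemma 5.3 in \cite{C1}) is proved by an exhaustion argument using a strictly decreasing integer-valued quantity. The plan is to mimic Crawley-Boevey's proof, replacing his simple roots $\epsilon_a$ and reflections $s_a$ by our generators $\epsilon_a$ for $a\in\mathcal{J}\cup\mathsf{Q}_0\backslash\mathsf{Q}_0'$ and the corresponding reflections $s_a$ on $\mathcal{S}$, and replacing ordinary positive roots by elements of $\widetilde{R}^+_\lambda$. Starting from $(\alpha,\lambda)$ with $\alpha\in\mathbb{N}\widetilde{R}^+_\lambda$, I would argue as follows: if there already is no $a\in\mathcal{J}\cup\mathsf{Q}_0\backslash\mathsf{Q}_0'$ with $\lambda_a\neq 0$ and $(\alpha,\epsilon_a)>0$, we are done. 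Otherwise pick such an $a$ and replace $(\alpha,\lambda)$ by $s_a(\alpha,\lambda)=(\alpha',\lambda')$. By Lemma \ref{lem0}, $\alpha'\in\mathbb{N}\widetilde{R}^+_{\lambda'}$, so the step is legitimate and we may iterate.

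\textbf{Key steps, in order.} First I would fix a linear functional that detects the coordinates being "pushed down": since $\lambda_a\neq 0$ forces $a$ to be a vertex of $\mathsf{Q}$ or a class $c_a$ with a controlled structure, I would use a strictly positive integer vector $n=(n_a)$ (for instance the one coming from $\mathbf{dim}$-type counts, or simply any fixed vector with all $n_a>0$) and track the quantity $n\cdot\alpha=\sum_a n_a\alpha_a$. Applying $s_a$ with $(\alpha,\epsilon_a)>0$ strictly decreases $n\cdot\alpha$, because $s_a(\alpha)=\alpha-(\alpha,\epsilon_a)\epsilon_a$ and $(\alpha,\epsilon_a)>0$ with $\epsilon_a$ having nonnegative entries, so $n\cdot s_a(\alpha)<n\cdot\alpha$. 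Since $\alpha$ stays in $\mathbb{N}\widetilde{R}^+_\lambda\subset(\mathbb{Z}_{\ge 0})^{\mathsf{Q}_0}$ throughout (by Lemma \ref{lem0}(2)), $n\cdot\alpha$ is a nonnegative integer, so the process must terminate after finitely many steps. The terminal pair $(\alpha',\lambda')$ has, by construction, no $a$ with $\lambda'_a\neq 0$ and $(\alpha',\epsilon_a)>0$, i.e.\ $(\alpha',\epsilon_a)\le 0$ whenever $\lambda'_a\neq 0$, which is exactly the desired conclusion. Equivalence of $(\alpha,\lambda)$ and $(\alpha',\lambda')$ is immediate since each step is an application of some $s_a$.

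\textbf{Expected obstacle.} The only delicate point is verifying that each reflection step is actually \emph{defined} on $\mathcal{S}$, i.e.\ that $s_a$ can be applied: for $a=[i,j,k]$ this requires $\lambda_a\neq 0$, which is precisely the hypothesis under which we chose $a$; for $a=\mathbf{i}\in\mathcal{J}$ it requires $\lambda_{\mathbf{i}}=\sum_{i\in I_{\text{irr}}}\lambda_{[i,j_i]}\neq 0$, and one must check this coincides with the condition "$\lambda_a\neq 0$" in the statement when $a$ is interpreted through the pairing with $\epsilon_{\mathbf{i}}$ — this is where the careful bookkeeping of how $\lambda$ pairs with $\epsilon_{\mathbf{i}}$ (cf.\ the formulas preceding Proposition \ref{middletoweyl}) enters. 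Once this identification is made, Lemma \ref{negaposi} guarantees that reflecting a decomposition $\alpha=\sum_t\gamma^{(t)}$ into positive roots of $\widetilde{R}^+_\lambda$ either produces another such decomposition or exhibits $\gamma^{(t)}=\epsilon_a$ for some $t$ (handled by the previous lemma), so that membership in $\mathbb{N}\widetilde{R}^+_{\lambda'}$ is preserved. Beyond this, the argument is a routine termination argument and I do not expect any further difficulty.
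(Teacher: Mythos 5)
Your proposal is correct and follows essentially the same route as the paper, which simply invokes Lemma \ref{lem0} together with the descent argument of Lemma 5.3 in \cite{C1}: repeatedly reflect at an $a$ with $\lambda_a\neq 0$ and $(\alpha,\epsilon_a)>0$, note that Lemma \ref{lem0} keeps $\alpha$ in $\mathbb{N}\widetilde{R}^{+}_{\lambda}$ (hence nonnegative), and terminate because a positive linear functional of $\alpha$ strictly decreases. Your identification of the only delicate point — that for $a=\mathbf{i}\in\mathcal{J}$ the condition $\lambda_a\neq 0$ means $\sum_{i\in I_{\text{irr}}}\lambda_{[i,j_i]}\neq 0$, which is exactly when $s_{\mathbf{i}}$ is defined on $\mathcal{S}$ — is the right bookkeeping.
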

\begin{proof}
	This follows form Lemma \ref{lem0} as well as the proof of Lemma 5.3 in 
	\cite{C1}.
\end{proof}
The following Lemmas \ref{lem11}, \ref{lem2} and Theorem \ref{thm1} 
can be shown by the arguments in the proofs of the 
corresponding 
statements in \cite{C1} without any change.

\begin{lem}[cf. Lemma 5.4 in \cite{C1}]\label{lem11}
	Suppose that $0\neq \alpha \in \mathbb{N}\widetilde{R}^{+}_{\lambda}$
	and $(\alpha,\epsilon_{a})\le 0$ for all $a\in \mathcal{J}\cup 
	\mathsf{Q}_{0}^{\text{leg}}$ with $\lambda_{a}\neq 0$.
	If $(\beta,\alpha-\beta)\le -2$ whenever $\beta,\alpha-\beta$ are 
	nonzero and in $\mathbb{N}\widetilde{R}^{+}_{\lambda}$,
	then $\alpha$ is either $\epsilon_{a}$ where $a\in \mathcal{J}
	\cup \mathsf{Q}_{0}^{\text{leg}}$ or in the 
	$\mathcal{L}$-fundamental set.
\end{lem}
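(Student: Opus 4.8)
The statement to prove, Lemma \ref{lem11}, is the analogue of Lemma 5.4 in \cite{C1}, and as the paper already signals, the proof should be a transcription of Crawley-Boevey's argument with $R^+$ replaced by $\widetilde{R}^+$, the fundamental set $F$ by the quasi-fundamental set $\widetilde{F}$, and the set of fundamental roots by $\mathcal{J}\cup \mathsf{Q}_0\backslash \mathsf{Q}_0'$. The plan is as follows. First I would write $\alpha = \sum_{t=1}^r \gamma^{(t)}$ as a sum of elements of $\widetilde{R}^+_\lambda$ with $r$ minimal, and dispose of the trivial case $r=1$ with $\alpha = \epsilon_a$ immediately. So assume $\alpha$ is not of that form; the goal is to show $\alpha \in \widetilde{F}$, i.e. that $\mathrm{supp}(\alpha)$ is connected and that $(\alpha, \epsilon_a) \le 0$ for \emph{all} $a \in \mathcal{J}\cup \mathsf{Q}_0\backslash \mathsf{Q}_0'$ (not merely those with $\lambda_a \ne 0$).

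For connectedness of $\mathrm{supp}(\alpha)$: if the support split into two pieces with vertex sets $S_1, S_2$ having no arrows between them, then writing $\alpha = \alpha_1 + \alpha_2$ accordingly one would get $(\alpha_1, \alpha_2) = 0$. Here I must be slightly careful, since $\mathcal{L}$ is not all of $\mathbb{Z}^{\mathsf{Q}_0}$: I need that $\alpha_1, \alpha_2$ can themselves be taken in $\mathbb{N}\widetilde{R}^+_\lambda$ — this is where one uses that a positive root with disconnected support cannot occur (a root of $\mathsf{Q}$ has connected support), so each $\gamma^{(t)}$ lies entirely in $S_1$ or entirely in $S_2$, giving the decomposition $\alpha_i \in \mathbb{N}\widetilde{R}^+_\lambda$. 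Then $(\alpha_1, \alpha_2) = 0$ contradicts the hypothesis $(\beta, \alpha-\beta) \le -2$. For the remaining vertices, suppose $(\alpha, \epsilon_a) > 0$ for some $a$ with $\lambda_a = 0$; by the preceding lemma (the analogue of Lemma 5.1 in \cite{C1}, already proved above) $\alpha - \epsilon_a \in \mathbb{N}\widetilde{R}^+_\lambda$, and then applying the hypothesis to $\beta = \epsilon_a$ gives $(\epsilon_a, \alpha - \epsilon_a) \le -2$, i.e. $(\alpha, \epsilon_a) \le 2 - 2 = 0$, a contradiction. This combined with the hypothesis for $\lambda_a \ne 0$ yields $(\alpha, \epsilon_a) \le 0$ for every $a \in \mathcal{J}\cup \mathsf{Q}_0\backslash \mathsf{Q}_0'$, so $\alpha \in \widetilde{F}$.

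The step I expect to be the main obstacle is verifying that the decomposition argument genuinely stays inside $\mathcal{L}$ and inside $\widetilde{R}^+_\lambda$ rather than inside $R^+_\lambda$: one has to check that when we split $\alpha$ along a disconnection of its support, or subtract off $\epsilon_a$, the pieces land in $\mathcal{L}^+$. For the disconnection this is automatic because each $\gamma^{(t)} \in \widetilde{R}^+_\lambda \subset \mathcal{L}$ has connected support hence lies on one side; for the subtraction of $\epsilon_a$ it follows from the previous lemma, which already has this built in. So in fact no new difficulty arises beyond bookkeeping, which is exactly why the paper can say the proof goes through ``without any change.'' I would therefore present the proof concisely, emphasizing these two reductions and pointing to Lemma \ref{negaposi} and the preceding lemma for the inputs, and conclude that $\alpha = \epsilon_a$ for some $a \in \mathcal{J}\cup \mathsf{Q}_0\backslash\mathsf{Q}_0'$ or $\alpha \in \widetilde{F}$.
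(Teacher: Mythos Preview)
Your proposal is correct and is exactly the transcription of Crawley-Boevey's Lemma 5.4 argument that the paper intends (the paper gives no independent proof, simply declaring that the argument in \cite{C1} goes through without change). The only cosmetic point is that the minimality of $r$ in the decomposition $\alpha=\sum_t\gamma^{(t)}$ is never used---you only need some decomposition into elements of $\widetilde{R}^+_\lambda$ for the connectedness step, and the case distinction is really ``$\alpha=\epsilon_a$ for some $a$'' versus ``not'', independent of $r$.
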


\begin{lem}[cf. Lemma 5.5 in \cite{C1}]\label{lem2}
	If $0\neq \alpha \in \mathbb{N}\widetilde{R}^{+}_{\lambda}$
	and $(\beta,\alpha-\beta)\le -2$ whenever $\beta,\alpha-\beta$ are 
	nonzero and in $\widetilde{R}^{+}_{\lambda}$, then 
	$\alpha\in \widetilde{R}^{+}_{\lambda}$.
\end{lem}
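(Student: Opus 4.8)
The statement to prove is Lemma~5.5 of \cite{C1} adapted to our setting: if $0\neq\alpha\in\mathbb{N}\widetilde{R}^{+}_{\lambda}$ and $(\beta,\alpha-\beta)\le-2$ whenever $\beta$ and $\alpha-\beta$ are nonzero and in $\widetilde{R}^{+}_{\lambda}$, then $\alpha\in\widetilde{R}^{+}_{\lambda}$. The plan is to follow verbatim the argument of Crawley-Boevey's Lemma~5.5, which goes through unchanged once the combinatorial inputs established in the preceding lemmas of this section are in place. The key point is that our $s_{\mathbf{i}}$ and $s_{[i,j,k]}$ behave like simple reflections on $\widetilde{R}^{+}$ (Lemma~\ref{negaposi}), so all the positivity bookkeeping in \cite{C1} survives the transfer to $\mathcal{L}$.

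First I would reduce, via the equivalence relation $\sim$ on $\mathcal{S}$ and Lemma~\ref{lem0}, to a pair $(\alpha,\lambda)$ that is \emph{dominant} in the sense that $(\alpha,\epsilon_a)\le 0$ for all $a\in\mathcal{J}\cup\mathsf{Q}_0\backslash\mathsf{Q}_0'$ with $\lambda_a\neq 0$; this is legitimate because $\widetilde{R}^{+}_{\lambda}$, $\mathbb{N}\widetilde{R}^{+}_{\lambda}$, and the hypothesis $(\beta,\alpha-\beta)\le-2$ are all preserved under $\sim$ (the last because the bilinear form is $W$-invariant and reflections permute $\widetilde{R}^{+}_{\lambda}\setminus\{\epsilon_a\}$). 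After this reduction, I would apply Lemma~\ref{lem11}: under the same hypothesis $\alpha$ is either $\epsilon_a$ for some $a\in\mathcal{J}\cup\mathsf{Q}_0\backslash\mathsf{Q}_0'$, or $\alpha$ lies in the quasi-fundamental set $\tilde F$. In the first case $\alpha=\epsilon_a$ is a positive real root of $\mathsf{Q}$, and it lies in $\mathcal{L}$ and satisfies $\alpha\cdot\lambda=\lambda_a$; if $\lambda_a\neq 0$ then $\alpha\notin\widetilde{R}^{+}_{\lambda}$ would contradict $\alpha\in\mathbb{N}\widetilde{R}^{+}_{\lambda}$ (a sum of roots orthogonal to $\lambda$ is orthogonal to $\lambda$), so $\lambda_a=0$ and $\alpha\in\widetilde{R}^{+}_{\lambda}$. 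In the second case $\alpha\in\tilde F$, so by Theorem~\ref{quasiisroots} $\alpha$ is a positive imaginary root of $\mathsf{Q}$; combined with $\alpha\in\mathbb{N}\widetilde{R}^{+}_{\lambda}$, which forces $\alpha\cdot\lambda=0$, we conclude $\alpha\in\widetilde{R}^{+}_{\lambda}$. Undoing the reduction via Lemma~\ref{lem0}(1) then gives the claim for the original $\alpha$.

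The only genuinely new ingredient relative to \cite{C1} is that the quasi-fundamental set consists of roots, i.e.\ Theorem~\ref{quasiisroots}, which is precisely what Sections~\ref{modifset} was set up to supply; everything else is a transcription. I would therefore write the proof essentially as a one-paragraph citation: ``This is proved exactly as Lemma~5.5 in \cite{C1}, using Lemmas~\ref{negaposi}, \ref{lem0}, \ref{lem11} and Theorem~\ref{quasiisroots} in place of the corresponding facts about simple roots, the Weyl group, Lemma~5.4 and the fundamental set.'' I expect no real obstacle here; the substantive work was already done in establishing that $\epsilon_{\mathbf{i}}$ acts like a simple root and that $\tilde F\subset\Delta^{+}$, and the present lemma is a routine consequence.
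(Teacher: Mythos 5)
Your overall route is the one the paper takes: its own ``proof'' of this lemma is nothing more than the assertion that the argument of Lemma 5.5 in \cite{C1} transfers verbatim, and your plan --- pass to an equivalent dominant pair via Lemma \ref{lem0}, invoke Lemma \ref{lem11}, and observe that both $\epsilon_a$ and elements of the quasi-fundamental set are roots in $\widetilde{R}^{+}_{\lambda}$ by Theorem \ref{quasiisroots} and orthogonality to $\lambda$ --- is exactly that argument. The two terminal cases and the remark that the reduction is legitimate because reflections at $a$ with $\lambda_a\neq 0$ permute $\widetilde{R}^{+}_{\lambda}$ (Lemma \ref{negaposi}) are handled correctly.

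The one place you cannot write ``under the same hypothesis'' is the appeal to Lemma \ref{lem11}: that lemma requires $(\beta,\alpha-\beta)\le -2$ for \emph{every} splitting with $\beta,\alpha-\beta$ nonzero in $\mathbb{N}\widetilde{R}^{+}_{\lambda}$, whereas the statement you are proving, as printed, only supplies this for splittings into elements of $\widetilde{R}^{+}_{\lambda}$ itself. The weaker condition does not imply the stronger one, and indeed with the printed hypothesis the lemma is false: take $\alpha=\epsilon_{[i,j,1]}+\epsilon_{[i,j,3]}+\epsilon_{[i,j,5]}$ on a sufficiently long leg with the corresponding entries of $\lambda$ equal to zero; this lies in $\mathcal{L}$ and in $\mathbb{N}\widetilde{R}^{+}_{\lambda}$, admits no splitting at all into two nonzero positive roots (any root below $\alpha$ is one of the three simple roots, and the complement then has disconnected support), hence satisfies the printed hypothesis vacuously, yet is not a root. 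So the splitting condition here must be read over $\mathbb{N}\widetilde{R}^{+}_{\lambda}$, exactly as in Lemma \ref{lem11}, Theorem \ref{thm1} and \cite{C1}; this is also the only form in which the lemma is ever used. Once the hypothesis is read this way your deduction goes through, provided you also record that the $\mathbb{N}\widetilde{R}^{+}_{\lambda}$-splitting condition is preserved under the equivalence $\sim$ (it is, since the reflections preserve $(\,,\,)$ and, by Lemmas \ref{negaposi} and \ref{lem0}, carry $\mathbb{N}\widetilde{R}^{+}_{\lambda}$ onto $\mathbb{N}\widetilde{R}^{+}_{\lambda'}$), so that the reduction to dominant position is legitimate.
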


\begin{thm}[cf. Theorem 5.6 in \cite{C1}]\label{thm1}
	If $\alpha \in \mathcal{L}^{+}$ then $\alpha\in 
	\widetilde{\Sigma}_{\lambda}$ if and only if 
	$0\neq \alpha \in \mathbb{N}\widetilde{R}^{+}_{\lambda}$
	and $(\beta,\alpha-\beta)\le -2$ whenever $\beta,\alpha-\beta$ 
	are nonzero and in $\mathbb{N}\widetilde{R}^{+}_{\lambda}$.
\end{thm}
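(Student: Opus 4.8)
The statement is the exact counterpart, relative to the sublattice $\mathcal{L}$, of Theorem 5.6 in \cite{C1}, and the plan is to reproduce Crawley-Boevey's proof, which is now legitimate since all of its ingredients are in place: Lemmas \ref{lem0}, \ref{lem11}, \ref{lem2}, the reduction lemma (the analogue of Lemma 5.3 in \cite{C1}), and the two genuinely new structural facts --- Lemma \ref{negaposi}, which says that each $\epsilon_{\mathbf{i}}$ ($\mathbf{i}\in\mathcal{J}$) behaves as a simple root, so that $s_{\mathbf{i}}$ permutes $\widetilde{R}^{+}_{\lambda}\setminus\{\epsilon_{\mathbf{i}}\}$, and Theorem \ref{quasiisroots}, which places $\widetilde{F}$ inside the positive imaginary roots of $\mathsf{Q}$. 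All the arithmetic rests on the identity
\[
	p(\delta_{1}+\cdots+\delta_{r})-\sum_{t=1}^{r}p(\delta_{t})=1-r-\sum_{s<t}(\delta_{s},\delta_{t}),
\]
immediate from $q(\delta_{1}+\cdots+\delta_{r})=\sum_{t}q(\delta_{t})+\sum_{s<t}(\delta_{s},\delta_{t})$ and $p=1-q$; for $r=2$ it reads $p(\alpha)=p(\beta)+p(\alpha-\beta)-1-(\beta,\alpha-\beta)$, so that for integer vectors $p(\alpha)>p(\beta)+p(\alpha-\beta)$ is equivalent to $(\beta,\alpha-\beta)\le -2$.

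The ``if'' direction is the short one. Assume $0\neq\alpha\in\mathbb{N}\widetilde{R}^{+}_{\lambda}$ with $(\beta,\alpha-\beta)\le -2$ whenever $\beta,\alpha-\beta$ are nonzero and in $\mathbb{N}\widetilde{R}^{+}_{\lambda}$. By Lemma \ref{lem2}, $\alpha\in\widetilde{R}^{+}_{\lambda}$, so $\alpha$ is a positive root lying in $\mathcal{L}$ with $\alpha\cdot\lambda=0$. Let $\alpha=\gamma_{1}+\cdots+\gamma_{r}$ with $r\ge 2$ and all $\gamma_{t}\in\widetilde{R}^{+}_{\lambda}$. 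For each $s$, the pair $\gamma_{s}$ and $\alpha-\gamma_{s}=\sum_{t\neq s}\gamma_{t}$ is nonzero and in $\mathbb{N}\widetilde{R}^{+}_{\lambda}$, so $(\gamma_{s},\alpha-\gamma_{s})\le -2$; summing over $s$ gives $2\sum_{s<t}(\gamma_{s},\gamma_{t})\le -2r$, and then the displayed identity yields $p(\alpha)-\sum_{t}p(\gamma_{t})=1-r-\sum_{s<t}(\gamma_{s},\gamma_{t})\ge 1>0$. Hence $p(\alpha)>\sum_{t}p(\gamma_{t})$ for every such decomposition, i.e. $\alpha\in\widetilde{\Sigma}_{\lambda}$.

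The ``only if'' direction is Crawley-Boevey's bookkeeping argument. Given $\alpha\in\widetilde{\Sigma}_{\lambda}$, we have $0\neq\alpha\in\widetilde{R}^{+}_{\lambda}\subset\mathbb{N}\widetilde{R}^{+}_{\lambda}$ at once, so only the pairing condition needs proof. Assuming for contradiction that $\beta,\alpha-\beta$ are nonzero in $\mathbb{N}\widetilde{R}^{+}_{\lambda}$ with $(\beta,\alpha-\beta)\ge -1$, one produces --- by the minimal-counterexample and merging procedure of the proof of Theorem 5.6 in \cite{C1}, repeatedly using Lemma \ref{negaposi}, the reduction lemma and Lemma \ref{lem2} to keep everything inside $\widetilde{R}^{+}_{\lambda}$, together with Lemma \ref{lem11} and Theorem \ref{quasiisroots} to identify the extremal configurations as simple roots $\epsilon_{a}$ or as elements of $\widetilde{F}$ --- a decomposition of $\alpha$ into elements of $\widetilde{R}^{+}_{\lambda}$, with at least two parts, for which $p(\alpha)\le\sum p$; this contradicts $\alpha\in\widetilde{\Sigma}_{\lambda}$. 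Since every step of that argument manipulates only roots of $\widetilde{R}^{+}_{\lambda}$ and the $W$-invariant form $(\,,\,)$, and $\sim$-invariance (Lemma \ref{lem0}) allows the same normalizations as in \cite{C1}, the proof transfers without change.

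The main obstacle is thus concentrated in the ``only if'' direction: upgrading the inequality ``$p(\alpha)>\sum p$ over root decompositions'', which is what membership in $\widetilde{\Sigma}_{\lambda}$ literally asserts, to the pairwise inequality over all of $\mathbb{N}\widetilde{R}^{+}_{\lambda}$. But this is precisely the argument of \cite{C1}, and the only points where the restriction to the sublattice $\mathcal{L}$ could intervene --- that $\epsilon_{\mathbf{i}}$ acts as a simple reflection (Lemma \ref{negaposi}) and that $\widetilde{F}$ consists of imaginary roots (Theorem \ref{quasiisroots}) --- have been arranged in the previous sections. The one thing to check with care is that every decomposition occurring in the proof is a decomposition into elements of $\widetilde{R}^{+}_{\lambda}$, i.e. into positive roots lying in $\mathcal{L}$ --- exactly the weakening of Crawley-Boevey's hypothesis recorded in condition \ref{test} of the main theorem --- so that no step covertly needs a decomposition into arbitrary positive roots of $\mathsf{Q}$.
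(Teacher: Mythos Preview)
Your proposal is correct and matches the paper, which likewise simply records that the proof of Theorem 5.6 in \cite{C1} carries over verbatim; your ``if'' direction is written out completely and correctly.

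One clarification on the ``only if'' direction: the argument there uses only Lemma \ref{lem2} directly. If $\alpha\in\widetilde\Sigma_\lambda$ and $\beta,\alpha-\beta$ are nonzero in $\mathbb{N}\widetilde R^{+}_{\lambda}$ with $(\beta,\alpha-\beta)\ge -1$, then whenever one of them (or a later summand) is not itself a root, the contrapositive of Lemma \ref{lem2} splits it as $\eta'+\eta''$ with $(\eta',\eta'')\ge -1$, hence without decreasing $\sum p$; iterating (the process terminates since each summand is $\le\alpha$) yields a decomposition of $\alpha$ into at least two elements of $\widetilde R^{+}_{\lambda}$ with $\sum p\ge p(\beta)+p(\alpha-\beta)\ge p(\alpha)$, contradicting $\alpha\in\widetilde\Sigma_\lambda$. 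The ingredients you list --- Lemma \ref{negaposi}, the reduction lemma, Lemma \ref{lem11}, Theorem \ref{quasiisroots} --- are what make Lemma \ref{lem2} itself go through in the $\mathcal{L}$-setting; they do not appear again in the proof of the present theorem, and in particular there is no ``minimal-counterexample and merging'' step here (that machinery belongs to Theorem \ref{reduction}, the analogue of Theorem 5.8).
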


Combining these results, we can obtain the following theorem as in \cite{C1}.
\begin{thm}[cf. Theorem 5.8 in \cite{C1}]\label{reduction}
	If $\alpha \in \widetilde{\Sigma}_{\lambda}$ then there is an 
	equivalent pair $(\alpha',\lambda')$ with $\alpha'$ which is 
	either 
	$\epsilon_{a}$ where $a\in \mathcal{J}
	\cup \mathsf{Q}_{0}^{\text{leg}}$
	or in the $\mathcal{L}$-fundamental set.
\end{thm}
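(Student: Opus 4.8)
The plan is to follow Crawley-Boevey's reduction argument (the proof of Theorem 5.8 in \cite{C1}) essentially verbatim, since the present section has already transplanted every ingredient to the setting of the sublattice $\mathcal{L}$, the root set $\widetilde{R}^{+}_{\lambda}=\mathcal{L}\cap R^{+}_{\lambda}$, the reflections $s_{a}$ for $a\in\mathcal{J}\cup\mathsf{Q}_{0}\backslash\mathsf{Q}_{0}'$, and the equivalence relation $\sim$ they generate on $\mathcal{S}$. First I would take $\alpha\in\widetilde{\Sigma}_{\lambda}$ and apply the analogue of Crawley-Boevey's Lemma 5.3 stated just before Lemma \ref{lem11}: this produces an equivalent pair $(\alpha',\lambda')$, i.e.\ $(\alpha,\lambda)\sim(\alpha',\lambda')$, with the extra property that $(\alpha',\epsilon_{a})\le 0$ for every $a\in\mathcal{J}\cup\mathsf{Q}_{0}\backslash\mathsf{Q}_{0}'$ with $\lambda'_{a}\neq 0$.

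Next I would transfer the hypotheses along the equivalence. By Lemma \ref{lem0} the properties of lying in $\mathbb{N}\widetilde{R}^{+}$ and in $\widetilde{\Sigma}$ are $\sim$-invariant, so $\alpha'\in\widetilde{\Sigma}_{\lambda'}$; then Theorem \ref{thm1} gives $0\neq\alpha'\in\mathbb{N}\widetilde{R}^{+}_{\lambda'}$ together with the pairing inequality $(\beta,\alpha'-\beta)\le -2$ whenever $\beta$ and $\alpha'-\beta$ are nonzero elements of $\mathbb{N}\widetilde{R}^{+}_{\lambda'}$. Now $\alpha'$ satisfies exactly the hypotheses of Lemma \ref{lem11}, which forces $\alpha'$ to be either $\epsilon_{a}$ for some $a\in\mathcal{J}\cup\mathsf{Q}_{0}\backslash\mathsf{Q}_{0}'$ or an element of the quasi-fundamental set $\tilde{F}$. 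Since $(\alpha',\lambda')\sim(\alpha,\lambda)$, this is the asserted equivalent pair and the theorem follows.

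Almost all of the content is thus packaged inside the transplanted lemmas, and the only place where something genuinely has to be checked is that the $\sim$-reduction behaves like a reduction in a Kac--Moody root system: one needs that, for $\mathbf{i}\in\mathcal{J}$, applying $s_{\mathbf{i}}$ to a decomposition $\alpha=\sum_{t}\gamma^{(t)}$ into elements of $\widetilde{R}^{+}_{\lambda}$ either exhibits some $\gamma^{(t)}=\epsilon_{\mathbf{i}}$ or keeps every $s_{\mathbf{i}}(\gamma^{(t)})$ positive. This is precisely Lemma \ref{negaposi}, and it is what makes the proofs of Lemmas 5.1--5.6 of \cite{C1} carry over without change. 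I expect the main (though minor) obstacle to be the termination of the iteration in the analogue of Lemma 5.3 --- that repeatedly applying reflections to decrease $\alpha$ cannot cycle indefinitely --- which is handled in \cite{C1} by a height/positivity argument and goes through here because $\mathcal{L}$ is stable under the group generated by the $s_{\mathbf{i}}$ and $s_{[i,j,k]}$, so the reflections act on the same lattice throughout.
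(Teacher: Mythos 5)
Your proposal is correct and follows exactly the route the paper intends: the paper gives no separate proof of Theorem \ref{reduction} beyond "combining these results as in \cite{C1}", and the chain you describe (the analogue of Lemma 5.3, then Lemma \ref{lem0}, Theorem \ref{thm1}, and Lemma \ref{lem11}, all underpinned by Lemma \ref{negaposi}) is precisely that combination.
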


\begin{thm}\label{nonemptyness}
	For $\beta\in \widetilde{\Sigma}_{\nu}$,
we have $\mu^{-1}_{\beta}(\nu)^{\text{dif}}\neq 
	\emptyset$.
\end{thm}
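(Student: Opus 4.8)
The plan is to reduce the problem, via Theorem \ref{reduction}, to the two terminal possibilities for the dimension vector --- an element of the quasi-fundamental set, or a coordinate vector --- and to transport non-emptiness of $\mathrm{Rep}(\overline{\mathsf{Q}},\cdot)^{\text{dif}}_{\cdot}$ along the equivalence relation $\sim$, using that $\sim$ is generated by reflections which on the representation side are middle convolutions and reorderings of the $\xi$-data, and that both of these are bijections.

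Concretely, I would first record that each generator of $\sim$ --- $s_{\mathbf{i}}$ for $\mathbf{i}\in\mathcal{J}$ (defined when $\mu_{\mathbf{i}}\neq0$) and $s_{[i,j,k]}$ (defined when $\mu_{[i,j,k]}\neq0$) --- corresponds, under the bijections $\Phi_{\xi}$ of Proposition \ref{irregularquiver}, to the middle convolution $\mathrm{mc}_{\mathbf{i}}$ (Propositions \ref{middletoroot}, \ref{middletoweyl}, \ref{modifiedreflection}), respectively to the reordering $\sigma^{[i,j]}_{k}$ (Proposition \ref{changeorder}), and that both are bijections between the corresponding quotient spaces $\mathrm{Rep}(\overline{\mathsf{Q}},\gamma)^{\text{dif}}_{\nu}\big/\prod_{a\in\mathsf{Q}_{0}}\mathrm{GL}(\gamma_{a},\mathbb{C})$. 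Hence non-emptiness of $\mathrm{Rep}(\overline{\mathsf{Q}},\gamma)^{\text{dif}}_{\nu}$ depends only on the $\sim$-class of $(\gamma,\nu)$. Now, given $\beta\in\widetilde{\Sigma}_{\mu}$, Theorem \ref{reduction} produces an equivalent pair $(\beta',\mu')$ with $\beta'$ either a coordinate vector $\epsilon_{a}$ ($a\in\mathcal{J}\cup\mathsf{Q}_{0}\backslash\mathsf{Q}_{0}'$) or in $\widetilde{F}$; by Lemma \ref{lem0} we still have $\beta'\in\widetilde{\Sigma}_{\mu'}$, and it remains to show $\mathrm{Rep}(\overline{\mathsf{Q}},\beta')^{\text{dif}}_{\mu'}\neq\emptyset$.

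If $\beta'\in\widetilde{F}$ then $\beta'\in\widetilde{\Sigma}_{\mu'}\cap\widetilde{F}$ and Theorem \ref{nonempty} applies directly. If $\beta'=\epsilon_{a}$, I would argue that $a$ must be some $\mathbf{i}=([i,j_{i}])_{0\le i\le p}\in\mathcal{J}$: along the reduction the quantity $\sum_{j}\gamma_{[i,j]}$ ($i\in I_{\text{irr}}$) can be kept positive --- it is untouched by the $s_{[i,j,k]}$, and Lemma \ref{Katzalgorithm} pins down the only configurations at which an $s_{\mathbf{i}}$ could make it vanish, namely the real roots $\epsilon_{\mathbf{i}}+\epsilon_{[i_{0},j_{i_{0}},1]}+\cdots+\epsilon_{[i_{0},j_{i_{0}},l]}$, from which one can instead reflect down the tail to $\epsilon_{\mathbf{i}}$ --- so the terminal coordinate vector is not $\epsilon_{[i,j,k]}$ with $[i,j,k]\in\mathsf{Q}_{0}\backslash\mathsf{Q}_{0}'$ (for which $\mathrm{Rep}^{\text{dif}}$ is empty, the injectivity condition on $\rho_{[i,j,k]}$ failing). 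For $\beta'=\epsilon_{\mathbf{i}}$ the space $\mathrm{Rep}(\overline{\mathsf{Q}},\epsilon_{\mathbf{i}})$ is the affine space of pairs of scalars $(x_{i},y_{i})=\bigl(\psi_{\rho^{[0,j_{0}]}_{[i,j_{i}]}},\psi_{(\rho^{[0,j_{0}]}_{[i,j_{i}]})^{*}}\bigr)$, $i\in I_{\text{irr}}\backslash\{0\}$, all remaining vertices being zero-dimensional so that the only non-vacuous ``dif'' conditions are the $1\times1$ determinant conditions $x_{i}\neq0$; the moment-map equations reduce to $x_{i}y_{i}=\mu'_{[i,j_{i}]}$ ($i\neq0$) and $-\sum_{i}y_{i}x_{i}=\mu'_{[0,j_{0}]}$, consistent exactly because $\epsilon_{\mathbf{i}}\cdot\mu'=0$, and one may pick the solution with every $x_{i}\neq0$, yielding the desired point of $\mathrm{Rep}(\overline{\mathsf{Q}},\epsilon_{\mathbf{i}})^{\text{dif}}_{\mu'}$.

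The step I expect to be the real obstacle is the coordinate-vector case, and within it the point that Theorem \ref{reduction} by itself only delivers a terminal $\epsilon_{a}$ or a quasi-fundamental vector, whereas what we need is non-emptiness of the \emph{open} subset $\mathrm{Rep}(\overline{\mathsf{Q}},\beta')^{\text{dif}}_{\mu'}$ --- which fails for $\epsilon_{a}$ supported at a tail vertex $a\in\mathsf{Q}_{0}\backslash\mathsf{Q}_{0}'$. Making precise the assertion above --- that, starting from a dimension vector with all $\sum_{j}\beta_{[i,j]}\ge1$, the reduction can always be arranged to terminate either in $\widetilde{F}$ or at $\epsilon_{\mathbf{i}}$ --- is exactly where Lemmas \ref{negaposi} and \ref{Katzalgorithm} do the essential work, and is the delicate part of the argument.
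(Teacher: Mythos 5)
Your proposal is correct and follows essentially the same route as the paper: reduce via Theorem \ref{reduction} and Lemma \ref{Katzalgorithm} to either a quasi-fundamental vector (handled by Theorem \ref{nonempty}) or a terminal real root, keeping $\sum_{j}\gamma_{[i,j]}>0$ at every step so that the terminal coordinate vector cannot sit on a tail, and then transport non-emptiness back along $\sim$ using that middle convolutions and the reorderings $\sigma^{[i,j]}_{k}$ are bijections on the quotients of $\mathrm{Rep}(\overline{\mathsf{Q}},\cdot)^{\text{dif}}_{\cdot}$. The only differences are cosmetic: the paper stops the reduction at the real root $\epsilon_{\mathbf{i}}+\epsilon_{[i_{0},j_{0},1]}+\cdots+\epsilon_{[i_{0},j_{0},l]}$ of Lemma \ref{Katzalgorithm} rather than descending the tail to $\epsilon_{\mathbf{i}}$ with your explicit construction, and it separately dispatches in one line the degenerate case $\sum_{j}\beta_{[i,j]}=0$, which your argument implicitly excludes.
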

\begin{proof}
	By Theorems \ref{nonempty}, \ref{reduction} and 
	Lemma \ref{Katzalgorithm}, we may assume  
	$(\beta,\nu)\sim(\epsilon_{a},\nu')$, $a\in \mathcal{J}
	\cup \mathsf{Q}_{0}^{\text{leg}}$.
	If $a\in \mathcal{J}$, then Lemma \ref{existsimple}
	shows that $\mu^{-1}_{\beta}(\nu)\neq \emptyset$.
	Suppose $a\in \mathsf{Q}_{0}^{\text{leg}}$.
	If $\sum_{j=1}^{m_{i}}\beta_{[i,j]}>0$ for all $i\in I_{\text{irr}}$,
	then Lemma \ref{Katzalgorithm} and Theorem \ref{reduction} imply
	that there exists a sequence $a_{1},\ldots,a_{r}\in 
	\mathcal{J}\cup \mathsf{Q}_{0}^{\text{leg}}$ such
	that $(\beta^{(k)},\nu^{(k)})=s_{a_{k}}s_{a_{k-1}}\cdots
	s_{a_{1}}(\beta,\nu)$ are well defined and 
	$\sum_{j=1}^{m_{i}}\beta^{(k)}_{[i,j]}>0\ (i\in I_{\text{irr}})$
	for all $k=1,\ldots,r$ and moreover 
	$\beta^{(r)}=\epsilon_{\mathbf{i}}+\epsilon_{[i_{0},j_{0},1]}+
	\cdots$ as in Lemma \ref{Katzalgorithm}.
	Then we have $\mu_{\beta}^{-1}(\nu)^{\text{dif}}\neq 
	\emptyset$ by Lemma \ref{existsimple}.
	If $\sum_{j=1}^{m_{i}}\beta_{[i,j]}=0$ for all 
	$i\in I_{\text{irr}}$, then 
	$\mu_{\beta}^{-1}(\nu)^{\text{dif}}=
	\mu_{\beta}^{-1}(\nu)^{\text{irr}}
	\neq 
	\emptyset$.
\end{proof}

Then Theorems \ref{ifnonempty} and \ref{nonemptyness} show our main theorem.
\begin{thm}\label{mainthm}
	We use the same notation as in Section 
	\ref{middleconvolutionandreflection}.
	Let us consider 
	the additive Deligne-Simpson problem
	for $k_{0},\ldots,k_{p}$ and 
	the HTL normal forms $B_{i}\in \mathfrak{g}^{*}_{k_{i}}$
	for $i=0,\ldots,p$. Then the problem has a solution 
	if and only if $\alpha\in \widetilde{\Sigma}_{\lambda}$.
\end{thm}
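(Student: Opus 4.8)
The strategy is to combine the correspondence established in Theorem \ref{irreducibleandquasi} with the two halves of the argument developed in Sections \ref{middleconvolutionandreflection}--\ref{proofofmain}. Recall that by Theorem \ref{irreducibleandquasi} the map $\Phi_{\xi}$ gives a bijection between (isomorphism classes of) irreducible solutions of the generalized additive Deligne-Simpson problem for $B^{(0)},\ldots,B^{(p)}$ and quasi-irreducible representations in $\mathrm{Rep}(\overline{\mathsf{Q}},\alpha)_{\lambda}^{\text{dif}}$, where $\mathsf{Q}$, $\alpha$ and $\lambda$ are built from the $B^{(i)}$ exactly as in Section \ref{equationsandquiver}. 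Hence the problem has a solution if and only if $\mathrm{Rep}(\overline{\mathsf{Q}},\alpha)_{\lambda}^{\text{dif}}$ contains a quasi-irreducible representation, and the whole statement reduces to the assertion: $\mathrm{Rep}(\overline{\mathsf{Q}},\alpha)_{\lambda}^{\text{dif}}$ contains a quasi-irreducible representation if and only if $\alpha\in \widetilde{\Sigma}_{\lambda}$. Note also that the conditions (1) and (2) spelled out in Theorem \ref{premainthm} are, by definition of $\widetilde{R}^{+}_{\lambda}$, $\widetilde{\Sigma}_{\lambda}$ and $\mathcal{L}^{+}$, precisely the statement that $\alpha\in\widetilde{\Sigma}_{\lambda}$, so no separate translation is needed.

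First I would dispose of the ``only if'' direction. If a quasi-irreducible representation exists in $\mathrm{Rep}(\overline{\mathsf{Q}},\alpha)_{\lambda}^{\text{dif}}$, then in particular that set is non-empty, so Theorem \ref{ifnonempty} applies and yields $\alpha\in\widetilde{\Sigma}_{\lambda}$ directly. For the ``if'' direction, assume $\alpha\in\widetilde{\Sigma}_{\lambda}$. The missing ingredient needed to invoke Theorem \ref{ifnonempty} is the non-emptiness of $\mathrm{Rep}(\overline{\mathsf{Q}},\alpha)_{\lambda}^{\text{dif}}$, and this is exactly what was proved in Section \ref{proofofmain}: Theorem \ref{nonemptyness} states that $\mathrm{Rep}(\overline{\mathsf{Q}},\alpha)_{\lambda}^{\text{dif}}\neq\emptyset$ whenever $\alpha\in\widetilde{\Sigma}_{\lambda}$. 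Once this non-emptiness is in hand, the reverse implication of Theorem \ref{ifnonempty} produces a quasi-irreducible representation in $\mathrm{Rep}(\overline{\mathsf{Q}},\alpha)_{\lambda}^{\text{dif}}$, which under $\Phi_{\xi}^{-1}$ gives an irreducible solution of the generalized additive Deligne-Simpson problem.

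So the proof is essentially a one-line synthesis: by Theorem \ref{irreducibleandquasi} the problem is solvable iff $\mathrm{Rep}(\overline{\mathsf{Q}},\alpha)_{\lambda}^{\text{dif}}$ contains a quasi-irreducible representation; Theorem \ref{nonemptyness} guarantees $\mathrm{Rep}(\overline{\mathsf{Q}},\alpha)_{\lambda}^{\text{dif}}\neq\emptyset$ when $\alpha\in\widetilde{\Sigma}_{\lambda}$; and Theorem \ref{ifnonempty} then gives the equivalence ``$\mathrm{Rep}(\overline{\mathsf{Q}},\alpha)_{\lambda}^{\text{dif}}$ contains a quasi-irreducible representation $\iff \alpha\in\widetilde{\Sigma}_{\lambda}$'' under that non-emptiness hypothesis. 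Chaining these gives the theorem. The genuine mathematical content is of course hidden in the auxiliary results being cited — in particular in Theorem \ref{nonemptyness}, whose proof rests on the case analysis of the quasi-fundamental set (Theorems \ref{quasiisroots} and \ref{nonempty}) and the reduction procedure via the modified reflections $s_{\mathbf{i}}$ (Theorem \ref{reduction} and Lemma \ref{Katzalgorithm}); but at the level of the final statement there is no further obstacle, only the bookkeeping of invoking the right lemma for each implication.
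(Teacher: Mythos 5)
Your proposal is correct and follows exactly the paper's own argument: the theorem is obtained by combining the correspondence of Theorem \ref{irreducibleandquasi} with Theorems \ref{ifnonempty} and \ref{nonemptyness}, precisely as you describe. No gaps.
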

\begin{cor}\label{maincor}
	Let 
	$\mathbf{B}=(B_{i})_{0\le i\le p}
	\in \prod_{i=0}^{p}\mathfrak{g}_{k_{i}}$
	be the collection of HTL normal forms as above theorem. Then
	$\mathfrak{M}(\mathbf{B})\neq \emptyset$ if and only if 
	$\alpha \in \widetilde{\Sigma}_{\lambda}$.

\end{cor}
\if0
\section{Moduli spaces of meromorphic connections and quiver varieties}
Let us fix a collection of HTL normal forms $\mathbf{B}=(B^{(i)})_{i=0,
\ldots,p}$ as 
in the previous sections and consider a moduli space 
$\mathcal{M}^{*}(\mathbf{B})$ of 
meromorphic connections on  trivial bundles.
As an application of the results in the previous sections, 
we shall show that the stable part 
$\mathcal{M}^{*}_{\text{st}}(\mathbf{B})$ of the 
moduli space has an embedding into an open subset of a quiver variety
and moreover determine a necessary and sufficient condition of the non-emptiness
of $\mathcal{M}^{*}_{\text{st}}(\mathbf{B})$.

Let us define the moduli space $\mathcal{M}^{*}(\mathbf{B})$ of 
meromorphic connections on trivial bundles following \cite{Boa1} 
(see also \cite{HY}).
Let $V$ be a holomorphic vector 
bundle or equivalently locally free sheaf of rank $n$
on $\mathbb{P}^{1}$ 
and $D=k_{0}a_{0}+k_{1}a_{1}+\cdots +k_{p}a_{p}$ the effective 
divisor on $\mathbb{P}^{1}$.
A {\em meromorphic connection} is a $\mathbb{C}$ linear map
$\nabla\colon V\rightarrow V\otimes \Omega(D)$ satisfying 
the Leibniz rule 
\[
	\nabla{fs}=df\otimes s +f\nabla(s)
\]
for all open subset $U\subset \mathbb{P}^{1}$, 
$f\in \mathcal{O}_{\mathbb{P}^{1}}(U)$ and $s\in V(U)$.
Here $\mathcal{O}_{\mathbb{P}^{1}}$ is the 
sheaf of holomorphic functions on $\mathbb{P}^{1}$ and 
$\Omega({D})$ is the extension of the sheaf 
$\Omega_{\mathbb{P}^{1}\backslash\{a_{0},a_{1},\ldots,a_{p}\}}$
 of holomorphic 1-forms on 
 $\mathbb{P}^{1}\backslash\{a_{0},a_{1},\ldots,a_{p}\}$ 
 so that sections of neighbourhoods of $a_{i}$ have 
 a pole of order at most $k_{i}$ at $a_{i}$ for each 
 $i=0,1,\ldots,p$.

 Let $x$ be a local coordinate on $\mathbb{P}^{1}$ vanishing at $a_{i}$
 and write $\nabla=d-A^{(i)}dx$ where $
	 A^{(i)}=\sum_{j=-k_{i}}^{\infty}A_{j}x^{j}
 $
 in terms of a local trivialization of $V$ near $a_{i}$. 
 If $A^{(i)}\in \mathcal{O}_{B^{(i)}}$, we say 
 that $(V,\nabla)$ is {\em formally equivalent} in 
 $\mathfrak{g}_{k_{i}}$ to $d-B^{(i)}dx$ at $a_{i}$.
 \begin{df}[Definition 2.5 in \cite{Boa1}, see also Corollary 2.15 
		 and Definition 3.15 in
	 \cite{HY}]\normalfont
	 The moduli space  $\mathcal{M}^{*}(\mathbf{B})$ is 
	 the set of isomorphism classes of pairs $(V,\nabla)$ where 
	 $V$ is trivial rank $n$ holomorphic vector bundle 
	 over $\mathbb{P}^{1}$ and $\nabla$ is a meromorphic connection
	 on $V$ which is formally equivalent in $\mathfrak{g}_{k_{i}}$ to
	 $d-B^{(i)}dx$ at $a_{i}$ for each $i=0,1,\ldots,p$ and 
	 has no other poles.
	 Moreover the subset $\mathcal{M}^{*}_{\text{st}}(\mathbf{B})
	 \subset \mathcal{M}^{*}(\mathbf{B})$ consists of 
	 isomorphism classes of stable pairs $(V,\nabla)$.
	 Here we say that a pair 
 $(\mathcal{O}_{\mathbb{P}^{1}}^{n},\nabla)$
	 of a trivial vector bundle and a meromorphic connection 
	 is {\em stable} if 
	 there is no nontrivial proper subspace $S\subset \mathbb{C}^{n}$
	 such that $\nabla(S\otimes_{\mathbb{C}}\mathcal{O}_{\mathbb{P}^{1}} )
	 \subset S\otimes_{\mathbb{C}}\Omega(D)$.
 \end{df}
 Let $(V,\nabla)\in \mathcal{M}^{*}(\mathbf{B})$. Then a trivialization of 
 $V$ gives a description $\nabla=d-A\,dx$ where 
 \[
	 A=\sum_{i=1}^{p}\sum_{j=1}^{k_{i}}\frac{A_{i,j}}{(x-a_{i})^{j}}+
	 \sum_{2\le j\le k_{0}}A_{0,j}x^{j-2}
 \]
 with $\sum_{i=0}^{p}A_{i,1}=0$. Here we set $a_{0}=\infty$.
 Thus we have a map
 \begin{multline*}
	 f\colon \mathcal{M}^{*}(\mathbf{B}) \longrightarrow\\
	 \left\{
		 \left(\sum_{j=1}^{k_{i}}A^{(i)}_{j}x^{-j}
		 \right)_{0\le i\le p}\in 
		 \prod_{i=0}^{p}\mathcal{O}_{B^{(i)}}\,
		 \middle|\,
		 \sum_{i=0}^{p}A^{(i)}_{1}=0
	 \right\}/\mathrm{GL}(n,\mathbb{C})
 \end{multline*}
 which is bijective, see Proposition 2.1 in \cite{Boa1} and also Corollary 2.15
 in \cite{HY}.
 Moreover the definitions of the stability of connections on trivial bundles 
 and the irreducibility of differential equations show that the restriction map
 \begin{multline*}
	f\colon \mathcal{M}_{\text{st}}^{*}(\mathbf{B}) \longrightarrow\\
	 \left\{
		 \left(\sum_{j=1}^{k_{i}}A^{(i)}_{j}x^{-j}
		 \right)_{0\le i\le p}\in 
		 \prod_{i=0}^{p}\mathcal{O}_{B^{(i)}}\,
		 \middle|\,
		 \sum_{i=0}^{p}A^{(i)}_{1}=0,\,
		 \text{irreducible}
	 \right\}/\mathrm{GL}(n,\mathbb{C})
 \end{multline*}
 is well defined and bijective.
 Then we can obtain a bijection from the moduli space of stable connections
 $\mathcal{M}^{*}_{\text{st}}(\mathbf{B})$ to an open subset of the quiver 
 variety $\mathfrak{M}^{\text{reg}}_{\lambda}(\mathsf{Q},\alpha)$.
 \begin{thm}[Open embedding theorem]label{embedding}
	 Let us take $\mathbf{B}=
	 (B^{(i)})_{0\le i\le p}$, the collection of HTL normal forms,
	 the quiver $\mathsf{Q}$,
	 $\alpha\in (\mathbb{Z}_{\ge 0})^{\mathsf{Q}_{0}}$ and 
	 $\lambda\in \mathbb{C}^{\mathsf{Q}_{0}}$ as in Section 
	 \ref{equationsandquiver}.
	 Then there exists $\lambda'\in \mathbb{C}^{\mathsf{Q}_{0}}$ and
	 a injection
	 \[
		 \Phi\colon \mathcal{M}^{*}_{\text{st}}(\mathbf{B})
		 \hookrightarrow
		 \mathfrak{M}^{\text{reg}}_{\lambda'}(\mathsf{Q},\alpha)
	 \]
	 such that 
	 \begin{multline*}
		 \Phi(\mathcal{M}^{*}_{\text{st}}(\mathbf{B}))=\\
		 \left\{(M_{a},\psi_{a})\in 
			 \mathfrak{M}^{\text{reg}}_{\lambda'}(\mathsf{Q},
			 \alpha)\,\middle|\,
			 \mathrm{det}\left(
				 \psi_{\rho^{[0,j]}_{[i,j']}}
			 \right)_{\substack{1\le j\le m^{(0)}\\
			 1\le j'\le m^{(i)}}}\neq 0,\
			 i\in I_{\text{irr}}\backslash\{0\}
		 \right\}.
	 \end{multline*}
	 In particular if $I_{\text{irr}}=\{0\}$, then $\lambda'=\lambda$
	 and $\Phi$ is bijective.
 \end{thm}
 \begin{proof}
	 If $I_{\text{irr}}=\{0\}$, then quasi-irreducibility coincides with
	 irreducibility. Thus Theorem \ref{irreducibleandquasi} shows the 
	 result.
	 Thus we assume $\#I_{\text{irr}}\ge 1$.
	 Then as we saw in the proof of Theorem \ref{ifnonempty},
	 there exists $\lambda'\in \mathbb{C}^{\mathsf{Q}_{0}}$
	 and the addition of differential equations induces 
	 a one to one correspondence between quasi-irreducible
	 elements of $\mathrm{Rep}(\mathsf{Q},\alpha)_{\lambda}^{\text{dif}}$
	 and irreducible elements of $\mathrm{Rep}(\mathsf{Q},
	 \alpha)^{\text{dif}}_{\lambda'}$ through the bijections in 
	 Theorem \ref{irreducibleandquasi}.
	 Thus composing this correspondence 
	 and the bijection in Theorem 
	 \ref{irreducibleandquasi},
	 we have the bijection
	 \begin{align*}
		 \mathcal{M}^{*}_{\text{st}}(\mathbf{B})&\xrightarrow[]{\sim}
		 \{M\in \mathrm{Rep}(\mathsf{Q},\alpha)_{\lambda}^{\text{dif}}
		 \mid \text{quasi-irreducible}\}/
		 \prod_{a\in\mathsf{Q}_{0}}\mathrm{GL}(\alpha_{a},
		 \mathbb{C})\\
		 &\xrightarrow[]{\sim}
		 \{M\in \mathrm{Rep}(\mathsf{Q},\alpha)_{\lambda'}^{\text{dif}}
		 \mid \text{irreducible}\}/
		 \prod_{a\in\mathsf{Q}_{0}}\mathrm{GL}(\alpha_{a},
		 \mathbb{C})
		 \subset \mathfrak{M}^{\text{reg}}_{\lambda'}(
		 \mathsf{Q},\alpha).
	 \end{align*}
 \end{proof}

 Moreover Theorem \ref{mainthm} gives a necessary and sufficient condition 
 for the non-emptiness of $\mathcal{M}^{*}_{\text{st}}(\mathbf{B})$.
 \begin{thm}\label{nonemptymoduli}
	 The moduli space of the stable connections 
	 $\mathcal{M}^{*}_{\text{st}}(\mathbf{B})\neq \emptyset$
	 if and only if $\alpha\in \widetilde{\Sigma}_{\lambda}$.
 \end{thm}
 \fi

\end{document}